	\setlist{nolistsep, leftmargin=5mm}
	\newsavebox{\measure@tikzpicture}
	\def\tikz@width{#1}%
	\newlength{\bibitemsep}\setlength{\bibitemsep}{.2\baselineskip plus .05\baselineskip minus .05\baselineskip}
	\newlength{\bibparskip}\setlength{\bibparskip}{0pt}
	\let\oldthebibliography\thebibliography
	\renewcommand\thebibliography[1]{%
	\oldthebibliography{#1}%
	\setlength{\parskip}{\bibitemsep}%
	\setlength{\itemsep}{\bibparskip}%
	}
\newtheorem*{Proposition}{Proposition}
\newtheorem{theorem}{Theorem}[section]
\newtheorem*{Theorem}{Theorem}
\newtheorem*{Corollary}{Corollary}
\newtheorem{corollary}[theorem]{Corollary}
\newtheorem{proposition}[theorem]{Proposition}
\newtheorem{fact}[theorem]{Fact}
\newtheorem{remark}[theorem]{Remark}
\newtheorem{definition}[theorem]{Definition}
\newtheorem*{notation}{Notation}
\newtheorem{example}[theorem]{Example}
\begin{document}

\title{On the action of the cactus group on the set of Gelfand-Tsetlin patterns for orthogonal Lie algebras}
\author{Igor Svyatnyy}

\definecolor{zzttqq}{rgb}{0.26666666666666666,0.26666666666666666,0.26666666666666666}
\definecolor{cqcqcq}{rgb}{0.7529411764705882,0.7529411764705882,0.7529411764705882}
\maketitle

\begin{abstract}
The purpose of this work is to define a natural action of the cactus group on the set of Gelfand-Tsetlin patterns for orthogonal Lie algebras. These Gelfand-Tsetlin patterns are meant to index the Gelfand-Tsetlin basis in the irreducible representations of the orthogonal Lie algebra $\mathfrak{o}_N$ with respect to the chain of nested orthogonal Lie algebras $\mathfrak{o}_N \supset \mathfrak{o}_{N-1} \supset \ldots \supset \mathfrak{o}_3$.
Using the Howe duality between $O_N$ and $\mathfrak{o}_{2n}$, we realize some representations of $\mathfrak{o}_N$ as multiplicity spaces inside the tensor power of the spinor representation $(\Lambda \mathbb{C}^{n})^{\otimes N}$. There is a natural choice of the basis inside the multiplicity space, which agrees with the decomposition of $(\Lambda \mathbb{C}^{n})^{\otimes N}$ into simple $\mathfrak{o}_{2n}$-modules. We call such basis \textit{principal}. The action of the cactus group $C_N$ by the crystal commutors on the crystal arising from $(\Lambda \mathbb{C}^{n})^{\otimes N}$ induces the action of $C_N$ on the set indexing the principal basis inside the multiplicity space. We call this set \textit{regular cell tables}. Regular cell tables are the analog of semi-standard Young tables. There is a natural bijection between a specific subset of semi-standard Young tables and regular cell tables. In this paper, we establish a natural bijection between the principal basis and the Gelfand-Tsetlin basis and, therefore, define an action of the cactus group on the set Gelfand-Tsetlin patterns.  
\end{abstract}

\tableofcontents

\section{Introduction}

\subsection{Tensor power of spinor representation.}

Let us define by $S$ the famous spinor representation of even orthogonal lie algebra $\mathfrak{o}_{2n}(\mathbb{C})$, where $n > 1$. As a vector space, $S$ is isomorphic to the exterior algebra of $n$-dimensional complex vector space. We will consider the $N$-th tensor power of this representation. It is well known that a finite-dimensional representation of a semisimple Lie algebra is completely reducible. Therefore, we can consider the decomposition of $S^{\otimes N}$ into the direct sum of irreducible subrepresentations.  $$S^{\otimes N} \cong \bigoplus_{\lambda \in \Delta^N } L_{\lambda}^{\oplus m_{\lambda}}.$$
Here $L_\lambda$ is the finite-dimensional irreducible representation of Lie algebra $\mathfrak{o}_{2n}(\mathbb{C})$ of the highest weight $\lambda$ , $\Delta^N $ is the set of highest weights of irreducible subrepresentations of $S^{\otimes N}$, $m_\lambda$ is the multiplicity of  $L_\lambda$ in $S^{\otimes N}$.

One may rewrite this decomposition in the following way.

$$S^{\otimes N} \cong  \bigoplus_{\lambda \in \Delta^N } U_{\lambda} \otimes L_{\lambda} \hspace{1mm},$$
where $U_\lambda = \text{Hom}_{\mathfrak{o}_{2n}}(L_{\lambda}, S^{\otimes N})$ is the multiplicity space of $L_\lambda$ in $S^{\otimes N}$.  We can also interpret $U_\lambda$ as the vector subspace of the singular vectors of weight $\lambda$ in $S^{\otimes N}$. 

We will explicitly describe the set of $\Delta^N $ and give a nice combinatorial interpretation of this set. In fact, in section \ref{sec_3} we are going to prove that the set $\Delta^N $ can be identified with the set of \textit{regular cell diagrams} of length $N$ and height $n$, denoted by $\mathfrak{D}(N,n)$. The definition of the regular cell diagram can be found in \ref{def_3.1}. 

The tensor product of spinor representation with any irreducible finite-dimensional representation of $\mathfrak{o}_{2n}(\mathbb{C})$ admits the following decomposition into the sum of irreducible representations without multiplicities:  

$$ L_\lambda \otimes S \cong \bigoplus_{\mu \in P[S] \hspace{1mm} : \hspace{1mm} \lambda + \mu \hspace{1mm}\in \hspace{1mm} P_+} L_{ \lambda + \mu} \hspace{1mm}, $$
where $P[S]$  is the set of weights  of spinor representation and $P_{+}$ is the set of dominant weights of $\mathfrak{o}_{2n}(\mathbb{C})$. 

Spinor representation $S$ is the sum of two irreducible representations $S \cong L_{\omega_+} \oplus L_{\omega_{-}}$, where one of them is spanned by all monomials of odd degree and another by all monomials of even degree. Therefore, from the above formula it immediately follows that $N$-th tensor power of spinor representation admits the following decomposition into the sum of irreducible subrepresentations without multiplicities:$$S^{\otimes N} \cong \bigoplus_{(\mu_1, \mu_2, \ldots, \mu_N) \in T^N} L_{\mu_1 + \mu_2 + \cdots + \mu_N} \hspace{1mm},$$
where $$T^N = \{(\mu_1, \mu_2, \ldots, \mu_N) \in P[S]^N \hspace{1mm}| \hspace{1mm} \mu_1 = \omega_{\pm}, \sum_{i=1}^{k} \mu_i \in P_+ \hspace{1mm}, \forall k \leqslant N   \}.$$

It means that there is a natural way to choose the decomposition of the isotypic components of $S^{\otimes N}$ into the sum of irreducible subrepresentations. We can choose the basis in the multiplicity space $U_\lambda$, which will agree with this decomposition. That is, if we identify $U_\lambda$ with the subspace of singular vectors of weight $\lambda$ in $S^{\otimes N}$, then we can define the \textit{principal basis} in $U_\lambda$, denoted by $\mathcal{PR}(U_\lambda)$, as the basis consisting of the highest weight vectors of the irreducible subrepresentations $L_{\mu_1 + \mu_2 + \ldots + \mu_N}$, such that $(\mu_1, \mu_2, \ldots, \mu_N) \in T^N $ and $ \mu_1 + \mu_2 + \ldots + \mu_N = \lambda$. 

\begin{notation}
$T_\lambda^N := \{(\mu_1, \mu_2, \ldots, \mu_N) \in T^N \hspace{1mm} | \hspace{1mm} \sum_{i = 1}^{N} 
\mu_i = \lambda \}$
\end{notation}

So, the principal basis of $U_\lambda$ is indexed by the set $T_\lambda^N$. It turns out that the set $T_\lambda^N$ has a purely combinatorial interpretation discussed in \ref{subs_cell_tables} as the set of \textit{regular cell tables} of the shape $D_\lambda^N$, denoted by $\mathfrak{Ctab}(D_\lambda^N)$, where $D_\lambda^N$ is a regular cell diagram corresponding to $\lambda \in \Delta^N $.  Therefore, $\mathcal{PR}(U_\lambda)$ is indexed by the set $\mathfrak{Ctab}(D_\lambda^N)$.

\subsection{Howe Duality. Gelfand-Tsetlin basis in multiplicity space.} 

As mentioned earlier, spinor representation as a vector space is isomorphic to the exterior algebra of $n$-dimensional vector space  $S \cong \Lambda E$, $\dim {E} = n$. Then there is an isomorphism of vector spaces $$S^{\otimes N} \cong \Lambda E^{\otimes N} \cong \Lambda (\mathbb{C}^{N} \otimes E)$$
Thus there is an action of the orthogonal group $O_N(\mathbb{C})$ on $S^{\otimes N}$:  it acts on $\mathbb{C}^{N} \otimes E$ by acting on the left factor and this action extends to action on the exterior algebra of this vector space. So, $S^{\otimes N}$ is a finite-dimensional (holomorphic) representation of $O_N(\mathbb{C})$. 

From the theorem $8$ in \cite{Howe} we deduce the following theorem.

\begin{Theorem}
    The action of  $O_N(\mathbb{C})$ on $S^{\otimes N}$ commutes with the action of $\mathfrak{o}_{2n}(\mathbb{C})$ and  $S^{\otimes N}$ under the joint action of $O_N(\mathbb{C})$ and $\mathfrak{o}_{2n}(\mathbb{C})$  decomposes into direct sum of irreducible representations without multiplicities: 
    $$S^{\otimes N} \cong  \bigoplus_{\lambda \in \Delta^N } U_{\lambda} \otimes L_{\lambda} \hspace{1mm},$$ where each $U_\lambda$ is an irreducible (holomorphic) representation of $O_N(\mathbb{C})$ and each $L_\lambda$ is the irreducible representation of the highest weight $\lambda$ of $\mathfrak{o}_{2n}(\mathbb{C})$. Moreover, $U_\lambda$ and $L_\lambda$ uniquely determine each other.  Equivalently speaking, multiplicity spaces $U_\lambda$ for different $\lambda \in \Delta^N $ are non-isomorphic as $O_N(\mathbb{C})$-modules. 
    
\end{Theorem}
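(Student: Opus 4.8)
The approach is to exhibit $(O_{N}(\mathbb{C}),\mathfrak{o}_{2n}(\mathbb{C}))$ as a reductive dual pair on $S^{\otimes N}$ in the exterior-algebra (``fermionic'') realization and then invoke Theorem~8 of \cite{Howe}. \emph{Step 1: identify the two actions and check they commute.} Fix a polarization $\mathbb{C}^{2n}=E\oplus E^{*}$ with the hyperbolic form, so that $S=\Lambda E$ with $E$ acting by exterior multiplication and $E^{*}$ by contraction, and the embedding $\mathfrak{gl}_{n}\subset\mathfrak{o}_{2n}$ sits inside the Clifford quantization. Passing from one tensor factor to $N$ gives the vector-space identification $S^{\otimes N}\cong(\Lambda E)^{\otimes N}\cong\Lambda(E^{\oplus N})\cong\Lambda(\mathbb{C}^{N}\otimes E)$ used in the statement. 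Under it, let $a^{\dagger}_{i\otimes v}$ denote exterior multiplication by $e_{i}\otimes v$ and $a_{i\otimes\varphi}$ contraction by $e_{i}\otimes\varphi$ ($1\le i\le N$, $v\in E$, $\varphi\in E^{*}$); the diagonal (comultiplication) action of $\mathfrak{o}_{2n}(\mathbb{C})$ on $S^{\otimes N}$ is spanned by the ``$O_{N}$-summed'' quadratic operators $\sum_{i}a^{\dagger}_{i\otimes v}a^{\dagger}_{i\otimes w}$, $\sum_{i}a^{\dagger}_{i\otimes v}a_{i\otimes\varphi}$ (up to normal ordering) and $\sum_{i}a_{i\otimes\varphi}a_{i\otimes\psi}$, which span a copy of $\mathfrak{o}_{2n}=\Lambda^{2}E\oplus\mathfrak{gl}_{n}\oplus\Lambda^{2}E^{*}$. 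Since $O_{N}(\mathbb{C})$ acts on $\mathbb{C}^{N}\otimes E$ honestly by $g\otimes\mathrm{id}_{E}$ (no spin cover is needed, because $\mathbb{C}^{N}\otimes E$ is already a polarization) and $g^{-1}=g^{t}$ for $g\in O_{N}$, a one-line computation shows each such operator is $O_{N}$-invariant; this proves the commutation assertion. The holomorphy of the $O_{N}(\mathbb{C})$-action, and hence of each $U_{\lambda}$, is automatic since the action is polynomial in the matrix entries.

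\emph{Step 2: mutual commutant.} Using $(\Lambda V)^{*}\cong\Lambda(V^{*})$ we get $\mathrm{End}(S^{\otimes N})\cong S^{\otimes N}\otimes(S^{\otimes N})^{*}\cong\Lambda\bigl(\mathbb{C}^{N}\otimes(E\oplus E^{*})\bigr)=\Lambda(\mathbb{C}^{N}\otimes\mathbb{C}^{2n})$, and by the first fundamental theorem for $O_{N}(\mathbb{C})$ acting on this exterior algebra, the $O_{N}(\mathbb{C})$-invariants form the subalgebra generated in degrees $\le 2$ by the $O_{N}$-contractions $\sum_{i}(e_{i}\otimes u)\wedge(e_{i}\otimes u')$. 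Transported back, these are exactly the generators of the image of $U(\mathfrak{o}_{2n}(\mathbb{C}))$ from Step~1. Hence the subalgebras of $\mathrm{End}(S^{\otimes N})$ generated by $O_{N}(\mathbb{C})$ and by $\mathfrak{o}_{2n}(\mathbb{C})$ are mutual commutants — i.e. $(O_{N}(\mathbb{C}),\mathfrak{o}_{2n}(\mathbb{C}))$ is a reductive dual pair on $S^{\otimes N}$ — which is the content we take from Theorem~8 of \cite{Howe}. \emph{Step 3: conclude.} Both $O_{N}(\mathbb{C})$ (reductive) and $\mathfrak{o}_{2n}(\mathbb{C})$ (semisimple, as $n>1$) act semisimply, so the double commutant theorem applied to a mutually commutant pair of semisimple subalgebras yields a multiplicity-free decomposition $S^{\otimes N}\cong\bigoplus_{\lambda\in\Delta^{N}}U_{\lambda}\otimes L_{\lambda}$ with each $L_{\lambda}$ an irreducible $\mathfrak{o}_{2n}(\mathbb{C})$-module, each $U_{\lambda}=\mathrm{Hom}_{\mathfrak{o}_{2n}}(L_{\lambda},S^{\otimes N})$ an irreducible $O_{N}(\mathbb{C})$-module, and $\lambda\mapsto(U_{\lambda},L_{\lambda})$ a bijection between $\Delta^{N}$ and the set of summands. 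A highest-weight computation identifies $L_{\lambda}$ with the irreducible of highest weight $\lambda$; since distinct dominant weights give non-isomorphic $L_{\lambda}$, the map $\lambda\mapsto U_{\lambda}$ is injective, and by the dual-pair formalism $\mathrm{Hom}_{O_{N}}(U_{\lambda},S^{\otimes N})$ is an irreducible $\mathfrak{o}_{2n}$-module, so a given irreducible $O_{N}(\mathbb{C})$-module occurs for only one $\lambda$; this is the ``uniquely determine each other'' clause.

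The genuinely substantive input is the first fundamental theorem / mutual-commutant statement in Step~2, which I would cite from \cite{Howe} rather than reprove; that is the step I expect to be the main obstacle in a self-contained treatment. Steps~1 and~3 are bookkeeping: matching the comultiplication action on $S^{\otimes N}$ with the fermionic dual-pair action on $\Lambda(\mathbb{C}^{N}\otimes E)$, checking $O_{N}$-invariance, and invoking the standard dual-pair formalism. An alternative to Step~2 is to start from classical skew $(GL_{N},GL_{n})$-duality on $\Lambda(\mathbb{C}^{N}\otimes E)$, restrict to $O_{N}$ and extend $\mathfrak{gl}_{n}$ to $\mathfrak{o}_{2n}$, but one must then separately verify that the isotypic pieces regroup without multiplicities, so the direct dual-pair argument above is cleaner.
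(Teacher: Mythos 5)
Your proposal is correct and takes essentially the same route as the paper: both reduce the substantive content (mutual-commutant structure and the resulting multiplicity-free decomposition) to Theorem~8 of \cite{Howe}, with the remaining work being bookkeeping that identifies the fermionic realization on $\Lambda(\mathbb{C}^N\otimes E)$ with the diagonal $\mathfrak{o}_{2n}$-action on $(\Lambda E)^{\otimes N}$ and the polynomial $O_N$-action, exactly as the paper does in Propositions 4.1--4.4 and Corollary 4.2. One small caveat on your Step~2: the identification $\mathrm{End}(S^{\otimes N})\cong\Lambda(\mathbb{C}^N\otimes\mathbb{C}^{2n})$ is only a filtration-preserving vector-space isomorphism (the Clifford symbol map), not an algebra isomorphism, so transporting the FFT across it requires a filtered/graded argument — but since you defer that step to \cite{Howe} in any case, this does not create a gap.
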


The space $L_\lambda$ can be seen as the multiplicity space of irreducible subrepresentation $U_\lambda$ in $S^{\otimes N}$ (here we look at $S^{\otimes N }$ as a representation of $O_N(\mathbb{C}))$. As stated in the theorem,  $U_\lambda$ is an irreducible finite-dimensional (holomorphic) representation of $O_N(\mathbb{C})$. From the theorem $19.19$ in \cite{Fulton} such representations are indexed by Young diagrams in which the sum of the lengths of the first two columns does not exceed $N$.  We refer to them as \textit{short Young diagrams} of height $N$. We denote by $SYD(N,n)$ the set of short Young diagrams of height $N$ that have at most $n$ columns. In the section \ref{sec_5} we will prove the following theorem. 

\begin{Theorem}
 The map $$\mathcal{F}: \mathfrak{D}(N,n) \rightarrow SYD(N,n)$$ defined in \ref{th_5.1} is a bijection between the set of regular cell tables and the set short Young diagrams. Moreover, for each $\lambda \in \Delta^N $, the multiplicity space $U_\lambda$ as a representation of the group $O_N$ is isomorphic to the irreducible representation $R_{\nu}$, where $$\nu = \mathcal{F}(D_\lambda^N).$$
\end{Theorem}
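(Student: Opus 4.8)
The statement has two independent halves: that $\mathcal F$ is a bijection between the indexing sets $\mathfrak D(N,n)$ and $SYD(N,n)$, and that $\mathcal F$ realizes the $O_N$-side of the Howe correspondence, namely $U_\lambda\cong R_{\mathcal F(D_\lambda^N)}$ for all $\lambda\in\Delta^N$. I would settle the combinatorial half first. After unwinding the definition of $\mathcal F$ from \ref{th_5.1}, one checks: (i) that $\mathcal F(D)$ always has at most $n$ columns and that its first two column lengths sum to at most $N$, so that it indeed lies in $SYD(N,n)$, tracing each inequality back to the defining constraints on a regular cell diagram; (ii) that the obvious candidate inverse $\mathcal G\colon SYD(N,n)\to\mathfrak D(N,n)$, reading a short Young diagram back off as a cell diagram, is well defined; and (iii) that $\mathcal G\circ\mathcal F=\mathrm{id}$ and $\mathcal F\circ\mathcal G=\mathrm{id}$, verified column by column. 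This is essentially bookkeeping, with one delicate point: the last coordinate $\lambda_n$ of a dominant weight of $\mathfrak o_{2n}$ may be negative, so the associated cell diagram carries a sign that must be matched correctly against the column lengths near $N$ on the Young-diagram side, and the regularity hypothesis on cell diagrams is exactly what keeps this a genuine bijection rather than a $2{:}1$ or $1{:}2$ correspondence.

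For the representation-theoretic half I would use the vector-space identification $S^{\otimes N}\cong\Lambda(\mathbb C^N\otimes E)$ and a seesaw for the two commuting pairs $(O_N,\mathfrak o_{2n})$ and $(GL_N,GL_n)$ acting on this space, where $O_N\subset GL_N$ acts through the $\mathbb C^N$ factor, $GL_n=GL(E)$ acts through $E$, and $\mathfrak{gl}_n\subset\mathfrak o_{2n}$ is the Levi subalgebra whose restriction of the spinor module is $\bigoplus_{k=0}^n\Lambda^k E$ (up to a twist by a power of the determinant). The $GL_N\times GL_n$ decomposition of $\Lambda(\mathbb C^N\otimes E)$ is the skew Cauchy identity $\bigoplus_{\mu\subseteq(n^N)}V^{GL_N}_\mu\otimes V^{GL_n}_{\mu'}$, while the $O_N\times\mathfrak o_{2n}$ decomposition is the Howe decomposition $\bigoplus_{\lambda\in\Delta^N}U_\lambda\otimes L_\lambda$. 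Restricting both to $O_N\times\mathfrak{gl}_n$ and comparing multiplicities — using that the $U_\lambda$ are pairwise non-isomorphic — yields, for every partition $\kappa$, the reciprocity $[\,V^{GL_N}_{\kappa'}|_{O_N}:U_\lambda\,]=[\,L_\lambda|_{\mathfrak{gl}_n}:V^{GL_n}_\kappa\,]$. The right-hand side is classical: the constituents of $L_\lambda|_{\mathfrak{gl}_n}$ are certain $V^{GL_n}_\mu$ with $\mu\subseteq\lambda$, among which $V^{GL_n}_\lambda$ occurs exactly once and is the top one (at least when $\lambda_n\ge0$; the case $\lambda_n<0$ is handled by a determinant twist). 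Isolating this term and combining with Littlewood's rule for restricting $GL_N$-modules to $O_N$ pins down the unique short diagram $\nu$ with $U_\lambda\cong R_\nu$, after which one checks directly that $\nu$ agrees with the output of the recipe in \ref{th_5.1}. A second route, closer to the spirit of the rest of the paper, is to build from the combinatorial data of $D_\lambda^N$ an explicit vector in $S^{\otimes N}$ that is simultaneously $\mathfrak o_{2n}$-singular of weight $\lambda$ and $O_N$-extremal, and to read its $O_N$-highest weight off directly.

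The main obstacle is that Littlewood's restriction rule for $GL_N$-modules to $O_N$ is valid only in the stable range — precisely when the Young diagram involved is short — and must be corrected by King's modification rules outside it, whereas the partitions $\mu\subseteq(n^N)$ entering the Cauchy decomposition are in general far from short. So the crux is to show that, once the (often cancelling) contributions of the non-stable $\mu$ are accounted for, the $O_N$-irreducibles appearing in $S^{\otimes N}$ with nonzero multiplicity are exactly the $R_\nu$ with $\nu$ short and having at most $n$ columns, each occurring with multiplicity $\dim L_\lambda$ for the corresponding $\lambda$; equivalently, that the regularity condition on cell diagrams is the precise combinatorial shadow of this stabilization. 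Along the way the dimension identity $\sum_{\lambda\in\Delta^N}\dim U_\lambda\cdot\dim L_\lambda=2^{nN}$, and more sharply the equalities $\dim R_{\mathcal F(D_\lambda^N)}=\dim U_\lambda$ from Weyl's dimension formula, are useful consistency checks; if one prefers the explicit-vector route, the same difficulty reappears as the problem of writing down, uniformly in $\lambda$, a joint highest-weight vector whose $O_N$-weight can be evaluated in closed form.
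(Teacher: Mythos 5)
Your argument for the representation-theoretic half takes a genuinely different route from the paper, but the route you actually commit to has a gap that you identify yourself without resolving. The paper does not pass through the $(GL_N,GL_n)$ Cauchy decomposition and the Littlewood restriction rule at all. Instead it proves Theorem~\ref{th_5.2} by writing down, for each $\lambda\in\Delta^N$, an explicit monomial
\[
\xi_\lambda=\Lambda_{i=1}^n\bigl(\Lambda_{k=1}^{\min(r_i,d)}(v_k\otimes e_i)\wedge\Lambda_{k=1}^{\max(0,r_i-d)}(v_{N-k+1}\otimes e_i)\bigr)\in\Lambda W,
\]
verifies by hand that it is a joint $\mathfrak o(V)\oplus\mathfrak o(\tilde E)$-highest-weight vector with right part $\lambda$, and reads off the left part $\kappa(\lambda)$. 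This identifies $U_\lambda$ as an $\mathfrak o_N$-module; but that alone only determines $U_\lambda$ as an $O_N$-module up to the ambiguity between $R_\nu$ and $R_{\nu^\dagger}$ (Proposition~\ref{prop_5.2}). The paper then removes this ambiguity by evaluating the action of a concrete element of $O_N\setminus SO_N$ (for $N$ odd the element $-\mathrm{Id}$, for $N$ even the reflection $g_d$ swapping $v_d$ and $v_{2d}$) on the same vector $\xi_\lambda$, and invoking Facts~\ref{f_5.1} and~\ref{f_5.2}. Your ``second route, closer to the spirit of the rest of the paper'' is precisely this, but you compress it to ``read its $O_N$-highest weight off directly'', which hides the step that actually carries the content: the $\mathfrak o_N$-weight of an extremal vector does \emph{not} determine the $O_N$-module; the sign of the action of a reflection does, and this is where the case analysis on the parities of $n$ and $N$, and on the sign of $\lambda_n$, enters and produces the different formulas for $\mathcal F$ in the even and odd cases.

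As for your primary route: the seesaw $(GL_N\times GL_n)\,\|\,(O_N\times\mathfrak o_{2n})$ with the skew Cauchy identity and Littlewood's rule is a legitimate and conceptually attractive alternative, but as you correctly note, the partitions $\mu\subseteq(n^N)$ occurring in $\Lambda(\mathbb C^N\otimes E)$ are generally far outside the stable range, so one would have to run King's modification rules (including the signs and the ``null'' diagrams) and show that everything telescopes to exactly the short diagrams with at most $n$ columns, each with multiplicity $\dim L_\lambda$. You flag this as ``the crux'' and then leave it as an assertion; nothing in the proposal demonstrates the cancellation. Until that is carried out, this route does not constitute a proof. The paper side-steps the entire issue: it never decomposes the $O_N$-module $\Lambda W$ via $GL_N$, it locates each $U_\lambda$ directly inside $\Lambda W$ as the span of $O_N$-translates of a single explicit vector whose extremal weight and reflection sign are computed in closed form. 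This is why the regularity conditions on cell diagrams appear cleanly there: they are built into the inequalities $r_1\ge\cdots\ge r_n$, $r_{n-1}\ge l_n$ that make $\xi_\lambda$ singular, not recovered a posteriori from a cancellation of $GL$-branching data.

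One small remark on the combinatorial half: you propose to prove bijectivity directly by exhibiting a two-sided inverse $\mathcal G$ on the level of diagrams. That is fine. The paper actually proves injectivity of $\mathcal F$ for free from the pairwise non-isomorphism of the $U_\lambda$ (Corollary~\ref{cor_4.3}), and only checks surjectivity by writing a formula for $\lambda$ in terms of $\nu$; so be aware that the paper's bijectivity argument is entangled with the representation theory, whereas yours is purely combinatorial. Either works, but they are not the same proof.
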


Similarly to a semi-standard Young table, one can define a semi-standard short Young table. The precise definition is given in \ref{def_6.3}. 

We denote the set of semi-standard Young tables of the shape $\nu$ by $SSSYT(\nu)$. 

One can consider the Gelfand-Tsetlin basis inside the multiplicity space $U_\lambda$
with respect to the chain of nested orthogonal groups
$$O_N \supset O_{N-1} \supset \ldots \supset O_2 \supset O_1,$$
denoted by $GT(U_\lambda)$. 

It is easy to see that $GT(U_\lambda)$ is naturally indexed by the set $SSSYT(\mathcal{F}(D_\lambda^N))$.  We are going to prove the following proposition. 

\begin{Proposition}
    Gelfand-Tsetlin basis $GT(U_\lambda)$ up to rescaling is equal to the principal basis $\mathcal{PR}(U_\lambda)$ 
\end{Proposition}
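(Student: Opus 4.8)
The plan is to prove that both $GT(U_\lambda)$ and $\mathcal{PR}(U_\lambda)$ are bases \emph{adapted to the chain} $O_N\supset O_{N-1}\supset\cdots\supset O_1$ in the same way, and then to invoke the fact that a chain-adapted basis is unique up to rescaling. Uniqueness holds because each branching $O_k\downarrow O_{k-1}$ is multiplicity free (the classical fact underlying Gelfand--Tsetlin theory for the orthogonal groups), so the isotypic decomposition of any restriction is canonical, and iterating this refinement down to the one-dimensional irreducibles of $O_1$ writes $U_\lambda$ canonically as a direct sum of lines. The Gelfand--Tsetlin basis is adapted to the chain by construction, so the work is to show that $\mathcal{PR}(U_\lambda)$ is as well, with its vectors labelled by the expected sequence of intermediate isotypic components.

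First I would make the chain act through the tensor factors. Writing $\mathbb{C}^N=\mathbb{C}^{N-1}\oplus\mathbb{C}e_N$ with $O_{N-1}$ fixing $e_N$, one gets $S^{\otimes N}\cong S^{\otimes(N-1)}\otimes S$ as $O_{N-1}\times\mathfrak{o}_{2n}$-modules, where $O_{N-1}$ acts on the first $N-1$ factors only and $\mathfrak{o}_{2n}$ acts diagonally. Substituting the Howe-duality decomposition $S^{\otimes(N-1)}\cong\bigoplus_\mu U^{(N-1)}_\mu\otimes L_\mu$ (I write $U^{(k)}_\mu$ for the multiplicity spaces attached to $S^{\otimes k}$) and using that each $L_\mu\otimes S$ is multiplicity free, I read off the branching $U_\lambda|_{O_{N-1}}\cong\bigoplus_\mu U^{(N-1)}_\mu$, the sum running over those $\mu$ for which $L_\lambda$ occurs in $L_\mu\otimes S$; this also recovers the multiplicity-freeness of the $O_N\downarrow O_{N-1}$ branching of $U_\lambda$, since the $U^{(N-1)}_\mu$ are pairwise non-isomorphic by Howe duality for $N-1$. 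Taking $\mathfrak{o}_{2n}$-singular vectors of weight $\lambda$ on both sides, the $O_{N-1}$-isotypic component of $U_\lambda$ labelled by $\mu$ is identified with $U^{(N-1)}_\mu\otimes\mathbb{C}\,u_{\mu,\lambda-\mu}$, where $u_{\mu,\nu}$ denotes the highest weight vector of the unique copy of $L_{\mu+\nu}$ inside $L_\mu\otimes S$.

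Next I would line this up with the recursive construction of $\mathcal{PR}(U_\lambda)$. By definition the copy of $L_\lambda$ indexed by $(\mu_1,\dots,\mu_N)\in T^N_\lambda$ is the $L_\lambda$-summand of $L_\mu^{(\mu_1,\dots,\mu_{N-1})}\otimes S$, where $L_\mu^{(\mu_1,\dots,\mu_{N-1})}\subset S^{\otimes(N-1)}$ is the copy of $L_\mu$ indexed by the truncated path and $\mu=\mu_1+\dots+\mu_{N-1}$. Under the Howe isomorphism for $S^{\otimes(N-1)}$ this submodule corresponds to $\mathbb{C}v_{(\mu_1,\dots,\mu_{N-1})}\otimes L_\mu$, with $v_{(\mu_1,\dots,\mu_{N-1})}$ the corresponding principal basis vector of $U^{(N-1)}_\mu$; hence the highest weight vector representing $v_{(\mu_1,\dots,\mu_N)}\in U_\lambda$ is carried to $v_{(\mu_1,\dots,\mu_{N-1})}\otimes u_{\mu,\mu_N}$. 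In other words $v_{(\mu_1,\dots,\mu_N)}$ lies in the $O_{N-1}$-isotypic component of $U_\lambda$ labelled by $\mu$, and inside it (identified with $U^{(N-1)}_\mu$) it is a scalar multiple of $v_{(\mu_1,\dots,\mu_{N-1})}$. Repeating this with $S^{\otimes N}\cong S^{\otimes k}\otimes S^{\otimes(N-k)}$ for every $k=N-1,\dots,1$ — compatibly, because $T^{k}$ is the image of $T^{k+1}$ under truncation, mirroring the nesting of the isotypic subspaces — shows that $\mathcal{PR}(U_\lambda)$ is adapted to the full chain, with the line $\mathbb{C}v_{(\mu_1,\dots,\mu_N)}$ recorded by the sequence of intermediate $O_k$-types $U^{(k)}_{\mu_1+\dots+\mu_k}$. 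Since $GT(U_\lambda)$ is the chain-adapted basis for the same chain, uniqueness up to rescaling finishes the proof; along the way the induced bijection of index sets sends a regular cell table $(\mu_1,\dots,\mu_N)$ to the Gelfand--Tsetlin pattern whose $k$-th row encodes $\mathcal{F}(D_{\mu_1+\dots+\mu_k}^k)$, which is the bijection already in play in the paper.

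The main obstacle is the compatibility claim in the third step: one must verify that passing from $S^{\otimes(N-1)}$ to $S^{\otimes N}$ induces the $O_N\downarrow O_{N-1}$ branching of $U_\lambda$ \emph{together with} the principal decompositions on both sides, i.e.\ that the principal highest weight vector of a path factors as (principal vector of the truncated path)\,$\otimes\,u_{\mu,\mu_N}$. This reduces to correctly identifying the submodule $L_\mu^{(\mu_1,\dots,\mu_{N-1})}$ with $\mathbb{C}v_{(\mu_1,\dots,\mu_{N-1})}\otimes L_\mu$ under the Howe isomorphism and to the elementary observation that $O_{N-1}$ acts trivially on the last spinor factor; once these are in place the rest is formal. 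A secondary point to check is that the identification of index sets produced this way agrees with the combinatorial bijection between (the relevant subset of) semi-standard short Young tableaux and regular cell tables established earlier, so that the equality ``$GT(U_\lambda)=\mathcal{PR}(U_\lambda)$ up to rescaling'' is compatible with the chosen indexings.
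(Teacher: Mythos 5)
Your argument is correct and follows the same line the paper develops: the identification $S^{\otimes N}\cong S^{\otimes(N-1)}\otimes S$ as a module over $O_{N-1}\times\mathfrak{o}_{2n}$, combined with Howe duality for $S^{\otimes(N-1)}$, gives the multiplicity-free branching $U_\lambda|_{O_{N-1}}\cong\bigoplus_\mu U^{(N-1)}_\mu$ (the paper's Proposition 6.2), which is then iterated down the chain (Corollary 6.2). The paper states Proposition 6.3 without a separate proof, relying implicitly on that iteration; your write-up supplies exactly the verification the paper omits — that the principal highest-weight vector of a path factors through the corresponding intermediate isotypic components — so uniqueness of a chain-adapted basis for a multiplicity-free chain forces the claimed equality up to rescaling.
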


Hence, the following corollary. 

\begin{Corollary}
There is a natural bijection defined in \ref{cor_6.4} between sets $\mathfrak{Ctab}(D_\lambda^N)$ and $SSSYT(\mathcal{F}(D_\lambda^N))$. It is given by the formula \ref{eq_6.10}.
\end{Corollary}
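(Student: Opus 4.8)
The plan is to deduce the bijection from the Proposition together with the two combinatorial parametrisations already in hand, and then to identify it with the explicit map of formula~\ref{eq_6.10}. By construction $\mathcal{PR}(U_\lambda)$ is indexed by $T_\lambda^N\cong\mathfrak{Ctab}(D_\lambda^N)$: a tuple $(\mu_1,\dots,\mu_N)$ names the highest-weight vector of the summand $L_{\mu_1+\cdots+\mu_N}\subset S^{\otimes N}$. On the other hand $GT(U_\lambda)$, the Gelfand--Tsetlin basis for $O_N\supset\cdots\supset O_1$, is by the orthogonal branching theory indexed by $SSSYT(\mathcal{F}(D_\lambda^N))$: a semi-standard short Young table of shape $\nu=\mathcal{F}(D_\lambda^N)$ is the same datum as a chain $\nu=\nu^{(N)}\supset\nu^{(N-1)}\supset\cdots\supset\nu^{(1)}$ of short Young diagrams satisfying the $O_k\downarrow O_{k-1}$ interlacing conditions, this chain recording the isotypic filtration of the chosen vector. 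Since the Proposition asserts $\mathcal{PR}(U_\lambda)=GT(U_\lambda)$ up to rescaling, the identity map of $U_\lambda$ sends each principal vector to a nonzero multiple of a unique Gelfand--Tsetlin vector; matching indices gives the desired bijection $\mathfrak{Ctab}(D_\lambda^N)\xrightarrow{\ \sim\ }SSSYT(\mathcal{F}(D_\lambda^N))$.

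Next I would write this map down and check it is formula~\ref{eq_6.10}. Given $(\mu_1,\dots,\mu_N)\in T_\lambda^N$, form the partial sums $\lambda^{(k)}:=\mu_1+\cdots+\mu_k$; each lies in $\Delta^k$ by the definition of $T^N$, hence determines a regular cell diagram $D^k_{\lambda^{(k)}}$, to which I apply the bijection of the preceding Theorem at level $k$ to obtain $\nu^{(k)}:=\mathcal{F}(D^k_{\lambda^{(k)}})\in SYD(k,n)$. The claim is that $(\mu_1,\dots,\mu_N)\mapsto(\nu^{(1)}\subset\cdots\subset\nu^{(N)})$ is precisely the bijection obtained above. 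For this I use that both filtrations are compatible with restriction: $O_{N-1}\subset O_N$ acts on $S^{\otimes N}=S^{\otimes(N-1)}\otimes S$ through the first $N-1$ factors, the principal vector attached to $(\mu_1,\dots,\mu_N)$ is produced from the one attached to $(\mu_1,\dots,\mu_{N-1})$ by the fixed rule defining $T^N$ (tensoring with the weight-$\mu_N$ line of $S$ and projecting to the $L_{\lambda^{(N)}}$-component), and under the isomorphism of the second Theorem the $O_{N-1}$-type of the $L_{\lambda^{(N-1)}}$-layer is exactly $R_{\nu^{(N-1)}}$. Iterating down the chain identifies the Gelfand--Tsetlin vector equal, up to scalar, to the principal vector of $(\mu_i)$ with the one labelled by $(\nu^{(k)})_k$, which is formula~\ref{eq_6.10}.

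Finally I would verify well-definedness and bijectivity as a self-contained combinatorial fact. Well-definedness means the chain $(\nu^{(k)})$ always satisfies the interlacing conditions defining $SSSYT(\mathcal{F}(D_\lambda^N))$ --- this is forced by the representation-theoretic interpretation, but I would also record the direct check by tracking how one step $\mu_k\in\{\omega_+,\omega_-\}$ changes the short Young diagram under $\mathcal{F}$ and comparing with the $O$-branching rule. Injectivity is immediate, since applying the level-$k$ inverse of $\mathcal{F}$ and differencing recovers the $\mu_i$ from the chain. Surjectivity follows because any interlacing chain of short Young diagrams pulls back, through these inverses, to a sequence of $\Delta^k$-weights whose successive differences are automatically weights $\omega_\pm$ of $S$ and whose partial sums are dominant, i.e.\ an element of $T_\lambda^N$.

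The step I expect to be the main obstacle is the compatibility check: that ``apply $\mathcal{F}$ level by level'' agrees with the orthogonal branching $O_k\downarrow O_{k-1}$ under the dictionary of the second Theorem. Concretely one must show that the single-box moves coming from the even versus odd spinor components, $\mu_k=\omega_+$ versus $\mu_k=\omega_-$, translate after applying $\mathcal{F}$ into exactly the diagram modifications allowed by the $SSSYT$ interlacing conditions. This forces one to combine the explicit description of $\mathcal{F}$ with the explicit $T^N$-constraint; once that bookkeeping is in place, bijectivity and the formula~\ref{eq_6.10} fall out.
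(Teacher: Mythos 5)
Your proposal is correct and follows essentially the same route as the paper: the paper also derives the bijection from the coincidence (up to rescaling) of the principal basis with the $O_N\supset\cdots\supset O_1$ Gelfand--Tsetlin basis, indexes the former by $\mathfrak{Ctab}(D_\lambda^N)$ and the latter by $SSSYT(\nu)$ via the two branching rules, and writes the resulting map as level-by-level application of $\mathcal{F}$. The compatibility step you flag as the main obstacle is exactly what the paper isolates as its Corollary \ref{cor_6.3}, proved by the chain of $O_{N-1}$-module isomorphisms $R_\nu\cong U_\lambda\cong\bigoplus U_\mu\cong\bigoplus R_{\mathcal{F}(\mathcal{K}_{N-1}(\mu))}$.
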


One can view $U_{\lambda}$ as a representation of Lie algebra $\mathfrak{o}_N(\mathbb{C})$.  Depending on $N$ and $\lambda$, $U_{\lambda}$ is either a simple $\mathfrak{o}_N(\mathbb{C})$-module or is the sum of two simple non-isomorphic $\mathfrak{o}_N(\mathbb{C})$ modules (their highest weights are uniquely determined by $N$ and $\lambda$). Either way, it makes sense to consider the Gelfand-Tsetlin basis of $U_{\lambda}$ with respect to the chain of nested orthogonal Lie algebras $$\mathfrak{o}_N \supset \mathfrak{o}_{N-1} \supset \ldots \supset \mathfrak{o}_{3},$$denoted by $\mathfrak{GT}(U_\lambda)$. 
This basis is indexed by the set of Gelfand-Tsetlin patterns with the first line determined by $\nu = \mathcal{F}(D_\lambda^N)$. We denote this set of Gelfand-Tsetlin patterns by $\mathfrak{GTP}(\nu)$.

Therefore, we obtain two bases in the multiplicity space $U_\lambda$: $\mathcal{PR}(U_\lambda) = GT(U_\lambda)$ and $\mathfrak{GT}(U_\lambda)$. These bases do not coincide, but in \ref{subs_bij} we construct a natural bijection between them; see \ref{prop_6.4}. Bijection between bases gives us a bijection between indexing sets.  

\begin{Proposition}
For each $\lambda \in \Delta^N $ there is a natural bijection between the set of Gelfand-Tsetlin patterns $\mathfrak{GTP}(\nu)$, where $\nu = \mathcal{F}(D_\lambda^N)$, and the set of regular cell tables  $\mathfrak{Ctab}(D_\lambda^N)$, given by the composition of the bijections constructed in \ref{cor_6.4}, \ref{cor_6.5}
\end{Proposition}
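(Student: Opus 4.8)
The plan is to obtain the asserted correspondence simply by composing the two bijections already produced, so that the proof reduces to checking compatibility of domains and codomains and to verifying that the composite is the ``natural'' map. I would first recall the ingredients with their sources and targets. By \ref{cor_6.4} there is an explicit bijection $\mathfrak{Ctab}(D_\lambda^N)\to SSSYT\bigl(\mathcal{F}(D_\lambda^N)\bigr)$, written out in formula \ref{eq_6.10}; it is the map of indexing sets induced by the coincidence (up to rescaling of basis vectors, which does not affect labels) of the principal basis $\mathcal{PR}(U_\lambda)$ with the Gelfand--Tsetlin basis $GT(U_\lambda)$ for the group chain $O_N\supset\cdots\supset O_1$. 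By \ref{cor_6.5} there is a bijection $SSSYT\bigl(\mathcal{F}(D_\lambda^N)\bigr)\to\mathfrak{GTP}\bigl(\mathcal{F}(D_\lambda^N)\bigr)$, the map of indexing sets induced by the basis bijection of \ref{prop_6.4} between $GT(U_\lambda)=\mathcal{PR}(U_\lambda)$ and the Gelfand--Tsetlin basis $\mathfrak{GT}(U_\lambda)$ for the Lie-algebra chain $\mathfrak{o}_N\supset\cdots\supset\mathfrak{o}_3$. Since the target of the first and the source of the second are the same set, namely $SSSYT(\nu)$ with $\nu=\mathcal{F}(D_\lambda^N)$, the composite
$$\mathfrak{Ctab}(D_\lambda^N)\ \xrightarrow{\ \ref{cor_6.4}\ }\ SSSYT(\nu)\ \xrightarrow{\ \ref{cor_6.5}\ }\ \mathfrak{GTP}(\nu)$$
is well defined, and being a composition of bijections it is a bijection. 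This is the map claimed in the Proposition.

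To justify the adjective ``natural'' I would track a single principal basis vector through the underlying identifications of bases. A vector of $\mathcal{PR}(U_\lambda)$ labelled by $C\in\mathfrak{Ctab}(D_\lambda^N)$ equals, up to the rescaling identifying $\mathcal{PR}(U_\lambda)$ with $GT(U_\lambda)$, the $GT(U_\lambda)$-vector whose label is the image of $C$ under \ref{cor_6.4}; the basis bijection of \ref{prop_6.4} then carries this $GT(U_\lambda)$-vector to the $\mathfrak{GT}(U_\lambda)$-vector whose label in $\mathfrak{GTP}(\nu)$ is, by \ref{cor_6.5}, the further image of that label. Hence the principal vector indexed by $C$ corresponds to the Gelfand--Tsetlin vector indexed by the composite of \ref{cor_6.4} and \ref{cor_6.5} applied to $C$, exactly as asserted; the scalar discrepancy is immaterial, since rescaling a vector does not change its index.

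The only point requiring care — and the closest thing to an obstacle in an otherwise formal argument — is the passage from the orthogonal group $O_N$ to the orthogonal Lie algebra $\mathfrak{o}_N$, where $U_\lambda$ may cease to be irreducible and split into two non-isomorphic simple $\mathfrak{o}_N$-modules. One must check that $\mathfrak{GTP}(\nu)$ and the bijection \ref{cor_6.5} already account for both constituents, so that the composite surjects onto the whole pattern set rather than onto the block of a single summand. This is ensured by the definition of $\mathfrak{GTP}(\nu)$ as the set of all Gelfand--Tsetlin patterns with first line $\nu$ (which includes the patterns of both constituents), together with the fact that \ref{cor_6.5} descends from a bijection of full bases of $U_\lambda$; thus no obstruction actually arises, and the remaining verifications are the routine manipulation of the explicit formula \ref{eq_6.10} and of the description of the map \ref{cor_6.5}.
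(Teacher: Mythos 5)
Your proof is correct and follows exactly the route the paper intends: the proposition in question is simply the composition of the bijection $\mathcal{Y}_\lambda:\mathfrak{Ctab}(D_\lambda^N)\to SSSYT(\nu)$ from \ref{cor_6.4} with $\mathcal{\bar J}_\nu:SSSYT(\nu)\to\mathfrak{GTP}(\nu)$ from \ref{cor_6.5}, and you correctly identify the one point needing care, namely that $\mathfrak{GTP}(\nu)$ and $\mathcal{\bar J}_\nu$ already account for the two $\mathfrak{o}_N$-irreducible constituents when $U_\lambda$ is reducible as a Lie-algebra module.
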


 \subsection{Cactus group and its action}

Let $C_N$ be a group with generators $s_{p, q}$, $1 \leqslant p < q \leqslant N$ and the defining relations 
$$s_{p,q}^{2} = e;$$
$$s_{p_1, q_1} s_{p_2, q_2} = s_{p_2, q_2} s_{p_1, q_1}, \hspace{1mm} \text{if} \hspace{2mm} q_1 < p_2;$$
$$s_{p_1, q_1} s_{p_2, q_2} s_{p_1, q_1} = s_{p_1 + q_1 - q_2, p_1 + q_1 - p_2}, \hspace{1mm} \text{if} \hspace{2mm} p_1 \leqslant p_2 < q_2 \leqslant q_1.$$
 
 There is an epimorphism $\pi : C_N \rightarrow S_N$ which takes generator $s_{p, q}$ to the involution reversing the segment $\{p, \ldots, q\} \subset \{1, \ldots, N\}$. Let $PC_N$ be the kernel of $\pi$. In the work of Herniques and Kamnitzer \cite{Herniques_Kamnitzer} the groups $C_N$ and $PC_N$ were named \textit{cactus group} and \textit{pure cactus group} respectively. 

Henriques and Kamnitzer observed that these groups naturally appear in \textit{coboundary categories}. Coboundary category is a monoidal category with a functorial involutive isomorphism $s_{X,Y}: X \otimes Y  \rightarrow  Y \otimes X$, called \textit{commutor}, satisfying certain natural relations. 

In this paper, we will work with the coboundary category of $\mathfrak{g}$-crystals. Roughly, a crystal is a directed graph where the edges are labeled by simple roots of $\mathfrak{g}$ and the vertices are labeled by weights of $\mathfrak{g}$. The tensor product of two crystals is a crystal whose underlying set is the product of the underlying sets of multipliers. 

For our purposes, we can assume that $\mathfrak{g}$ is a finite-dimensional semisimple Lie algebra. We are only interested in a special case $\mathfrak{g} = \mathfrak{o}_{2n}(\mathbb{C})$. Commutor for finite-dimensional reductive lie algebra $\mathfrak{g}$ was defined by Herniques and Kamnitzer in \cite{Herniques_Kamnitzer}. 

Category of $\mathfrak{g}$-crystals is semisimple. Simple objects in the category of $\mathfrak{g}$-crystals are indexed by the set of dominant weights of $\mathfrak{g}$ just like finite-dimensional irreducible representations of $\mathfrak{g}$. Informally, one can say that in a sense each crystal arises from some finite-dimensional representation of $\mathfrak{g}$.

The decomposition of the tensor product of $\mathfrak{g}$-crystals into the sum of simple crystals agrees with the decomposition of the tensor product of respective $\mathfrak{g}$-modules into the sum of simple modules. 

Now set $\mathfrak{g}$ to be equal to $\mathfrak{o}_{2n}(\mathbb{C})$. Let $\mathcal{B_{S}}$ be the crystal that arises from spinor representation. If we consider tensor degree of this crystal $\mathcal{{B}_{S}}^{\otimes N}$  we get the following decomposition:  $$\mathcal{{B}_{S}}^{\otimes N} =\bigoplus_{\lambda \in \Delta^N } \mathcal{B}_{\lambda}^{\oplus m_{\lambda}},$$ where $\mathcal{B}_{\lambda}$ is a simple crystal of the highest weight $\lambda$. 

Similarly to the case with representations, we find that the set $\mathfrak{Ctab}(D_\lambda^N)$ of cell tables of the shape $D_\lambda^N$ indexes the components $\mathcal{B}_{\lambda}$ in $\mathcal{{B}_{S}}^{\otimes N}$.     

The group $C_N$ acts on $\mathcal{{B}_{S}}^{\otimes N}$ by the suitable compositions of commutors. Commutors are isomorphisms of crystals, and hence they just permute the summands $\mathcal{B}_{\lambda}$ in $\mathcal{{B}_{S}}^{\otimes N}$ for all $\lambda \in \Delta^N $. 

\begin{Proposition}
For each $\lambda \in \Delta^N $ there is a natural action of the cactus group $C_N$ on the set of regular cell tables $\mathfrak{Ctab}(D_\lambda^N)$ given by the action of commutors on $\mathcal{{B}_{S}}^{\otimes N}$
\end{Proposition}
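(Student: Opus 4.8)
The plan is to read the statement off from two structural facts about $\mathfrak{o}_{2n}(\mathbb{C})$-crystals: that they form a coboundary category, and that this category is semisimple with very rigid morphism spaces. First I would recall from Henriques and Kamnitzer \cite{Herniques_Kamnitzer} that in any coboundary category the commutors assemble, for any fixed object $B$, into an action of $C_N$ on the $N$-fold tensor power $B^{\otimes N}$, the generator $s_{p,q}$ acting by the commutor that reverses the tensor factors in positions $p, \ldots, q$ (after the appropriate re-bracketing). Applying this to $B = \mathcal{B}_S$ gives a group homomorphism $\rho_N \colon C_N \to \mathrm{Aut}(\mathcal{B}_S^{\otimes N})$ into the group of crystal automorphisms; this homomorphism is precisely ``the action of commutors on $\mathcal{B}_S^{\otimes N}$'' referred to in the statement, and it requires no verification of the cactus relations, since those are subsumed by the definition of a coboundary category together with the theorem that $\mathfrak{g}$-crystals form one.

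Next I would show that $\rho_N$ descends to each isotypic component. A crystal automorphism of $\mathcal{B}_S^{\otimes N}$ preserves weights and commutes with the Kashiwara operators, hence carries any simple subcrystal to a simple subcrystal of the same highest weight, and therefore maps $\mathcal{B}_\lambda^{\oplus m_\lambda}$ onto itself inside $\mathcal{B}_S^{\otimes N} = \bigoplus_{\lambda \in \Delta^N} \mathcal{B}_\lambda^{\oplus m_\lambda}$. Restricting, for each $\lambda \in \Delta^N$ we obtain a homomorphism $C_N \to \mathrm{Aut}(\mathcal{B}_\lambda^{\oplus m_\lambda})$. I would then identify the target with a symmetric group: unlike in the module category there are no scalars to rescale by, and a nonzero endomorphism of the simple crystal $\mathcal{B}_\lambda$ must fix its unique highest-weight vertex and, by connectedness, be the identity; consequently any crystal automorphism of $\mathcal{B}_\lambda^{\oplus m_\lambda}$ merely permutes the $m_\lambda$ copies of $\mathcal{B}_\lambda$, so $\mathrm{Aut}(\mathcal{B}_\lambda^{\oplus m_\lambda}) \cong S_{m_\lambda}$. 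Finally, using the labelling of these $m_\lambda$ copies by $\mathfrak{Ctab}(D_\lambda^N)$ — the same labelling, obtained by iterating the multiplicity-free decompositions $\mathcal{B}_S = \mathcal{B}_{\omega_+} \sqcup \mathcal{B}_{\omega_-}$ and $\mathcal{B}_\mu \otimes \mathcal{B}_S$, that underlies the identification of $\mathfrak{Ctab}(D_\lambda^N)$ with $T_\lambda^N$ — we get the desired action $C_N \to \mathrm{Sym}\!\big(\mathfrak{Ctab}(D_\lambda^N)\big)$, and its naturality is inherited from the functoriality of the commutors.

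I expect the only genuinely delicate point to be bookkeeping rather than mathematics: one has to pin down, once and for all and consistently with the earlier sections, the bijection between the copies of $\mathcal{B}_\lambda$ sitting inside $\mathcal{B}_S^{\otimes N}$ and the cell tables in $\mathfrak{Ctab}(D_\lambda^N)$, so that $\rho_N$ acts literally on regular cell tables and not merely on some abstract index set of cardinality $m_\lambda$. Once that identification is fixed, everything else is formal: the coboundary axioms give the $C_N$-action on the tensor power, semisimplicity and rigidity of the crystal category turn it into a permutation of the summands, and the labelling transports it to an action on $\mathfrak{Ctab}(D_\lambda^N)$.
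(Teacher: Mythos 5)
Your proposal follows essentially the same route as the paper: Henriques--Kamnitzer's coboundary structure gives the $C_N$-action on $\mathcal{B}_S^{\otimes N}$ by commutors (the paper's Proposition \ref{prop_8.1}), crystal automorphisms preserve highest weights and hence permute the simple summands within each isotypic component, rigidity of simple crystals (the paper's Proposition \ref{prop_7.1}) identifies the automorphism group of $\mathcal{B}_\lambda^{\oplus m_\lambda}$ with $S_{m_\lambda}$, and the identification of the summands with $T^N_\lambda \cong \mathfrak{Ctab}(D^N_\lambda)$ via the iterated multiplicity-free decomposition (Corollary \ref{cor_2.1}, Propositions \ref{prop_6.1} and \ref{f_8.1}) transports the action to cell tables. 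Your write-up is in fact somewhat more explicit than the paper's Corollary \ref{cor_8.1} about the Schur-lemma step and the ``no scalars in the crystal category'' point, but the skeleton is identical.
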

Therefore, we get the following corollary. 

\begin{Corollary}
For each $\lambda \in \Delta^N $ there is a natural action of the cactus group $C_N$ on the set of Gelfand-Tsetlin patterns $\mathfrak{GTP(\nu)}$ and on the set of semi-standard short Young tables $SSSYT(\nu)$, where $\nu = \mathcal{F}(D_\lambda^N)$. 
\end{Corollary}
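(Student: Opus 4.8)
The idea is purely transport of structure: the preceding Proposition already supplies, for every $\lambda \in \Delta^N$, an action of $C_N$ on the set $\mathfrak{Ctab}(D_\lambda^N)$ of regular cell tables, and the two statements to be proved follow by carrying this action across the bijections already constructed in the paper. Recall how that action arises: the crystal commutors are isomorphisms of $\mathcal{{B}_{S}}^{\otimes N}$, hence they permute the isotypic pieces $\mathcal{B}_\lambda^{\oplus m_\lambda}$, and since $\mathfrak{Ctab}(D_\lambda^N)$ canonically indexes the simple summands of $\mathcal{{B}_{S}}^{\otimes N}$ isomorphic to $\mathcal{B}_\lambda$, composing commutors yields a permutation action of $C_N$ on $\mathfrak{Ctab}(D_\lambda^N)$, which I will write $(g,t)\mapsto g\cdot t$.

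Next I would recall the two bijections. The Corollary asserting that $\mathfrak{Ctab}(D_\lambda^N)$ and $SSSYT(\nu)$ are in natural bijection (through the explicit formula quoted there, with $\nu=\mathcal F(D_\lambda^N)$) provides a bijection $\Phi\colon \mathfrak{Ctab}(D_\lambda^N)\to SSSYT(\nu)$; and the Proposition asserting the natural bijection between $\mathfrak{GTP}(\nu)$ and $\mathfrak{Ctab}(D_\lambda^N)$ provides a bijection $\Psi\colon \mathfrak{GTP}(\nu)\to \mathfrak{Ctab}(D_\lambda^N)$. Both are canonical because they come from comparing three bases of the one vector space $U_\lambda$ — the principal basis $\mathcal{PR}(U_\lambda)$, the Gelfand-Tsetlin basis $GT(U_\lambda)$ of the $O_N$-module $U_\lambda$, and the Gelfand-Tsetlin basis $\mathfrak{GT}(U_\lambda)$ of the $\mathfrak{o}_N$-module $U_\lambda$ — and the only indeterminacy is a rescaling of individual basis vectors, which does not affect the induced maps of indexing sets.

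Then I would simply define
$$g\cdot t := \Phi\bigl(g\cdot \Phi^{-1}(t)\bigr),\quad t\in SSSYT(\nu), \qquad g\cdot P := \Psi^{-1}\bigl(g\cdot \Psi(P)\bigr),\quad P\in \mathfrak{GTP}(\nu),$$
and verify the group-action axioms. Because $\Phi$ and $\Psi$ are bijections, $e\cdot t = t$ and $(gh)\cdot t = g\cdot(h\cdot t)$ follow at once from the corresponding identities for the action on $\mathfrak{Ctab}(D_\lambda^N)$, and likewise for $\mathfrak{GTP}(\nu)$; this is the standard transport-of-structure computation and needs no further detail.

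I do not anticipate a genuine obstacle: the mathematical substance is entirely contained in the earlier results — Howe duality, the bijection $\mathcal F$, and above all the identification $\mathcal{PR}(U_\lambda)=GT(U_\lambda)$ together with the explicit bijection relating it to $\mathfrak{GT}(U_\lambda)$ — so that only a formal step remains. The one point I would take care to articulate is the meaning of the word \emph{natural}: the resulting $C_N$-actions must be shown independent of the auxiliary choices implicit in the construction, namely of the chosen direct-sum decomposition of $\mathcal{{B}_{S}}^{\otimes N}$ and of the rescalings of basis vectors in the Proposition identifying $GT(U_\lambda)$ with $\mathcal{PR}(U_\lambda)$. Both independences are immediate — commutors, being crystal isomorphisms, induce a well-defined permutation of the canonically indexed family of simple summands isomorphic to $\mathcal{B}_\lambda$, and the bijections $\Phi$ and $\Psi$ do not see a rescaling of individual basis vectors — so this is at most a remark rather than a real step.
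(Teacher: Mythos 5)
Your proposal is correct and is essentially the argument the paper intends: the corollary (labeled \ref{cor_8.2} in the paper) is stated as a direct transport of the $C_N$-action on $\mathfrak{Ctab}(D_\lambda^N)$ from \ref{cor_8.1} across the bijections of \ref{cor_6.4} and \ref{cor_6.5}, with no further proof given. Your explicit conjugation formulas and the remark on independence of auxiliary choices spell out exactly what the paper leaves implicit.
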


Understanding the action of the cactus group $C_N$ on these sets will be the aim of my next study. 

\subsection{This paper is organized as follows}

\hspace{5mm} In the section \ref{sec_1} we give all the necessary definitions regarding spinor representation. 
In section \ref{sec_2} we prove the decomposition formula for the tensor power of spinor representation. The key results of this section are theorem \ref{th_2.1} and \ref{cor_2.1}.

In section \ref{sec_3} we introduce an important notion of a regular cell diagram and prove that the set of regular cell diagrams corresponds to the highest weights of the subrepresentations of a tensor power of spinor representation; see \ref{prop_3.2}. 

In section \ref{sec_4} we prove that multiplicity space $U_\lambda$ has the natural structure of an irreducible orthogonal group representation, using Howe's fundamental result \cite{Howe}.
In section \ref{sec_5} we find an isomorphism class of the multiplicity space $U_\lambda$ as $O_N$-module. The key result of the section is theorem \ref{th_5.1}. 

In section \ref{sec_6} we introduce the notion of a regular cell table and define 3 bases inside the multiplicity space $U_\lambda$. We also establish natural bijections between them and between their indexing sets. The main result of this section is corollaries \ref{cor_6.4} and \ref{cor_6.5}.

In section \ref{sec_7} we give all the necessary information on crystals. In section \ref{sec_8} we introduce the cactus group and define its action on the set of regular cell tables, short Young diagrams, and Gelfand-Tsetlin patterns, see \ref{cor_8.1} and \ref{cor_8.2}.

\subsection{Acknowledgments}
I am grateful to my scientific advisor Leonid Rybnikov for driving my attention to the subject and for the extremely useful discussions and references. I am happy to thank Grigory Olshansky for pointing out the relevance of Howe duality to my research and for providing all the necessary references. Finally, I would like to thank Sergey Davydov for the useful references. 

\section{Spinor representation and Quantum algebra}
\label{sec_1}
We will start by giving the standard but necessary definitions. 
Let $W$ be a $k$-dimensional vector space over the field of complex numbers. Denote by $W^{*}$ its dual. Let us fix some basis in $W$:  $\textbf{w} = (w_1, w_2, \ldots w_k)$. By $\textbf{w}^* = (w_1^*, w_2^*, \ldots, w_k^*)$ we denote the dual basis in $W^{*}$. For all $i, j \in \{ 1, 2, \ldots, k\}$ we have: $$\langle w_i, w_j^* \rangle = \delta_{ij}$$ 
Now take the exterior algebra $\Lambda W$. We will be interested in constructing some certain subalgebra in $End_{\mathbb{C}}(\Lambda W)$. 
For any vector $w_i$ from $\textbf{w}$ let us define the operator $M_{w_i} \in End_{\mathbb{C}}(\Lambda W)$ the following way: 
\begin{gather}
M_{w_i}: x \mapsto w_i \wedge x, \hspace{3mm} \forall x \in \Lambda W. 
\end{gather}
Similarly, for any vector $w_i^{*} \in \textbf{w}^{*}$ we define an operator $D_{w_i^*} \in$ $End_{\mathbb{C}}(\Lambda W)$:
\begin{gather}
D_{w_i^*}: w_i \wedge x \mapsto x; 
 \hspace{3mm} x \mapsto 0, \hspace{3mm} \forall x \in \Lambda W, \hspace{1mm} \text{such that} \hspace{1mm} w_i \wedge x \neq 0.
\end{gather}
\begin{definition} \label{def_1.1}
\textbf{Quantum subalgebra} of $\text{End}_{\mathbb{C}}(\Lambda W)$ is the subalgebra generated by the endomorphisms $M_{w_i}, D_{w_i^*}$ and $1_{\Lambda W}$. We denote it by $\text{End}_{\mathbb{C}}^{\circ}(\Lambda W)$. 
\end{definition}
\begin{notation}
    $\tilde{W} = W \oplus W^{*}$
\end{notation}
\begin{remark} \label{r_1.1}
There is a natural non-degenerate symmetric bilinear form $(\cdot, \cdot)$ on $\tilde{W}$ defined by the following equation: 
$$(x + x^{*}, y +  y^{*}) = \langle x, y^* \rangle + \langle x^*, y \rangle \hspace{1mm}, $$
$\forall x, y \in W, \hspace{1mm} \forall x^*, y^* \in W^*$. 
In basis $\tilde{\textbf{w}} := \textbf{w} \hspace{1mm} \cup \hspace{1mm} \textbf{w}^*$ this form has the following Gram matrix $G = \begin{bmatrix} 
0 & I_n \\
I_n & 0 
\end{bmatrix}.$
\end{remark}
\begin{remark}
\label{r_1.2}
Orthogonal Lie algebra $\mathfrak{o}(\tilde{W})$ as a matrix subalgebra of $\mathfrak{gl}(\tilde{W})$ written in the basis $\tilde{\textbf{w}}$ has the following form $\mathfrak{o}(\tilde{W}) = \{ x \in \mathfrak{gl}(\tilde{W}) \hspace{1mm}| \hspace{1mm} Gx + x^{t}G = 0 \}$. Equivalently, one may say that $\mathfrak{o}(\tilde{W})$ consists of the matrices $\begin{bmatrix} 
A & B \\
C & D 
\end{bmatrix}$ such that $A^{t} + D = 0, C + C^{t} = 0, B + B^{t} = 0$.
\end{remark}
\begin{proposition}
\label{prop_1.1}
Quantum subalgebra $\text{End}^{\circ}_{\mathbb{C}}(\Lambda W)$ is isomorphic to the Clifford algebra $CL(\tilde{W}, (\cdot, \cdot)_{\tilde{W}})$, where $(\cdot, \cdot)_{\tilde{W}}$ is the bilinear form on $\tilde{W}$ defined in \ref{r_1.1}. The isomorphism is defined by the map $\iota: CL(\tilde{W}) {\rightarrow} \text{End}^{\circ}_{\mathbb{C}}(\Lambda W)$  
\begin{gather}
    \iota(w_i) = M_{w_i}, \\
    \iota(w_i^*) = D_{w_i^*}, \\
    \iota(1_{CL}) = 1_{\Lambda W}
\end{gather}

\end{proposition}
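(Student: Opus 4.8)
The plan is to obtain $\iota$ from the universal property of the Clifford algebra and then promote it to an isomorphism by a dimension count. First I would extend the assignment $w_i \mapsto M_{w_i}$, $w_i^* \mapsto D_{w_i^*}$ linearly to a map $\phi: \tilde{W} \to \text{End}_{\mathbb{C}}(\Lambda W)$ and check that it satisfies the defining relations of $CL(\tilde{W},(\cdot,\cdot)_{\tilde{W}})$, i.e. $\phi(u)\phi(v)+\phi(v)\phi(u) = (u,v)_{\tilde{W}}\,1_{\Lambda W}$ for all $u,v \in \tilde{W}$. By bilinearity it suffices to check this on the basis $\tilde{\mathbf{w}}$, where it reduces to three elementary computations with the definitions of $M$ and $D$: the identity $w_i \wedge w_j \wedge x = -w_j \wedge w_i \wedge x$ gives $M_{w_i}M_{w_j}+M_{w_j}M_{w_i}=0$; the analogous anticommutation of contractions gives $D_{w_i^*}D_{w_j^*}+D_{w_j^*}D_{w_i^*}=0$; and decomposing an arbitrary $x \in \Lambda W$ as $x = w_i \wedge a + b$ with $a,b$ not involving $w_i$ yields $M_{w_i}D_{w_j^*}+D_{w_j^*}M_{w_i} = \delta_{ij}\,1_{\Lambda W}$. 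Comparing with the Gram matrix of Remark \ref{r_1.1}, these are precisely the Clifford relations, so $\phi$ extends uniquely to an algebra homomorphism $\iota: CL(\tilde{W}) \to \text{End}_{\mathbb{C}}(\Lambda W)$ with $\iota(1_{CL}) = 1_{\Lambda W}$, realising the formulas of the statement. Its image is by construction the subalgebra generated by the $M_{w_i}$, the $D_{w_j^*}$ and $1_{\Lambda W}$, which is $\text{End}^{\circ}_{\mathbb{C}}(\Lambda W)$ by Definition \ref{def_1.1}; hence $\iota$ is surjective onto the quantum subalgebra.

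It then remains to prove that $\iota$ is injective, which I would deduce from an equality of dimensions once I show in addition that $\text{End}^{\circ}_{\mathbb{C}}(\Lambda W)$ is all of $\text{End}_{\mathbb{C}}(\Lambda W)$. On the source side, the PBW-type basis theorem for Clifford algebras gives $\dim_{\mathbb{C}} CL(\tilde{W}) = 2^{\dim \tilde{W}} = 2^{2k}$, where $k = \dim W$. On the target side, I claim $\Lambda W$ is irreducible as a module over $\text{End}^{\circ}_{\mathbb{C}}(\Lambda W)$: given a nonzero submodule, pick a nonzero vector, expand it in the monomial basis $\{w_{i_1}\wedge\cdots\wedge w_{i_p}\}$, choose a monomial $w_S$ occurring with nonzero coefficient and with $|S|$ maximal among such, and apply the product of contractions $\prod_{j\in S} D_{w_j^*}$; since $S$ is maximal this kills every other monomial and produces a nonzero multiple of $1 \in \Lambda W$, so $1$ lies in the submodule, and then applying the operators $M_{w_i}$ recovers every basis monomial. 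Hence by Burnside's theorem (the field is algebraically closed and $\Lambda W$ is finite-dimensional) we get $\text{End}^{\circ}_{\mathbb{C}}(\Lambda W) = \text{End}_{\mathbb{C}}(\Lambda W)$, of dimension $(\dim \Lambda W)^2 = (2^k)^2 = 2^{2k}$. Thus $\iota$ is a surjective homomorphism between $\mathbb{C}$-algebras of equal finite dimension, hence an isomorphism, and in particular $\text{End}^{\circ}_{\mathbb{C}}(\Lambda W) \cong CL(\tilde{W})$.

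Every step here is routine; the only places needing care are the sign bookkeeping in the wedge orderings when verifying the mixed relation $\{M_{w_i},D_{w_j^*}\}=\delta_{ij}\,1_{\Lambda W}$, and the appeal to Burnside's theorem. As an alternative to the latter one could instead invoke the standard fact that the Clifford algebra of a nondegenerate form on an even-dimensional space is a simple algebra, whence the nonzero homomorphism $\iota$ (which sends $1_{CL}$ to $1_{\Lambda W}\neq 0$) is automatically injective; combined with surjectivity onto $\text{End}^{\circ}_{\mathbb{C}}(\Lambda W)$ this again yields the isomorphism and, as a byproduct, the equality $\text{End}^{\circ}_{\mathbb{C}}(\Lambda W)=\text{End}_{\mathbb{C}}(\Lambda W)$.
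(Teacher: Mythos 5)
The paper states Proposition \ref{prop_1.1} without giving a proof, so there is no argument in the text to compare against; your proposal supplies a complete and correct one. Both routes you offer are standard. A couple of small points worth flagging. First, the normalization of the Clifford relation: you take $\phi(u)\phi(v)+\phi(v)\phi(u)=(u,v)_{\tilde{W}}\,1$, and indeed $\{M_{w_i},D_{w_j^*}\}=\delta_{ij}\,1_{\Lambda W}$ matches the Gram matrix of Remark \ref{r_1.1}, so this is the convention the paper needs (had the paper used $\{u,v\}=2(u,v)\,1$, the assignment would have to be rescaled). Since the paper never writes down its convention, it is worth saying this explicitly. Second, in the irreducibility step, the maximality of $|S|$ is doing real work: monomials $w_T$ of strictly smaller degree cannot contain $S$, and the only monomial of degree $|S|$ containing $S$ is $w_S$ itself, so $\prod_{j\in S}D_{w_j^*}$ indeed kills everything except a nonzero multiple of $1$. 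Your alternative ending via simplicity of $CL(\tilde{W})$ — valid because $\dim\tilde{W}=2k$ is even and the form is nondegenerate, so $CL(\tilde{W})\cong M_{2^k}(\mathbb{C})$ — is arguably cleaner, as it avoids Burnside and delivers the equality $\text{End}^{\circ}_{\mathbb{C}}(\Lambda W)=\text{End}_{\mathbb{C}}(\Lambda W)$ as a corollary rather than an input. Either version is a valid proof of the proposition.
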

Consider the vector subspace of $CL(\tilde{W})$ spanned by all the commutators of elements in $\tilde{W} \subset CL(\tilde{W})$. Denote it by $\mathfrak{g} = \text{span}_{\mathbb{C}}\{ab - ba \hspace{1mm}| \hspace{1mm} a, b \in \tilde{W} \}$. 

\begin{proposition}
\label{prop_1.2}  There is a lie algebra structure on $\mathfrak{g}$ defined by the commutator of Clifford Algebra $CL(\tilde{W})$. Notice that $\mathfrak{g}$ acts on $\tilde{W}$ by commutators. Therefore, we get a faithful representation of the Lie algebra $\mathfrak{g}$ denoted by $\rho: \mathfrak{g} \rightarrow \mathfrak{gl}(\tilde{W})$. Moreover, $\rho$ identifies $\mathfrak{g}$ with orthogonal lie algebra: $\rho(\mathfrak{g}) = \mathfrak{o}(\tilde{W})$
\end{proposition}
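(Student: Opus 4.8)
The plan is to work directly from the Clifford relations. By Proposition \ref{prop_1.1} (and the anticommutation relations of $M_{w_i}, D_{w_j^*}$), the algebra $CL(\tilde W)$ is generated by $\tilde W$ subject to $ab + ba = (a,b)\cdot 1_{CL}$ for $a,b \in \tilde W$, where $(\cdot,\cdot)$ is the form of Remark \ref{r_1.1}. The single computation on which everything rests is: for $a,b,c \in \tilde W$,
\[
[ab,c] = abc - cab = (b,c)\,a - (a,c)\,b ,
\]
obtained by pushing $c$ to the left past $b$ and to the right past $a$ via the Clifford relation; hence
\[
[ab-ba,\,c] = 2(b,c)\,a - 2(a,c)\,b \in \tilde W .
\]
Thus $\operatorname{ad}_x(\tilde W) \subseteq \tilde W$ for every $x \in \mathfrak g$, so restricting the adjoint action of $CL(\tilde W)$ gives a linear map $\rho\colon \mathfrak g \to \mathfrak{gl}(\tilde W)$, $\rho(x)v = [x,v]$, which is automatically a Lie algebra homomorphism because $\operatorname{ad}\colon CL(\tilde W)\to\mathfrak{gl}(CL(\tilde W))$ is one.

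Next I would check that $\mathfrak g$ is closed under the commutator of $CL(\tilde W)$, so it is genuinely a Lie subalgebra. For $x\in\mathfrak g$ the map $\operatorname{ad}_x$ is a derivation of the associative algebra $CL(\tilde W)$ which, by the previous step, preserves $\tilde W$; therefore for $a,b\in\tilde W$, writing $a' = \operatorname{ad}_x a$ and $b' = \operatorname{ad}_x b$, one gets $\operatorname{ad}_x(ab-ba) = a'b + ab' - b'a - ba' = (a'b - ba') + (ab' - b'a)$, a sum of two elements of the form $cd - dc$ with $c,d\in\tilde W$, hence in $\mathfrak g$. This gives $[\mathfrak g,\mathfrak g]\subseteq\mathfrak g$.

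To see $\rho(\mathfrak g)\subseteq\mathfrak o(\tilde W)$, I would plug the formula for $[ab-ba,c]$ into the infinitesimal orthogonality condition: for $x = ab-ba$ and $u,v\in\tilde W$,
\[
([x,v],u) + (v,[x,u]) = 2(b,v)(a,u) - 2(a,v)(b,u) + 2(b,u)(a,v) - 2(a,u)(b,v) = 0
\]
by symmetry of the form, and the general case follows by linearity. For faithfulness and surjectivity onto $\mathfrak o(\tilde W)$, I would observe that $(a,b)\mapsto ab-ba$ is bilinear and alternating, so it factors through a surjection $\Lambda^2\tilde W \twoheadrightarrow \mathfrak g$, and that composing with $\rho$ yields the map $\Lambda^2\tilde W\to\mathfrak{gl}(\tilde W)$ sending $a\wedge b$ to $v\mapsto 2\big((b,v)a - (a,v)b\big)$, which is exactly $2$ times the classical isomorphism $\Lambda^2\tilde W \xrightarrow{\ \sim\ } \mathfrak o(\tilde W)$ attached to a nondegenerate symmetric form. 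Since this composite $\Lambda^2\tilde W\to\mathfrak o(\tilde W)$ is an isomorphism and the first arrow $\Lambda^2\tilde W\to\mathfrak g$ is onto, both arrows are isomorphisms; in particular $\dim\mathfrak g = \dim\mathfrak o(\tilde W) = n(2n-1)$, $\rho|_{\mathfrak g}$ is injective, and $\rho(\mathfrak g) = \mathfrak o(\tilde W)$. (Alternatively, faithfulness can be seen from $\ker\rho \subseteq Z(CL(\tilde W)) = \mathbb{C}\cdot 1_{CL}$ together with $1_{CL}\notin\mathfrak g$.) If one prefers a hands-on surjectivity argument, one computes $\rho$ on the products $w_iw_j$, $w_i^*w_j^*$ ($i<j$), $w_iw_j^*$ ($i\neq j$) and $w_iw_i^*$, and matches them, via Remark \ref{r_1.2}, with the matrix units spanning the blocks $A,B,C,D$ of $\mathfrak o(\tilde W)$.

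The main obstacle here is bookkeeping rather than anything conceptual: keeping the normalization of the form consistent (the Clifford relation here is $ab+ba = (a,b)\cdot 1_{CL}$, not $2(a,b)\cdot 1_{CL}$), so that the factors of $2$ are tracked correctly, and cleanly invoking the classical identification $\Lambda^2\tilde W\cong\mathfrak o(\tilde W)$; once the identity $[ab,c] = (b,c)a - (a,c)b$ is in place, all four assertions of the proposition fall out of it.
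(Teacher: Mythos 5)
The paper states Proposition \ref{prop_1.2} without proof, so there is no argument of the author's to compare against; I will simply assess your proof, which is correct and complete. The central identity
\[
[\,ab-ba,\;c\,] \;=\; 2(b,c)\,a \;-\; 2(a,c)\,b \qquad (a,b,c\in\tilde W)
\]
is derived correctly from the Clifford relation $ab+ba=(a,b)\,1_{CL}$, and you are right that this is the normalization forced by Proposition \ref{prop_1.1}: the CAR relations on $\Lambda W$ give $M_{w_i}D_{w_j^*}+D_{w_j^*}M_{w_i}=\delta_{ij}$, which matches $(w_i,w_j^*)=\delta_{ij}$, not $2\delta_{ij}$. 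That one identity does all the work you ask of it: it shows $\operatorname{ad}_x$ preserves $\tilde W$, the antisymmetry check places $\rho(\mathfrak g)$ inside $\mathfrak o(\tilde W)$, and the observation that $\rho$ composed with $\Lambda^2\tilde W\twoheadrightarrow\mathfrak g$ is twice the classical isomorphism $\Lambda^2\tilde W\xrightarrow{\ \sim\ }\mathfrak o(\tilde W)$ gives injectivity and surjectivity simultaneously. The derivation argument for $[\mathfrak g,\mathfrak g]\subseteq\mathfrak g$ is clean. Two small remarks: (1) in the dimension count you write $n(2n-1)$, but in this section of the paper $\dim W=k$, so this should read $k(2k-1)$; (2) in the parenthetical alternative for faithfulness you assert $1_{CL}\notin\mathfrak g$ without justification — it is true (the canonical anti-automorphism of $CL(\tilde W)$ fixing $\tilde W$ pointwise sends $ab-ba\mapsto -(ab-ba)$ while fixing $1_{CL}$, so $\mathfrak g$ lies in its $-1$-eigenspace), but since this is only offered as an alternative to the $\Lambda^2\tilde W$ argument, which is already complete, no repair is needed.
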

\begin{corollary}
\label{cor_1.1} On the external algebra $\Lambda W$ there is a structure of the $\mathfrak{o}(\tilde{W})$-module, given by the restriction of the mapping $\iota$ to $\mathfrak{g}\cong \mathfrak{o}(\tilde{W})$. 
\end{corollary}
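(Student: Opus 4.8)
The plan is to obtain the corollary as a formal consequence of Propositions \ref{prop_1.1} and \ref{prop_1.2}, since $\Lambda W$ already carries the tautological module structure over $\text{End}_{\mathbb{C}}(\Lambda W)$, hence over its quantum subalgebra $\text{End}^{\circ}_{\mathbb{C}}(\Lambda W)$. No new computation is needed beyond transporting structures along the maps that have just been constructed.

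First I would transport this module structure along $\iota$. By Proposition \ref{prop_1.1}, $\iota \colon CL(\tilde W) \xrightarrow{\sim} \text{End}^{\circ}_{\mathbb{C}}(\Lambda W)$ is an isomorphism of associative algebras, so precomposition with $\iota$ turns the tautological $\text{End}^{\circ}_{\mathbb{C}}(\Lambda W)$-module $\Lambda W$ into a module over the Clifford algebra $CL(\tilde W)$. Next, recalling that every associative algebra is a Lie algebra under the commutator bracket, and that $\mathfrak{g} = \text{span}_{\mathbb{C}}\{ab-ba \mid a,b \in \tilde W\}$ is by construction a Lie subalgebra of $CL(\tilde W)$ with respect to that bracket (this is exactly the content of Proposition \ref{prop_1.2}), I restrict the $CL(\tilde W)$-action to get a linear map $\iota|_{\mathfrak{g}} \colon \mathfrak{g} \to \text{End}_{\mathbb{C}}(\Lambda W)$. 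Because $\iota$ is an algebra homomorphism and the bracket on $\mathfrak{g}$ is the restriction of the commutator of $CL(\tilde W)$, the identity $\iota([a,b]) = \iota(a)\iota(b) - \iota(b)\iota(a)$ holds automatically, so $\iota|_{\mathfrak{g}}$ is a homomorphism of Lie algebras, i.e. a representation of $\mathfrak{g}$ on $\Lambda W$. Finally, Proposition \ref{prop_1.2} gives a Lie algebra isomorphism $\rho \colon \mathfrak{g} \xrightarrow{\sim} \mathfrak{o}(\tilde W)$; composing $\iota|_{\mathfrak{g}}$ with $\rho^{-1}$ produces the desired Lie algebra homomorphism $\mathfrak{o}(\tilde W) \to \text{End}_{\mathbb{C}}(\Lambda W)$, which is precisely the $\mathfrak{o}(\tilde W)$-module structure claimed in the statement.

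I do not expect any genuine obstacle: the corollary is bookkeeping built on the two preceding propositions. The only points I would spell out, each in a line, are (i) that the Lie bracket on $\mathfrak{g}$ invoked in Proposition \ref{prop_1.2} is literally the commutator inherited from $CL(\tilde W)$, which is what makes $\iota|_{\mathfrak{g}}$ a Lie homomorphism rather than merely a linear map, and (ii) that the resulting action of $\mathfrak{o}(\tilde W)$ is independent of the auxiliary basis $\textbf{w}$ fixed at the outset, since once the bilinear form on $\tilde W$ is fixed all of $CL(\tilde W)$, $\mathfrak{g}$, $\iota$ and $\rho$ are canonical. If desired, one could also remark that this is the standard realization of the (reducible) spinor module, but that is not required for the statement.
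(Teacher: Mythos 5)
Your proposal is correct and is precisely the argument the paper intends; the paper states this corollary without an explicit proof because it is indeed pure bookkeeping built on Propositions \ref{prop_1.1} and \ref{prop_1.2}. You transport the tautological $\text{End}^{\circ}_{\mathbb{C}}(\Lambda W)$-module structure along the algebra isomorphism $\iota$, note that $\iota|_{\mathfrak{g}}$ is automatically a Lie algebra homomorphism since the bracket on $\mathfrak{g}$ is the restricted Clifford commutator, and then pull back along the isomorphism $\rho \colon \mathfrak{g} \to \mathfrak{o}(\tilde W)$; this matches the paper's intent exactly.
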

\begin{definition}
 \label{def_1.2}   Representation $\Lambda W$ of the Lie algebra $\mathfrak{o}(\tilde{W}) \cong \mathfrak{o}_{2k}(\mathbb{C})$ is called \textbf{spinor} representation. 
\end{definition}
We are going to work with finite-dimensional representations of even orthogonal algebra. Therefore, it is essential to know the classification of finite-dimensional irreducible representations of $\mathfrak{o}(\tilde{W})$. We choose the Cartan subalgebra $\mathfrak{h}$ of $\mathfrak{o}(\tilde{W})$ to be the subalgebra of the diagonal (in basis $\tilde{\textbf{w}}$) matrices in $\mathfrak{o}(\tilde{W})$. 
$$\mathfrak{h} := \text{span}_{\mathbb{C}}\{h_i = E_{ii} - E_{i+k, i+k} \hspace{1mm} | \hspace{1mm} 1 \leqslant i \leqslant k\}.$$
We fix the basis in $\mathfrak{h}$: $$\textbf{h} = (h_i)_{1\leqslant i \leqslant k}$$
and the dual basis in $\mathfrak{h}^{*}$: $$\textbf{h}^{*} = (h_i^{*})_{1\leqslant i \leqslant k}$$
Root system in basis $\textbf{h}^{*}$ has the following form: $$R = \{ \pm h_i^* \pm h_j^*\ | \hspace{1mm} i \neq j \}$$
We choose polarization so that 
$$R_+ = \{h_i^* + h_j^* \hspace{1mm} |  \hspace{1mm} i \neq j \} \cup \{ h_i^* - h_j^* \hspace{1mm} | \hspace{1mm} i < j \}$$
Then $\mathfrak{o}(\tilde{W})$ admits triangular decomposition:  $$\mathfrak{o}(\tilde{W}) = \mathfrak{n}_{-} \oplus \mathfrak{h} \oplus \mathfrak{n}_{+},$$
where  $$\mathfrak{n}_{-} = \oplus_{\alpha \in R_{-}} \mathfrak{g}_{\alpha},$$ 
and $$\mathfrak{n}_{+} = \oplus_{\alpha \in R_{+}} \mathfrak{g}_{\alpha}.$$
Explicitly: 
$$\mathfrak{n}_{-} = \text{span}_{\mathbb{C}}\{ E_{i + k, j} - E_{j + k, i} \hspace{1mm} | \hspace{1mm} i < j \} \oplus \text{span}_{\mathbb{C}}\{  E_{ij} - E_{j+k, i+k} \hspace{1mm} | \hspace{1mm} i > j \},$$
 and
$$\mathfrak{n}_{+} = \text{span}_{\mathbb{C}}\{ E_{i, j + k} - E_{j, i + k} \hspace{1mm} | \hspace{1mm} i < j \} \oplus \text{span}_{\mathbb{C}}\{  E_{ij} - E_{j+k, i+k} \hspace{1mm} | \hspace{1mm} i < j \}.$$
\begin{proposition}
   \label{prop_1.3} Spinor representation $\Lambda W$ is isomorphic to $L_{\omega_{+}} \oplus L_{\omega_{-}}$  the direct sum of two irreducible representations with the highest weights $\omega_{\pm}$: $$ \Lambda W \cong L_{\omega_+} \oplus L_{\omega_-},$$
    where  $\omega_{\pm}= (\frac{1}{2}, \cdots, \frac{1}{2}, \pm \frac{1}{2})$.  
    Set of weights of spinor representation is equal to $$P[\Lambda W] = \left\{\left( \pm \frac{1}{2}, \pm\frac{1}{2}, \cdots, \pm\frac{1}{2} \right) \right\}$$  All weights are written in the basis $\textbf{h}^{*}$.
\end{proposition}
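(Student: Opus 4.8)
The plan is to diagonalize the Cartan subalgebra $\mathfrak{h}$ on the monomial basis of $\Lambda W$, read off the weights, and then determine all singular vectors. Fix the basis of $\Lambda W$ given by the monomials $w_S := w_{s_1} \wedge \cdots \wedge w_{s_r}$ indexed by subsets $S = \{s_1 < \cdots < s_r\} \subseteq \{1, \dots, k\}$; there are $2^k = \dim \Lambda W$ of them. By \ref{prop_1.2} the isomorphism $\rho \colon \mathfrak{g} \xrightarrow{\sim} \mathfrak{o}(\tilde{W})$ lets us transport $h_i = E_{ii} - E_{i+k,i+k}$ back to an element of $\mathfrak{g} \subset CL(\tilde{W})$, and by \ref{cor_1.1} that element then acts on $\Lambda W$ via $\iota$. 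A short computation in $CL(\tilde{W})$ — using the Clifford relation $ab + ba = (a,b)\cdot 1$ that makes \ref{prop_1.1} hold (so that $M_{w_i}D_{w_i^*} + D_{w_i^*}M_{w_i} = 1_{\Lambda W}$), together with the identity $[ab, c] = (b,c)\,a - (a,c)\,b$ for $a,b,c \in \tilde{W}$ — shows that $\rho^{-1}(h_i) = \tfrac{1}{2}(w_i w_i^* - w_i^* w_i)$, hence $h_i$ acts on $\Lambda W$ as $\tfrac{1}{2}\bigl(M_{w_i} D_{w_i^*} - D_{w_i^*} M_{w_i}\bigr)$. Since $M_{w_i} D_{w_i^*}$ is the projector onto $\operatorname{span}\{w_S : i \in S\}$ and $D_{w_i^*} M_{w_i}$ the complementary one, this gives $h_i \cdot w_S = \tfrac{1}{2} w_S$ when $i \in S$ and $h_i \cdot w_S = -\tfrac{1}{2} w_S$ when $i \notin S$.

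Consequently each $w_S$ is a weight vector of weight $\sum_{i=1}^{k} \varepsilon_i^S \tfrac{1}{2} h_i^*$ with $\varepsilon_i^S = +1$ for $i \in S$ and $\varepsilon_i^S = -1$ otherwise. As $S$ runs over all subsets of $\{1,\dots,k\}$ these weights realize every element of $\{(\pm\tfrac12, \dots, \pm\tfrac12)\}$ exactly once, which proves the claimed description of $P[\Lambda W]$ and, simultaneously, shows that every weight space of $\Lambda W$ is one-dimensional.

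To obtain the decomposition I would then determine the singular vectors, i.e.\ the common kernel of $\mathfrak{n}_+$. The same dictionary sends the generators of $\mathfrak{n}_+$ recorded in \ref{r_1.2} to the operators $\iota(w_i w_j) = M_{w_i} M_{w_j}$ and $\iota(w_i w_j^*) = M_{w_i} D_{w_j^*}$ for $i < j$. The space of singular vectors is $\mathfrak{h}$-stable, so — the weight spaces being one-dimensional — it is spanned by the $w_S$ it contains. Now $M_{w_i} M_{w_j} \cdot w_S = w_i \wedge w_j \wedge w_S$ vanishes for all $i < j$ exactly when at most one index lies outside $S$, i.e.\ $|\{1,\dots,k\}\setminus S| \leqslant 1$; and whenever $\{1,\dots,k\}\setminus S = \{m\}$ with $m < k$, choosing any $j$ with $m < j \leqslant k$ gives $M_{w_m} D_{w_j^*} \cdot w_S = \pm\, w_m \wedge w_{S \setminus \{j\}} \neq 0$ (because $m \notin S$), so such $w_S$ is not singular. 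This leaves exactly the two monomials $w_{\{1,\dots,k\}}$ and $w_{\{1,\dots,k-1\}}$; a direct check confirms that both are annihilated by every $M_{w_i} M_{w_j}$ and every $M_{w_i} D_{w_j^*}$, and by the weight computation their weights are $\omega_+ = (\tfrac12, \dots, \tfrac12, \tfrac12)$ and $\omega_- = (\tfrac12, \dots, \tfrac12, -\tfrac12)$ respectively. Hence the space of singular vectors is two-dimensional and carries the two distinct weights $\omega_\pm$.

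Finally, $\Lambda W$ is a finite-dimensional module over the semisimple Lie algebra $\mathfrak{o}(\tilde{W})$, hence completely reducible; the number of irreducible summands equals the dimension of the space of singular vectors, each summand contributing its highest weight, so $\Lambda W \cong L_{\omega_+} \oplus L_{\omega_-}$. (Alternatively one may note that $\iota(\mathfrak{g})$ consists of parity-preserving operators, so $\Lambda^{\mathrm{even}} W$ and $\Lambda^{\mathrm{odd}} W$ are submodules of dimension $2^{k-1}$ each containing exactly one of the two singular vectors, hence each irreducible.) I expect the only points requiring care to be fixing the normalization of the Clifford relation compatibly with $\iota$, the sign bookkeeping in the exterior-algebra computations, and the case analysis in the previous step ruling out the $w_S$ with $|\{1,\dots,k\}\setminus S| = 1$ but $\{1,\dots,k\}\setminus S \neq \{k\}$.
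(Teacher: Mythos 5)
Your proof is correct. The paper states Proposition~\ref{prop_1.3} without proof as one of the "standard but necessary" preliminary facts in Section~\ref{sec_1}, so there is no paper argument to compare against; your argument is the standard one and is sound. You correctly infer the Clifford normalization $ab+ba=(a,b)\cdot 1$ from the requirement that $\iota$ in Proposition~\ref{prop_1.1} be an algebra homomorphism (which forces $M_{w_i}D_{w_i^*}+D_{w_i^*}M_{w_i}=1_{\Lambda W}$ to match $(w_i,w_i^*)=1$), and your transport of $h_i$ to $\tfrac12(w_iw_i^*-w_i^*w_i)$ via the commutator identity $[ab,c]=(b,c)a-(a,c)b$ is right; the resulting diagonal action $\tfrac12(M_{w_i}D_{w_i^*}-D_{w_i^*}M_{w_i})$ on the monomial basis indeed gives every weight $(\pm\tfrac12,\dots,\pm\tfrac12)$ exactly once. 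The identification of the $\mathfrak{n}_+$-generators with $M_{w_i}M_{w_j}$ and $M_{w_i}D_{w_j^*}$ (for $i<j$) is consistent with Corollary~\ref{cor_1.2}, since $M_{w_i}D_{w_j^*}=-D_{w_j^*}M_{w_i}$ when $i\neq j$, and your elimination argument correctly isolates $w_{\{1,\dots,k\}}$ and $w_{\{1,\dots,k-1\}}$ as the only singular monomials, of weights $\omega_{+}$ and $\omega_{-}$. The parity-splitting remark at the end is a clean alternative finish and also correctly shows each summand is irreducible.
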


We adopt the following notation. 

\begin{notation}
    We denote by $s_{W}: \mathfrak{o}(\tilde{W}) \rightarrow \text{End}_{\mathbb{C}}(\Lambda W)$ the spinor representation with the base space $W$. 
\end{notation}
 
\begin{corollary}
\label{cor_1.2}  Using \ref{prop_1.1} and \ref{prop_1.2} one can describe $s_W$ explicitly: 
\begin{gather}
s_{W}(E_{ij} - E_{j+k, i+k})  = \frac{1}{2}(M_{w_i} \circ D_{w_j^{*}} - D_{w_j^{*}} \circ M_{w_i}), \\ 
s_{W}(E_{i + k, j} - E_{j + k, i}) = D_{w_i^{*}} \circ D_{w_j^{*}}, \\
s_{W}(E_{i, j + k} - E_{j, i + k}) = M_{w_i} \circ M_{w_j}.
\end{gather}
  \end{corollary}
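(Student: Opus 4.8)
The plan is simply to unwind the definitions. By Definition \ref{def_1.2} and Corollary \ref{cor_1.1}, the spinor representation $s_W$ is the $\mathfrak{o}(\tilde W)$-module structure on $\Lambda W$ obtained by restricting the isomorphism $\iota$ of Proposition \ref{prop_1.1} to $\mathfrak{g}\subset CL(\tilde W)$ and identifying $\mathfrak{g}$ with $\mathfrak{o}(\tilde W)$ through the isomorphism $\rho$ of Proposition \ref{prop_1.2}; in other words,
\[
s_W \;=\; \iota\circ\rho^{-1}\colon\quad \mathfrak{o}(\tilde W)\ \xrightarrow{\ \rho^{-1}\ }\ \mathfrak{g}\,\subset\,CL(\tilde W)\ \xrightarrow{\ \iota\ }\ \text{End}^{\circ}_{\mathbb{C}}(\Lambda W),
\]
where $\rho$ denotes the commutator action of $\mathfrak{g}$ on $\tilde W$. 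Hence, to evaluate $s_W(X)$ for $X\in\mathfrak{o}(\tilde W)$ it is enough to (i) produce a quadratic element $c\in\mathfrak{g}\subset CL(\tilde W)$ with $\rho(c)=X$, and then (ii) apply $\iota$, using that $\iota$ is an algebra homomorphism with $\iota(w_i)=M_{w_i}$ and $\iota(w_i^*)=D_{w_i^*}$.

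For step (i) I would compute in $CL(\tilde W)$ using the relations read off from the Gram matrix in Remark \ref{r_1.1}: $w_iw_j^*+w_j^*w_i=\delta_{ij}$, $w_iw_j+w_jw_i=0$ and $w_i^*w_j^*+w_j^*w_i^*=0$, so that $W$ and $W^*$ are isotropic. A short bracket computation then gives, for $i\neq j$, the following operators on $\tilde W$ written in the basis $\tilde{\mathbf w}$:
\[
[\,w_iw_j,\,{-}\,]=E_{i,j+k}-E_{j,i+k},\qquad [\,w_i^*w_j^*,\,{-}\,]=E_{i+k,j}-E_{j+k,i},\qquad [\,w_iw_j^*,\,{-}\,]=E_{ij}-E_{j+k,i+k}.
\]
For instance $[w_iw_j,w_l]=0$ and $[w_iw_j,w_l^*]=\delta_{jl}\,w_i-\delta_{il}\,w_j$, which is exactly the action of $E_{i,j+k}-E_{j,i+k}$, and the other two are checked in the same way. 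Since $W$ and $W^*$ are isotropic, $w_iw_j=\tfrac12(w_iw_j-w_jw_i)$, $w_i^*w_j^*=\tfrac12(w_i^*w_j^*-w_j^*w_i^*)$ and, for $i\neq j$, $w_iw_j^*=\tfrac12(w_iw_j^*-w_j^*w_i)$ all lie in $\mathfrak{g}$; so the displayed identities say precisely that $\rho^{-1}$ of the three listed root vectors equals $w_iw_j$, $w_i^*w_j^*$ and $w_iw_j^*$ respectively.

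Step (ii) is then immediate from multiplicativity of $\iota$: $s_W(E_{i,j+k}-E_{j,i+k})=\iota(w_iw_j)=M_{w_i}\circ M_{w_j}$ and $s_W(E_{i+k,j}-E_{j+k,i})=\iota(w_i^*w_j^*)=D_{w_i^*}\circ D_{w_j^*}$, which are the second and third formulas. For the first family, $\iota(w_iw_j^*)=M_{w_i}\circ D_{w_j^*}$, and since $M_{w_i}$ and $D_{w_j^*}$ anticommute for $i\neq j$ this equals $\tfrac12(M_{w_i}\circ D_{w_j^*}-D_{w_j^*}\circ M_{w_i})$, the stated commutator-symmetric expression. (The same expression also recovers the Cartan action: one checks $\rho^{-1}(h_i)=w_iw_i^*-\tfrac12$, and $\iota$ of this is $M_{w_i}D_{w_i^*}-\tfrac12\,\mathrm{id}=\tfrac12(M_{w_i}D_{w_i^*}-D_{w_i^*}M_{w_i})$ because $M_{w_i}D_{w_i^*}+D_{w_i^*}M_{w_i}=\mathrm{id}$, which is why the first formula is written this way.)

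I do not expect a genuine obstacle here: the whole content of the corollary is the elementary bracket computation of step (i). The only points requiring care are bookkeeping ones --- the index shift $i\leftrightarrow i+k$ encoding the splitting $\tilde W=W\oplus W^*$, the signs in $[w_iw_j^*,w_l]$ versus $[w_iw_j^*,w_l^*]$, and the normalization of the Clifford form (the factor that appears on passing from anticommutators $\{a,b\}=(a,b)$ to commutators $ab-ba$), which is exactly what produces the $\tfrac12$ in the first displayed formula and nowhere else.
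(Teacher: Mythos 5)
Your proof is correct and is exactly the argument the corollary is implicitly invoking: the paper states the formulas without a written proof, leaving precisely the two-step verification you carry out --- realize each root vector in $\mathfrak{o}(\tilde W)$ as $\rho$ of a quadratic element of $CL(\tilde W)$ via a bracket computation in the Clifford algebra, then push through $\iota$. Your bookkeeping is right throughout (the Clifford relations $\{w_i,w_j^*\}=\delta_{ij}$, $\{w_i,w_j\}=\{w_i^*,w_j^*\}=0$ read off from Remark \ref{r_1.1}, the identification of $[w_iw_j,\cdot]$, $[w_i^*w_j^*,\cdot]$, $[w_iw_j^*,\cdot]$ with the displayed matrices, and the observation that the $\tfrac12$-symmetrization in the first formula is the one that also covers the Cartan case $i=j$ where $M_{w_i}D_{w_i^*}$ and $D_{w_i^*}M_{w_i}$ do not anticommute but sum to the identity).
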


In this paper, it is assumed that the reader knows basic facts about representations of finite-dimensional semisimple Lie algebras (representations and Lie algebras are assumed to be complex). All necessary information on this subject in the book of Kirillov \cite{Kirillov}.

\section{Tensor power of a spinor representation. Decomposition formula.} 
\label{sec_2}
Let $E$ be the $n$-dimensional vector space over the field of complex numbers. Consider $N$-th tensor power of a spinor representation with the base space $E$:  
$$s_E^{\otimes N}: \mathfrak{o}(\tilde{E}) \longrightarrow \text{End}_{\mathbb{C}}(\Lambda E^{\otimes N})$$

Every finite-dimensional representation of the algebra Lie $\mathfrak{o}(\tilde{E})$ is completely reducible. Hence, as an $\mathfrak{o}(\tilde{E})$-module $\Lambda E^{\otimes N}$ can be decomposed into the direct sum of simple $\mathfrak{o}(\tilde{E})$-modules: 
$$\Lambda E^{\otimes N} \underset{\mathfrak{o}(\tilde{E})}{\cong} \bigoplus_{\lambda \in P_+} L_\lambda^{\oplus m_\lambda},$$
where $P_+$ is the set of dominant weights of $\mathfrak{o}(\tilde{E})$ and $m_\lambda$ are non-negative integers.  

We rewrite this decomposition in the following way: 

\begin{equation}\label{eq_2.1} \Lambda E^{\otimes N} \underset{\mathfrak{o}(\tilde{E})}{\cong} \bigoplus_{\lambda \in P_+} U_{\lambda} \otimes L_\lambda,\end{equation}
where $U_\lambda$ is the vector space of dimension $m_\lambda$.  Algebra Lie $\mathfrak{o}(\tilde{E})$ acts on the right factor of each summand  $U_{\lambda} \otimes L_\lambda$. So, if we fix the basis $\textbf{u} = (u_1, u_2, \ldots u_{m_\lambda})$, we can establish the following non-canonical isomorphism of $\mathfrak{o}(\tilde{E})$ modules: $$U_{\lambda} \otimes L_\lambda \underset{\mathfrak{o}(\tilde{E})}{\cong} \bigoplus_{i = 1}^{m_{\lambda}} \mathbb{C}u_i \otimes L_\lambda \underset{\mathfrak{o}(\tilde{E})}{\cong} L_\lambda ^{\oplus m_\lambda}$$ We treat $U_\lambda$ as a \textbf{multiplicity space} of irreducible subrepresentation $L_\lambda$.  

A natural question to ask is for which $\lambda \in P_+$, $\dim U_\lambda = m_\lambda \neq 0$, i.e. $L_\lambda$ is isomorphic to some subrepresentation of $\Lambda E$. Also, one may ask if there is a formula for $m_\lambda$. To answer these questions, we need the following theorem. 

\begin{theorem} \label{th_2.1}
Let $L_\lambda$ be the irreducible representation of the algebra Lie $\mathfrak{o}_{2n}(\mathbb{C}) = \mathfrak{o}(\tilde{E})$ of the highest weight $\lambda \in P_+$. Let $P[\Lambda E]$ be the set of weights of the spinor representation $\Lambda E$  explicitly described in \ref{prop_1.3} Then the following decomposition formula is valid: 
\begin{equation} \label{eq_2.2}
     L_\lambda \otimes \Lambda E \underset{\mathfrak{o}(\tilde{E})}{\cong} \bigoplus_{\mu \in P[\Lambda E]: \hspace{1mm} \lambda + \mu \in P_+} L_{\lambda+\mu} 
\end{equation}
\end{theorem}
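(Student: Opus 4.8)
The plan is to prove the decomposition formula \eqref{eq_2.2} by the standard technique of reducing a tensor product with a "small" representation (here the spinor representation $\Lambda E$) to a weight-multiplicity count via the Brauer/Klimyk-type character argument, and then showing that all the multiplicities that could in principle appear collapse to $0$ or $1$ because the weights of $\Lambda E$ form a single Weyl-orbit (plus its negative). Concretely, one knows from the Racah--Speiser/Klimyk formula that
\begin{equation*}
 \mathrm{ch}\, L_\lambda \cdot \mathrm{ch}\,\Lambda E \;=\; \sum_{\mu \in P[\Lambda E]} e^{\mu}\,\mathrm{ch}\,L_\lambda \;=\; \sum_{\mu \in P[\Lambda E]} \mathrm{ch}\,L_{\lambda+\mu}^{\sharp},
\end{equation*}
where the $\sharp$ denotes the usual "straightening" operation: if $\lambda+\mu+\rho$ is regular we replace $L_{\lambda+\mu}$ by $\mathrm{sgn}(w)\,L_{w\cdot(\lambda+\mu)}$ for the unique $w$ making $w(\lambda+\mu+\rho)$ dominant, and if $\lambda+\mu+\rho$ is singular the term is dropped. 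So the content of the theorem is that after this straightening every surviving summand occurs with coefficient exactly $+1$ and is indexed by some $\mu' \in P[\Lambda E]$ with $\lambda+\mu'$ dominant.

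\textbf{Step 1.} Record the explicit description of $P[\Lambda E]$ from \ref{prop_1.3}: it is the set of all $\bigl(\pm\tfrac12,\dots,\pm\tfrac12\bigr)$ in the basis $\mathbf{h}^{*}$, which is precisely the union of the Weyl-group orbits of $\omega_{+}$ and $\omega_{-}$ (for $D_n$ these are two distinct orbits, exchanged by an outer automorphism / by the "odd" element of $O_{2n}$, but that distinction is irrelevant for the counting). In particular $|P[\Lambda E]| = 2^{n} = \dim\Lambda E$ and every weight is extremal (all weights of $\Lambda E$ have multiplicity one, since $\Lambda E = L_{\omega_+}\oplus L_{\omega_-}$ and each minuscule summand has all weights in one orbit).

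\textbf{Step 2.} Fix $\lambda \in P_+$ and analyze, for each $\mu \in P[\Lambda E]$, the position of $\lambda+\mu+\rho$ relative to the walls. I claim: (i) if $\lambda + \mu$ is dominant then it contributes $L_{\lambda+\mu}$ with coefficient $+1$; (ii) if $\lambda+\mu$ is not dominant then either $\lambda+\mu+\rho$ is singular (term dies) or there is a unique simple reflection $s_i$ with $s_i\cdot(\lambda+\mu) = \lambda + \mu'$ for some other $\mu' \in P[\Lambda E]$ with $\lambda+\mu'$ strictly "more dominant", and these two terms cancel in sign-pairs. The mechanism behind (ii) is that, because $\mu$ is minuscule, $\langle \lambda+\mu+\rho,\alpha_i^{\vee}\rangle \in \{0,-1\}$ can happen for at most one $i$ when $\langle\lambda+\mu,\alpha_i^\vee\rangle<0$: indeed $\langle\mu,\alpha_i^\vee\rangle\in\{-1,0,1\}$ and $\langle\lambda+\rho,\alpha_i^\vee\rangle\ge 1$, so $\langle\lambda+\mu+\rho,\alpha_i^\vee\rangle\ge 0$, with $=0$ only when $\langle\lambda+\rho,\alpha_i^\vee\rangle=1$ and $\langle\mu,\alpha_i^\vee\rangle=-1$; and $\langle\lambda+\mu,\alpha_i^\vee\rangle = -1 < 0$ forces the same equalities. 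In the singular case the Klimyk term is zero; in the other case $s_i\mu\in P[\Lambda E]$ again (a Weyl orbit is stable under $W$), $\lambda + s_i\mu+\rho = s_i(\lambda+\mu+\rho)$ has $\mathrm{sgn}=-1$, and the involution $\mu\mapsto s_i\mu$ pairs off precisely the "bad" weights, leaving only the $\mu$ with $\lambda+\mu\in P_+$ each counted once.

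\textbf{The main obstacle} is making Step 2 fully rigorous, i.e. checking that the "straightening involution" $\mu\mapsto s_{i(\mu)}\mu$ on the set of bad weights is well-defined (the simple reflection index $i(\mu)$ is unique) and is genuinely an involution with no fixed points among the bad weights, so that all negative contributions cancel against positive ones and nothing is left over or double-counted. Equivalently, one must verify that the Klimyk expansion of $\mathrm{ch}(L_\lambda\otimes\Lambda E)$, after applying the affine-Weyl "dot" action to push every $\lambda+\mu$ into the dominant chamber, produces a multiplicity-free sum — this is exactly the minuscule-weight phenomenon and is clean precisely because all weights of $\Lambda E$ lie on a single $W$-orbit up to the $\pm$ splitting. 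An alternative, perhaps cleaner, route that avoids the explicit involution bookkeeping is: show $L_\lambda\otimes\Lambda E$ is multiplicity-free directly (e.g. because $\Lambda E$ is minuscule for $D_n$ and tensoring a minuscule module is known to be multiplicity-free), then identify the highest weights by a dimension or character comparison using $\dim\Lambda E = 2^n$ together with the fact that the dominant weights of the form $\lambda+\mu$ with $\mu\in P[\Lambda E]$ are automatically distinct. I would present the argument in the Klimyk form since it simultaneously proves multiplicity-freeness and pins down the summands, and then remark on the minuscule interpretation.

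Finally, combining Step 1 and Step 2 yields exactly
\begin{equation*}
 \mathrm{ch}\,L_\lambda\cdot\mathrm{ch}\,\Lambda E \;=\; \sum_{\substack{\mu\in P[\Lambda E]\\ \lambda+\mu\in P_+}}\mathrm{ch}\,L_{\lambda+\mu},
\end{equation*}
and since characters of finite-dimensional $\mathfrak{o}_{2n}(\mathbb C)$-modules are linearly independent, this lifts to the claimed isomorphism of $\mathfrak o(\tilde E)$-modules \eqref{eq_2.2}. I would also note explicitly, as a sanity check consumed later in the paper, that the two summands $L_{\omega_+}$, $L_{\omega_-}$ of $\Lambda E$ contribute separately, which is what makes the refined statement about $\omega_\pm$ in the introduction (the set $T^N$) meaningful.
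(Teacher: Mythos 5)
Your proof is correct and takes essentially the same character-theoretic route as the paper: both reduce \eqref{eq_2.2} to showing that whenever $\lambda+\mu$ is not dominant for $\mu\in P[\Lambda E]$, the shifted weight $\lambda+\mu+\rho$ is singular, so the corresponding Weyl alternating sum vanishes. Your explicit check that $\langle\mu,\alpha_i^\vee\rangle\in\{-1,0,1\}$ forces this singularity is a welcome addition (the paper asserts the singularity without justification), while the ``straightening involution'' bookkeeping you flag as the main obstacle is in fact unnecessary, since your own minuscule computation already shows the non-dominant terms are each singular and hence individually zero.
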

\begin{proof}
We assume that all weights are written in the basis $\textbf{h}^{*}$, defined in the previous section. The formula can be checked directly using the Weyl character formula for the irreducible representation $L_\lambda$. 
\begin{equation}
        ch(L_\lambda) = \frac{\sum_{w\in W} (-1)^{l(w)}e^{w(\lambda + \rho)}}{\prod_{\alpha \in R_+}(e^{\alpha/2} - e^{-\alpha/2})},
\end{equation}
where $W$ is the Weyl group, $l(w)$ is the length of the element $w \in W$ and (in type D) $$\rho = (n-1, n-2, \ldots 0).$$
As all weight subspaces of $\Lambda E$ are one-dimensional we can write the character of $\Lambda E$: 
$$ch(\Lambda E) = \sum_{\mu \in P[\Lambda E]} e^{\mu}
$$
Using the language of characters one can rewrite the statement of the theorem in the following way: 

$$ch(\Lambda E) \cdot ch(L_\lambda) = \sum_{\mu \in P[\Lambda E]: \hspace{1mm} \lambda + \mu \in P_+}ch(L_{\lambda + \mu})$$
So, we want to show that \begin{gather} \label{extra_1} \left(\sum_{\mu \in P[\Lambda E]} e^{\mu} \right)  \cdot \frac{\sum_{w\in W} (-1)^{l(w)}e^{w(\lambda + \rho)}}{\prod_{\alpha \in R_+}(e^{\alpha/2} - e^{-\alpha/2})}  = \sum_{\mu \in P[\Lambda E]: \hspace{1mm} \lambda + \mu \in P_+} \frac{\sum_{w\in W} (-1)^{l(w)}e^{w(\lambda + \mu + \rho)}}{\prod_{\alpha \in R_+}(e^{\alpha/2} - e^{-\alpha/2})} \end{gather}
It is clear that $$\sum_{\mu \in S} e^\mu = \sum_{\mu \in P[\Lambda E]} e^{w(\mu)},$$ for any $w \in W$. Using this equality, we can rewrite the left side of the equation \ref{extra_1} in the following way: 

$$\left(\sum_{\mu \in P[\Lambda E]} e^{\mu} \right)  \cdot \frac{\sum_{w\in W} (-1)^{l(w)}e^{w(\lambda + \rho)}}{\prod_{\alpha \in R_+}(e^{\alpha/2} - e^{-\alpha/2})} = \sum_{\mu \in P[\Lambda E]}\frac{\sum_{w\in W} (-1)^{l(w)}e^{w(\lambda + \mu + \rho)}}{\prod_{\alpha \in R_+}(e^{\alpha/2} - e^{-\alpha/2})}.$$
Hence, to finish the proof it is enough to show that \begin{gather} \label{extra_2}
 \sum_{\mu \in P[\Lambda E]: \lambda + \mu \notin P_+} \frac{\sum_{w\in W} (-1)^{l(w)}e^{w(\lambda + \mu + \rho)}}{\prod_{\alpha \in R_+}(e^{\alpha/2} - e^{-\alpha/2})} = 0.
 \end{gather}
 Notice that if $\lambda + \mu \notin P_+$, then $\exists j: s_{\alpha_j}(\lambda + \mu + \rho) = \lambda + \mu + \rho$, where $s_{\alpha_j}$ is one of the simple reflections that generate the Weyl group. Since $s_{\alpha_j}$ is involutive, all elements of the Weyl group can be divided into pairs $\{w, ws_{\alpha_j}\}$. Clearly, for any $w \in W$ $$|l(w) - l(ws_{\alpha_j})| = 1.$$ 
Therefore, if $\lambda + \mu \notin P_+$, then $$\sum_{w\in W} (-1)^{l(w)}e^{w(\lambda + \mu + \rho)} = \sum_{\{w, ws_{\alpha_j}\}} (-1)^{l(w)}e^{w(\lambda + \mu + \rho)} + (-1)^{l(ws_{\alpha_j})}e^{w(\lambda + \mu + \rho)} 
 = 0.$$
We get that each summand on the left side of the equation \ref{extra_2} is $0$ and we are done.  
\end{proof}

Using the formula \ref{eq_2.2}, we immediately get the following corollary. 

\begin{corollary}
\label{cor_2.1}
Tensor power of the spinor representation $\Lambda E^{\otimes N}$ of algebra Lie $\mathfrak{o}(\tilde{E})$ decomposes into the direct sum of simple $\mathfrak{o}(\tilde{E})$-modules :
\begin{gather} \label{eq_2.3}
    \Lambda E^{\otimes N}  \underset{\mathfrak{o}(\tilde{E})}{\cong} \bigoplus_{(\mu_1, \mu_2, \cdots, \mu_N) \in T^N} L_{\mu_1 + \mu_2 + \cdots + \mu_N}, \\
 \text{where} \hspace{1mm}   T^N = \{(\mu_1, \mu_2, \ldots, \mu_N) \in P[\Lambda E]^N \hspace{1mm}| \hspace{1mm} \mu_1 = \omega_{\pm}, \sum_{i=1}^{k} \mu_i \in P_+ \hspace{1mm}, \forall k \leqslant N   \}
\end{gather}
\end{corollary}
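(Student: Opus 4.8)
The plan is to argue by induction on $N$, peeling off one tensor factor of $\Lambda E$ at a time and applying Theorem \ref{th_2.1}. Throughout I use only that the category of finite-dimensional $\mathfrak{o}(\tilde E)$-modules is semisimple and that $-\otimes\Lambda E$ is additive, so it commutes with finite direct sums; this is standard.

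For the base case $N=1$, Proposition \ref{prop_1.3} gives $\Lambda E \cong L_{\omega_+}\oplus L_{\omega_-}$. Since $\omega_{\pm}\in P_+$, the set $T^1=\{(\mu_1)\in P[\Lambda E]\mid \mu_1=\omega_{\pm},\ \mu_1\in P_+\}$ equals $\{(\omega_+),(\omega_-)\}$, so the two sides of \ref{eq_2.3} agree.

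For the inductive step, assume $\Lambda E^{\otimes(N-1)}\cong\bigoplus_{(\mu_1,\ldots,\mu_{N-1})\in T^{N-1}}L_{\mu_1+\cdots+\mu_{N-1}}$. Tensoring with $\Lambda E$ and distributing over the direct sum yields $\Lambda E^{\otimes N}\cong\bigoplus_{(\mu_1,\ldots,\mu_{N-1})\in T^{N-1}}\bigl(L_{\mu_1+\cdots+\mu_{N-1}}\otimes\Lambda E\bigr)$. By the definition of $T^{N-1}$ each $\lambda:=\mu_1+\cdots+\mu_{N-1}$ lies in $P_+$, so Theorem \ref{th_2.1} applies and gives $L_{\mu_1+\cdots+\mu_{N-1}}\otimes\Lambda E\cong\bigoplus_{\mu_N\in P[\Lambda E]:\ \mu_1+\cdots+\mu_N\in P_+}L_{\mu_1+\cdots+\mu_N}$. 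Substituting this back, $\Lambda E^{\otimes N}$ is the direct sum of the $L_{\mu_1+\cdots+\mu_N}$ over all tuples $(\mu_1,\ldots,\mu_N)$ with $(\mu_1,\ldots,\mu_{N-1})\in T^{N-1}$, $\mu_N\in P[\Lambda E]$, and $\mu_1+\cdots+\mu_N\in P_+$.

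It remains to identify this index set with $T^N$. Unwinding definitions, $(\mu_1,\ldots,\mu_N)\in T^N$ iff each $\mu_i\in P[\Lambda E]$, $\mu_1=\omega_{\pm}$, and $\sum_{i=1}^{k}\mu_i\in P_+$ for all $k\leqslant N$; the last condition is equivalent to the conjunction of ``$\sum_{i=1}^{k}\mu_i\in P_+$ for all $k\leqslant N-1$'' and ``$\sum_{i=1}^{N}\mu_i\in P_+$'', the former of which (together with $\mu_1=\omega_\pm$ and $\mu_i\in P[\Lambda E]$) is precisely $(\mu_1,\ldots,\mu_{N-1})\in T^{N-1}$. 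Hence the recursion $T^N=\{(\tau,\mu_N)\mid \tau\in T^{N-1},\ \mu_N\in P[\Lambda E],\ (\text{sum of }\tau)+\mu_N\in P_+\}$ holds, the index set above is exactly $T^N$, and the induction closes. There is no genuine obstacle here; the only point requiring care is this last bookkeeping step — that the ``all partial sums dominant'' condition telescopes so that the one-step recursion for $T^N$ is valid — since everything else is a direct iteration of Theorem \ref{th_2.1}.
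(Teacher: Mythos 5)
Your induction on $N$ is correct, and it is exactly the argument the paper has in mind: the paper's proof of \ref{cor_2.1} simply says it follows directly from Theorem \ref{th_2.1} and Proposition \ref{prop_1.3}, leaving the iteration and the bookkeeping on $T^N$ implicit, which you have written out in full.
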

\begin{proof}
    Follows directly from the theorem \ref{th_2.1} and proposition \ref{prop_1.3}.
\end{proof}

\begin{notation}
    Denote the set of the highest weights of irreducible subrepresentations in $\Lambda E^{\otimes N}$ by $$\Delta^N  := \{\sum_{i=1}^N \mu_i \hspace{1mm} | \hspace{1mm} (\mu_1, \ldots, \mu_N) \in T^N \}.$$
\end{notation}

We can refine the decomposition formula \ref{eq_2.1}: 
\begin{gather}
    \label{eq_2.4} \Lambda E^{\otimes N} \underset{\mathfrak{o}(\tilde{E})}{\cong} \bigoplus_{\lambda \in \Delta^N } U_{\lambda} \otimes L_\lambda.
\end{gather}

\section{Cell diagrams}
\label{sec_3}
It happens that the set $\Delta^N $ has a nice combinatorial interpretation in terms of \textit{cell diagrams} (which is an analog of Young diagrams). 

\begin{definition}
\label{def_3.1}
\textbf{\textit{regular cell diagram}} of the length $N$ and of the height $n$ is a composition of cells and a vertical straight line, called \textbf{axis} of the cell diagram constructed by two sets of non-negative integers $\textbf{{l}} = (l_i)_{1 \leqslant i \leqslant n}$, $\textbf{{r}} = (r_i)_{1 \leqslant i \leqslant n}$ satisfying the following conditions:
\begin{itemize}
 \item $r_i + l_i = N, \hspace{1mm} \forall i \in \{1, \ldots n\}$
 \item $r_1 \geqslant r_2 \geqslant \ldots \geqslant r_n$
 \item $r_{n-1} \geqslant l_{n}$
\end{itemize}
The construction looks as follows. For every $i \in \{1, \ldots, n\}$ in the $i$-th row to the right of the axis draw $r_i$ cells and to the left of the axis draw $l_i$ cells. The enumeration of rows goes from top to bottom.
\end{definition}

\begin{notation}
The regular cell diagram, corresponding to sets $\textbf{{l}}, \textbf{{r}}$ is denoted by $D(\textbf{l}, \textbf{r})$.
\end{notation}
\begin{notation}
The set of regular cell diagrams of the length $N$ and height $n$ is denoted by  $\mathfrak{D}(N, n)$.
\end{notation}
For clarity, we provide an example of a regular cell diagram

\begin{example}
\label{ex_3.1}
Let $N =7, n = 4$. Take $\textbf{r} = (r_1, r_2, r_3, r_4) = (5, 4, 4, 3); \hspace{2mm} \textbf{l} = (l_1, l_2, l_3, l_4) = (2, 3, 3, 4)$. Then we get  regular cell diagram $D(\textbf{l}, \textbf{r})$: 

\centering 
\begin{tikzpicture}
\draw[thick,<->] (6,-0.5) -- (6, 4.5) node[anchor=north west] {};
\draw (2,0) rectangle (3,1);
\draw (3,0) rectangle (4,1);
\draw (4,0) rectangle (5,1);
\draw (5,0) rectangle (6,1);
\draw (6,0) rectangle (7,1);
\draw (7,0) rectangle (8,1);
\draw (8,0) rectangle (9,1);
\draw (3,1) rectangle (4,2);
\draw (4,1) rectangle (5,2);
\draw (5,1) rectangle (6,2);
\draw (6,1) rectangle (7,2);
\draw (7,1) rectangle (8,2);
\draw (8,1) rectangle (9,2);
\draw (9,1) rectangle (10,2);
\draw (3,2) rectangle (4,3);
\draw (4,2) rectangle (5,3);
\draw (5,2) rectangle (6,3);
\draw (6,2) rectangle (7,3);
\draw (7,2) rectangle (8,3);
\draw (8,2) rectangle (9,3);
\draw (9,2) rectangle (10,3);
\draw (5,3) rectangle (4,4);
\draw (6,3) rectangle (5,4);
\draw (7,3) rectangle (6,4);
\draw (8,3) rectangle (7,4);
\draw (9,3) rectangle (8,4);
\draw (10,3) rectangle (9,4);
\draw (11,3) rectangle (10,4);

\end{tikzpicture}

\end{example}

Our goal is to identify the set $\Delta^N $ with the set of regular cell diagrams of length $N$ and height $n$ denoted by $\mathfrak{D}(N, n)$. To do that, we need to make some observations about the set $\Delta^N $. 

From the very definition of $\Delta^N $ it is clear that $\Delta^N  \subset P_+$. For each $\lambda \in P_+$ and $N \in \mathbb{N}$ lets define two sets of numbers $$\textbf{{l}}(\lambda, N) = (l_i)_{1 \leqslant i \leqslant n}$$
and $$\textbf{{r}}(\lambda, N) = (r_i)_{1 \leqslant i \leqslant n}.$$ Each pair $(r_i, l_i), \forall i \in \{1, \ldots, n\}$ is uniquely determined as a solution of the following system of equations: \begin{gather} \label{eq_3.1}
\begin{cases} 
r_i + l_i = N \\
r_i - l_i = 2 \lambda_i
\end{cases}
\end{gather}
In other words: 
\begin{gather} \label{eq_3.2}
\begin{cases} 
r_i = \frac{N}{2} + \lambda_i \\
l_i = \frac{N}{2} - \lambda_i
\end{cases}
\end{gather}

\begin{proposition}
\label{prop_3.1}
For any $\lambda \in P_+$, $\lambda \in \Delta^N $ iff  $\textbf{{r}}(\lambda, N)$, $\textbf{{l}}(\lambda, N) \in (\mathbb{Z}_{+})^{\times n}.$
\end{proposition}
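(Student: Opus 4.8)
The plan is to reduce the claim to the combinatorial characterization of $\Delta^N$ already obtained in Corollary \ref{cor_2.1}, namely $\lambda \in \Delta^N$ iff there exists a chain $(\mu_1,\dots,\mu_N) \in T^N$ with $\sum_i \mu_i = \lambda$. Recall from Proposition \ref{prop_1.3} that every weight of $\Lambda E$ has all coordinates equal to $\pm\tfrac12$ in the basis $\textbf{h}^*$, and the admissible starting weights are $\omega_\pm = (\tfrac12,\dots,\tfrac12,\pm\tfrac12)$. The system \ref{eq_3.2} shows that $\textbf{r}(\lambda,N), \textbf{l}(\lambda,N) \in (\mathbb{Z}_+)^{\times n}$ is equivalent to two things: (i) $r_i = \tfrac N2 + \lambda_i$ and $l_i = \tfrac N2 - \lambda_i$ are integers for all $i$, i.e.\ every $\lambda_i$ lies in $\tfrac N2 + \mathbb{Z}$ (equivalently all $\lambda_i$ have the same half-integrality as $\tfrac N2$), and (ii) $|\lambda_i| \le \tfrac N2$ for all $i$. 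So the proposition is the assertion that, for $\lambda \in P_+$, membership in $\Delta^N$ is equivalent to this parity-plus-bound condition.

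For the forward implication ($\lambda \in \Delta^N \Rightarrow$ the condition): if $\lambda = \sum_{k=1}^N \mu_k$ with each $\mu_k$ a weight of $\Lambda E$, then each coordinate $\lambda_i = \sum_k (\mu_k)_i$ is a sum of $N$ terms each equal to $\pm\tfrac12$, hence $\lambda_i \in \tfrac N2 + \mathbb{Z}$ and $|\lambda_i| \le \tfrac N2$; that is exactly (i) and (ii), so $\textbf{r}(\lambda,N), \textbf{l}(\lambda,N) \in (\mathbb{Z}_+)^{\times n}$. For the reverse implication I would argue by explicit construction of a chain in $T^N$. Given $\lambda \in P_+$ satisfying (i) and (ii), set $r_i = \tfrac N2 + \lambda_i \in \{0,1,\dots,N\}$; these are weakly decreasing in $i$ since $\lambda_1 \ge \lambda_2 \ge \cdots \ge \lambda_{n-1} \ge |\lambda_n|$ (dominance in type $D$). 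For each coordinate $i$ I want, among the $N$ choices of sign $\varepsilon_{k,i} \in \{\pm\tfrac12\}$, exactly $r_i$ of them positive; the natural choice is $\varepsilon_{k,i} = +\tfrac12$ for $k \le r_i$ and $-\tfrac12$ for $k > r_i$, except that the constraint $\mu_1 = \omega_\pm$ forces $\varepsilon_{1,i} = +\tfrac12$ for $i < n$ (which is automatic once $r_i \ge 1$; the case $r_i = 0$ for some $i<n$ cannot occur because then $\lambda_i = -\tfrac N2 < 0$, contradicting $\lambda \in P_+$ unless $N=0$) and $\varepsilon_{1,n} = \pm\tfrac12$ freely. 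Then define $\mu_k = (\varepsilon_{k,1},\dots,\varepsilon_{k,n})$. One checks $\mu_1 = \omega_\pm$, $\sum_k \mu_k = \lambda$, and — the one genuine thing to verify — that every partial sum $\sum_{k=1}^m \mu_k$ is dominant, i.e.\ lies in $P_+$.

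The main obstacle is precisely this last point: the partial sums must stay in $P_+$, and with the "greedy" sign assignment above the partial-sum coordinates are $(\sum_{k=1}^m \varepsilon_{k,i})_i = (\min(m,r_i) - \max(0, m - r_i)\cdot\text{... })$, more cleanly $\big(\tfrac{\min(m,r_i)}{1} - \tfrac{\max(0,m-r_i)}{1}\big)$-type expressions that are themselves weakly decreasing in $i$ because $m \mapsto$ (number of $+$ signs used in the first $m$ slots of column $i$) is a concave-type function and $r_i$ is decreasing; one must also check the type-$D$ tail condition $(\text{partial sum})_{n-1} \ge |(\text{partial sum})_n|$, which follows from $r_{n-1} \ge l_n$, i.e.\ from the third bullet $r_{n-1}(\lambda,N) \ge l_n(\lambda,N)$ — note this inequality is automatic from dominance of $\lambda$ once (i),(ii) hold, since $\lambda_{n-1} \ge |\lambda_n|$ gives $r_{n-1} = \tfrac N2+\lambda_{n-1} \ge \tfrac N2 + |\lambda_n| \ge \tfrac N2 - \lambda_n = l_n$. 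So I expect the proof to consist of: restating the condition via \ref{eq_3.2}; the easy forward direction; and a short induction on $m$ showing the greedy chain lies in $T^N$, the only subtlety being a careful bookkeeping of signs to confirm weak decrease of partial-sum coordinates and the $D_n$ tail inequality at every step.
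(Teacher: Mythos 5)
Your proposal is correct and takes essentially the same route as the paper: the forward direction counts the $+\tfrac12$ and $-\tfrac12$ entries in each coordinate across the tuple to identify $\textbf{r},\textbf{l}$, and the reverse direction uses the same greedy sign assignment $(\mu_j)_i=+\tfrac12$ for $j\le r_i$, verifying partial-sum dominance via monotonicity of the $r_i$ and the inequality $r_{n-1}\ge l_n$ (both consequences of $\lambda\in P_+$). The only cosmetic differences are your reformulation of the integrality condition as "parity plus bound" and a slightly looser presentation of the partial-sum computation, which the paper spells out explicitly with $\min/\max$ expressions.
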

\begin{proof}
If $\lambda \in \Delta^N $ then there exists a tuple $(\mu_1, \mu_2 \ldots \mu_N) \in T^N$, such that $\sum_{j=1}^N \mu_j = \lambda$.  Then let  $\bar{r}_i$ be the amount of weights $\mu_j$ in this tuple with the $i$-th coordinate equal to $\frac{1}{2}$ and let $\bar{l}_i$ be the amount of weights $\mu_j$ in this tuple with the $i$-th coordinate equal to $-\frac{1}{2}$. It is easy to see that the pair $(\bar{r}_i, \bar{l}_i)$ is the solution of the system \ref{eq_3.1}. Hence, $r_i = \bar{r}_i \in \mathbb{Z}_{+}$ and $l_i = \bar{l}_i \in \mathbb{Z}_+$.

If $\textbf{{r}}(\lambda, N)$, $\textbf{{l}}(\lambda, N) \in (\mathbb{Z}_{+})^{\times n}$ then one can construct the tuple $(\mu_1, \mu_2, \ldots \mu_N)$ such that 

\begin{gather*}
    (\mu_j)_i = \begin{cases}
    \frac{1}{2}, \hspace{1mm} \text{if} \hspace{1mm} r_i \geqslant j \\
    -\frac{1}{2}, \hspace{1mm} \text{otherwise}
    \end{cases}
\end{gather*}
It is clear that $\sum_{j=1}^{N} \mu_j = \lambda$. We need to check that $(\mu_1, \mu_2, \ldots \mu_N) \in T^N$. Since $\lambda \in P_+$, it follows from \ref{eq_3.2} that $$r_1 \geqslant r_2 \geqslant \ldots \geqslant r_n$$ and that $$r_{n-1} \geqslant l_n.$$
We want to show that $\sum_{j = 1}^{k} \mu_j \in P_+$ for every $k \leqslant N$. It follows from the construction of the tuple that

$$\left(\sum_{j = 1}^{k} \mu_j\right)_i = \frac{\text{min}(k, r_i)}{2} - \frac{\text{max}(k - r_i, 0)}{2}.$$
Since 
$$r_1 \geqslant r_2 \geqslant \ldots \geqslant r_n,$$
we immediately get  $$\left(\sum_{j = 1}^{k} \mu_j\right)_1 \geqslant \left(\sum_{j = 1}^{k} \mu_j\right)_2 \geqslant \ldots \geqslant \left(\sum_{j = 1}^{k} \mu_j\right)_n. $$

It remains to check that $$ \left(\sum_{j = 1}^{k} \mu_j\right)_{n-1}+ \left(\sum_{j = 1}^{k} \mu_j\right)_{n} \geqslant 0.$$
We rewrite it as follows: 
\begin{gather} \label{eq_3.3} \frac{\text{min}(k, r_{n-1})}{2} - \frac{\text{max}(k - r_{n-1}, 0)}{2} + \frac{\text{min}(k, r_{n})}{2} - \frac{\text{max}(k - r_{n}, 0)}{2} \geqslant{0}.
\end{gather}
Notice that \begin{gather} \label{eq_3.4} \frac{\text{min}(k, r_{n-1})}{2} -  \frac{\text{max}(k - r_{n}, 0)}{2} \geqslant 0.\end{gather} Indeed, if $k < r_{n-1}$, then it is obviously true. On the contrary, if $k \geqslant r_{n-1}$, then we have to prove that $$\frac{r_{n-1}}{2} - \frac{\text{max}(k - r_{n}, 0)}{2} \geqslant 0,$$ 
which is true, since $$r_{n-1} \geqslant l_{n} = N - r_n  \geqslant k - r_n$$ Similarly, we get 

$$ \frac{\text{min}(k, r_{n})}{2} -  \frac{\text{max}(k - r_{n-1}, 0)}{2} \geqslant 0,$$

since  $r_{n-1} \geqslant l_{n} \Rightarrow  r_n \geqslant l_{n-1} =  N - r_{n-1} \geqslant k - r_{n-1}$. 

Hence, we get the inequality \ref{eq_3.3} and we are done. 
\end{proof}

Using proposition \ref{prop_3.1},  one can explicitly describe the set of $\Delta^N $

\begin{corollary}
  \label{cor_3.1}
Let $\lambda$ be an element of $P_+$. Then $\lambda \in \Delta^N $ iff  for every $i \in \{1, 2, \ldots n \}$:
 \begin{itemize}
 \item $-\frac{1}{2}N \leqslant \lambda_i  \leqslant  \frac{1}{2}N , $
 \item $(2\lambda_i + N) \mathrel{\vdots} 2 $.
 \end{itemize}

\end{corollary}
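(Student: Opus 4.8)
The plan is to deduce Corollary~\ref{cor_3.1} from Proposition~\ref{prop_3.1} by simply translating the condition ``$\textbf{r}(\lambda,N),\textbf{l}(\lambda,N)\in(\mathbb{Z}_+)^{\times n}$'' into explicit arithmetic constraints on the coordinates $\lambda_i$. Recall from \ref{eq_3.2} that $r_i=\tfrac{N}{2}+\lambda_i$ and $l_i=\tfrac{N}{2}-\lambda_i$, so the membership of the pair $(r_i,l_i)$ in $\mathbb{Z}_+\times\mathbb{Z}_+$ is governed by two independent requirements: an integrality condition and a positivity (sign) condition, both of which must hold simultaneously for each $i$.

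First I would handle integrality. Since $r_i=\tfrac{N}{2}+\lambda_i$ and $l_i=\tfrac{N}{2}-\lambda_i$, and since $r_i+l_i=N\in\mathbb{Z}$ automatically, the pair $r_i,l_i$ consists of integers if and only if $r_i\in\mathbb{Z}$, equivalently $2\lambda_i+N\in 2\mathbb{Z}$, i.e. $(2\lambda_i+N)\mathrel{\vdots}2$. (Note this is a genuine condition because for $\mathfrak{o}_{2n}$ the weight $\lambda$ may be half-integral, as in the spinor weights $\omega_\pm$.) Observe also that $l_i=N-r_i$, so once $r_i\in\mathbb{Z}$ we automatically get $l_i\in\mathbb{Z}$; hence a single divisibility condition per coordinate suffices.

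Next I would handle the sign condition. Given that $r_i,l_i\in\mathbb{Z}$, we have $r_i\geqslant0$ iff $\lambda_i\geqslant-\tfrac{N}{2}$, and $l_i\geqslant0$ iff $\lambda_i\leqslant\tfrac{N}{2}$. Combining, $r_i,l_i\in\mathbb{Z}_+$ iff $-\tfrac12 N\leqslant\lambda_i\leqslant\tfrac12 N$ together with $(2\lambda_i+N)\mathrel{\vdots}2$. Quantifying over all $i\in\{1,\dots,n\}$ and invoking Proposition~\ref{prop_3.1} gives exactly the stated characterization of $\Delta^N$.

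There is essentially no obstacle here; the corollary is a direct unpacking of \ref{eq_3.2} and Proposition~\ref{prop_3.1}. The only point requiring a line of care is making explicit that $l_i\in\mathbb{Z}_+$ follows from $r_i\in\mathbb{Z}_+$ and $\lambda\in P_+$ via $l_i=N-r_i$ and $\lambda_i\leqslant\tfrac{N}{2}$, so that the two bulleted conditions really do capture both $r_i$ and $l_i$ being nonnegative integers, not just $r_i$. (Strictly, $l_i\ge0$ does need the upper bound $\lambda_i\le N/2$; it is not implied by $r_i\ge0$ alone, which is why both inequalities appear in the first bullet.)
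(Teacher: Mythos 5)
Your proposal is correct and follows exactly the route the paper intends: the paper states Corollary~\ref{cor_3.1} as an immediate consequence of Proposition~\ref{prop_3.1} via the explicit formulas \ref{eq_3.2}, without writing out the details, and your argument is precisely that unpacking (integrality of $r_i$ gives the divisibility condition, nonnegativity of $r_i$ and $l_i$ gives the two inequalities, and $l_i\in\mathbb{Z}$ is automatic once $r_i\in\mathbb{Z}$).
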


From the proof of the proposition \ref{prop_3.1} we know that for each $\lambda \in \Delta^N $ the sets $\textbf{{r}}(\lambda, N)$, $\textbf{{l}}(\lambda, N) \in (\mathbb{Z}_{+})^{\times n}$ and that $r_1 \geqslant r_2 \geqslant \ldots \geqslant r_n$ and $r_{n-1} \geqslant l_{n}$.  From the very definition of the sets $\textbf{{r}}(\lambda, N)$, $\textbf{{l}}(\lambda, N)$ we have  $r_i + l_i = N$ for every $i \in \{1, 2, \ldots n \}$. Hence, the following definition makes sense. 
 
\begin{definition}
\label{def_3.2} \textbf{Cell diagram} of the shape $\lambda \in \Delta^N $, denoted by $D^N_{\lambda}$, is a regular cell diagram of the length $N$ and height $n$ constructed by the sets $\textbf{{l}}(\lambda, N)$, $\textbf{{r}}(\lambda, N)$.  This definition can be expressed using the following formula: 
$$D_\lambda^N := D(\textbf{{l}}(\lambda, N),\textbf{{r}}(\lambda, N)).$$
\end{definition}

\begin{example}
\label{ex_3.2}
Let  $N = 7$ and $ \hspace{1mm} \lambda = (\frac{3}{2}, \frac{1}{2}, \frac{1}{2}, -\frac{1}{2})$ \newline
Then $D_{\lambda}^{N}$ is the regular cell diagram from the example \ref{ex_3.1}
\end{example}

Consider the map from $\Delta^N $ to the set $\mathfrak{D}(N, n)$, which sends $\lambda$ to $D_\lambda^{N}$. To identify these sets we need to prove that this map is a bijection.

\begin{proposition}
\label{prop_3.2}    
Map $\mathcal{K}_N : \Delta^N  \rightarrow \mathfrak{D}(N, n)$, where $$\mathcal{K}_N(\lambda) =  D_{\lambda}^N $$is a bijection. 
\end{proposition}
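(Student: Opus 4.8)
The plan is to show that $\mathcal{K}_N$ is both injective and surjective, and in fact the cleanest route is to exhibit an explicit two-sided inverse. Recall from the discussion preceding Definition \ref{def_3.2} that every $\lambda \in \Delta^N$ produces, via \ref{eq_3.2}, tuples $\textbf{r}(\lambda,N),\textbf{l}(\lambda,N) \in (\mathbb{Z}_+)^{\times n}$ satisfying $r_i + l_i = N$, $r_1 \geqslant r_2 \geqslant \cdots \geqslant r_n$, and $r_{n-1} \geqslant l_n$; these are precisely the defining conditions of a regular cell diagram in Definition \ref{def_3.1}, so $D_\lambda^N = D(\textbf{l}(\lambda,N),\textbf{r}(\lambda,N))$ genuinely lies in $\mathfrak{D}(N,n)$ and $\mathcal{K}_N$ is well-defined.

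For injectivity: suppose $\mathcal{K}_N(\lambda) = \mathcal{K}_N(\lambda')$. Two regular cell diagrams are equal exactly when their defining data $(\textbf{l},\textbf{r})$ coincide (the diagram records, row by row, the number of cells left and right of the axis). Hence $r_i(\lambda,N) = r_i(\lambda',N)$ for all $i$, and by the first line of \ref{eq_3.2}, $\lambda_i = r_i(\lambda,N) - \tfrac{N}{2} = r_i(\lambda',N) - \tfrac{N}{2} = \lambda'_i$ for every $i$, so $\lambda = \lambda'$.

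For surjectivity: given an arbitrary $D(\textbf{l},\textbf{r}) \in \mathfrak{D}(N,n)$, define $\lambda$ by $\lambda_i := r_i - \tfrac{N}{2}$ (equivalently $\lambda_i = \tfrac{1}{2}(r_i - l_i)$, which agrees since $r_i + l_i = N$). I claim $\lambda \in P_+$: the chain $r_1 \geqslant \cdots \geqslant r_n$ gives $\lambda_1 \geqslant \cdots \geqslant \lambda_n$, and $r_{n-1} \geqslant l_n = N - r_n$ rearranges to $r_{n-1} + r_n \geqslant N$, i.e. $\lambda_{n-1} + \lambda_n \geqslant 0$; together with the $D_n$ dominance conditions (the $\lambda_i$ are real and the relevant pairwise sums are nonnegative) this places $\lambda$ in $P_+$. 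Moreover $0 \leqslant r_i \leqslant N$ forces $-\tfrac{N}{2} \leqslant \lambda_i \leqslant \tfrac{N}{2}$, and $2\lambda_i + N = 2r_i \in 2\mathbb{Z}$, so by Corollary \ref{cor_3.1} we get $\lambda \in \Delta^N$. Finally, unwinding \ref{eq_3.2} shows $\textbf{r}(\lambda,N) = \textbf{r}$ and $\textbf{l}(\lambda,N) = \textbf{l}$, hence $\mathcal{K}_N(\lambda) = D(\textbf{l},\textbf{r})$. This proves surjectivity and completes the proof.

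The argument is essentially bookkeeping: the real content was already done in Proposition \ref{prop_3.1} and Corollary \ref{cor_3.1}, which translate membership in $\Delta^N$ into the arithmetic conditions on $(\textbf{l},\textbf{r})$. The only point requiring any care is checking that the dominance condition $r_{n-1} \geqslant l_n$ in Definition \ref{def_3.1} matches exactly the type-$D$ dominance inequality $\lambda_{n-1} + \lambda_n \geqslant 0$ together with the chain condition, so that the map $D(\textbf{l},\textbf{r}) \mapsto (r_i - \tfrac{N}{2})_i$ really does land in $\Delta^N$ and not merely in some larger set; I expect this compatibility of the two notions of "dominant" to be the only step where one must be attentive rather than mechanical.
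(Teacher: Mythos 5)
Your proof is correct and follows essentially the same route as the paper: injectivity by noting the diagram determines $(\textbf{l},\textbf{r})$ and hence $\lambda$ via \ref{eq_3.2}, and surjectivity by defining $\lambda_i = (r_i - l_i)/2$ from a given diagram and invoking \ref{prop_3.1}/\ref{cor_3.1} to place it in $\Delta^N$. The only difference is that you spell out why $\lambda \in P_+$ (matching the chain condition and $r_{n-1}\geqslant l_n$ against the type-$D$ dominance inequalities), a step the paper dismisses as clear, which is a reasonable and welcome bit of extra care.
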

\begin{proof}
It is clear that $\mathcal{K}_N$ is injective. Indeed, if $D_\lambda^N = D_{\lambda^{'}}^N$ then $$\textbf{{l}}(\lambda^{'}, N) = \textbf{{l}}(\lambda, N)$$
and 
$$\textbf{{r}}(\lambda^{'}, N) = \textbf{{r}}(\lambda, N).$$
But then $$\lambda^{'}_i = \lambda_{i}, \hspace{1mm} \forall i \in \{1, 2, \ldots, n \} \Rightarrow \lambda^{'} = \lambda.$$
Lets prove the surjectivity of $\mathcal{K}_N$. Take a regular cell diagram $D(\textbf{l}, \textbf{r}) \in \mathfrak{D}(N, n)$. Define the coefficients of $\lambda = (\lambda_i)_{1 \leqslant i \leqslant n}$ to be equal to
$$\lambda_i =\frac{r_i - l_i}{2}.$$
It is clear that $\lambda \in P_+$. Since $D(\textbf{l}, \textbf{r}) \in \mathfrak{D}(N, n)$ we have 
$$r_i + l_i = N , \hspace{1mm} \forall i \in \{1, 2, \ldots, n\}.$$
Therefore, $$\textbf{r} = \textbf{{r}}(\lambda, N)$$ and 
    $$\textbf{l} = \textbf{{l}}(\lambda, N)$$
We conclude that $\lambda \in P_+$ and $\textbf{{r}}(\lambda, N), \textbf{{l}}(\lambda, N) \in (\mathbb{Z}_{+})^{\times n}$. From the proposition \ref{prop_3.1} it follows that $\lambda \in \Delta^{N} $. So, $$D(\textbf{l}, \textbf{r}) = D(\textbf{{l}}(\lambda, N),\textbf{{r}}(\lambda, N)) = D_\lambda^{N} = \mathcal{K}_N(\lambda)$$
 and we are done.
\end{proof}

\section{Howe duality.} 

\label{sec_4}

In this section, we will prove that multiplicity spaces $U_\lambda$ defined in paragraph $2$ (see \ref{eq_2.1} or \ref{eq_2.4}) have a natural structure of an irreducible representation of the orthogonal group $O_N(\mathbb{C})$. 

Let $V$ be a vector space of dimension $N$ over the field of complex numbers. There is the natural action of $O_N(\mathbb{C})$ on $V$. Consider the vector space $W = V^{\oplus n}$. As $O_N(\mathbb{C})$-module $W$ is isomorphic to $V \otimes E$, where $E$ is a $n$-dimensional vector space and the group acts on the left factor. Take a look at the space $$\tilde{W} = W \oplus W^{*} \cong V \otimes E \oplus V^{*} \otimes E^{*}.$$ There  is a non-degenerate symmetric bilinear form on $V$ denoted by $(\cdot, \cdot)_V$, which identifies $O_N(\mathbb{C})$ with $O(V)$.  We can identify $V$ with $V^{*}$ by sending $v \in V$ to the linear functional $(v, \cdot) \in V^{*}$. This is not only the isomorphism of vector spaces but also as $O(V)$-modules since $\forall u \in V$:  
$$(g \rhd (v, \cdot)_V)(u) = (v, g^{-1}(u))_V = (g(v), u) = (g(v), \cdot)_V(u).$$
Therefore $$\tilde{W} = W \oplus W^{*} \underset{O(V)}{\cong} V \otimes (E \oplus E^{*}) = V \otimes \tilde{E}$$
Denote by $(\cdot, \cdot)_{\tilde{W}}$ the bilinear form on $\tilde{W} = W \oplus W^{*}$ defined in \ref{r_1.1}. Similarly, we denote by $(\cdot, \cdot)_{\tilde{E}}$ the respective bilinear form on $\tilde{E}$. 

\begin{proposition}
    \label{prop_4.1}
    Bilinear form $(\cdot, \cdot)_{\tilde{W}}$ on $\tilde{W}$ is the tensor product of the bilinear forms $(\cdot, \cdot)_V$ and $(\cdot, \cdot)_{\tilde{E}}$ on the factors. 
\end{proposition}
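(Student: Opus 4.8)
The plan is to unwind both bilinear forms in compatible bases and check that their Gram matrices are tensor products. First I would fix the basis $\mathbf{v} = (v_1, \ldots, v_N)$ of $V$ in which $(\cdot, \cdot)_V$ has some Gram matrix $G_V$, and the basis $\tilde{\mathbf{e}} = \mathbf{e} \cup \mathbf{e}^*$ of $\tilde{E}$ in which, by Remark \ref{r_1.1}, the form $(\cdot, \cdot)_{\tilde{E}}$ has Gram matrix $G_{\tilde E} = \begin{bmatrix} 0 & I_n \\ I_n & 0 \end{bmatrix}$. Under the identification $\tilde{W} = W \oplus W^* \cong V \otimes E \oplus V^* \otimes E^*$, the space $W = V^{\oplus n}$ gets the basis $\{v_a \otimes e_i\}$ and $W^*$ the dual basis $\{v_a^* \otimes e_i^*\}$, where I also use the self-duality $V \cong V^*$ coming from $(\cdot,\cdot)_V$; this is exactly the identification $\tilde W \cong V \otimes \tilde E$ recorded before the statement. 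So the natural ordered basis of $\tilde W$ to use is $\{v_a \otimes x\}$ with $a$ ranging over $1, \ldots, N$ and $x$ ranging over $\tilde{\mathbf{e}}$.

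Next I would compute $(\cdot,\cdot)_{\tilde W}$ on two such basis vectors directly from its definition in Remark \ref{r_1.1}. Writing a general element of $W$ as $w = \sum_i w^{(i)} \otimes e_i$ with $w^{(i)} \in V$ and an element of $W^*$ as $w^* = \sum_i (w^*)^{(i)} \otimes e_i^*$, the pairing $\langle w, w^*\rangle$ between $W$ and $W^*$ factors as $\sum_i \langle w^{(i)}, (w^*)^{(i)}\rangle_V$, i.e. it is the pairing induced by $(\cdot,\cdot)_V$ on the $V$-factor and by the canonical $E$--$E^*$ pairing on the $E$-factor. Feeding this into $(x + x^*, y + y^*)_{\tilde W} = \langle x, y^*\rangle + \langle x^*, y\rangle$ and comparing with the analogous expansion of $(\cdot,\cdot)_{\tilde E}$, one sees that on decomposable vectors $v \otimes u, v' \otimes u' \in V \otimes \tilde E$ one gets $(v \otimes u, v' \otimes u')_{\tilde W} = (v, v')_V \cdot (u, u')_{\tilde E}$, which is precisely the defining property of the tensor product of the two forms. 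Equivalently, in the basis above the Gram matrix of $(\cdot,\cdot)_{\tilde W}$ is $G_V \otimes G_{\tilde E}$.

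I would also remark that this is compatible with the $O(V)$-action: both sides are $O(V)$-invariant forms on $V \otimes \tilde E$ (with $O(V)$ acting on the first factor), and invariance of the right-hand side is immediate since $(\cdot,\cdot)_V$ is $O(V)$-invariant by definition and $\tilde E$ carries the trivial action here. This also double-checks the earlier claim that the isomorphism $\tilde W \cong V \otimes \tilde E$ is one of $O(V)$-modules.

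The computation is entirely routine; there is no real obstacle. The only point requiring a little care is bookkeeping: making sure that the self-duality $V \cong V^*$ used to write $W^* \cong V^* \otimes E^* \cong V \otimes E^*$ is the one induced by $(\cdot,\cdot)_V$ (so that the $\langle w, w^*\rangle$ pairing really does reproduce $(\cdot,\cdot)_V$ on the $V$-factor rather than some twisted version of it), and then keeping the ordering of the basis $\tilde{\mathbf e}$ consistent so that the block form $\begin{bmatrix} 0 & I_n \\ I_n & 0 \end{bmatrix}$ for $G_{\tilde E}$ matches the block form for $G_{\tilde W}$. Once the bases are aligned, the identity $G_{\tilde W} = G_V \otimes G_{\tilde E}$ is immediate and the proposition follows.
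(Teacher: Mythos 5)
Your proof is correct and takes essentially the same approach as the paper: both fix the basis $\{v_a \otimes e_j,\, v_a \otimes e_j^*\}$ of $\tilde W \cong V \otimes \tilde E$ (using the identification $V \cong V^*$ furnished by $(\cdot,\cdot)_V$, with $(v_a)$ orthonormal) and verify agreement of the two forms on basis vectors. The only cosmetic difference is that you phrase the check via Gram matrices and decomposable tensors while the paper writes out the evaluation on generic sums of basis vectors directly; the content is the same.
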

\begin{proof}
It is enough to check that the forms $(\cdot, \cdot)_{\tilde{W}}$ and $(\cdot, \cdot)_{V \otimes \tilde{E}}:=(\cdot, \cdot)_V \otimes (\cdot, \cdot)_{\tilde{E}}$ agree on the basis of $\tilde{W}$. 
Let $(e_1, \ldots, e_n)$ be a basis of $E$ and $(e_1^{*}, \ldots e_n^{*})$ be its dual basis in $E^{*}$.  Let $(v_1, \ldots v_N)$ be an orthonormal basis of $V$ with respect to $( \cdot, \cdot)_{V}$. We obtain the basis of $\tilde{W}$ consisting of simple tensors $v_i \otimes e_j$ and $v_i \otimes e_j^{*}$. Tensors of the type $v_i \otimes e_j$ span the subspace $W \subset \tilde{W}$ and tensors of the type $v_i \otimes e_j^{*}$ span $W^{*} \subset \tilde{W}$.
Then 
\begin{gather*}
(v_{i_1} \otimes e_{j_1} + v_{i_2} \otimes e^{*}_{j_2} ,v_{i_3} \otimes e_{j_3} + v_{i_4} \otimes e^{*}_{j_4})_{\tilde{W}} = \langle v_{i_2} \otimes e^{*}_{j_2}, v_{i_3} \otimes e_{j_3} \rangle + \langle v_{i_1} \otimes e_{j_1},  v_{i_4} \otimes e^{*}_{j_4}\rangle = \\
= (v_{i_2}, v_{i_3})_{V} \cdot (e^{*}_{j_2}, e_{j_3})_{\tilde{E}} + (v_{i_1}, v_{i_4})_{V} \cdot ( e_{j_1}, e^{*}_{j_4})_{\tilde{E}} = (v_{i_2}, v_{i_3})_{V} \cdot (e^{*}_{j_2}, e_{j_3})_{\tilde{E}} + (v_{i_1}, v_{i_4})_{V} \cdot ( e_{j_1}, e^{*}_{j_4})_{\tilde{E}} +\\ + (v_{i_1}, v_{i_3})_{V} \cdot (e_{j_1}, e_{j_3})_{\tilde{E}} + (v_{i_2}, v_{i_4})_{V} \cdot ( e^{*}_{j_2}, e^{*}_{j_4})_{\tilde{E}} = (v_{i_1} \otimes e_{j_1} + v_{i_2} \otimes e^{*}_{j_2} ,v_{i_3} \otimes e_{j_3} + v_{i_4} \otimes e^{*}_{j_4})_{V \otimes \tilde{E}}
\end{gather*}
and we are done.
\end{proof}

\begin{corollary}
\label{cor_4.1}
Lie groups $O(\tilde{E})$ and $O(V)$ are naturally embedded into $O(\tilde{W})$. Indeed, we have inclusions 
$$i_{\tilde{E}}: O(\tilde{E}) \hookrightarrow O(\tilde{W})$$ defined by $$f \mapsto Id_V \otimes f,$$ and $$i_{V}: O(V) \hookrightarrow O(\tilde{W})$$ defined by $$g \mapsto g \otimes Id_{\tilde{E}}.$$
\end{corollary}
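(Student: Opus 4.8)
The plan is to deduce the statement directly from Proposition \ref{prop_4.1} together with the identification $\tilde{W} \cong V \otimes \tilde{E}$ established just above it, checking three things in turn: that the two maps land in $O(\tilde{W})$, that they are group homomorphisms, and that they are injective. First I would verify that $i_{\tilde{E}}$ is well-defined. Given $f \in O(\tilde{E})$, the operator $Id_V \otimes f$ is invertible on $V \otimes \tilde{E}$ with inverse $Id_V \otimes f^{-1}$, so it suffices to check that it preserves $(\cdot,\cdot)_{\tilde{W}}$. Since the form is bilinear and simple tensors span $V \otimes \tilde{E}$, it is enough to evaluate it on images of simple tensors: by Proposition \ref{prop_4.1},
$$\bigl((Id_V \otimes f)(v \otimes x),\, (Id_V \otimes f)(v' \otimes x')\bigr)_{\tilde{W}} = (v,v')_V \,(f(x), f(x'))_{\tilde{E}} = (v,v')_V \,(x, x')_{\tilde{E}} = (v \otimes x,\, v' \otimes x')_{\tilde{W}},$$
where the middle equality uses that $f \in O(\tilde{E})$. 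Hence $i_{\tilde{E}}(f) = Id_V \otimes f \in O(\tilde{W})$, and the identical argument with the roles of the two tensor factors exchanged shows $i_V(g) = g \otimes Id_{\tilde{E}} \in O(\tilde{W})$ for $g \in O(V)$.

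Next I would observe that both maps are homomorphisms of groups. This is immediate from the functoriality of the tensor product of linear maps: $(Id_V \otimes f_1)\circ(Id_V \otimes f_2) = Id_V \otimes (f_1 \circ f_2)$ and $(g_1 \otimes Id_{\tilde{E}})\circ(g_2 \otimes Id_{\tilde{E}}) = (g_1 \circ g_2)\otimes Id_{\tilde{E}}$, and each map sends the identity transformation to the identity transformation of $\tilde{W}$. Finally, for injectivity, suppose $Id_V \otimes f = Id_{\tilde{W}}$; fixing any nonzero $v \in V$ we get $v \otimes f(x) = v \otimes x$ for every $x \in \tilde{E}$, which forces $f(x) = x$ for all $x$, i.e. $f = Id_{\tilde{E}}$. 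The argument for $i_V$ is symmetric, using a nonzero vector of $\tilde{E}$. Thus $i_{\tilde{E}}$ and $i_V$ are injective group homomorphisms into $O(\tilde{W})$, i.e. embeddings.

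There is no real obstacle here; the corollary is essentially a formal consequence of Proposition \ref{prop_4.1}. The only point that requires a little care is that a general element of $\tilde{W} = V \otimes \tilde{E}$ is a sum of simple tensors rather than a single simple tensor, so the verification that the form is preserved should be phrased via bilinear extension (equivalently, checked on a basis of simple tensors, exactly as in the proof of Proposition \ref{prop_4.1}) rather than on simple tensors alone.
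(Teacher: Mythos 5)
Your proof is correct and fills in exactly the unstated details the paper implicitly invokes: the corollary follows from Proposition \ref{prop_4.1} because the tensor-product form is preserved by $Id_V \otimes f$ and $g \otimes Id_{\tilde{E}}$ whenever the respective factors preserve the forms on $V$ and $\tilde{E}$. The paper gives no separate argument for this corollary, so there is no divergence of approach to compare.
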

So, we can treat groups $O(\tilde{E})$ and $O(V)$ as subgroups of $O(\tilde{W})$. 

\begin{proposition}
\label{prop_4.2}
Subgroups $O(\tilde{E})$ and $O(V)$ of the group $O(\tilde{W})$ are centralizers of each other. 
\end{proposition}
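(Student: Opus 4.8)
The plan is to exhibit both inclusions $O(\tilde E)\subseteq Z_{O(\tilde W)}(O(V))$ and $O(V)\subseteq Z_{O(\tilde W)}(O(\tilde E))$ and then, conversely, to show that anything in $O(\tilde W)$ commuting with one of the factors actually lies in the image of the other. The two containments $O(\tilde E)\subseteq Z(O(V))$ and $O(V)\subseteq Z(O(\tilde E))$ are immediate from Corollary~\ref{cor_4.1}: under $i_V$ and $i_{\tilde E}$ the groups act as $g\otimes \mathrm{Id}_{\tilde E}$ and $\mathrm{Id}_V\otimes f$ on $\tilde W = V\otimes\tilde E$, and operators of the form $g\otimes\mathrm{Id}$ and $\mathrm{Id}\otimes f$ commute on a tensor product. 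So the content is the reverse inclusion: if $\varphi\in O(\tilde W)$ commutes with every $\mathrm{Id}_V\otimes f$, $f\in O(\tilde E)$, then $\varphi = g\otimes\mathrm{Id}_{\tilde E}$ for some $g\in O(V)$, and symmetrically for the other factor.

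First I would reduce this to a statement about $\mathrm{GL}$: an endomorphism of $V\otimes\tilde E$ commuting with all of $\mathrm{Id}_V\otimes O(\tilde E)$ in particular commutes with the Lie algebra $\mathrm{Id}_V\otimes\mathfrak o(\tilde E)$, and since $\tilde E$ is an irreducible $\mathfrak o(\tilde E)$-module (it is the standard $2n$-dimensional representation, which is irreducible for $n>1$), Schur's lemma applied to the $\mathrm{End}(V)\otimes\mathrm{End}(\tilde E)$-decomposition forces the centralizer of $\mathrm{Id}_V\otimes\mathfrak o(\tilde E)$ in $\mathrm{End}(V\otimes\tilde E)$ to be exactly $\mathrm{End}(V)\otimes\mathrm{Id}_{\tilde E}$. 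Hence $\varphi = g\otimes\mathrm{Id}_{\tilde E}$ for some $g\in\mathrm{GL}(V)$. Then I would use Proposition~\ref{prop_4.1}: since the form on $\tilde W$ is $(\cdot,\cdot)_V\otimes(\cdot,\cdot)_{\tilde E}$ and $\varphi=g\otimes\mathrm{Id}_{\tilde E}$ preserves it, writing out the condition on simple tensors $v_1\otimes x$, $v_2\otimes y$ gives $(gv_1,gv_2)_V\,(x,y)_{\tilde E} = (v_1,v_2)_V\,(x,y)_{\tilde E}$ for all $v_i\in V$, $x,y\in\tilde E$; choosing $x,y$ with $(x,y)_{\tilde E}\neq 0$ yields $(gv_1,gv_2)_V=(v_1,v_2)_V$, i.e. $g\in O(V)$. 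Thus $\varphi = i_V(g)$, proving $Z_{O(\tilde W)}(O(\tilde E))\subseteq i_V(O(V))$. The symmetric argument, using that $V$ is an irreducible $O(V)$-module (the standard representation) so that the centralizer of $O(V)\otimes\mathrm{Id}_{\tilde E}$ is $\mathrm{Id}_V\otimes\mathrm{End}(\tilde E)$, together with Proposition~\ref{prop_4.1}, gives $Z_{O(\tilde W)}(O(V))\subseteq i_{\tilde E}(O(\tilde E))$.

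The main obstacle I anticipate is the irreducibility input needed for the Schur-lemma step, and getting it at the level of the \emph{group} $O(V)$ rather than just $\mathrm{SO}(V)$ or the connected orthogonal group: the standard $N$-dimensional representation of the full orthogonal group is irreducible, so its commutant in $\mathrm{End}(V)$ is the scalars, which is what makes the tensor-factorization argument go through; I would make sure to state this for $O_N(\mathbb C)$ explicitly. (For the $\mathfrak o(\tilde E)$ side one should note $n>1$, which is assumed throughout, so that $\tilde E\cong\mathbb C^{2n}$ is genuinely irreducible.) A secondary, purely bookkeeping point is that I should phrase everything inside $O(\tilde W)$: I first argue in $\mathrm{End}(\tilde W)$ to get the tensor factorization, and only afterwards impose the orthogonality constraint via Proposition~\ref{prop_4.1} to land back in the orthogonal groups. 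Everything else is routine linear algebra on simple tensors.
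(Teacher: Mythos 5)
Your proof is correct and takes essentially the same route as the paper: compute the centralizer inside $\mathrm{End}(V)\otimes\mathrm{End}(\tilde E)$ via a Schur-type argument using irreducibility of the standard representation of one factor, and then impose preservation of the tensor-product bilinear form from Proposition~\ref{prop_4.1} to land in the orthogonal subgroup of the other factor. The only cosmetic difference is your detour through the Lie algebra $\mathfrak{o}(\tilde E)$ on one side (which is what forces the $n>1$ caveat), whereas the paper argues directly at the group level on both sides; this does not affect correctness under the paper's standing hypotheses.
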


\begin{proof}
Any operator from $O(\tilde {W})$ lies in $\text{End}(\tilde{W}) \cong \text{End}(V)\otimes \text{End}(\tilde{E})$. Therefore, we can express any operator $x \in O(\tilde{W})$ in the following form $$ x = f_1 \otimes g_1 + f_2 \otimes g_2 +  \ldots + f_n \otimes g_n$$
Where $f_i$'s and $g_i$'s are linearly independent as elements of $\text{End}(V)$ and $\text{End}(\tilde{E})$ respectively.   Assume $x$ commutes with $f \otimes Id_{\tilde{E}} \in O(\tilde{W})$ for every $f \in O(V)$.   

Then each $f_i$ commutes with any $f \in O(V)$, since $g_i$'s are linearly independent. Hence, each $f_i$ is a scalar operator, so one can write $$ x =  f_1 \otimes g_1 + f_2 \otimes g_2 +  \ldots + f_n \otimes g_n = Id_{V} \otimes g$$
Operator $x$ has to preserve the bilinear form $(\cdot, \cdot)_{\tilde{W}}$. By proposition \ref{prop_4.1} $(\cdot, \cdot)_{\tilde{W}} =(\cdot, \cdot)_V \otimes (\cdot, \cdot)_{\tilde{E}}$, so $g$ must lie in $O(\tilde{E})$. We conclude that the operators from $O(\tilde{W})$, which commute with the operators from $O(V) \subset O(\tilde{W})$ are precisely the operators from $O(\tilde{E}) \subset O(\tilde{W})$. Similarly, we see that the operators from $O(\tilde{W})$, which commute with the operators from $O(\tilde{E}) \subset O(\tilde{W})$ are precisely the operators from $O(V) \subset O(\tilde{W})$. Hence, $O(V)$ and $O(\tilde{E})$ are centralizers of each other inside $O(\tilde{W})$ and we are done.
\end{proof}
\begin{corollary}
\label{cor_4.2}
Lie algebras $\mathfrak{o}(\tilde{E})$ and $\mathfrak{o}(V)$ are naturally embedded into $\mathfrak{o}(\tilde{W})$ by the differentials of the Lie group homomorphisms $i_{\tilde{E}}$ and $i_{V}$ established in corollary \ref{cor_4.1}. As subalgebras of $\mathfrak{o}(\tilde{W})$, $\mathfrak{o}(\tilde{E})$ and $\mathfrak{o}(V)$ centralize each other. 
\end{corollary}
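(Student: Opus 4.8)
The assertion has two parts: that the differentials $di_{\tilde E}$ and $di_V$ are genuine embeddings of Lie algebras, and that their images centralize each other inside $\mathfrak{o}(\tilde W)$. For the embedding part the plan is simply to note that $i_{\tilde E}$ and $i_V$ of Corollary~\ref{cor_4.1} are \emph{injective} Lie group homomorphisms, and that the differential at the identity of an injective Lie group homomorphism is an injective Lie algebra homomorphism. Concretely, differentiating $f \mapsto Id_V \otimes f$ and $g \mapsto g \otimes Id_{\tilde E}$ gives
$$di_{\tilde E}(X) = Id_V \otimes X \quad (X \in \mathfrak{o}(\tilde E)), \qquad di_V(Y) = Y \otimes Id_{\tilde E} \quad (Y \in \mathfrak{o}(V)),$$
viewed inside $\mathfrak{gl}(\tilde W) \cong \mathfrak{gl}(V) \otimes \mathfrak{gl}(\tilde E)$, so injectivity is clear and $\mathfrak{o}(\tilde E)$, $\mathfrak{o}(V)$ are realized as subalgebras of $\mathfrak{o}(\tilde W)$.

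For the centralizing part I would reproduce the argument of Proposition~\ref{prop_4.2}, now with Lie algebra elements in place of group elements. That the two subalgebras commute is the one-line identity $[\,Id_V \otimes X,\ Y \otimes Id_{\tilde E}\,] = Y \otimes X - Y \otimes X = 0$ (equivalently, differentiate the fact from Proposition~\ref{prop_4.2} that $i_{\tilde E}(O(\tilde E))$ and $i_V(O(V))$ commute elementwise). To see that each subalgebra is the \emph{full} centralizer of the other, take $x \in \mathfrak{o}(\tilde W) \subset \operatorname{End}(V) \otimes \operatorname{End}(\tilde E)$ and write $x = \sum_i f_i \otimes g_i$ with the $f_i$ linearly independent in $\operatorname{End}(V)$ and the $g_i$ linearly independent in $\operatorname{End}(\tilde E)$. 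If $x$ commutes with $Y \otimes Id_{\tilde E}$ for every $Y \in \mathfrak{o}(V)$, then $\sum_i [f_i, Y] \otimes g_i = 0$, and linear independence of the $g_i$ forces $[f_i, Y] = 0$ for all $i$ and all $Y \in \mathfrak{o}(V)$; since the standard representation of $\mathfrak{o}(V) \cong \mathfrak{o}_N$ on $V$ is irreducible, Schur's lemma makes each $f_i$ scalar, so $x = Id_V \otimes g$ with $g \in \operatorname{End}(\tilde E)$. Because $x$ preserves the form $(\cdot,\cdot)_{\tilde W} = (\cdot,\cdot)_V \otimes (\cdot,\cdot)_{\tilde E}$ of Proposition~\ref{prop_4.1} and is skew-adjoint, the factor $g$ must be skew-adjoint with respect to $(\cdot,\cdot)_{\tilde E}$, that is $g \in \mathfrak{o}(\tilde E)$, so $x \in di_{\tilde E}(\mathfrak{o}(\tilde E))$. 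The symmetric argument, with $V$ and $\tilde E$ interchanged and using irreducibility of the standard representation of $\mathfrak{o}(\tilde E) \cong \mathfrak{o}_{2n}$ on $\tilde E$, shows that anything in $\mathfrak{o}(\tilde W)$ commuting with all of $\mathfrak{o}(\tilde E)$ already lies in $\mathfrak{o}(V)$.

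The one delicate step is the appeal to Schur's lemma, which rests on irreducibility of the standard representations of $\mathfrak{o}_N$ on $\mathbb{C}^N$ and of $\mathfrak{o}_{2n}$ on $\mathbb{C}^{2n}$; these hold for $N \geq 3$ and $n \geq 2$, which is exactly the range this paper works in, and the remaining few small cases are trivial or can be checked by hand. One could instead try to deduce the Lie algebra statement directly from the group statement of Proposition~\ref{prop_4.2}, by passing to the Lie algebras of the centralizer subgroups and using connectedness of $SO(V)$ and $SO(\tilde E)$, but that reduction ultimately relies on the same irreducibility input, so the direct computation above seems the cleanest route.
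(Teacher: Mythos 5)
The paper states this corollary without an explicit proof, treating it as immediate from Corollary~\ref{cor_4.1} and Proposition~\ref{prop_4.2}; your argument is the natural Lie-algebra analogue of the paper's proof of Proposition~\ref{prop_4.2} and is correct. Two small points worth keeping in mind. First, you are right to prefer the direct computation over trying to differentiate Proposition~\ref{prop_4.2}: since $O(V)$ is disconnected, the identity $\operatorname{Lie}(Z_G(H)) = \mathfrak{z}_{\mathfrak{g}}(\mathfrak{h})$ requires $H$ connected, so a naive differentiation only shows $\mathfrak{o}(\tilde E) = Z_{\mathfrak{o}(\tilde W)}(O(V)) \subseteq Z_{\mathfrak{o}(\tilde W)}(\mathfrak{o}(V))$ and one would still have to argue the reverse inclusion via irreducibility for $SO(V)$ — which is exactly the Schur input you invoke directly. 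Second, the Schur step is indeed the only real input; for the paper's standing assumptions ($n > 1$, and $N \geq 3$ implicit in the chain $\mathfrak{o}_N \supset \cdots \supset \mathfrak{o}_3$) both standard representations are irreducible, so the argument applies in the full range the paper needs.
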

Consider the following map: 
$$ s_{W} \circ d_e i_{\tilde{E}}: \mathfrak{o}(\tilde{E}) \hookrightarrow \mathfrak{o}(\tilde {W}) \longrightarrow End_{\mathbb{C}}(\Lambda W).$$
It defines the $\mathfrak{o}(\tilde{E})$ representation structure on the space $\Lambda W$.  Notice that as a vector space $\Lambda W$ is isomorphic to $\Lambda E ^{\otimes N}$. Lets fix the isomorphism between these spaces:  $$\gamma: \Lambda E^{\otimes N} \longrightarrow \Lambda W$$
defined by
\begin{equation} \label{eq_4.1}
\gamma : \bigotimes_{j = 1}^{N} \Lambda_{r = 1}^{k_j} e_{i_{jr}} \mapsto \Lambda_{j = 1}^{N} \Lambda_{r = 1}^{k_j} (v_j \otimes e_{i_{jr}}),
\end{equation}

where $(v_1, v_2, \ldots v_N)$ is an orthonormal basis of $V$ with respect to $(\cdot, \cdot)_V$ and $(e_1, e_2, \ldots e_n)$ is some basis of the space $E$. 

On the vector space $\Lambda E^{\otimes N}$ there is the structure of $\mathfrak{o}(\tilde{E})$-module given by the $N$-th tensor power of the spinor representation $s_E$ with the base space $E$.   

\begin{proposition}
\label{prop_4.3}
Map $\gamma$ is an isomorphism of representations of the algebra Lie $\mathfrak{o}(\tilde{E})$. Representation structure on the space $\Lambda W$ is given by the homomorphism  $$s_{W} \circ d_e i_{\tilde{E}}: \mathfrak{o}(\tilde{E}) \longrightarrow End_{\mathbb{C}}(\Lambda W).$$
Representation structure on the space $\Lambda E^{\otimes N}$ is given by the homomorphism $$s_E^{\otimes N}: \mathfrak{o}(\tilde{E}) \longrightarrow \text{End}_{\mathbb{C}}(\Lambda E^{\otimes N}).$$
\end{proposition}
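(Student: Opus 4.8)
The plan is to verify that the map $\gamma$ defined in \ref{eq_4.1} intertwines the two $\mathfrak{o}(\tilde{E})$-actions by checking the intertwining property on a set of generators of $\mathfrak{o}(\tilde{E})$ and on a basis of $\Lambda E^{\otimes N}$. First I would record that $\gamma$ is a linear isomorphism: the elements $\bigotimes_{j=1}^N \Lambda_{r=1}^{k_j} e_{i_{jr}}$, as the index data range over all choices, form a basis of $\Lambda E^{\otimes N}$, and the corresponding wedges $\Lambda_{j=1}^N \Lambda_{r=1}^{k_j}(v_j \otimes e_{i_{jr}})$ form a basis of $\Lambda W = \Lambda(V\otimes E)$, because the vectors $v_j \otimes e_i$ (as $j$ ranges over $1,\dots,N$ and $i$ over $1,\dots,n$) form a basis of $W = V\otimes E$ and the decoration by the $v_j$ keeps distinct summands in distinct ``blocks''. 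So it suffices to show $\gamma$ commutes with the action of Lie algebra elements.

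Next I would reduce to generators. The algebra $\mathfrak{o}(\tilde{E})$ is generated (as a Lie algebra) by the root vectors $E_{ij}-E_{j+n,i+n}$, $E_{i+n,j}-E_{j+n,i}$ and $E_{i,j+n}-E_{j,i+n}$ together with the Cartan elements $h_i$; by \ref{cor_1.2} the spinor representation $s_E$ sends these to explicit combinations of the creation operators $M_{e_i}$ and annihilation operators $D_{e_i^*}$ on $\Lambda E$. On the $N$-th tensor power $s_E^{\otimes N}$, such a generator $X$ acts by the Leibniz rule $\sum_{j=1}^N 1^{\otimes(j-1)}\otimes s_E(X)\otimes 1^{\otimes(N-j)}$. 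On the other side, I would use Corollary \ref{cor_4.2} and Proposition \ref{prop_4.1}: under the isomorphism $\tilde W \cong V\otimes \tilde E$, the embedding $d_e i_{\tilde E}$ sends a root vector of $\mathfrak{o}(\tilde E)$ to $\mathrm{Id}_V$ tensored (in the appropriate sense) with it inside $\mathfrak{o}(\tilde W)$, and then by Proposition \ref{prop_1.1} and Corollary \ref{cor_1.2} the operator $s_W$ of that element is built from the creation/annihilation operators $M_{v_j\otimes e_i}$, $D_{(v_j\otimes e_i)^*}$ on $\Lambda W$. Because $(v_j)$ is orthonormal, $D_{(v_j\otimes e_i)^*}$ acts as ``annihilate $e_i$ in the $j$-th block,'' and summing over $j$ reproduces exactly the Leibniz action. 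So the core computation is: for a single creation/annihilation generator of the Clifford algebra of $\tilde E$, the operator $s_W\circ d_e i_{\tilde E}$ of the corresponding root vector, transported through $\gamma$, equals the Leibniz sum of $s_E$ of that root vector. This is a direct check on a basis monomial, handling the three families of root vectors (and the Cartan) one at a time; the quadratic generators $M\circ M$, $D\circ D$, and $\tfrac12(M\circ D - D\circ M)$ each translate cleanly because the $V$-factor only supplies the orthonormality that makes $D_{(v_j\otimes e_i)^*}$ local to block $j$.

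Finally I would assemble: since $\gamma$ intertwines a generating set and is a linear isomorphism, it intertwines all of $\mathfrak{o}(\tilde E)$, hence is an isomorphism of $\mathfrak{o}(\tilde E)$-modules. I expect the main obstacle to be bookkeeping rather than conceptual: one must be careful with the Koszul signs arising when an annihilation operator $D_{(v_j\otimes e_i)^*}$ is moved past the wedge factors coming from blocks $1,\dots,j-1$ in $\Lambda W$, and check that these signs match exactly the signs produced on the $\Lambda E^{\otimes N}$ side when $s_E(X)$ acts in the $j$-th tensor slot after the isomorphism reorders the monomial into block form. Verifying that these sign conventions agree (equivalently, that the chosen ordering $\Lambda_{j=1}^N\Lambda_{r=1}^{k_j}(\cdot)$ in \ref{eq_4.1} is the ``right'' one) is the delicate point; everything else is routine.
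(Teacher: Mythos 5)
Your proposal follows the same route as the paper: transport the question through $\gamma$ to the equality $\gamma^{\#}\circ s_W\circ d_e i_{\tilde E}=s_E^{\otimes N}$, reduce to a generating set of $\mathfrak{o}(\tilde E)$, use the explicit Clifford-algebra formulas of \ref{cor_1.2} together with $d_e i_{\tilde E}(x)=\mathrm{Id}_V\otimes x=\sum_k P_{v_k}\otimes x$, and match the resulting sum of block-local creation/annihilation operators against the Leibniz rule. Your explicit flagging of the Koszul-sign bookkeeping is the right thing to worry about (the paper merely checks the degree-preserving generator $E_{ij}-E_{j+n,i+n}$ and leaves the rest to the reader); the signs do cancel, since the two operators in each quadratic generator pick up complementary signs when passing the preceding blocks.
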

\begin{proof}
We choose the bases in $\tilde{E}$, $V$, and $\tilde{W}$ as in the proof of proposition \ref{prop_4.1}. Notice that for all $x \in \mathfrak{o}(\tilde{E})$ 
$$d_ei_{\tilde E}(x) =   P_{v_1} \otimes  x +  P_{v_2} \otimes  x + \ldots + P_{v_N} \otimes  x,$$
where $P_{v_i}$ is the orthogonal projection onto $\langle v_i \rangle_{\mathbb{C}}$.
We denote by $E_{v_{i_1} \otimes e_{j_1}^{\circ}, v_j \otimes e_{j_2}^{\circ}}$ the operator from $\mathfrak{gl}(\tilde W)$ that sends the basis vector $v_j \otimes e_{j_2}^{\circ}$ to the basis vector $v_{i_1} \otimes e_{j_1}^{\circ}$ and sends other basis vectors to $0$. Here $\circ$ is either nothing or $*$.  
Isomorphism $\gamma$ induces isomorphism between the endomorphism spaces 
$$\gamma^{\#}: \text{End}_{\mathbb{C}}(\Lambda W) \longrightarrow \text{End}_{\mathbb{C}}(\Lambda E^{\otimes N})$$
Statement of the proposition is equivalent to the equality: 
$$\gamma^{\# } \circ s_{W} \circ d_e i_{\tilde{E}} = s_E^{\otimes N}$$
It is enough to check it on the basis of $\mathfrak{o}(\tilde{E})$. We will check it for the basis vector $E_{ij} - E_{j+n, i+n} \in \mathfrak{o}(\tilde{E})$, the rest is left to the reader. 

Using the formula for $d_ei_{\tilde{E}}$ above, we get
$$d_ei_{\tilde E}(E_{ij} - E_{j+n, i+n}) = \sum_{k=1}^{N} E_{v_k \otimes e_i  ,  v_k \otimes e_j } - E_{v_k \otimes e_j^{*}  ,  v_k \otimes e_i^{*} } $$
We apply $s_W$ to the result: 
$$s_W \left(\sum_{k=1}^{N} E_{v_k \otimes e_i  ,  v_k \otimes e_j } - E_{v_k \otimes e_j^{*}  ,  v_k \otimes e_i^{*} } \right) = \sum_{k=1}^{N} s_W(E_{v_k \otimes e_i  ,  v_k \otimes e_j } - E_{v_k \otimes e_j^{*}  ,  v_k \otimes e_i^{*} }) = $$$$= \frac{1}{2} \sum_{k=1}^{N} M_{v_k \otimes e_i} \circ D_{(v_k \otimes e_j)^{*}} - D_{(v_k \otimes e_j)^{*}} \circ M_{v_k \otimes e_i}.$$
The latter is due to corollary \ref{cor_1.2}.  To finish the proof we need to observe that: $$\gamma^{\# }\left(\frac{1}{2} \sum_{k=1}^{N} M_{v_k \otimes e_i} \circ D_{(v_k \otimes e_j)^{*}} - D_{(v_k \otimes e_j)^{*}} \circ M_{v_k \otimes e_i}\right) =  \frac{1}{2}(M_{e_i} \circ D_{e_j^{*}} - D_{e_j^{*}} \circ M_{e_i}) \otimes Id_{\Lambda E}^{\otimes N-1} + \ldots$$
$$ \ldots + Id_{\Lambda E} \otimes \frac{1}{2}(M_{e_i} \circ D_{e_j^{*}} - D_{e_j^{*}} \circ M_{e_i}) \otimes Id_{\Lambda E}^{\otimes N-2} + \ldots  $$
$$\ldots + Id_{\Lambda E}^{\otimes N-1}  \otimes \frac{1}{2}(M_{e_i} \circ D_{e_j^{*}} - D_{e_j^{*}} \circ M_{e_i}) = s_E^{\otimes N}(E_{ij} - E_{j+n, i+n}).$$

The latter equality is again a consequence of corollary \ref{cor_1.2} and the definition of the algebra Lie representation structure on the tensor product.  

We obtain that $$\gamma^{\# } \circ s_{W} \circ d_e i_{\tilde{E}}(E_{ij} - E_{j+n, i+n}) =  s_E^{\otimes N}(E_{ij} - E_{j+n, i+n}).$$
This concludes the proof. 
\end{proof}

There is a natural action of the group $GL(W)$ on the space $\tilde{W}$. It is easy to see that operators arising from this action lie in $O(\tilde{W})$. Indeed, if we take an element $g \in GL(W)$ and consider its action on two vectors $w_1 + \xi_1$ and $w_2 + \xi_2$ , where  $w_1, w_2 \in W$ and $\xi_1, \xi_2 \in W^{*}$, then we can see that  
\begin{gather*}
(g.(v_1 + \xi_1), g.(v_2 + \xi_2))_{\tilde {W}} = (g.v_1 + g.\xi_1, g.v_2 + g.\xi_2)_{\tilde{W}} = \\
=  g.\xi_1(g.v_2) +  g.\xi_2(g.v_1) = \xi_1(g^{-1}g.v_2) + \xi_2(g^{-1}g.v_1) = \\
 = \xi_1(v_2) + \xi_2(v_1) = ( v_1 + \xi_1, v_2 + \xi_2)_{\tilde{W}}.
\end{gather*}

So, we get an injective Lie group homomorphism  $$j_W: GL(W) \hookrightarrow O(\tilde{W}).$$
There is also natural injective Lie group homomorphism $$ext_W: GL(W) \hookrightarrow GL(\Lambda W).$$
Recall that $W \cong V \otimes E$, so there is a natural embedding $$\bar{i}_V : O(V) \hookrightarrow GL(W)$$ defined by
$$\bar{i}_V: g \mapsto g \otimes \text{Id}_E.$$
One can see that $$j_W \circ \bar{i}_V = i_V$$
Therefore,  algebra Lie $\mathfrak{o}(V)$ is embedded into $\mathfrak{gl}(W)$ via the homomorphism $d_e\bar{i}_V$ and the embeddings $d_ej_W \circ d_e\bar{i}_V$, $d_ei_V$ of $\mathfrak{o}(V)$ into $\mathfrak{o}(\tilde{W})$ coincide. 

\begin{proposition}
\label{prop_4.4}
Homomorphisms of Lie algebras 
  $$d_e ext_W :   \mathfrak{gl}(W) \hookrightarrow \text{End}_{\mathbb{C}}(\Lambda W)$$
  and $$s_W \circ d_e j_W : \mathfrak{gl}(W) \hookrightarrow \text{End}_{\mathbb{C}} (\Lambda W).$$
  
  coincide on the subalgebra $\mathfrak{o}(V) \subset \mathfrak{gl}(W)$. 
    
\end{proposition}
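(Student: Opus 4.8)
The plan is to reduce Proposition \ref{prop_4.4} to the defining formulas for the spinor representation from Corollary \ref{cor_1.2}, exactly as was done for $\mathfrak{o}(\tilde E)$ in the proof of Proposition \ref{prop_4.3}. Both maps $d_e\,ext_W$ and $s_W \circ d_e j_W$ are Lie algebra homomorphisms $\mathfrak{gl}(W) \to \operatorname{End}_{\mathbb C}(\Lambda W)$, so to prove they agree on $\mathfrak{o}(V) \subset \mathfrak{gl}(W)$ it suffices to check the equality on a spanning set of $d_e\bar i_V(\mathfrak{o}(V))$. Using the orthonormal basis $(v_1,\dots,v_N)$ of $V$, the basis $(e_1,\dots,e_n)$ of $E$ and the identification $W \cong V\otimes E$, the image $d_e\bar i_V(\mathfrak{o}(V))$ is spanned by operators of the form $\sum_{j=1}^n \big(E_{v_p\otimes e_j,\, v_q\otimes e_j} - E_{v_q\otimes e_j,\, v_p\otimes e_j}\big)$ for $p<q$ (these are $x\otimes \operatorname{Id}_E$ with $x$ running over the standard skew-symmetric basis of $\mathfrak{o}(V)$ in the orthonormal basis). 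So I would fix one such generator and compute both sides.

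First I would compute the right-hand side. Since $j_W \circ \bar i_V = i_V$ and $d_e j_W \circ d_e\bar i_V = d_e i_V$, and since under the identification $\tilde W \cong V\otimes \tilde E$ of Corollary \ref{cor_4.1} and Proposition \ref{prop_4.1} the element $d_e i_V(x\otimes\operatorname{Id}_E)$ lands in $\mathfrak{o}(\tilde W)$ as an operator of the form $\sum_j E_{v_p\otimes e_j, v_q\otimes e_j} - E_{v_q\otimes e_j, v_p\otimes e_j} - E_{v_q\otimes e_j^*, v_p\otimes e_j^*} + E_{v_p\otimes e_j^*, v_q\otimes e_j^*}$ (the last two terms forced by the condition $A^t + D = 0$ describing $\mathfrak{o}(\tilde W)$ in Remark \ref{r_1.2}, with $B = C = 0$), I can expand each summand using the three formulas of Corollary \ref{cor_1.2} applied in $\Lambda W$ with the base space $W$ and its basis $\{v_i\otimes e_j\}$. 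The $E_{ab}-E_{a',b'}$ summand with $a = v_p\otimes e_j$, $b = v_q\otimes e_j$ is of the ``first type'' $E_{\alpha\beta} - E_{\beta+k,\alpha+k}$, so it maps to $\tfrac12\big(M_{v_p\otimes e_j}\circ D_{(v_q\otimes e_j)^*} - D_{(v_q\otimes e_j)^*}\circ M_{v_p\otimes e_j}\big)$, and the companion summand (indices swapped) to $\tfrac12\big(M_{v_q\otimes e_j}\circ D_{(v_p\otimes e_j)^*} - D_{(v_p\otimes e_j)^*}\circ M_{v_q\otimes e_j}\big)$; summing over $j$ gives a concrete operator on $\Lambda W$.

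Then I would compute the left-hand side. The homomorphism $ext_W: GL(W) \hookrightarrow GL(\Lambda W)$ is the functorial extension of a linear automorphism of $W$ to the exterior algebra; its differential $d_e\,ext_W$ sends $Y \in \mathfrak{gl}(W)$ to the derivation of $\Lambda W$ acting on degree-one elements as $Y$ itself, i.e. $d_e\,ext_W(E_{v_a\otimes e_b,\, v_c\otimes e_d}) = M_{v_a\otimes e_b}\circ D_{(v_c\otimes e_d)^*}$ (the elementary matrix acts as ``replace $v_c\otimes e_d$ by $v_a\otimes e_b$'', which on the exterior algebra is the composition of contraction and wedge). Applying this to the generator gives $\sum_{j=1}^n \big(M_{v_p\otimes e_j}\circ D_{(v_q\otimes e_j)^*} - M_{v_q\otimes e_j}\circ D_{(v_p\otimes e_j)^*}\big)$. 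Comparing with the right-hand side, the two expressions must match; the difference is the discrepancy between $M_a D_{b^*}$ and $\tfrac12(M_a D_{b^*} - D_{b^*}M_a)$, which is $\tfrac12(M_a D_{b^*} + D_{b^*}M_a)$, and one checks via the Clifford relation $M_{v_p\otimes e_j}D_{(v_q\otimes e_j)^*} + D_{(v_q\otimes e_j)^*}M_{v_p\otimes e_j} = (v_p\otimes e_j,\, v_q\otimes e_j)_{\tilde W}\cdot\operatorname{Id} = \delta_{pq}\operatorname{Id}$ (Proposition \ref{prop_1.1}) that for $p\neq q$ these anticommutators vanish, so the two formulas coincide on the generator. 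I expect the main obstacle to be purely bookkeeping: correctly pinning down the four-term form of $d_e i_V$ inside $\mathfrak{o}(\tilde W)$ under the tensor identification and keeping the Clifford anticommutator signs straight, rather than any conceptual difficulty — the structural fact that both sides are homomorphisms does all the real work, reducing everything to one explicit check on a basis, just as in Proposition \ref{prop_4.3}.
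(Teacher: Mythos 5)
Your proposal is correct and follows essentially the same route as the paper: both reduce the claim to a basis check in $\mathfrak{gl}(W)$ and then invoke Corollary \ref{cor_1.2} together with the Clifford anticommutation relation to pass from $\tfrac{1}{2}\bigl(M_{w_i} D_{w_j^*} - D_{w_j^*} M_{w_i}\bigr)$ to $M_{w_i} D_{w_j^*}$ for $i \neq j$, which visibly agrees with $d_e\,ext_W(E_{ij})$ on monomials. The paper's version is marginally cleaner, since it verifies the equality directly on every off-diagonal matrix unit $E_{ij}$ with $i \neq j$ (a spanning set of a subspace containing $\mathfrak{o}(V)$) rather than tracking the explicit $\mathfrak{o}(V)$ generators through the tensor identification of $\tilde W$; you should also correct the notational slip $(v_p\otimes e_j,\, v_q\otimes e_j)_{\tilde W}$ to the pairing $\langle v_p\otimes e_j,\, (v_q\otimes e_j)^*\rangle = \delta_{pq}$, since the form on $\tilde W$ vanishes identically on $W\times W$.
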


\begin{proof}
We choose the basis in $W$ consisting of the simple tensors $v_t \otimes e_s$. It is clear that the subalgebra $\mathfrak{o}(V)$ lies inside the subspace spanned by the matrix units $E_{ij} \in \mathfrak{gl}(W)$, such that $i\neq j$.  Hence, it is enough to check that for matrix unit $E_{ij} \in \mathfrak{gl}(W)$, $i\neq j$, the following equality holds: 
$$d_e ext_W(E_{ij}) = s_W \circ d_e j_W(E_{ij}).$$
Let $(w_1, w_2, \ldots, w_{N \cdot n})$ be the chosen basis inside $W$, meaning that each $w_i$ is $v_t \otimes e_s$ for some $t$ and $s$. Consider the action of $d_e ext_W(E_{ij})$ on some the element of $\Lambda W$: 
$$d_e ext_W(E_{ij}) \rhd w_{l_1} \wedge \ldots \wedge w_{l_m} = \sum_{k = 1}^{m} \delta_{l_{k}j} \cdot w_{l_1} \wedge \ldots w_{l_{k-1}} \wedge w_{i} \wedge w_{l_{k+1}} \wedge \ldots \wedge w_{l_m}.$$
In other words, one can say that if there is a factor $w_j$ in the monomial, then it gets replaced with the factor $w_i$. If there is a factor $w_i$ in the monomial or there is no factor $w_j$ in the monomial, then the monomial gets canceled. 

Now we want to show that the action of $s_W \circ d_e j_W(E_{ij})$ on monomials coincides with the action of $d_e ext_W(E_{ij})$. We choose the usual basis in $\tilde{W}$: $(w_1, w_2, \ldots w_{N \cdot n}, w_1^{*}, w_{2}^{*} \ldots w_{N \cdot n}^{*})$. It is easy to see that $$d_ej_W(E_{ij}) = E_{ij} - E_{j + N \cdot n, i + N \cdot n}.$$
Hence, $$s_W \circ d_e j_W(E_{ij}) = \frac{1}{2} (M_{w_i}D_{w_j^{*}} - D_{w_j^{*}}M_{w_i}) = M_{w_i} D_{w_j^{*}}.$$
And just by definition of operators $M_{w_i}$ and $D_{w_j^{*}}$ operator  $M_{w_i} D_{w_j^{*}}$ acts the same way as $d_e ext_W(E_{ij})$ on the monomials $ w_{l_1} \wedge \ldots \wedge w_{l_m}$. But these monomials span all $\Lambda W$, so we are done. 
\end{proof}

We state the fundamental result due to Roger Howe \cite{Howe}. 

\begin{theorem}
\label{th_4.1}
Let $V$ be the standard representation of the orthogonal group $O(V) \cong O_N(\mathbb{C})$ $(\dim V = N)$. Denote by $W = \oplus^{n} V  \underset{O(V)}{\cong}  V \otimes E$, where $E$ is the auxiliary space of dimension $n$. $O(V)$ acts on $W$ on the left side. Consider $\Lambda W$ as the $O(V)$ -module. It is completely reducible. Let 
$$\Lambda W  \underset{O(V)}{\cong} \bigoplus_j  I_j$$ be the decomposition of $\Lambda W$ into isotypic components under the action of $O(V)$. Let $\Gamma$ be the image of $\mathfrak{o}(V)$ in $\text{End}_{\mathbb{C}}(\Lambda W)$ corresponding to this action. It follows from \ref{prop_4.4} that this image coincides with the image of embedding $d_ei_V$ of $\mathfrak{o}(V)$ into $\mathfrak{o}(\tilde{W}) \subset \text{End}_{\mathbb{C}}(\Lambda W)$. So, take the centralizer $\Gamma^{'}$ of  $\Gamma$ in $\mathfrak{o}(\tilde{W})$. Then any isotypic component $I_j$ is irreducible under the joint action of $O(V)$ and $\Gamma^{'}$ and has the following form: 
$$U_j \otimes L_j,$$
where $U_j$ is a finite-dimensional irreducible representation of $O(V)$, and $L_j$ - is an irreducible representation of Lie algebra $\Gamma^{'}$, Moreover, $L_j$ and $U_j$ determine each other, so that the map $U_j \leftrightarrow L_j$ is bijective. 
\end{theorem}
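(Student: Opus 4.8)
The plan is to deduce Theorem \ref{th_4.1} directly from the original formulation of Howe duality in \cite{Howe}, translating it into the language and notation set up in this section. The statement of Howe's theorem (Theorem 8 in \cite{Howe}, as referenced) concerns the skew-symmetric algebra $\Lambda(V \otimes E)$ under the joint action of $O(V)$ and a commuting Lie algebra realized inside the Clifford algebra; our task is to identify that commuting Lie algebra with $\mathfrak{o}(\tilde{E})$, i.e. with the centralizer $\Gamma'$ of $\Gamma$ in $\mathfrak{o}(\tilde{W})$, and to observe that all the necessary compatibilities have already been proved.

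First I would recall that, by Proposition \ref{prop_4.1} and Corollary \ref{cor_4.1}, the form $(\cdot,\cdot)_{\tilde{W}}$ factors as $(\cdot,\cdot)_V \otimes (\cdot,\cdot)_{\tilde{E}}$, so the Clifford algebra $CL(\tilde{W})$ acting on $\Lambda W$ via $\iota$ (Proposition \ref{prop_1.1}) is exactly the ambient algebra in which Howe works. Next, Proposition \ref{prop_4.4} shows that the image $\Gamma$ of $\mathfrak{o}(V)$ under the differentiated $O(V)$-action on $\Lambda W$ coincides with the image of the embedding $d_e i_V$ into $\mathfrak{o}(\tilde{W}) \subset \mathrm{End}_\mathbb{C}(\Lambda W)$ — this is precisely the point that lets us pass from the ``group side'' (how $O(V)$ genuinely acts on $\Lambda W = \Lambda(V\otimes E)$) to the ``Lie-algebra-inside-Clifford side'' where Howe's reductive dual pair lives. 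Then Proposition \ref{prop_4.2} (and its corollary \ref{cor_4.2}) identifies the centralizer of $\Gamma$ inside $\mathfrak{o}(\tilde{W})$ with $\mathfrak{o}(\tilde{E})$, so that $\Gamma' = d_e i_{\tilde{E}}(\mathfrak{o}(\tilde{E}))$ and the joint-action decomposition is governed by the reductive dual pair $(O(V), \mathfrak{o}(\tilde{E}))$ inside $CL(\tilde{W})$.

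With these identifications in place, I would simply invoke Howe's theorem: for the dual pair $(O(V), \mathfrak{o}(\tilde{E}))$ acting on $\Lambda W$, the decomposition into isotypic components under $O(V)$ refines into a multiplicity-free decomposition $\bigoplus_j U_j \otimes L_j$ under the joint action, with $U_j$ an irreducible $O(V)$-module and $L_j$ an irreducible $\mathfrak{o}(\tilde{E})$-module, and the correspondence $U_j \leftrightarrow L_j$ a bijection. Finally, I would use Proposition \ref{prop_4.3}: the isomorphism $\gamma: \Lambda E^{\otimes N} \to \Lambda W$ intertwines the $\mathfrak{o}(\tilde{E})$-action coming from $s_W \circ d_e i_{\tilde{E}}$ with the $N$-th tensor power $s_E^{\otimes N}$, so the $L_j$ appearing here are precisely the irreducible summands $L_\lambda$ of $\Lambda E^{\otimes N}$ indexed by $\lambda \in \Delta^N$, and transporting back along $\gamma$ exhibits $U_\lambda$ as an irreducible $O(V) \cong O_N(\mathbb{C})$-module. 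This yields the theorem as stated.

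The main obstacle is not any deep new argument but rather the faithful translation: one must be careful that Howe's hypotheses apply to $\Lambda(V \otimes E)$ with $\dim V = N$ and $\dim E = n$ in the orthogonal case (the ``$O_N$–$\mathfrak{o}_{2n}$'' seesaw, which requires the form on the $O$-side to be nondegenerate and the Clifford-side Lie algebra to be the full orthogonal algebra of $\tilde{E}$), and that the notion of ``$\Gamma'$'' used in our statement — the centralizer taken inside $\mathfrak{o}(\tilde{W})$ rather than inside all of $\mathrm{End}_\mathbb{C}(\Lambda W)$ or the Clifford algebra — is the right one; this is exactly what Proposition \ref{prop_4.2} guarantees. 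A secondary point to check is that the $O(V)$-action in Howe's setup (on $\Lambda W$ by functoriality of the exterior algebra) matches $ext_W$ restricted to $O(V)$, which is the content of Proposition \ref{prop_4.4}, so no genuine gap remains.
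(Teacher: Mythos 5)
Your proposal is correct and takes essentially the same approach as the paper: the paper presents Theorem \ref{th_4.1} as a direct citation of Howe's Theorem 8 with no accompanying proof, and your argument simply makes explicit the routine dictionary (via Propositions \ref{prop_4.1}, \ref{prop_4.2}, \ref{prop_4.4}) that shows the hypotheses of Howe's theorem apply to the pair $(O(V), \Gamma')$ acting on $\Lambda W$. Note that the transport along $\gamma$ via Proposition \ref{prop_4.3} is not needed for Theorem \ref{th_4.1} itself — that step belongs to Corollary \ref{cor_4.3} — but including it does no harm.
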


This is a very powerful result and applying it together with corollary\ref{cor_4.2} and proposition \ref{prop_4.3}  to our case we immediately get the following corollary.

\begin{corollary}
\label{cor_4.3}
$\Lambda E ^{\otimes N}$ as a representation of Lie algebra $\mathfrak{o}(\tilde{E})\cong \mathfrak{o}_{2n}(\mathbb{C})$ decomposes into the direct sum
    $$\Lambda E^{\otimes N} = \bigoplus_{\lambda \in \Delta^{N} } U_{\lambda} \otimes L_{\lambda},$$
where $L_\lambda$ is the irreducible representation of the highest weight $\lambda$  and $U_\lambda$ is the multiplicity space of subrepresentation $L_\lambda$ in $\Lambda E ^{\otimes N}$. There is a natural structure of irreducible representation of $O_N$ on each multiplicity space $U_\lambda$ and, moreover, $U_\lambda$ and $L_\lambda$ uniquely determine each other.  Equivalently speaking, multiplicity spaces $U_\lambda$ for different $\lambda \in \Delta^N $ are non-isomorphic as $O_N$-modules.
\end{corollary}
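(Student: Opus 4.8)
The statement is, in essence, Howe's Theorem \ref{th_4.1} read off through the isomorphism $\gamma$, so the plan is to line up the available pieces rather than to prove anything new. First I would use Proposition \ref{prop_4.3}: the map $\gamma$ of \ref{eq_4.1} is an isomorphism of $\mathfrak{o}(\tilde{E})$-modules from $\Lambda E^{\otimes N}$, equipped with $s_E^{\otimes N}$, onto $\Lambda W$, equipped with the structure $s_W\circ d_e i_{\tilde{E}}$ obtained by restricting the spinor representation of $\mathfrak{o}(\tilde{W})$ along the embedding $\mathfrak{o}(\tilde{E})\hookrightarrow\mathfrak{o}(\tilde{W})$. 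Consequently any $\mathfrak{o}(\tilde{E})$-equivariant decomposition of $\Lambda W$ transports verbatim to $\Lambda E^{\otimes N}$, and it is enough to produce such a decomposition of $\Lambda W$ carrying the desired extra $O_N$-module structure.

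Next I would apply Theorem \ref{th_4.1} directly, with $V$ the standard $O_N$-module and $W=V^{\oplus n}\cong V\otimes E$. It gives $\Lambda W\cong\bigoplus_j U_j\otimes L_j$, a decomposition into joint isotypic components for $O(V)$ and $\Gamma'$, where $\Gamma$ is the image of $\mathfrak{o}(V)$ in $\text{End}_{\mathbb{C}}(\Lambda W)$ --- which by Proposition \ref{prop_4.4} coincides with the image of the embedding $d_e i_V$ --- and $\Gamma'$ is its centralizer inside $\mathfrak{o}(\tilde{W})$. The decisive point is the identification $\Gamma'=d_e i_{\tilde{E}}(\mathfrak{o}(\tilde{E}))$: this is precisely Corollary \ref{cor_4.2}, resting on Proposition \ref{prop_4.2} (that $O(V)$ and $O(\tilde{E})$ are mutual centralizers in $O(\tilde{W})$) together with Proposition \ref{prop_4.1} (that the bilinear form on $\tilde{W}$ is the tensor product of those on $V$ and $\tilde{E}$). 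Therefore the $\Gamma'$-action appearing in Howe's decomposition is exactly the $\mathfrak{o}(\tilde{E})$-action $s_W\circ d_e i_{\tilde{E}}$, i.e. $s_E^{\otimes N}$ after transporting along $\gamma$.

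The conclusion is then immediate. Each $L_j$ is a finite-dimensional irreducible module over $\Gamma'\cong\mathfrak{o}(\tilde{E})\cong\mathfrak{o}_{2n}(\mathbb{C})$, hence $L_j=L_{\lambda_j}$ for a unique $\lambda_j\in P_+$; distinct isotypic components of a fixed completely reducible module carry non-isomorphic irreducibles, so the $\lambda_j$ are pairwise distinct, and by Corollary \ref{cor_2.1} the set over which they range is exactly $\Delta^N$. Pulling the decomposition back through $\gamma$ yields $\Lambda E^{\otimes N}\cong\bigoplus_{\lambda\in\Delta^N}U_\lambda\otimes L_\lambda$, which agrees with \ref{eq_2.4} by uniqueness of the isotypic decomposition, so the $U_\lambda$ here really are the multiplicity spaces of Section \ref{sec_2}; and now each $U_\lambda$ carries an irreducible $O(V)\cong O_N$-module structure. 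Finally, the last clause of Theorem \ref{th_4.1}, that $U_j\leftrightarrow L_j$ is a bijection, says exactly that $L_\lambda$ and $U_\lambda$ determine one another, equivalently that the $U_\lambda$ for distinct $\lambda\in\Delta^N$ are non-isomorphic as $O_N$-modules.

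The one place where genuine care is needed --- the main obstacle, though a modest one --- is the equality $\Gamma'=d_e i_{\tilde{E}}(\mathfrak{o}(\tilde{E}))$. Howe's theorem refers to the \emph{full} centralizer of the image of $\mathfrak{o}(V)$ in $\mathfrak{o}(\tilde{W})$, so it does not suffice that $\mathfrak{o}(\tilde{E})$ centralizes $\mathfrak{o}(V)$; one must know there is nothing larger, and this is why Corollary \ref{cor_4.2} is invoked. Should one wish to verify it from scratch, the argument is a Schur-lemma computation: under $\tilde{W}\cong V\otimes\tilde{E}$ the centralizer of $\mathfrak{o}(V)\otimes\text{Id}_{\tilde{E}}$ in $\mathfrak{gl}(V)\otimes\mathfrak{gl}(\tilde{E})$ is $\text{Id}_V\otimes\mathfrak{gl}(\tilde{E})$ because $V$ is an irreducible $O(V)$-module, and intersecting with $\mathfrak{o}(\tilde{W})$ forces the $\tilde{E}$-factor to lie in $\mathfrak{o}(\tilde{E})$ by Proposition \ref{prop_4.1}. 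Everything else is the formal assembly of Propositions \ref{prop_4.3}, \ref{prop_4.4}, Theorem \ref{th_4.1}, and Corollary \ref{cor_2.1}.
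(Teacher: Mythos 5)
Your proposal is correct and follows exactly the paper's own route: the paper derives the corollary by combining Theorem~\ref{th_4.1} with Corollary~\ref{cor_4.2} and Proposition~\ref{prop_4.3}, and you have simply made explicit how these pieces fit together (transporting Howe's decomposition through $\gamma$ and identifying the Howe dual partner $\Gamma'$ with $d_e i_{\tilde E}(\mathfrak{o}(\tilde E))$). You also correctly flag the one non-trivial point — that one needs the \emph{full} centralizer of $\mathfrak{o}(V)$ in $\mathfrak{o}(\tilde W)$ to be $\mathfrak{o}(\tilde E)$, not merely that the two commute — and this is exactly what Corollary~\ref{cor_4.2} supplies.
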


\section {Bijection between short Young diagrams and regular cell diagrams}
\label{sec_5}
  We have shown that the multiplicity spaces $U_\lambda$ for each $\lambda \in \Delta^N $ are non-isomorphic irreducible representations of the group $O_N$. It is known from the theorem $19.19$ in \cite{Fulton} that irreducible representations of $O_N$ are indexed by the Young diagrams such that the sum of the lengths of the first two columns does not exceed $N$. We will refer to such diagrams as \textit{short Young diagrams of height $N$}. As a consequence of the corollary \ref{cor_4.3}, for each $\lambda \in \Delta^N $ we have the corresponding short young diagram $\nu$, such that $U_\lambda  \underset{O_N}{\cong}  R_{\nu}$, where $R_{\nu}$ is the irreducible representation of $O_N$ corresponding to the short young diagram $\nu$.  As we saw in \ref{prop_3.2} there is a bijection between the sets $\Delta^N $ and $\mathfrak{D}(N, n)$. We introduce the following notation:
\begin{notation}
Denote by $SYD(N, n)$ the set of short young diagrams of height $N$ that have at most $n$ columns.
\end{notation}
In this chapter, we are going to prove the following theorem. 

\begin{theorem}
\label{th_5.1}
Consider the following map $$\mathcal{F}: \mathfrak{D}(N, n) \rightarrow SYD(N, n).$$ We define it the following way. If $n$ is even, then $$\mathcal{F} (D(\textbf{l}, \textbf{r})) =  (l_n, l_{n-1}, \ldots, l_1)^{t},$$ if n is odd, then $$\mathcal{F} (D(\textbf{l}, \textbf{r})) =  (r_n, l_{n-1}, \ldots, l_1)^{t}.$$ We claim that map $\mathcal{F}$ is a bijection and moreover the image of the cell diagram $D^{N}_{\lambda}$, corresponding to $\lambda \in \Delta^{N} $, under map $\mathcal{F}$ is the short Young diagram $\nu$, such that multiplicity space $U_\lambda$ as $O_N$-module is isomorphic to $R_{\nu}$. 
\end{theorem}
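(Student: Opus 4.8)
The plan is to prove the two claims of the theorem separately: first that $\mathcal{F}$ is a bijection $\mathfrak{D}(N,n) \to SYD(N,n)$, and then that it intertwines the Howe correspondence with the classification of irreducible $O_N$-modules. For the bijectivity, I would first unwind what the conditions defining a regular cell diagram say about the tuple $(l_1,\ldots,l_n)$ (or the mixed tuple $(r_n,l_{n-1},\ldots,l_1)$ when $n$ is odd). Since $r_i + l_i = N$ and $r_1 \geqslant \cdots \geqslant r_n$, we get $l_1 \leqslant l_2 \leqslant \cdots \leqslant l_n$, so reversing to $(l_n,\ldots,l_1)$ gives a weakly decreasing sequence of nonnegative integers bounded by $N$, i.e.\ a genuine partition with at most $n$ parts each $\leqslant N$ --- equivalently a Young diagram with at most $n$ columns and first column $\leqslant N$. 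The remaining constraint $r_{n-1} \geqslant l_n$ needs to be translated into the ``short'' condition: the sum of the lengths of the first two columns of $\mathcal{F}(D)$ should be $\leqslant N$. In the even case the first two columns have lengths equal to (number of $i$ with $l_i > 0$) and (number of $i$ with $l_i > 1$), hmm --- more carefully, I would compute the first two column lengths of the transpose directly as the first two \emph{parts} $l_n \geqslant l_{n-1}$ of the partition, wait, the transpose swaps rows and columns, so the first two columns of $(l_n,\ldots,l_1)^t$ have lengths $l_n$ and $l_{n-1}$. Then ``short'' means $l_n + l_{n-1} \leqslant N$, and using $l_{n-1} = N - r_{n-1}$ this becomes $l_n \leqslant r_{n-1}$, which is exactly the third defining condition of a regular cell diagram. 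The odd case is the same computation with $l_n$ replaced by $r_n$; here one should double-check $r_n \geqslant l_{n-1}$, which follows from $r_{n-1} \geqslant l_n$ by the symmetric manipulation already used in the proof of Proposition~\ref{prop_3.1}. So bijectivity reduces to checking these translations are reversible, which they manifestly are since each $l_i$ (or the pair $r_n, l_i$) determines and is determined by the corresponding part.

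For the representation-theoretic claim, I would combine Proposition~\ref{prop_3.2} (the bijection $\mathcal{K}_N : \Delta^N \to \mathfrak{D}(N,n)$) with the explicit description of the Howe dual pair. The key input is an explicit identification of the $O_N$-module $U_\lambda$ appearing in Corollary~\ref{cor_4.3}. The standard approach: realize $\Lambda W = \Lambda(V \otimes E)$ and use the fact that the $\mathfrak{o}_{2n}$-highest-weight vectors of weight $\lambda$ form exactly the $O_N$-module $U_\lambda$; on the other hand, by the same Howe-duality bookkeeping (or by directly applying Theorem~\ref{th_4.1} in the form given in \cite{Howe}, Theorem 8), the decomposition of $\Lambda(V\otimes E)$ under $O_N$ pairs each irreducible $R_\nu$ with a specific $\mathfrak{o}_{2n}$-module whose highest weight can be read off from $\nu$. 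Concretely, I expect the relation between $\nu$ and $\lambda$ to be: $\nu$ is the partition whose transpose is $(l_n,\ldots,l_1)$ (even $n$) resp.\ $(r_n,l_{n-1},\ldots,l_1)$ (odd $n$), and one checks this matches the weight $\lambda_i = (r_i - l_i)/2$ via the formulas \ref{eq_3.2}. The cleanest route is probably to compute the $\mathfrak{h}$-character (or just the highest weight) on both sides: given $\nu$, the $O_N \times \mathfrak{o}_{2n}$ decomposition of $\Lambda(V\otimes E)$ is classical (see \cite{Howe} or \cite{Fulton}), and the $\mathfrak{o}_{2n}$-highest weight attached to $R_\nu$ is expressible in terms of the column lengths $\nu^t_1 \geqslant \nu^t_2 \geqslant \cdots$; matching this against $(\lambda_1,\ldots,\lambda_n)$ with $\lambda_i = N/2 - l_i$ pins down $\mathcal{F}$.

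The main obstacle, and the place where the odd/even dichotomy really bites, is getting the last coordinate right: for type $D_n$ the spinor-type weights $\lambda_n$ can be negative, and the orthogonal group $O_N$ (as opposed to $SO_N$) is involved precisely so that its irreducibles are honestly indexed by short Young diagrams without a sign ambiguity. So the subtle point is tracking how the sign/parity of the relevant monomials in $\Lambda(V \otimes E)$ --- equivalently whether one uses $l_n$ or $r_n = N - l_n$ in the $n$-th slot --- corresponds to the distinction between $L_{\omega_+}$ and $L_{\omega_-}$ appearing via Proposition~\ref{prop_1.3}, and to the behavior of $U_\lambda$ under the element $-\mathrm{Id} \in O_N \setminus SO_N$. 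I would handle this by examining a minimal example (e.g.\ $N=1$, where $\Lambda E = \Lambda \mathbb{C}^n$ decomposes as $L_{\omega_+}\oplus L_{\omega_-}$ and the two multiplicity spaces are the trivial and sign characters of $O_1$) to fix the convention, and then argue the general case follows by the functoriality of the Howe correspondence together with the already-established bijectivity of $\mathcal{F}$ as a map of finite sets of the correct cardinalities. Since both $\mathcal{F}\circ \mathcal{K}_N$ and $\lambda \mapsto [\text{short Young diagram of } U_\lambda]$ are injections between finite sets of equal size (the latter by Corollary~\ref{cor_4.3}), it suffices to verify they agree, which I would do by the highest-weight comparison above.
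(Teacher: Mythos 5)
Your argument for the purely combinatorial claim --- that $\mathcal{F}$ is a bijection $\mathfrak{D}(N,n) \to SYD(N,n)$ --- is essentially correct and runs parallel to the paper's: the monotonicity of $l_i$ gives a partition, the first two column lengths of $\mathcal{F}(D)$ are $l_n, l_{n-1}$ (resp.\ $r_n, l_{n-1}$ for odd $n$), and the short condition $l_n + l_{n-1} \leqslant N$ translates, via $l_{n-1} = N - r_{n-1}$, to exactly $r_{n-1}\geqslant l_n$. That part is fine.

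The genuine gap is in the second, representation-theoretic, claim, and it is precisely the point you flagged as ``the place where the odd/even dichotomy really bites'' but then did not resolve. The highest-weight comparison you outline only determines $U_\lambda$ as an $\mathfrak{o}_N(\mathbb{C})$-module, i.e.\ it pins down $s(\nu) = \kappa(\lambda)$, but Propositions~\ref{prop_5.1}--\ref{prop_5.2} say $R_\nu$ and $R_{\nu^\dagger}$ restrict to the \emph{same} $\mathfrak{o}_N$-module $L_{s(\nu)}$. So the weight computation leaves a two-to-one ambiguity $\nu\leftrightarrow\nu^\dagger$ for every non-self-associated $\nu$, and the cardinality observation (``both maps are injections between finite sets of equal size, so it suffices to verify they agree'') does not close it: one could in principle have $\mathcal{F}(\mathcal{K}_N(\lambda))=\nu(\lambda)^\dagger$ for some $\lambda$ and $=\nu(\lambda)$ for others and still have a bijection. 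The missing ingredient is a genuinely group-level (not Lie-algebra-level) distinction. The paper supplies it by writing down an explicit $\mathfrak{o}(V)\oplus\mathfrak{o}(\tilde E)$-highest-weight vector $\xi_\lambda \in \Lambda W$ (formula~\ref{eq_5.3} inside Theorem~\ref{th_5.2}), computing its transformation under the specific elements $g_d \in O_N\setminus SO_N$ (for even $N$, via Fact~\ref{f_5.2}) and $-\mathrm{Id}$ (for odd $N$, via Fact~\ref{f_5.1}), and then running four separate parity cases ($N$ even/odd $\times$ $n$ even/odd, plus the sign of $\lambda_n$) to match the scalar with $\nu$ or $\nu^\dagger$. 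Your proposal to ``fix the convention'' by checking $N=1$ and then appeal to ``functoriality of the Howe correspondence'' is not a substitute for this: a single small example does not propagate to a statement about the eigenvalue of $g_d$ or $-\mathrm{Id}$ on $U_\lambda$ for arbitrary $N,n,\lambda$, and no form of functoriality you have stated would carry that sign information. Without the explicit $\xi_\lambda$ computation (or an equivalent $O_N$-level invariant), the second half of the theorem remains unproved.
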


This theorem explains how to explicitly construct the short Young diagram $\nu$ from the highest weight $\lambda \in \Delta^N$. We provide some concrete examples in order to make it easier to visualize the map $\mathcal{F}$. 

\begin{example}
\label{ex_6.1}
Let  $n = 4, N = 7$ and $ \hspace{1mm} \lambda = (\frac{3}{2}, \frac{1}{2}, \frac{1}{2}, -\frac{1}{2}).$
Then $D_{\lambda}^{N}$ is the regular cell diagram from the example \ref{ex_3.1}. Map $\mathcal{F}$ sends it to the short Young diagram drawn on the right. 
\newline

\centering
\begin{tikzpicture}
\draw[thick,<->] (2,-0.5) -- (2, 2.5) node[anchor=north west] {};
\filldraw[color=black!70, fill=red!10,  thick](0,0) rectangle (0.5,0.5);
\filldraw[color=black!70, fill=red!10,  thick] (0.5,0) rectangle (1,0.5);
\filldraw[color=black!65, fill=red!10,  thick] (1,0) rectangle (1.5,0.5);
\filldraw[color=black!70, fill=red!10,  thick] (1.5,0) rectangle (2,0.5);
\filldraw[color=black!70, fill=white!10,  thick] (2,0) rectangle (2.5,0.5);
\filldraw[color=black!70, fill=white!10,  thick] (2.5,0) rectangle (3,0.5);
\filldraw[color=black!70, fill=white!10,  thick] (3,0) rectangle (3.5,0.5);

\filldraw[color=black!70, fill=green!10,  thick]  (0.5,0.5) rectangle (1,1);
\filldraw[color=black!70, fill=green!10,  thick]  (1,0.5) rectangle (1.5,1);
\filldraw[color=black!70, fill=green!10,  thick]  (1.5,0.5) rectangle (2,1);
\filldraw[color=black!70, fill=white!10,  thick] (2,0.5) rectangle (2.5,1);
\filldraw[color=black!70, fill=white!10,  thick] (2.5,0.5) rectangle (3,1);
\filldraw[color=black!70, fill=white!10,  thick] (3,0.5) rectangle (3.5,1);
\filldraw[color=black!70, fill=white!10,  thick] (3.5,0.5) rectangle (4,1);

\filldraw[color=black!70, fill=blue!10,  thick] (0.5,1) rectangle (1,1.5);
\filldraw[color=black!70, fill=blue!10,  thick] (1,1) rectangle (1.5,1.5);
\filldraw[color=black!70, fill=blue!10,  thick] (1.5,1) rectangle (2,1.5);
\filldraw[color=black!70, fill=white!10,  thick] (2,1) rectangle (2.5,1.5);
\filldraw[color=black!70, fill=white!10,  thick] (2.5,1) rectangle (3,1.5);
\filldraw[color=black!70, fill=white!10,  thick] (3,1) rectangle (3.5,1.5);
\filldraw[color=black!70, fill=white!10,  thick] (3.5,1) rectangle (4,1.5);

\filldraw[color=black!70, fill=yellow!15,  thick] (1,1.5) rectangle (1.5,2);
\filldraw[color=black!70, fill=yellow!15,  thick] (1.5,1.5) rectangle (2,2);
\filldraw[color=black!70, fill=white!10,  thick] (2,1.5) rectangle (2.5,2);
\filldraw[color=black!70, fill=white!10,  thick] (2.5,1.5) rectangle (3,2);
\filldraw[color=black!70, fill=white!10,  thick] (3,1.5) rectangle (3.5,2);
\filldraw[color=black!70, fill=white!10,  thick] (3.5,1.5) rectangle (4,2);
\filldraw[color=black!70, fill=white!10,  thick] (4,1.5) rectangle (4.5,2);

\draw[thick,->] (5,1) -- (6.5,1);
\filldraw[black] (5.5,1.25) circle (0pt) node[anchor=west]{$\mathcal{F}$};

\filldraw[color=black!70, fill=red!10,  thick](7.5,0) rectangle (8,0.5);
\filldraw[color=black!70, fill=red!10,  thick](7.5,0.5) rectangle (8,1);
\filldraw[color=black!70, fill=red!10,  thick](7.5,1) rectangle (8,1.5);
\filldraw[color=black!70, fill=red!10,  thick](7.5,1.5) rectangle (8,2);

\filldraw[color=black!70, fill=green!10,  thick] (8,0.5) rectangle (8.5,1);
\filldraw[color=black!70, fill=green!10,  thick] (8,1) rectangle (8.5,1.5);
\filldraw[color=black!70, fill=green!10,  thick] (8,1.5) rectangle (8.5,2);

\filldraw[color=black!70, fill=blue!10,  thick] (8.5,0.5) rectangle (9,1);
\filldraw[color=black!70, fill=blue!10,  thick] (8.5,1) rectangle (9,1.5);
\filldraw[color=black!70, fill=blue!10,  thick] (8.5,1.5) rectangle (9,2);

\filldraw[color=black!70, fill=yellow!15,  thick] (9,1) rectangle (9.5,1.5);
\filldraw[color=black!70, fill=yellow!15,  thick] (9,1.5) rectangle (9.5,2);

\end{tikzpicture}
\newline
\end{example}
\begin{example}
\label{ex_6.2}
We also present an example for odd $n$. Let $n = 5$, $N = 6$ and $ \hspace{1mm} \lambda = (3,2, 1, 1,-1).$
Then $D_{\lambda}^{N}$ is the regular cell diagram drawn on the left. Map $\mathcal{F}$ sends it to the short Young diagram drawn on the right.  

\centering
\begin{tikzpicture}

\draw[thick,<->] (2,-0.5) -- (2, 3) node[anchor=north west] {};
\filldraw[color=black!70, fill=white!10,  thick](0,0) rectangle (0.5,0.5);
\filldraw[color=black!70, fill=white!10,  thick] (0.5,0) rectangle (1,0.5);
\filldraw[color=black!70, fill=white!10,  thick] (1,0) rectangle (1.5,0.5);
\filldraw[color=black!70, fill=white!10,  thick] (1.5,0) rectangle (2,0.5);
\filldraw[color=black!70, fill=red!10,  thick] (2,0) rectangle (2.5,0.5);
\filldraw[color=black!70, fill=red!10,  thick] (2.5,0) rectangle (3,0.5);

\filldraw[color=black!70, fill=green!10,  thick]  (1,0.5) rectangle (1.5,1);
\filldraw[color=black!70, fill=green!10,  thick]  (1.5,0.5) rectangle (2,1);
\filldraw[color=black!70, fill=white!10,  thick] (2,0.5) rectangle (2.5,1);
\filldraw[color=black!70, fill=white!10,  thick] (2.5,0.5) rectangle (3,1);
\filldraw[color=black!70, fill=white!10,  thick] (3,0.5) rectangle (3.5,1);
\filldraw[color=black!70, fill=white!10,  thick] (3.5,0.5) rectangle (4,1);

\filldraw[color=black!70, fill=blue!10,  thick] (1,1) rectangle (1.5,1.5);
\filldraw[color=black!70, fill=blue!10,  thick] (1.5,1) rectangle (2,1.5);
\filldraw[color=black!70, fill=white!10,  thick] (2,1) rectangle (2.5,1.5);
\filldraw[color=black!70, fill=white!10,  thick] (2.5,1) rectangle (3,1.5);
\filldraw[color=black!70, fill=white!10,  thick] (3,1) rectangle (3.5,1.5);
\filldraw[color=black!70, fill=white!10,  thick] (3.5,1) rectangle (4,1.5);

\filldraw[color=black!70, fill=yellow!15,  thick] (1.5,1.5) rectangle (2,2);
\filldraw[color=black!70, fill=white!10,  thick] (2,1.5) rectangle (2.5,2);
\filldraw[color=black!70, fill=white!10,  thick] (2.5,1.5) rectangle (3,2);
\filldraw[color=black!70, fill=white!10,  thick] (3,1.5) rectangle (3.5,2);
\filldraw[color=black!70, fill=white!10,  thick] (3.5,1.5) rectangle (4,2);
\filldraw[color=black!70, fill=white!10,  thick] (4,1.5) rectangle (4.5,2);

\filldraw[color=black!70, fill=white!10,  thick] (2,2) rectangle (2.5,2.5);
\filldraw[color=black!70, fill=white!10,  thick] (2.5,2) rectangle (3,2.5);
\filldraw[color=black!70, fill=white!10,  thick] (3,2) rectangle (3.5,2.5);
\filldraw[color=black!70, fill=white!10,  thick] (3.5,2) rectangle (4,2.5);
\filldraw[color=black!70, fill=white!10,  thick] (4,2) rectangle (4.5,2.5);
\filldraw[color=black!70, fill=white!10,  thick] (4.5,2) rectangle (5,2.5);

\draw[thick,->] (5.25,1.25) -- (6.75,1.25);
\filldraw[black] (5.75,1.5) circle (0pt) node[anchor=west]{$\mathcal{F}$};

\filldraw[color=black!70, fill=red!10,  thick](7.5,0.75) rectangle (8,1.25);
\filldraw[color=black!70, fill=red!10,  thick](7.5,1.25) rectangle (8,1.75);

\filldraw[color=black!70, fill=green!10,  thick](8,0.75) rectangle (8.5,1.25);
\filldraw[color=black!70, fill=green!10,  thick](8,1.25) rectangle (8.5,1.75);

\filldraw[color=black!70, fill=blue!10,  thick](8.5,0.75) rectangle (9,1.25);
\filldraw[color=black!70, fill=blue!10,  thick](8.5,1.25) rectangle (9,1.75);

\filldraw[color=black!70, fill=yellow!15,  thick] (9,1.25) rectangle (9.5,1.75);

\end{tikzpicture}
\end{example}

To prove this theorem, we need to study the structure of the $O_N$ module on multiplicity spaces $U_\lambda$. Our approach will be the following. $O_N$-module structure  naturally defines $\mathfrak{o}_N(\mathbb{C})$-module structure on the multiplicity space $U_\lambda$. We will study $U_\lambda$ as a $\mathfrak{o}_{N}(\mathbb{C})$-module and then once we manage to decompose it into the sum of simple $\mathfrak{o}_{N}(\mathbb{C})$-modules we will be able to say a lot about its $O_N$-module structure. The following two propositions from \cite{Fulton} justify this approach. 

\begin{proposition} (Theorem 19.20 in \cite{Fulton}) 
\label{prop_5.1}

Let $R_\nu$ be the irreducible representation of the group $O_N$ corresponding to a short Young diagram $\nu \hspace{1mm}(\nu^t_1 + \nu_2^t \leqslant N)$, such that $\nu^t_1 \leqslant \frac{N}{2}$. Assume that we fixed the standard choice of Cartan subalgebra inside $\mathfrak{o}_N(\mathbb{C})$ and the natural basis in its dual. All highest weights are written in this basis.
\begin{itemize}
    \item If $N = 2m + 1$ and $\nu = (\nu_1 \geqslant \nu_2  \geqslant \ldots \geqslant \nu_m \geqslant 0)$, then $R_{\nu}$ is an irreducible representation of $\mathfrak{o}_N(\mathbb{C})$ of the highest weight $\nu$.
    \item If $N = 2m$ and $\nu = (\nu_1 \geqslant \nu_2  \geqslant \ldots \geqslant \nu_{m-1} \geqslant 0)$, then $R_{\nu}$ - irreducible representation $\mathfrak{o}_N(\mathbb{C})$ of the highest weight $\nu$.
    \item If $N = 2m$ and $\nu = (\nu_1 \geqslant \nu_2  \geqslant \ldots \geqslant \nu_{m} > 0)$, then $R_{\nu}$  is the sum of two irreducible representations  of  $\mathfrak{o}_N(\mathbb{C})$ with the highest weights $\nu = (\nu_1, \ldots, \nu_m)$ and $\nu^{\sigma} = (\nu_1, \ldots, - \nu_m)$.
\end{itemize}
\end{proposition}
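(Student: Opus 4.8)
Since this is the classical branching rule for $\mathfrak o_N\subset O_N$ --- it is Theorem~19.20 of \cite{Fulton} --- in the paper it can simply be quoted, but if one wanted to prove it here is the line I would take. The starting point is the Weyl construction (\cite{Fulton}, Lecture~19): $R_\nu$ is the traceless Schur functor $\mathbb{S}_\nu^{[N]}(V)$, the image inside $V^{\otimes|\nu|}$ of a Young symmetrizer for $\nu$ followed by the projection onto the tensors annihilated by every contraction with $(\cdot,\cdot)_V$. The short condition $\nu^t_1+\nu^t_2\le N$ is exactly what keeps this module nonzero and irreducible, and among the two partitions $\nu,\nu'$ with $R_{\nu'}\cong R_\nu\otimes\det$ --- where $\nu'$ has the same columns as $\nu$ except that its first column has length $N-\nu^t_1$ --- the hypothesis $\nu^t_1\le N/2$ singles out the one with the shorter first column.

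First I would establish the \emph{irreducible-or-split dichotomy}. Write $O_N=SO_N\rtimes\langle r\rangle$ with $r$ a reflection (for $N$ odd it is cleaner to use $O_N=SO_N\times\{\pm I\}$ and note that every $O_N$-irrep is an $SO_N$-irrep tensored with a character of $\{\pm I\}$). By Clifford theory the restriction of the irreducible $R_\nu$ to $SO_N$ is either irreducible or a sum of exactly two non-isomorphic $r$-conjugate irreducibles, the latter occurring precisely when $R_\nu\cong R_\nu\otimes\det$, i.e.\ when $\nu=\nu'$, i.e.\ when $\nu^t_1=N-\nu^t_1$. Under $\nu^t_1\le N/2$ this forces $N=2m$ and $\nu^t_1=m$, meaning $\nu$ has exactly $m$ nonzero rows; in every other case $R_\nu$ stays irreducible over $SO_N$, hence over $\mathfrak o_N$. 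This already separates the three bullets.

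Next I would \emph{pin down the highest weights}. Fix the standard Cartan and Borel of $\mathfrak o_N$, and a basis $f_1,\dots,f_m$ of a maximal isotropic subspace of $V$ with $f_i$ of torus weight $e_i$. Assembling a candidate highest-weight vector of $\mathbb{S}_\nu^{[N]}(V)$ column by column --- filling each column of height $k$ with $f_1\wedge\cdots\wedge f_k$ --- produces a vector whose weight is $\sum_j(e_1+\cdots+e_{k_j})$ over the columns, which equals $\sum_i\nu_i e_i$ because summing column heights is the same as summing row lengths; it is annihilated by the nilradical $\mathfrak n_+$, and being built from mutually orthogonal isotropic vectors it is automatically traceless, so it survives the projection. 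This gives highest weight $(\nu_1,\dots,\nu_{\lfloor N/2\rfloor})$, which is bullets~1 and~2 (when $N=2m$ and $\nu$ has at most $m-1$ rows, the last coordinate is $0$). In the remaining case $N=2m$, $\nu^t_1=m$, the final column $f_1\wedge\cdots\wedge f_m$ is no longer a $\mathfrak{so}_{2m}$-weight vector --- $\Lambda^m V$ splits into its self-dual and anti-self-dual halves --- and the two constituents of the first step arise from $f_1\wedge\cdots\wedge f_m$ and from $f_1\wedge\cdots\wedge f_{m-1}\wedge f_m'$ (with $f_m'$ the hyperbolic partner of $f_m$, of weight $-e_m$), of highest weights $(\nu_1,\dots,\nu_{m-1},\nu_m)$ and $(\nu_1,\dots,\nu_{m-1},-\nu_m)$; conjugation by $r$ realizes the outer automorphism $e_m\mapsto-e_m$ of $\mathfrak{so}_{2m}$ and interchanges the two, so they are indeed non-isomorphic.

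The hard part is precisely this last computation in the even case: checking that the naive column construction really yields a weight vector after the traceless projection, and correctly fixing the sign of the $m$-th coordinate --- this is where the distinction between $O_{2m}$ and $SO_{2m}$, through the involution $\nu\leftrightarrow\nu'$, actually makes itself felt. Everything else is bookkeeping with the Weyl construction and with Clifford theory.
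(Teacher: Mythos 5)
The paper itself does not prove this statement: it is quoted as Theorem~19.20 of \cite{Fulton} and treated as a black box, so there is no internal proof to compare against; you correctly recognize this in your opening sentence. Your sketch is a sound outline of the standard argument, and it is in fact the route Fulton--Harris themselves indicate: realize $R_\nu$ as the traceless Schur module $\mathbb{S}_\nu^{[N]}(V)\subset V^{\otimes|\nu|}$, invoke index-two Clifford theory for $SO_N\triangleleft O_N$ to reduce the irreducible-versus-two-constituents dichotomy to whether $R_\nu\cong R_\nu\otimes\det$, i.e.\ whether $\nu$ equals its associate $\nu^\dagger$, and then exhibit highest weight vectors by filling columns with wedges of consecutive isotropic basis vectors, noting that such vectors are automatically traceless. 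That correctly reproduces all three bullets.

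One small imprecision is worth flagging in the last bullet. When $\nu_m>1$ there are $\nu_m$ columns of height $m$, and the highest weight vector of the $\nu^\sigma$-constituent is obtained by replacing $f_m$ with its hyperbolic partner $f_m'$ in \emph{every} column of height $m$, not just in ``the final column''; otherwise the resulting tensor is not even a weight vector. The cleanest way to say it is that the second highest weight vector is $r\cdot v$, where $v$ is the first one and $r\in O_{2m}\setminus SO_{2m}$ is the reflection swapping $f_m\leftrightarrow f_m'$; this makes the conjugacy of the two constituents manifest and fixes the sign issue you flag at the end. With that wording the sketch is complete.
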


We are yet to cover the case, where $R_{\nu}$ is the irreducible representation of $O_N(\mathbb{C})$ corresponding to a short Young diagram $\nu$, such that $\nu_1^{t} > \frac{N}{2}$. Let us introduce the following definition. 

\begin{definition}
\label{def_5.1}
  Short Young diagram $\nu^{\dagger}$  is called  \textbf{associated } to short Young diagram $\nu$ if $\nu^{\dagger}$ is obtained from $\nu$ by replacing the first column $\nu_1^{t}$ by $N-\nu_1^{t}$. Short Young diagram is called \textbf{self-associated } if  $\nu^{\dagger} = \nu$. 
\end{definition}

This definition makes sense since  $\nu^t_1 + \nu_2^t \leqslant N$,  so $N - \nu_1^{t} \geqslant \nu_2^{t}$ and $N - \nu_1^{t} + \nu_2^{t}  \leqslant N$, since $\nu^{t}_1 \geqslant  \nu^{t}_2$. 

\begin{proposition} (Page 297 of \cite{Fulton}) 

\label{prop_5.2}
    Let $R_\nu$ and $R_{\nu^{\dagger}}$ be two irreducible representations of $O_N$ corresponding to short Young diagrams $\nu$ and $\nu^{\dagger}$. Then $R_\nu$ and $R_{\nu^{\dagger}}$ are isomorphic as representations of $\mathfrak{o}_{N}(\mathbb{C})$. 
\end{proposition}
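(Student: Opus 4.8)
Proof proposal (sketch). The plan is to deduce this from the standard fact that passing from $\nu$ to its associated diagram $\nu^{\dagger}$ amounts, on the level of $O_N$-modules, to tensoring with the sign character $\det\colon O_N\to\{\pm1\}$. Write $\mathbb{C}_{\det}$ for the one-dimensional $O_N$-module on which $g\in O_N$ acts by $\det g$. All the content of the proposition is then the interplay between this twist and restriction to $\mathfrak{o}_N(\mathbb{C})$.

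The first step is to recall (this is exactly the assertion ``Page 297 of \cite{Fulton}'' invoked in the statement, part of Weyl's construction) the twisting rule
$$R_{\nu^{\dagger}} \underset{O_N}{\cong} R_{\nu}\otimes\mathbb{C}_{\det}.$$
Here it is worth noting why passing to the complement of the \emph{first} column is the right operation: since $\nu_1^{t}+\nu_2^{t}\leqslant N$ and $\nu_1^{t}\geqslant\nu_2^{t}\geqslant\cdots$, one has $\nu_i^{t}\leqslant N/2$ for every $i\geqslant 2$, so $\nu_1^{t}$ is the only column whose length can exceed $N/2$, and $\nu^{\dagger}$ is the unique associated short Young diagram with all columns of length $\leqslant N/2$ after the swap (or $\nu^{\dagger}=\nu$ in the self-associated case, where the proposition is trivial).

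The second step is to restrict the displayed isomorphism to $\mathfrak{o}_N(\mathbb{C})$. The character $\det$ is identically $1$ on the identity component $SO_N(\mathbb{C})$, so $\mathbb{C}_{\det}$ restricts to the trivial $SO_N(\mathbb{C})$-module; as $SO_N(\mathbb{C})$ is connected with Lie algebra $\mathfrak{o}_N(\mathbb{C})$, the differential of $\det$ vanishes and $\mathbb{C}_{\det}$ is the trivial $\mathfrak{o}_N(\mathbb{C})$-module. An isomorphism of $O_N$-modules is in particular one of $\mathfrak{o}_N(\mathbb{C})$-modules, so differentiating the above gives
$$R_{\nu^{\dagger}}\big|_{\mathfrak{o}_N}\;\cong\;\bigl(R_{\nu}\otimes\mathbb{C}_{\det}\bigr)\big|_{\mathfrak{o}_N}\;=\;R_{\nu}\big|_{\mathfrak{o}_N}\otimes\mathbb{C}_{\mathrm{triv}}\;\cong\;R_{\nu}\big|_{\mathfrak{o}_N},$$
which is precisely the claim.

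The only real obstacle is the twisting rule $R_{\nu^{\dagger}}\cong R_{\nu}\otimes\mathbb{C}_{\det}$; everything after it is formal, and I would be content to cite \cite{Fulton} for it. If one wanted a self-contained argument, the natural route is through Weyl's explicit realization of $R_{\nu}$ inside $(\mathbb{C}^N)^{\otimes|\nu|}$ as the traceless part of the image of a Young symmetrizer: one uses $\Lambda^N\mathbb{C}^N\cong\mathbb{C}_{\det}$ together with the $O_N$-equivariant duality $\Lambda^{k}\mathbb{C}^N\otimes\mathbb{C}_{\det}\cong\Lambda^{N-k}\mathbb{C}^N$, and checks that replacing the first column of $\nu$ by its complement intertwines the two symmetrizer-plus-trace-condition constructions after twisting by $\det$. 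Verifying this compatibility at the level of Young symmetrizers and harmonic (traceless) tensors is the technical heart of the matter; once it is in hand, the reduction to $\mathfrak{o}_N(\mathbb{C})$-modules is exactly the argument above. Combined with \ref{prop_5.1}, this then gives a complete description of the $\mathfrak{o}_N(\mathbb{C})$-module structure of every $R_{\nu}$.
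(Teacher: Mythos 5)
Your proof is correct. The paper itself gives no argument here — it simply cites page 297 of \cite{Fulton} — so your write-up essentially supplies the standard proof underlying that reference. The twisting identity $R_{\nu^{\dagger}}\cong R_{\nu}\otimes\mathbb{C}_{\det}$ is exactly the content of the Fulton--Harris discussion of associated partitions, and once it is granted, your observation that $\det$ is identically $1$ on $SO_N(\mathbb{C})$ (hence has vanishing differential, hence is trivial on $\mathfrak{o}_N(\mathbb{C})$) is the correct and complete reduction. Your side remark explaining why only the first column can exceed $N/2$ (from $\nu_1^{t}+\nu_2^{t}\leqslant N$ and monotonicity of column lengths) is a nice clarification of why the $\dagger$ operation is well-defined and why the self-associated case is trivial. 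The only thing to be aware of is that, as you acknowledge, the twisting rule itself is being taken on faith from \cite{Fulton}; a self-contained proof of it (via Weyl's construction and the duality $\Lambda^{k}\mathbb{C}^N\otimes\det\cong\Lambda^{N-k}\mathbb{C}^N$ on harmonic tensors) is indeed the technical core, but since the paper also cites this, no more is required of you than of the author.
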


So, we know how to decompose any irreducible representation of $O_N$ into the sum of irreducible representations of Lie algebra $\mathfrak{o}_{N}(\mathbb{C})$. For the technical reasons, it will be convenient to introduce the following notation. 

\begin{notation}
    Let $\nu$ be the short Young diagram of the height $N$. Then denote by $s(\nu)$ the shorter of  the diagrams $\nu$ and $\nu^{\dagger}$:
    $$s(\nu) = \begin{cases}
    \nu, \hspace{1mm} \text{if} \hspace{1mm} \nu_1^{t} \leqslant \frac{N}{2}, \\
    \nu^{\dagger},\hspace{1mm} \text{otherwise}.
    \end{cases}$$
 Obviously, in case the diagram $\nu$ is self-associated  i.e. $\nu_1^t = \frac{N}{2}$, $s(\nu) = \nu = \nu^{\dagger}$. 
\end{notation}
In chapter \ref{sec_4} we defined both $O(V)$ and $\mathfrak{o}(\tilde{E})$ representation structure on the space $\Lambda W$.  We can treat multiplicity space $U_\lambda$ as the subspace of the singular vectors of the weight $\lambda$ in $\Lambda W$. $O(V)$-module structure on $U_\lambda$ is inherited from the $O(V)$-module structure on $\Lambda W$. Indeed, since the action of $O(V)$ commutes with the action of $\mathfrak{o}(\tilde{E})$ on $\Lambda W$  (see \ref{th_5.1}), then a subspace of the singular vectors of a fixed weight is a $O(V)$ subrepresentation. Similarly, $U_\lambda$ is a $\mathfrak{o}(V)$-submodule of $\Lambda W$.  We know that the images of $\mathfrak{o}(V)$ and $\mathfrak{o}(\tilde{E})$ in $\text{End}_{\mathbb{C}}(\Lambda W)$ commute since they as subalgebras of $\mathfrak{o}(\tilde{W}) \subset \text{End}_{\mathbb{C}}(\Lambda W)$ centralize each other (see \ref{cor_1.1} and \ref{cor_4.2}). Therefore, $\Lambda W$ is a finite-dimensional representation of the Lie algebra $\mathfrak{o}(V) \oplus \mathfrak{o}(\tilde{E})$.  Lie algebra  $\mathfrak{o}(V) \oplus \mathfrak{o}(\tilde{E})$ is semisimple, so in order to study its finite-dimensional representations, we need to fix the choice of the Cartan subalgebra in $\mathfrak{o}(V) \oplus \mathfrak{o}(\tilde{E})$ and polarization of the root system. It is natural to choose Cartan subalgebra to be the direct sum of Cartan subalgebras of the summands. So, we need to say a few words about the summands. 

We have two different situations depending on the parity of the dimension of $V$.  
In case dimension of $V$ is even, $\dim(V) = N = 2d$ we choose a basis $$\textbf{v}= (v_1, v_2, \ldots, v_N)$$
in $V$, such that the Gram matrix of the symmetric bilinear form on $V$ in this basis equals 
$$G = \begin{bmatrix}
    0 & I_{d} \\
    I_{d} & 0
\end{bmatrix}$$
Then $\mathfrak{o}(V)$ as matrix subalgebra of $\mathfrak{gl}(V)$ written in the basis $\textbf{v}$ is essentially the same matrix Lie algebra as the one we observed in \ref{r_1.2}.  

We choose Cartan subalgebra of $\mathfrak{o}(V)$ to be the subalgebra of diagonal matrices in $\mathfrak{o}(V)$: 
$$\mathfrak{h}_{\mathfrak{o}(V)} = span_{\mathbb{C}}\{t_i = E_{ii} - E_{i+d, i+d} \hspace{1mm} | \hspace{1mm} 1 \leqslant i \leqslant d \}.$$
We also fix the basis in $\mathfrak{h}_{\mathfrak{o}(V)}$: $$\textbf{t} = (t_i)_{1\leqslant i \leqslant d}$$ and its dual in $\mathfrak{h}^{*}_{\mathfrak{o}(V)}$: $$\textbf{t}^{*} = (t_i^{*})_{1\leqslant i \leqslant d}.$$

Root system in basis $\textbf{t}^{*}$ has the following form: $$R = \{ \pm t_i^* \pm t_j^*\ | \hspace{1mm} i \neq j \}.$$
We choose polarization so that 
$$R_+ = \{t_i^* + t_j^* \hspace{1mm} | \hspace{1mm} i \neq j \} \cup \{ t_i^* - t_j^* \hspace{1mm}| \hspace{1mm} i < j \}.$$
Then $\mathfrak{o}(V)$ admits triangular decomposition $$\mathfrak{o}(V) = \mathfrak{n}_{-}(\mathfrak{o}(V)) \oplus \mathfrak{h}_{\mathfrak{o}(V)} \oplus \mathfrak{n}_{+}(\mathfrak{o}(V)),$$ where 
$$\mathfrak{n}_{-}(\mathfrak{o}(V)) = \text{span}_{\mathbb{C}}\{ E_{i + d, j} - E_{j + d, i} \hspace{1mm} | \hspace{1mm} i \neq j \} \oplus 
\text{span}_{\mathbb{C}} \{E_{ij} - E_{j+d, i+d} \hspace{1mm} | \hspace{1mm} i > j \}$$
and 
$$\mathfrak{n}_{+}(\mathfrak{o}(V)) = \text{span}_{\mathbb{C}}\{ E_{i, j + d} - E_{j, i + d} \hspace{1mm} | \hspace{1mm} i \neq j \} \oplus \text{span}_{\mathbb{C}}\{ E_{ij} - E_{j+d, i+d} \hspace{1mm} | \hspace{1mm}i < j \}.$$

In case dimension of $V$ is odd, $\dim(V) = N = 2d + 1$, we choose the basis $\textbf{v} = (v_1, v_2, \ldots, v_N)$ in $V$, such that Gram matrix of the symmetric bilinear form on $V$ in this basis equals
$$\begin{bmatrix}
    0 & I_{d} & 0 \\
    I_{d} & 0 & 0 \\
    0 & 0 & 1
\end{bmatrix}$$
Then $\mathfrak{o}(V)$ as matrix subalgebra of $\mathfrak{gl}(V)$ written in the basis $\textbf{v}$ equals $$\mathfrak{o}(V) = \{ x \in \mathfrak{gl}(V) \hspace{1mm}| \hspace{1mm} Gx + x^{t}G = 0 \}.$$
So, in the chosen basis elements of $\mathfrak{o}(V)$ are written as matrices   
$$\begin{bmatrix}
    A & B & e \\
    C & D & f \\
    h & l & 0
\end{bmatrix}$$
such that $A + D^{t} = 0$, $B + B^{t} = 0$, $C + C^{t}$, $e + l^{t} = 0$, $f + h^{t}= 0$.

As in the even case it is convenient to choose the Cartan subalgebra of $\mathfrak{o}(V)$ to be the subalgebra of the diagonal matrices:  
$$\mathfrak{h}_{\mathfrak{o}(V)} = span_{\mathbb{C}}\{ t_i := E_{ii} - E_{i+d, i+d} \hspace{1mm} | \hspace{1mm} 1 \leqslant i \leqslant d \}.$$
The root system in the odd case is a bit larger. In the odd case root system written in the basis $\textbf{t}^{*}$  has the following form: 
$$R = \{ \pm t_i^{*} \pm t_j^{*} \hspace{1mm}| \hspace{1mm} i \neq j \} \cup \{  \pm t_i^{*} \hspace{1mm} | \hspace{1mm} 1 \leqslant i \leqslant d\}.$$
As we can see we get additional roots $\pm t_i$  in the odd case. Let us choose the polarization of the root system, so that
$$R_+ = \{t_i^{*} - t_j^{*} \hspace{1mm} |  \hspace{1mm}  i < j \} \cup \{t_i^{*} + t_j^{*} \hspace{1mm} | \hspace{1mm} i \neq j\} \cup  \{  t_i^{*}\hspace {1mm} | 1 \leqslant i \leqslant d \} $$
Then we get the following triangular decomposition of Lie algebra $\mathfrak{o}(V)$: $$\mathfrak{o}(V) = \mathfrak{n}_{-}(\mathfrak{o}(V)) \oplus \mathfrak{h}_{\mathfrak{o}(V)} \oplus \mathfrak{n}_{+}(\mathfrak{o}(V)).$$
Explicitly:
\begin{gather*}
\mathfrak{n}_{-}(\mathfrak{o}(V)) = \text{span}_{\mathbb{C}}\{ E_{i + d, j} - E_{j + d, i} \hspace{1mm} | \hspace{1mm} i \neq j \} \oplus   \text{span}_{\mathbb{C}}\{ E_{ij} - E_{j+d, i+d} \hspace{1mm} | \hspace{1mm} i > j \} \oplus \\ \oplus \hspace{1mm} \text{span}_{\mathbb{C}}\{ E_{n + i, 2d+1} - E_{2d+1,i}  \hspace{1mm} | \hspace{1mm} 1 \leqslant i \leqslant d \} \end{gather*} 
and
\begin{gather*} 
\mathfrak{n}_{+}(\mathfrak{o}(V)) = \text{span}_{\mathbb{C}}\{ E_{i , j+ d} - E_{j , i+ d} \hspace{1mm} | \hspace{1mm} i \neq j \} \oplus  \text{span}_{\mathbb{C}}\{ E_{ij} - E_{j+d, i+d} \hspace{1mm} | \hspace{1mm} i < j \} \oplus \\ 
\oplus \hspace{1mm}  \text{span}_{\mathbb{C}} \{E_{i, 2d+1} - E_{2d+1, n+i} \hspace{1mm} | \hspace{1mm} 1 \leqslant i \leqslant d \}.
\end{gather*}

We deal with Lie algebra $\mathfrak{o}(\tilde{E})$ the same way we dealt with even case of $\mathfrak{o}(V)$. We have already listed all the necessary results in paragraph 2.  We choose the basis $\tilde{\textbf{e}} =  (e_1, \ldots e_n, e_1^{*}, \ldots e_n^{*})$ in $\tilde{E}$. The choice of Cartan subalgebra in $\mathfrak{o}(\tilde{E})$ is the following:
$$\mathfrak{h}_{\mathfrak{o}(\tilde{E})} = \text{span}_{\mathbb{C}}\{ h_i := E_{ii} - E_{i+n, i+n} \hspace{1mm} | \hspace{1mm} 1 \leqslant i \leqslant n \}$$
Other two summands of the triangular decomposition: 
$$\mathfrak{n}_{-}(\mathfrak{o}(\tilde{E})) = \text{span}_{\mathbb{C}}\{ E_{i + n, j} - E_{j + n, i} \hspace{1mm} | \hspace{1mm} i \neq j \} \oplus 
\text{span}_{\mathbb{C}} \{E_{ij} - E_{j+n, i+n} \hspace{1mm} | \hspace{1mm} i > j \},$$
$$\mathfrak{n}_{+}(\mathfrak{o}(\tilde{E})) = \text{span}_{\mathbb{C}}\{ E_{i, j + n} - E_{j, i + n} \hspace{1mm} | \hspace{1mm} i \neq j \} \oplus \text{span}_{\mathbb{C}}\{ E_{ij} - E_{j+n, i+n} \hspace{1mm} | \hspace{1mm}i < j \}.$$
As we have already announced Cartan subalgebra of the Lie algebra $\mathfrak{o}(V) \oplus \mathfrak{o}(\tilde{E})$ is $$\mathfrak{h}_{\mathfrak{o}(V) \oplus \mathfrak{o}(\tilde{E})} = \mathfrak{h}_{\mathfrak{o}(\tilde{E})} \oplus \mathfrak{h}_{\mathfrak{o}(V)} = \text{span}_{\mathbb{C}}\{h_i \hspace{1mm} | \hspace{1mm}  1 \leqslant i \leqslant n \} \oplus \text{span}_{\mathbb{C}} \{t_j \hspace{1mm}| \hspace{1mm}  1 \leqslant j \leqslant d \} $$
Similar with the other summands of the triangular decomposition of Lie algebra $\mathfrak{o}(V) \oplus \mathfrak{o}(\tilde{E})$:
$$\mathfrak{n}_{+}(\mathfrak{o}(V) \oplus \mathfrak{o}(\tilde{E})) =  \mathfrak{n}_{+}(\mathfrak{o}(V)) \oplus  
 \mathfrak{n}_{+}(\mathfrak{o}(\tilde{E}))$$$$\mathfrak{n}_{-}(\mathfrak{o}(V) \oplus \mathfrak{o}(\tilde{E})) =  \mathfrak{n}_{-}(\mathfrak{o}(V)) \oplus  
 \mathfrak{n}_{-}(\mathfrak{o}(\tilde{E}))$$
Let us choose the dual basis $(\mathbf{t}^*, \mathbf{h}^*)$ in $\mathfrak{h}^*_{\mathfrak{o}(V) \oplus \mathfrak{o}(\tilde{E})}$ so that 
\begin{gather}
h^{*}_i(h_j) = \delta_{ij} \\
h^{*}_i(t_j) = 0 \\
t^{*}_i(h_j) = 0\\
t^{*}_i(t_j) = \delta_{ij} 
\end{gather}

We will write the highest weights of irreducible representations of $\mathfrak{o}(V) \oplus \mathfrak{o}(\tilde{E})$ in this basis. Now we are ready to prove the following important result. 

\begin{theorem}
  \label{th_5.2}
In case dimension of $V$ is even, $\Lambda W$ as representation of $\mathfrak{o}(V) \oplus \mathfrak{o}(\tilde{E})$ decomposes into the sum of irreducible representations 
\begin{equation} \label{eq_5.1}
    \Lambda W \underset{\mathfrak{o}(V) \oplus \mathfrak{o}(\tilde{E})}{\cong}\bigoplus_{\{\lambda \in \Delta^N  \hspace{1mm}| \hspace{1mm} \lambda_n \neq 0\}} L_{(\kappa(\lambda), \lambda)} \oplus \bigoplus_{\{\lambda \in \Delta^N  \hspace{1mm}| \hspace{1mm} \lambda_n = 0\}} L_{(\kappa(\lambda), \lambda)} \oplus  L_{(\kappa(\lambda)^{\sigma}, \lambda)} ,
\end{equation}

where $\kappa(\lambda) = (\text{min} (l_{n}, r_{n}), l_{n-1},\ldots, l_{1})^t$ and $(l_i)_{1 \leqslant i \leqslant n} = \textbf{l}(\lambda, N)$, $(r_i)_{1 \leqslant i \leqslant n} = \textbf{r}(\lambda, N)$ defined by equations \ref{eq_3.1}.  

In case dimension of $V$ is odd, $\Lambda W$ as representation of $\mathfrak{o}(V) \oplus \mathfrak{o}(\tilde{E})$ decomposes into the sum of irreducible representations 
\begin{equation} \label{eq_5.2}
    \Lambda W \underset{\mathfrak{o}(V) \oplus \mathfrak{o}(\tilde{E})}{\cong} \bigoplus_{\lambda \in \Delta^N } L_{(\kappa(\lambda), \lambda)}, 
\end{equation}
where $\kappa(\lambda) = (\text{min} (l_{n}, r_{n}), l_{n-1},\ldots, l_{1})^t$ and $(l_i)_{1 \leqslant i \leqslant n} = \textbf{l}(\lambda, N)$, $(r_i)_{1 \leqslant i \leqslant n} = \textbf{r}(\lambda, N)$ defined by equations \ref{eq_3.1}.  
\end{theorem}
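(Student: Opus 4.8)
The plan is to deduce the theorem from Corollary~\ref{cor_4.3} together with Propositions~\ref{prop_5.1} and~\ref{prop_5.2}, the only genuinely new input being the construction of a single joint highest weight vector. By Corollary~\ref{cor_4.3} we have $\Lambda W\cong\bigoplus_{\lambda\in\Delta^N}U_\lambda\otimes L_\lambda$ as an $O(V)\times\mathfrak{o}(\tilde E)$-module with each $U_\lambda$ an irreducible $O(V)$-module, so $U_\lambda\cong R_\nu$ for a unique short Young diagram $\nu$ (Theorem~19.19 of~\cite{Fulton}); restricting the $O(V)$-action to $\mathfrak{o}(V)$, Propositions~\ref{prop_5.1} and~\ref{prop_5.2} say that as an $\mathfrak{o}(V)$-module $U_\lambda$ is $L_{s(\nu)}$, or $L_{s(\nu)}\oplus L_{s(\nu)^\sigma}$ (the latter only for $N$ even with $s(\nu)_{N/2}\neq 0$), so that the set of $\mathfrak{n}_+(\mathfrak{o}(V))$-singular weights occurring in $U_\lambda$ is $\{s(\nu)\}$ or $\{s(\nu),s(\nu)^\sigma\}$. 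I would therefore reduce the whole statement to the following: for each $\lambda$, produce an $\mathfrak{n}_+(\mathfrak{o}(V)\oplus\mathfrak{o}(\tilde E))$-singular vector in $\Lambda W$ of $\mathfrak{o}(\tilde E)$-weight $\lambda$ and $\mathfrak{o}(V)$-weight $\kappa(\lambda)$. Granting this, $\kappa(\lambda)$ is one of the (at most two) singular weights of $U_\lambda$; since $\kappa(\lambda)$ is a partition while of the two weights $s(\nu)$, $s(\nu)^\sigma$ only $s(\nu)$ is one, this forces $s(\nu)=\kappa(\lambda)$, and \ref{eq_5.1}--\ref{eq_5.2} drop out, the splitting in \ref{eq_5.1} occurring exactly when $s(\nu)_{N/2}=\kappa(\lambda)_{N/2}\neq 0$; a direct inspection of $\kappa(\lambda)^t=(\tfrac{N}{2}-|\lambda_n|,\ \tfrac{N}{2}-\lambda_{n-1},\ \ldots,\ \tfrac{N}{2}-\lambda_1)$ using $\lambda\in P_+$ identifies this last condition with $\lambda_n=0$.

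To build the vector I would work directly in $\Lambda W=\Lambda(V\otimes E)$, identifying its monomials with fillings of the $N\times n$ grid whose rows are indexed by $\mathbf{e}$ and columns by $\mathbf{v}$; then $U_\lambda$ is the space of $\lambda$-singular vectors and a joint highest weight vector is a filling killed by $\mathfrak{n}_+(\mathfrak{o}(V))$ and $\mathfrak{n}_+(\mathfrak{o}(\tilde E))$. Using Corollary~\ref{cor_1.2}, Proposition~\ref{prop_4.3} and Proposition~\ref{prop_4.4} I would first write the raising operators explicitly: $\mathfrak{n}_+(\mathfrak{o}(V))$ acts through the $\mathfrak{gl}(W)$-derivations attached to its matrix units, and $\mathfrak{n}_+(\mathfrak{o}(\tilde E))$ through the derivations that move a box between two $\mathbf{e}$-rows of a fixed column, together with the degree-raising operators attached to the roots $h_i^*+h_j^*$. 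The one subtle point is that, because the spinor construction uses the hyperbolic form on $\tilde W=W\oplus W^*$ and the identification $V\cong V^*$ coming from $(\cdot,\cdot)_V$, these last operators are \emph{not} $\sum_a M_{v_a\otimes e_i}M_{v_a\otimes e_j}$ but the form-twisted quadratics $\sum_{a,b}(G^{-1})_{ab}\,M_{v_a\otimes e_i}M_{v_b\otimes e_j}$ with $G$ the Gram matrix of $(\cdot,\cdot)_V$; the naive untwisted operators lead to a contradiction. Since a single monomial admits no cancellation, annihilation by all raising operators will then be equivalent to a list of combinatorial conditions on the column heights $s_1,\dots,s_N$: each column is filled from the top; $s_1\geq\cdots\geq s_{\lfloor N/2\rfloor}$ and $s_{\lfloor N/2\rfloor+1}\leq\cdots\leq s_{2\lfloor N/2\rfloor}$ with $s_{b+\lfloor N/2\rfloor}\leq s_a$ for $a\neq b$; for every $\mathbf{v}$-pair $\{a,\,a+\lfloor N/2\rfloor\}$ one column has height $n$ or both have height $n-1$; and, when $N$ is odd, the self-paired column $v_N$ has height at least $n-1$.

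With these conditions in hand I would take $\rho=(r_1,\dots,r_n)$, which is a partition since $r_1\geq\cdots\geq r_n\geq 0$ for $\lambda\in\Delta^N$, let $h$ be the conjugate partition of $\rho$ padded with zeros to length $N$, and let $S_\lambda$ be the filling placing the $\lfloor N/2\rfloor$ largest of the $h_c$ in $v_1,\dots,v_{\lfloor N/2\rfloor}$ in decreasing order, the $\lfloor N/2\rfloor$ smallest in $v_{\lfloor N/2\rfloor+1},\dots,v_{2\lfloor N/2\rfloor}$ in increasing order, and for $N$ odd a suitable middle value in $v_N$. I would then check that $\mathbf{e}$-row $i$ of $S_\lambda$ contains $r_i$ boxes, so $S_\lambda$ has $\mathfrak{o}(\tilde E)$-weight $\lambda$; that the combinatorial conditions hold, the delicate ``$n$ or $n-1$'' clause following from the fact that $h_a<n$ forces $h_a=n-1$ and $h_{N+1-a}\geq n-1$, which in turn rests on the cell-diagram inequalities $r_1\geq\cdots\geq r_n$, $r_{n-1}\geq l_n$ and on $\lambda\in P_+$; and that the $\mathfrak{o}(V)$-weight of $S_\lambda$ equals $(h_a-h_{N+1-a})_{a\leq\lfloor N/2\rfloor}$, which a short count rewrites as $\#\{\,i:|\lambda_i|\leq\tfrac{N}{2}-a\,\}$ and compares against $\kappa(\lambda)^t$ to give exactly $\kappa(\lambda)$. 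This produces the required joint highest weight vector and completes the argument.

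The hard part, as opposed to routine bookkeeping, will be pinning down the exact form of the $\mathfrak{o}(\tilde E)$-action on $\Lambda W$ — particularly the form-twist in the degree-raising generators — and then showing that the defining inequalities of a regular cell diagram really do force the ``$n$ or $n-1$'' condition and, in the odd case, the constraint on the self-paired column; the remaining parity bookkeeping (the extra short roots $\pm t_a^*$ of $\mathfrak{o}_{2d+1}$ and the absence of a $\sigma$-symmetry in type $B$, which is why \ref{eq_5.2} carries no doubled summands) will then be straightforward.
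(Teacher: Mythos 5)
Your proposal is correct and takes essentially the same approach as the paper: you construct the same explicit joint highest weight vector (your column-by-column $S_\lambda$ is the paper's row-by-row $\xi_\lambda$ from formula \ref{eq_5.3}), compute its $\mathfrak{o}(V)$-weight to be $\kappa(\lambda)$, and deduce the decomposition from Propositions \ref{prop_5.1} and \ref{prop_5.2} together with the $\lambda_n=0$ criterion. Your two side observations — the form-twist in the $\mathfrak{o}(\tilde E)$ degree-raising operators, which appears in the paper's proof as the $\bar k$ in $\psi(E_{i,j+n}-E_{j,i+n})$, and the shortcut that $\kappa(\lambda)$ must equal $s(\nu)$ because it is the only partition among the at most two candidate singular $\mathfrak{o}(V)$-weights of $U_\lambda$ — are both correct and streamline the bookkeeping without changing the argument in substance.
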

\begin{proof}

Let us decompose $\Lambda W$ into the direct sum of isotypic components under the action of $\mathfrak{o}(\tilde{E})$: 
\begin{gather*}
\Lambda W \underset{\mathfrak{o}(\tilde{E})}{\cong} \bigoplus_{\lambda \in \Delta^N }U_{\lambda} \otimes L_{\lambda}
\end{gather*}
We have already observed that the actions of  $\mathfrak{o}(\tilde{E})$ and $\mathfrak{o}(V)$ on $\Lambda W$ commute,  so the multiplicity spaces $U_\lambda$ realized as subspaces of $\Lambda W$ are naturally its $\mathfrak{o}(V)$-submodules. From corollary \ref{cor_4.3} we know that $U_\lambda$ is an irreducible $O(V)$ subrepresentation of $\Lambda W$.  
So, $U_\lambda$ as a representation of the group $O(V)$ is isomorphic to some $R_{\nu}$, where $\nu$ is the corresponding short Young diagram. Then from the propositions \ref{prop_5.1} and \ref{prop_5.2}, we know that as a representation of  $\mathfrak{o}(V)$, $U_\lambda$ is either an irreducible representation, in case $\nu$ is not self-associated, or is the direct sum of two irreducible representations, in case $\nu$ is self-associated. So, as a representation of $\mathfrak{o}(V) \oplus \mathfrak{o}(\tilde{E})$ each subspace $U_{\lambda} \otimes L_{\lambda}$ is either irreducible or is the sum of two irreducibles. More precisely, $$U_{\lambda} \otimes L_{\lambda} \underset{\mathfrak{o}(V) \oplus \mathfrak{o}(\tilde{E})}{\cong} L_{(s(\nu), \lambda)} $$ or $$U_{\lambda} \otimes L_{\lambda} \underset{\mathfrak{o}(V) \oplus \mathfrak{o}(\tilde{E})}{\cong}L_{(\nu, \lambda)} \oplus L_{(\nu^{\sigma}, \lambda)} .$$In order to determine $\nu$ we need to find highest weight vector with the right side of the weight equal to $\lambda$. Then, if the last coordinate of the left side of the weight equals $0$ it is the situation, where $\nu$ is not a self-associated short Young diagram. If it is not $0$, then $\nu$ is a self-associated short Young diagram and there is another highest weight vector with the right side of the weight equal to $\lambda$ and left side of the weight equal to $\nu^{\sigma}$. When the dimension of $V$ is odd there are no self-associated short Young diagrams and the second case never happens. 
We will consider different cases of parity of the dimension of $V$ separately. Firstly, consider the case when the dimension of $V$ is even. It turns out that it is very easy to explicitly construct the highest weight vector with the right part of the weight equal to $\lambda$: 
    \begin{equation} \label{eq_5.3}
    \xi_\lambda = \Lambda_{i = 1}^{n}(\Lambda_{k = 1}^{\text{min}(r_i, d)} (v_k \otimes e_i) \wedge \Lambda_{k = 1}^{\text{max}(0, r_i - d)} (v_{N - k + 1} \otimes e_i)).
    \end{equation}
We will see later that the same formula works in case of the odd dimension of $V$. 
One may read this formula as follows. We order the basis in $V$ the following way:  $$v_1, \ldots v_d, v_{2d}, v_{2d-1}, \ldots, v_{d+1}.$$ Then, for any $i$ we add $r_i$ factors starting from $v_1 \otimes e_i$. To verify that $
\xi_\lambda$ is the highest weight vector we need to show that it is singular and that it is a weight vector with the right part of the weight equal to $\lambda$. So, we need to show that $$\mathfrak{n}_+(\mathfrak{o}(V) \oplus \mathfrak{o}(\tilde{E})) \rhd \xi_{\lambda} = 0,$$
$$h_i \rhd \xi_\lambda =  \lambda_i \xi_{\lambda}$$
and 
$$t_i \rhd \xi_\lambda = \tau_i \xi_\lambda,$$  
 where $\tau_i$ is some number.

It is easy to describe the action of an element of $\mathfrak{o}(V)$ on an element of $\Lambda W$. 
$$x \rhd (v_{i_1}\otimes e_{i_1}) \wedge  (v_{i_2}\otimes e_{i_2}) \wedge \ldots \wedge (v_{i_k} \otimes e_{i_k}) = \sum_{j = 1}^{k} (v_{i_1}\otimes e_{i_1}) \wedge  (v_{i_2}\otimes e_{i_2}) \wedge \ldots\wedge (x.v_{i_j} \otimes e_{i_j}) \wedge\ldots \wedge (v_{i_k} \otimes e_{i_k}).$$
A little bit more complicated task is to describe an action of an element of $\mathfrak{o}(\tilde{E})$ on $\Lambda W$. As we know, it comes from the homomorphism 
$$ \psi := s_W\circ d_ei_{\tilde E}: \mathfrak{o}(\tilde{E}) \hookrightarrow \mathfrak{o}(\tilde{W}) \hookrightarrow \text{End}_{\mathbb{C}}(\Lambda W).$$In order to compute this action we need to realize an element of $\mathfrak{o}(\tilde{E})$ as an element of $\mathfrak{o}(\tilde{W})$. If we order basis in $\tilde{W}$ the following way: $$v_1 \otimes e_1,\ldots, v_1 \otimes e_n^{*}, v_2 \otimes e_1, \ldots, v_2 \otimes e_n^{*}, \ldots v_N \otimes e_n^{*}$$Then an element $y$ of $\mathfrak{o}(\tilde{E})$ is represented as an element of $\mathfrak{o}(\tilde{W})$ as a $N$-block diagonal matrix with the same blocks and each block is equal to $y$.  So, let $y$ be equal to $E_{ij} - E_{j+n,i+n}$. Then $$\psi(y) = \frac{1}{2} \sum_{k =1}^{N} M_{v_k \otimes e_i} \circ D_{(v_k \otimes e_j)^{*}} - D_{(v_k \otimes e_j)^{*}} \circ M_{v_k \otimes e_i}.$$ If $y$ is equal to $E_{i,j+n} - E_{j,i+n}$, then $$\psi(y) = \sum_{k =1}^{N} M_{v_k \otimes e_i} \circ M_{v_{\bar{k}} \otimes e_j},$$ where 
$$v_{\bar{k}} = \begin{cases} v_{k+d} \hspace{1mm}, \hspace{1mm} \text{if} \hspace{1mm} k \leqslant d, \\ v_{k-d}, \hspace{1mm} \text{if } k > d.\end{cases}$$

Now let's check that $\xi_\lambda$ is the highest weight vector and that the right part of its weight equals $\lambda$. Firstly, let's check that it is singular. Let $y = E_{ij} -E_{j+n,i+n} \in \mathfrak{n}_{+}(\mathfrak{o}(\tilde{E})),$ such that $i < j$.  Then $$y \rhd \xi_\lambda = \sum_{k =1}^{N} M_{v_k \otimes e_i} \circ D_{(v_k \otimes e_j)^*} (\xi_\lambda).$$
Consider each summand separately. If $\xi_\lambda$ has a factor $v_k \otimes e_j$, then it definitely has a factor $v_k \otimes e_i$,  since $r_i \geqslant r_j$.  Therefore, $$M_{v_k \otimes e_i} \circ D_{(v_k \otimes e_j)^*} (\xi_\lambda) = 0.$$If $\xi_\lambda$ has no factor $v_k \otimes e_j$, then obviously $$M_{v_k \otimes e_i} \circ D_{(v_k \otimes e_j)^*} (\xi_{\lambda}) = 0.$$
So each summand equals $0$, meaning that $y \rhd \xi_\lambda = 0$.  Now let $y = E_{i,j+n} - E_{j,i+n} \in \mathfrak{n}_{+}(\mathfrak{o}(\tilde{E}))$. Then $$y \rhd \xi_{\lambda} =  \sum_{k =1}^{N} M_{v_k \otimes e_i} \circ M_{v_{\bar{k}} \otimes e_j} (\xi_{\lambda}).$$
Again consider each summand separately. Either $k$ or $\bar{k}$ is not greater than $\frac{N}{2} = d$. Without loss of generality assume that $k \leqslant \frac{N}{2} $. Since $r_i$ is at least $\frac{N}{2}$ for all $i \leqslant n-1$, the only chance for the summand not to be equal to $0$ is if $i = n$ . Assuming $\xi_\lambda$ doesn't have a factor $v_k \otimes e_n$, we get  $r_n < k$ and it follows that $$r_j \geqslant r_{n-1} \geqslant l_n > N-k$$and, therefore, there is a factor $v_{k+d} \otimes e_j$ and $M_{(v_{\bar{k}} \otimes e_j)} \xi_\lambda = 0$. 
Now let $x = E_{ij} - E_{j+d,i+d} \in \mathfrak{n}_{+}(\mathfrak{o}(V))$. Consider $x \rhd \xi_\lambda$. It is enough to only consider the action of $x$ on the factors of $\xi_\lambda$ of the form $v_j \otimes *$ and $v_{i+d} \otimes *$ $\hspace{1mm}$. On other factors, it acts by 0.  Since $i < j \leqslant d$ ,  in case  $\xi_\lambda$ has a factor $v_j \otimes e_l$ it must also have a factor $v_i \otimes e_l$.  Then $x$ sends the factor $v_j \otimes e_l$ to $v_i \otimes e_l$ and, therefore, $\xi_\lambda$ vanishes.  Similarly, if  $\xi_\lambda$ has a factor $v_{i+d} \otimes e_l$, then it has a factor $v_{j+d} \otimes e_l$. Action of $x$ on the factor $v_{i+d} \otimes e_l$ sends it to  $-v_{j+d} \otimes e_l$, so $\xi_\lambda$ vanishes. 
Finally, let $x = E_{i,j+d} - E_{j,i+d} \in \mathfrak{n}_{+}(\mathfrak{o}(V))$. Consider $x \rhd \xi_\lambda$.  Again, it is enough to only consider the action of $x$ on the factors of $\xi_\lambda$ of the form $v_{j+d} \otimes *$ and $v_{i+d} \otimes *$ $\hspace{1mm}$. It is obvious that if $\xi_\lambda$ has a factor $v_{i+d} \otimes e_l$ or $v_{j+d} \otimes e_l$, then it definitely has both $v_{i} \otimes e_l$ and $v_{j} \otimes e_l$ as factors. So, $x$ acts on $\xi_\lambda$ by $0$. We conclude that $\xi_\lambda$ is singular. 
Let's prove that $\xi_\lambda$ is a weight vector and that the right part of the weight is equal to $\lambda$. We need to show that $\xi_\lambda$ is an eigenvector for any $z \in \mathfrak{h}_{\mathfrak{o}(V) \oplus \mathfrak{o}(\tilde{E})}$. If $z = h_i$, then $$z \rhd \xi_\lambda = \sum_{k =1}^{N} \frac{1}{2}( M_{v_k \otimes e_i} \circ D_{(v_k \otimes e_i)^*} - D_{(v_k \otimes e_i)^*} \circ  M_{v_k \otimes e_i}) \xi_\lambda = \frac{1}{2}(r_i - l_i) \xi_\lambda = \lambda_i \xi_\lambda.$$ If $z = t_i$, then $$z \rhd \xi_\lambda = (\#\{\text{factors of the form} \hspace{2mm} v_i \otimes * \} - \#\{\text{factors of the form} \hspace{2mm} v_{i+d} \otimes * \}) \xi_\lambda.$$Hence, $\xi_\lambda$ is a highest weight vector with right part of the weight equal to $\lambda$.  The next step is to realize the left part of the weight of $\xi_\lambda$ as some function of $\lambda$. One can see that the eigenvalue of the operator $t_i$ is exactly the amount of cells in the $i$-th column to the left of the axis of the cell diagram, corresponding to $\lambda$ , in case $\lambda_n \geqslant 0$ , and to $\lambda^{\sigma}$, in case $\lambda_n < 0$. 
Thus, the left part of the weight of $\xi_\lambda$  is equal to $\kappa(\lambda)$, where $$\kappa(\lambda) = (\text{min} (l_{n}, r_{n}), l_{n-1},\ldots, l_{1})^{t}.$$
Now we need to figure out when $\kappa(\lambda)_d$ is not equal to $0$ (or in other words when do we have two highest weight vectors with the right part of the weight equal to $\lambda$). In order for $\kappa(\lambda)_d$ not to be equal to $0$ we need to have at least $1$ cell in the $d$-th row of the Young diagram $\kappa(\lambda)$ or equivalently at least $1$ cell in the $d$-th column of $$\mu = (\text{min}(l_{n}, r_{n}), l_{n-1},\ldots, l_{1}).$$ So, $\text{min}(l_{n}, r_{n})$ has to be at least $d$. But $l_n +r_n = N = 2d$. Therefore, $l_n = r_n = d$ or equivalently $\lambda_n = 0$. 

This concludes the proof of the decomposition formula \ref{eq_5.1} for even dimension case. I want to add a little remark. In case $\lambda_n = 0$, it is not hard to construct the second highest weight vector with the weight $(\kappa(\lambda)^{\sigma}, \lambda)$. All you need is to interchange in $\xi_\lambda$ vectors $v_d$ with vectors $v_{2d}$. Clearly, it only affects the $d$-th coordinate of the left part of the weight by changing its sign. It remains to check that the vector we obtained is singular. This is a straightforward calculation similar to what we have done with $\xi_\lambda$. 
The odd dimension case is similar. The same formula \ref{eq_5.3} gives us $\xi_\lambda$ -  the highest weight vector with the right part of the weight equal to $\lambda$. Similarly to an even dimension case, one can verify that the left part of its weight equals $\kappa(\lambda)$. But in the odd case, the subalgebra $\mathfrak{n}_+(\mathfrak{o}(V))$ is slightly bigger. So, we need to check that $\xi_\lambda$ also vanishes under the action of these "new" operators. It is easy to check directly and is left as an exercise for the reader. As mentioned before in case where the dimension of $V$ is odd all multiplicity spaces $U_\lambda$ are irreducible representations of algebra Lie $\mathfrak{o}(V)$, since there are no self-associated short Young diagrams in the odd dimension case. Thus, we get the decomposition stated by the formula \ref{eq_5.2}. 
 \end{proof}

The theorem we have just proved gives us the decomposition of the multiplicity spaces $U_\lambda$ into the direct sum of irreducible representations of the algebra Lie $\mathfrak{o}(V)$. It doesn't give us the full answer to our initial question about the isomorphism class of $U_\lambda$ as a representation of the group $O(V)$. We got the complete answer only when $\lambda_n = 0$. In this case we know from the theorem \ref{th_5.2} that $U_\lambda \underset{\mathfrak{o}(V)}{\cong} L_{\kappa(\lambda)} \oplus  L_{\kappa(\lambda)^{\sigma}}$. Using proposition \ref{prop_5.1}, we get that $U_\lambda \cong R_{\kappa(\lambda)}$  as a representation of $O(V)$. But if $\lambda_n \neq 0$, then we don't get the answer to our question straight away, since $U_\lambda$ as a representation of $O(V)$ can still be isomorphic to either $R_{\kappa(\lambda)}$ or $R_{\kappa(\lambda)^{\dagger}}$, because as representations of $\mathfrak{o}(V)$  they are both isomorphic to $L_{\kappa(\lambda)}$. To determine the isomorphism class of $U_\lambda$ in case $\lambda_n \neq  0$ we need the following facts.

\begin{fact} \label{f_5.1}
Let $N = 2d + 1$. Consider an irreducible representation $U_\nu$ of the group $O_N$ corresponding to the short Young diagram $\nu$. Then the element $-\text{Id} \in O_N$ acts on $U_\nu$ by $(-1)^{|\nu|}$. 
\end{fact}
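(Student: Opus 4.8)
\emph{Proof proposal.} The plan is to realize $U_\nu = R_\nu$ inside a tensor power of the standard representation and then read off the action of the central element $-\text{Id}$ directly. First I would recall the Weyl construction of $R_\nu$ underlying Theorem $19.19$ of \cite{Fulton}. Let $V \cong \mathbb{C}^N$ be the standard representation of $O_N$. For a short Young diagram $\nu$ with $\nu_1^t \leqslant N/2$ one applies a Young symmetrizer $c_\nu$ (arising from the action of $S_{|\nu|}$ permuting the tensor factors) to $V^{\otimes |\nu|}$ and intersects the resulting $O_N$-submodule with the subspace of \emph{traceless} tensors, i.e.\ the common kernel of all contractions $V^{\otimes |\nu|} \to V^{\otimes |\nu|-2}$ built from the invariant bilinear form. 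By construction $R_\nu$ is then an $O_N$-submodule of $V^{\otimes |\nu|}$. In the remaining case $\nu_1^t > N/2$ one has $R_\nu \cong R_{\nu^{\dagger}} \otimes \det$ with $(\nu^{\dagger})_1^t = N - \nu_1^t \leqslant N/2$ (this is consistent with Proposition \ref{prop_5.2}, since for $N$ odd $-\text{Id} \notin SO_N$ and the two extensions to $O_N$ of one $SO_N$-irreducible differ by $\det$); I treat this case separately below.

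Next I would observe that $-\text{Id} \in O_N$ acts on $V$ by $-1$, hence acts on $V^{\otimes |\nu|}$ by the scalar $(-1)^{|\nu|}$; restricting this action to the submodule $R_\nu$ gives the claim when $\nu_1^t \leqslant N/2$. (Alternatively: $-\text{Id}$ is central in $O_N$, so by Schur's lemma it acts on the irreducible $R_\nu$ by a scalar $\varepsilon$ with $\varepsilon^2 = 1$, and the realization above only serves to pin down $\varepsilon = (-1)^{|\nu|}$.) When $\nu_1^t > N/2$, using $R_\nu \cong R_{\nu^{\dagger}} \otimes \det$ together with $\det(-\text{Id}) = (-1)^N = -1$, the element $-\text{Id}$ acts on $R_\nu$ by $(-1)^{|\nu^{\dagger}|}\cdot(-1)^N$; since $|\nu| \equiv |\nu^{\dagger}| + N \pmod 2$ (the columns of $\nu$ and $\nu^{\dagger}$ agree except for one of length $\nu_1^t$ versus $N-\nu_1^t$), this equals $(-1)^{|\nu|}$, as required. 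The hypothesis $N = 2d+1$ enters exactly through $\det(-\text{Id}) = -1$: it is what makes $-\text{Id} \notin SO_N$, so that the computed sign genuinely records the behaviour of $R_\nu$ on the non-identity component of $O_N$.

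There is essentially no real obstacle here; the only point that needs care is to quote the Weyl construction of \cite{Fulton} in the precise form that realizes $R_\nu$ (for $\nu_1^t \leqslant N/2$) as a \emph{sub}module, not merely a subquotient, of $V^{\otimes |\nu|}$ — which is exactly what that construction delivers — and to keep track of the parity bookkeeping in the passage between $\nu$ and its associated diagram $\nu^{\dagger}$.
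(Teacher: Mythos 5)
Your argument is correct. The paper itself does not prove this fact; it only states it and cites Goodman--Wallach, so there is no internal proof to compare against. What you give is the standard argument: $-\text{Id}$ is central and acts on $V^{\otimes k}$ by the scalar $(-1)^k$, and the Weyl construction realizes $R_\nu$ as an $O_N$-submodule of $V^{\otimes |\nu|}$, so the scalar is $(-1)^{|\nu|}$.

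One small remark: the case split you introduce is not actually needed. Theorem $19.19$ of Fulton--Harris applies to \emph{every} short Young diagram, i.e.\ to all $\nu$ with $\nu_1^t + \nu_2^t \leqslant N$, not only those with $\nu_1^t \leqslant N/2$; the Weyl module $\mathbb{S}_{[\nu]}(V)$ is already defined as a submodule of $V^{\otimes |\nu|}$ in all these cases. So the direct scalar computation settles the claim uniformly, without passing through $R_\nu \cong R_{\nu^{\dagger}} \otimes \det$. That said, your handling of the second branch is itself correct: for $N$ odd one has $O_N \cong SO_N \times \{\pm \text{Id}\}$, the two extensions of a given $SO_N$-irreducible differ by $\det$, $\det(-\text{Id}) = (-1)^N = -1$, and $|\nu| - |\nu^{\dagger}| = 2\nu_1^t - N \equiv N \pmod 2$, so the signs do match. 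The proof is sound; it is just slightly longer than necessary.
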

\begin{fact}
 \label{f_5.2}
 Let $N = 2d$. Consider irreducible representation $U_\nu$ of the group $O_N$ corresponding to non self-associated  short Young diagram $\nu$ $(\nu \neq \nu^{\dagger})$.  Let $v_1, v_2, \ldots v_d, v_{d+1}, \ldots v_{2d}$ be the basis in which gram matrix $G = \begin{bmatrix}
    0 & I_{d} \\
    I_{d} & 0
\end{bmatrix}$ . Consider the element $g_d $ of an orthogonal group that permutes vectors $v_d$ and $v_{2d}$ and acts by identity operator on the other vectors from the basis. Then $g_d$ acts on $U_\nu$ by $1$ if $s(\nu) = \nu$ and by $-1$ if $s(\nu) = \nu^{\dagger}$. 
\end{fact}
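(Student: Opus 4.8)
The plan is to reduce the claim to computing the single scalar by which $g_d$ acts on a highest weight vector of $U_\nu$, and then to pin down that scalar using an explicit model of $R_\nu$. First I would note that, $\nu$ being non-self-associated, $s(\nu)$ has at most $d-1$ rows, so by Propositions \ref{prop_5.1} and \ref{prop_5.2} the module $U_\nu \cong R_\nu$ is already irreducible as an $\mathfrak{o}(V)$-module, with highest weight $\mu := (s(\nu)_1,\dots,s(\nu)_{d-1},0)$; in particular $\mu_d = 0$. Conjugation by $g_d$ fixes $t_i^{*}$ for $i<d$ and negates $t_d^{*}$, so it is the diagram automorphism of $D_d$ attached to the polarization chosen in this section: it swaps the two "fork" simple roots $t_{d-1}^{*}\pm t_d^{*}$ and fixes $t_1^{*}-t_2^{*},\dots,t_{d-2}^{*}-t_{d-1}^{*}$, hence normalizes the Borel $\mathfrak{h}_{\mathfrak{o}(V)}\oplus\mathfrak{n}_{+}(\mathfrak{o}(V))$. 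Consequently, if $\xi$ is a highest weight vector of $U_\nu$ then $g_d\xi$ is again annihilated by $\mathfrak{n}_{+}(\mathfrak{o}(V))$ and has weight $g_d\cdot\mu=\mu$ (here $\mu_d=0$ is used), so $g_d\xi=c\,\xi$ for a scalar $c$, and $g_d^{2}=\mathrm{Id}$ forces $c=\pm1$. Since $R_\nu$ and $R_{\nu^{\dagger}}$ coincide as $\mathfrak{o}(V)$-modules, this eigenvalue $c$ is exactly the invariant distinguishing them, and it is what the statement records.

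To identify $c$ when $s(\nu)=\nu$, I would realize $R_\nu$ as the harmonic (traceless) component of the Schur functor $\mathbb{S}_\nu(V)$ of the standard representation, as in \cite{Fulton}. The $GL(V)$-highest weight vector $\mathbf{v}$ of $\mathbb{S}_\nu(V)$, with respect to the standard ordering of the basis, is built solely from $v_1,\dots,v_{\nu^{t}_1}$ with $\nu^{t}_1\leqslant d-1$; since $(v_i,v_j)_V=0$ for all $i,j\leqslant d-1$, every contraction with the bilinear form annihilates $\mathbf{v}$, so $\mathbf{v}\in R_\nu$, and a direct check shows $\mathbf{v}$ is killed by $\mathfrak{n}_{+}(\mathfrak{o}(V))$ and has weight $\nu$, hence is a highest weight vector of $U_\nu$. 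As $g_d$ fixes each of $v_1,\dots,v_{d-1}$, it fixes $\mathbf{v}$, so $c=+1$. When $s(\nu)=\nu^{\dagger}$ (that is, $\nu$ has more than $d$ rows), I would instead use the standard identification $R_\nu\cong R_{\nu^{\dagger}}\otimes\det$ of $O_N$-modules, $\det$ being the determinant character (the content of \cite{Fulton}, p.~297, underlying Proposition \ref{prop_5.2}). A highest weight vector of $R_\nu$ is then $\mathbf{v}'\otimes 1$ with $\mathbf{v}'$ the highest weight vector of $R_{\nu^{\dagger}}$ produced above and $1$ a generator of the determinant line; since $g_d\mathbf{v}'=\mathbf{v}'$ by the previous case and $\det(g_d)=-1$, we obtain $c=-1$, as claimed.

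The only genuinely non-formal step is deciding the sign of $c$: the reduction $c\in\{\pm1\}$ is soft, whereas choosing the sign requires a concrete realization. The care needed is in (i) verifying that the $GL(V)$-highest weight vector of $\mathbb{S}_\nu(V)$ is both traceless and supported on the first $d-1$ basis vectors, which is precisely what places it inside $R_\nu$ and makes it $g_d$-invariant, and (ii) confirming that $g_d$ normalizes exactly the polarization fixed in this section, so that $g_d\xi$ is again a highest weight vector of the same weight.
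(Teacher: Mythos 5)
The paper does not prove Fact~\ref{f_5.2}; it is stated with a citation to \cite{Goodman}, so there is no internal argument to compare against. Your proof is correct and self-contained. The reduction is right: $g_d$ preserves the chosen polarization, acting as the diagram automorphism of $D_d$ that fixes $t_i^{*}$ for $i<d$ and negates $t_d^{*}$, hence normalizes $\mathfrak{h}_{\mathfrak{o}(V)}\oplus\mathfrak{n}_+(\mathfrak{o}(V))$ and fixes the highest weight $s(\nu)$ because $s(\nu)_d = 0$ (which follows from $\nu\neq\nu^{\dagger}$, i.e. $\nu_1^{t}\neq d$). So $g_d$ maps the one-dimensional $\mathfrak{o}(V)$-highest weight line of $U_\nu$ to itself with eigenvalue $c=\pm1$, and since $U_\nu|_{\mathfrak{o}(V)}$ is irreducible this $c$ is precisely the Clifford-theoretic invariant separating $R_\nu$ from $R_{\nu^{\dagger}}$. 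The sign determination via the harmonic-tensor model is also correct: the $GL(V)$-highest weight vector of $\mathbb{S}_{s(\nu)}(V)$ lives in $v_1,\dots,v_{d-1}$, which are isotropic and pairwise orthogonal for the split form, so it is both traceless (hence in $R_{s(\nu)}$) and fixed pointwise by $g_d$, giving $c=+1$; twisting by $\det$ with $\det(g_d)=-1$ then gives $c=-1$ for the associated diagram.

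One small remark worth making explicit: as literally phrased, the statement cannot mean that $g_d$ acts by a scalar on all of $U_\nu$ --- $g_d$ is not central in $O_N$ (take $\nu$ a single box and $U_\nu$ the standard representation). It has to be read, as you do, as the eigenvalue of $g_d$ on the $\mathfrak{o}(V)$-highest weight line, which is exactly how the fact is used in the proof of Theorem~\ref{th_5.1}, where it is evaluated on the vector $\xi_\lambda$.
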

The proofs of these facts can be found in \cite{Goodman}.

Using these facts we will prove the result that was announced at the start of the chapter. The rest of this chapter is dedicated to the proof of the theorem \ref{th_5.1}

\begin{proof} 
Let us deal with the case when $N = 2d$ first. We want to understand the isomorphism class of multiplicity space $U_\lambda$ for each $\lambda \in \Delta^N $  as a representation of $O_N$. 
If $\lambda_n = 0$, then we already know that $U_\lambda$ is isomorphic to $R_{\kappa(\lambda)}$. In this particular case $$r_n = l_n = \frac{N}{2} = d,$$so no matter the parity of $n$ we have $\mathcal{F}(\lambda) = \kappa(\lambda)$ and $U_\lambda \cong R_{\mathcal{F}(\lambda)}$. 
Now consider the case $\lambda_n \neq 0$. We know that $U_\lambda \otimes L_\lambda$ is isomorphic to $U_\lambda^{\oplus \dim L_\lambda}$ as a representation of $O_N$. Consider the action of $g_d \in O_N$ on the vector $\xi_\lambda \in U_\lambda \otimes L_\lambda$  defined in the proof of the theorem \ref{th_5.2} by the formula \ref{eq_5.3}. Since $\lambda_n \neq 0$, the factors of $\xi_\lambda$ of the form $v_d \otimes *$  are in one-to-one correspondence with the factors of the form $v_{2d} \otimes *$. In other words, if we have the factor $v_d  \otimes e_i$ in $\xi_\lambda$, then we also have factor $v_{2d} \otimes e_i$ and vice versa. The action of $g_d$ on $\xi_\lambda$ permutes the factors  $v_d \otimes e_i$ with $v_{2d}  \otimes e_i$ for every $i$. It is easy to see then that $$g_d \rhd \xi_\lambda = (-1)^{\#\{\text{factors of the form} \hspace{1mm} v_d \otimes e_i\}}.$$It is clear from the construction of $\xi_\lambda$ that the amount of factors of $v_d \otimes e_i$ in $\xi_\lambda$ is either $n$ or $n-1$. The latter happens if and only if  $\lambda_n < 0.$ 
Now assume first that $n$ is even. 
In case $\lambda_n > 0$ we have that $$g_d \rhd \xi_\lambda = (-1)^{n} \xi_\lambda = \xi_\lambda,$$which means that $g_d$ acts by identity map on $U_\lambda$. As we know $U_\lambda$ as a representation of $O_N $ is isomorphic to either $R_{\kappa(\lambda)}$ or $R_{\kappa(\lambda)^{\dagger}}$.  Since $s(\kappa(\lambda)) = \kappa(\lambda) $, we deduce from the fact \ref{f_5.2} that $U_\lambda$ is isomorphic to $R_{\kappa(\lambda)}$. As $\lambda_n > 0$, we get that $r_n > l_n$ and $$\kappa(\lambda) = (l_n, l_{n-1}, \ldots l_1)^{t}.$$On the other hand, since $n$ is even $$\mathcal{F}(\lambda) = (l_n, l_{n-1}, \ldots l_1)^{t}.$$ So, $U_\lambda \cong R_{\mathcal{F}(\lambda)}$. 
Consider now the case $\lambda_n < 0$. Then $$g_d \rhd \xi_\lambda = (-1)^{n-1} \xi_\lambda = -\xi_\lambda,$$ which means that $g_d$ acts by $-1$ on $U_\lambda$, so $U_\lambda \cong R_{\kappa(\lambda)^{\dagger}}$. As $\lambda_n < 0$, we get that $l_n > r_n$ and $$\kappa(\lambda) = (r_n, l_{n-1}, \ldots l_1)^t,$$ so that $$\kappa(\lambda)^{\dagger} =  (l_n, l_{n-1}, \ldots l_1)^{t} =\mathcal{F}(\lambda).$$Again we get that $U_{\lambda} \cong R_{\mathcal{F}(\lambda)}$. 
Now consider the case when $n$ is odd. 
Again, first suppose that $\lambda_n > 0$. Then $$g_d \rhd \xi_\lambda = (-1)^{n} \xi_\lambda = - \xi_\lambda,$$which means that $g_d$ acts by $-1$ on $U_\lambda$, so $U_\lambda \cong R_{\kappa(\lambda)^{\dagger}}$. As $\lambda_n > 0$, we get that $$\kappa(\lambda)^{\dagger} = (r_n, l_{n-1}, \ldots, l_1)^{t}.$$On the other hand since $n$ is odd, $$\mathcal{F}(\lambda) = (r_n, l_{n-1}, \ldots l_1)^{t}.$$So, $U_\lambda \cong R_{\mathcal{F}(\lambda)}$. 
Finally, if $\lambda_n < 0$, then $$g_d \rhd \xi_\lambda = (-1)^{n-1} \xi_\lambda =  \xi_\lambda,$$which means that $g_d$ is the identity map on $U_\lambda$, so $U_\lambda \cong R_{\kappa(\lambda)}$.  As $\lambda_n < 0$, we get that  $$\kappa(\lambda) = (r_n, l_{n-1}, \ldots l_1)^t = \mathcal{F}(\lambda).$$So, $U_\lambda \cong R_{\mathcal{F}(\lambda)}$. 

We have just proved the theorem in case $N$ is even. We are left with the case where $N$ is odd. 
Due to the fact \ref{f_5.1}, we need to compute the action of $-\text{Id} \in O_N$ on $U_\lambda$. It is enough to study the action of $-\text{Id}$ on the vector $\xi_\lambda$. It acts on $\xi_\lambda$ by multiplying each factor of $\xi_\lambda$ by $-1$. So, $$-\text{Id} \rhd \xi_\lambda = (-1)^{\#\{\text{factors in } \hspace{1mm} \xi_\lambda\}}.$$Notice that the amount of factors in $\xi_\lambda$ equals the number of cells in the right part of the cell diagram $D_\lambda^{N}$. 

Now consider the case when $n$ is even. 
Then the total number of cells in the regular cell diagram $\lambda$ is even (it is equal to $n \cdot N$). Therefore, the parity of the number of cells in the right part of the diagram $D_\lambda^{N}$ equals the parity of the number of cells in the left part of the diagram. Then $$-\text{Id} \rhd \xi_\lambda = (-1)^{l_1 + l_2 + \ldots l_n} \xi_\lambda = (-1)^{|(l_n, l_{n-1}, \ldots, l_{1})^{t} |} \xi_\lambda = (-1)^{|\mathcal{F}(\lambda)|} \xi_\lambda.$$ Note that $r_n$ and $l_n$ have different parity (since $N$ is odd). So, we get $U_\lambda \cong R_{\mathcal{F}(\lambda)}$. 
Finally, if $n$ is odd, then the total number of cells in the regular cell diagram $D_\lambda^{N}$ is odd. Therefore, the parity of the number of cells in the right part of the diagram is the opposite of the parity of the number of cells in the left part of the diagram. Then, since $r_n$ and $l_n$ have different parity $$-\text{Id} \rhd \xi_\lambda = (-1)^{l_1 + l_2 + \ldots r_n} \xi_\lambda = (-1)^{|(r_n, l_{n-1}, \ldots, l_{1})^{t} |} \xi_\lambda = (-1)^{|\mathcal{F}(\lambda)|} \xi_\lambda.$$So,  $U_\lambda \cong R_{\mathcal{F}(\lambda)}$. 

Formally, we have not yet proved that the map $\mathcal{F}$ is a bijection. We see that $\mathcal{F}$ is injective because $U_\lambda$ are non-isomorphic for different $\lambda \in \Delta^N $ (see corollary \ref{cor_4.3}). 
It is also easy to see that $\mathcal{F}$ is surjective.  Let  $\nu \in SYD(N, n)$ be the short Young diagram of height $N$ that has at most $n$ columns. In case $n$ is even, set $$\lambda_i = \frac{N}{2}  - \nu^{t}_{n-i + 1}.$$
It is a direct check that $\lambda = (\lambda_i)_{1 \leq i \leq n} \in \Delta^N $  and $\mathcal{F}(\lambda) = \nu.$ 
In case $n$ is odd, set $$\lambda_n =  \nu_1^{t} - \frac{N}{2}$$ and for $1 \leq i < n$, $$\lambda_i = \frac{N}{2}  - \nu^{t}_{n-i + 1}.$$Then again one can directly check that $\lambda = (\lambda_i)_{1 \leq i \leq n} \in \Delta^N  $ and $\mathcal{F}(\lambda) = \nu$.

\end{proof}

\section{Regular cell tables  and Gelfand-Tsetlin patterns}
\label{sec_6}
\subsection{Bases in multiplicity spaces $U_\lambda$.}
There are two different bases we are going to construct in multiplicity space $U_\lambda$ for each $\lambda \in \Delta^N $. Recall that each multiplicity space $U_\lambda$ as a representation of Lie algebra $\mathfrak{o}_{N}(\mathbb{C})$ is either irreducible or is isomorphic to the sum of two non-isomorphic irreducible representations. 
Let $L_\beta$ be the irreducible representation of $\mathfrak{o}_{N}(\mathbb{C})$ given by the highest weight $\beta \in P_+$. One can naturally embed $\mathfrak{o}_{N-1}(\mathbb{C}) \subset \mathfrak{o}_N(\mathbb{C})$. Therefore, we can view $L_\beta$ as a representation of $\mathfrak{o}_{N-1}(\mathbb{C})$. 
It is known from the formulas $(25.34)$ and $(25.35)$ in \cite{Fulton}  that $L_\beta$ as a representation of $\mathfrak{o}_{N-1}(\mathbb{C})$  decomposes into the direct sum of irreducible subrepresentations without multiplicities: \begin{equation} \label{eq_6.1}
Res_{\mathfrak{o}_{N-1}}^{\mathfrak{o}_{N}} L_\beta = \bigoplus_{\mu \in P_+, \hspace{1mm} \mu \preccurlyeq \beta} L_{\mu},
\end{equation}
where  $$\mu \preccurlyeq \beta \Leftrightarrow \begin{cases} \beta_1 \geqslant \mu_1 \geqslant \beta_2 \geqslant \mu_2 \geqslant \ldots \geqslant \mu_{d-1} \geqslant \beta_{d} \geqslant | \mu_d|, \text{if} \hspace{1mm} N = 2d+1, \\  \beta_1 \geqslant \mu_1 \geqslant \beta_2 \geqslant \mu_2 \geqslant \ldots \geqslant \mu_{d-1} \geqslant |\beta_{d}|, \text{if} \hspace{1mm} N = 2d. \end{cases}$$ The essential part here is that each irreducible representation of $\mathfrak{o}_{N-1}$ in the decomposition of $L_\beta$ has a multiplicity not greater than $1$. Consider a nested family of orthogonal Lie algebras 
\begin{gather}
\label{eq_6.2}
\mathfrak{o}_3(\mathbb{C}) \subset \mathfrak{o}_4(\mathbb{C}) \subset \ldots \subset \mathfrak{o}_{N-1}(\mathbb{C}) \subset \mathfrak{o}_{N}(\mathbb{C}).
\end{gather}
From the formula \ref{eq_6.1} we can deduce the following corollary.
\begin{corollary}
   \label{cor_6.1}
Let $L_{\beta^{(N)}}$ be the highest weight representation of $\mathfrak{o}_{N}(\mathbb{C})$ corresponding to $\beta^{(N)} \in P_+$. Then it can be decomposed into the following direct sum of one-dimensional subspaces: 
\begin{equation}
    L_{\beta^{(N)}} = \bigoplus_{\underset{\beta^{(N)} \succcurlyeq \beta^{(N-1)} \succcurlyeq \ldots \succcurlyeq \beta^{(3)} \geqslant |z|}{(\beta^{(N)}, \ldots \beta^{(3)}, z), }} \mathbb{C}t_{(\beta^{(N)}, \ldots \beta^{(3)}, z)},
\end{equation}
where $\beta^{(i)}$ is the dominant weight of  $\mathfrak{o}_i(\mathbb{C})$ and $z \in \mathbb{Z}$. 
Vector $t_{(\beta^{(N)}, \ldots \beta^{(3)}, z)}$ is a nonzero element of the space  $L_{\beta^{(N)}}$, such that if we consider the decomposition of  $L_{\beta^{(N)}}$ into the direct sum of irreducible $\mathfrak{o}_i(\mathbb{C})$-modules, obtained by consecutive restrictions of the representation to the smaller Lie subalgebras from the nested family \ref{eq_6.2}, then it lies inside a summand isomorphic to $L_{\beta^{(i)}}$ and it is a weight vector under the action of $\mathfrak{o}_3(\mathbb{C})$ with the weight equal to $z$. 

The strings $(\beta^{(N)}, \ldots \beta^{(3)}, z)$ are called \textbf{Gelfand-Tsetlin patterns} and the basis of $L_{\beta^{(N)}}$ indexed by them is called \textbf{Gelfand-Tsetlin basis} with respect to the nested family of orthogonal Lie algebras \ref{eq_6.2} 
\end{corollary}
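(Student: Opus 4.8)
The plan is to prove the corollary by induction on $N$, each step being a single application of the branching rule \ref{eq_6.1}. The feature of \ref{eq_6.1} that does all the work is that the restriction $\mathfrak{o}_N\downarrow\mathfrak{o}_{N-1}$ is multiplicity-free; I would use this to upgrade the abstract direct-sum decompositions to \emph{canonical} decompositions into submodules, so that the one-dimensional pieces $\mathbb{C}t_{(\beta^{(N)},\dots,\beta^{(3)},z)}$ are genuinely well defined and not merely defined up to isomorphism.

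For the base case $N=3$ I would invoke $\mathfrak{o}_3(\mathbb{C})\cong\mathfrak{sl}_2(\mathbb{C})$: the irreducible module $L_{\beta^{(3)}}$ splits as the direct sum of its one-dimensional $\mathfrak{o}_3$-weight spaces, the occurring weights being exactly the integers $z$ with $\beta^{(3)}\geqslant|z|$, which is the asserted formula with string $(\beta^{(3)},z)$. For the inductive step, assuming the statement for $N-1$, restrict $L_{\beta^{(N)}}$ to $\mathfrak{o}_{N-1}(\mathbb{C})$; by \ref{eq_6.1} this restriction is $\bigoplus_{\mu\preccurlyeq\beta^{(N)}}L_\mu$ with every multiplicity equal to $1$, so for each such $\mu$ the corresponding isotypic component is a \emph{single} irreducible $\mathfrak{o}_{N-1}$-submodule $L^{[\mu]}\subseteq L_{\beta^{(N)}}$, uniquely determined by $L_{\beta^{(N)}}$. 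Applying the inductive hypothesis to each $L^{[\mu]}\cong L_{\beta^{(N-1)}}$ (with $\beta^{(N-1)}:=\mu$) breaks it into one-dimensional lines indexed by strings $(\beta^{(N-1)},\dots,\beta^{(3)},z)$ with $\beta^{(N-1)}\succcurlyeq\cdots\succcurlyeq\beta^{(3)}\geqslant|z|$ that carry the stated restriction/weight property along $\mathfrak{o}_{N-1}\supset\cdots\supset\mathfrak{o}_3$. Prepending $\beta^{(N)}$ to each string — the new inequality $\beta^{(N)}\succcurlyeq\beta^{(N-1)}$ being exactly $\mu\preccurlyeq\beta^{(N)}$ — produces precisely the index set of the corollary; and since each such line lies inside $L^{[\mu]}$, which is the $\mathfrak{o}_{N-1}$-summand of $L_{\beta^{(N)}}$ isomorphic to $L_{\beta^{(N-1)}}$, the line also carries the required property for the full chain $\mathfrak{o}_N\supset\cdots\supset\mathfrak{o}_3$. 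Summing the $L^{[\mu]}$ over $\mu$ exhausts $L_{\beta^{(N)}}$, closing the induction.

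I do not expect a genuine analytic obstacle here; the argument is bookkeeping, and the one point that must be handled with care is exactly the canonicity forced by multiplicity-freeness. Iterating \ref{eq_6.1}, a partial string $(\beta^{(N)},\dots,\beta^{(i)})$ satisfying the interlacing conditions should determine a uniquely defined nested family of submodules of $L_{\beta^{(N)}}$ ending in an irreducible $\mathfrak{o}_i$-submodule isomorphic to $L_{\beta^{(i)}}$, and it is this uniqueness that makes the vectors $t_{(\beta^{(N)},\dots,\beta^{(3)},z)}$ — hence the Gelfand--Tsetlin basis — well defined up to the indicated scalars. I would also remark that dominance of each $\beta^{(i)}$ and integrality of $z$ need not be checked separately: \ref{eq_6.1} already outputs dominant integral weights at every stage, and the base case produces $z\in\mathbb{Z}$.
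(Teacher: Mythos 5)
Your proof is correct and follows exactly the route the paper intends: the paper states Corollary 6.1 with no explicit argument beyond ``from formula (6.1) we deduce\ldots,'' and your induction is precisely the careful unwinding of that iterated branching, with the multiplicity-freeness of each step in (6.1) providing the canonicity of the nested submodules, and the base case $\mathfrak{o}_3\cong\mathfrak{sl}_2$ supplying the final splitting into one-dimensional weight lines indexed by $z\in\mathbb{Z}$ with $|z|\leqslant\beta^{(3)}$.
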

We introduce the following important notation. 

\begin{notation}
  Let $M$ be the finite-dimensional $\mathfrak{o}_N(\mathbb{C})$-module isomorphic to the direct sum of simple $\mathfrak{o}_N(\mathbb{C})$-modules without multiplicities . We denote by $\mathfrak{GT}(M)$ the Gelfand-Tsetlin basis of $M$ with respect to the nested family of orthogonal Lie algebras \ref{eq_6.2}.
\end{notation}
\begin{notation}
    Since each multiplicity space $U_\lambda$, as a $\mathfrak{o}_N(\mathbb{C})$-module, is either irreducible or is the sum of two non-isomorphic irreducibles, then it has the basis $\mathfrak{GT}(U_\lambda)$ indexed by Gelfand-Tsetlin patterns. We denote this set of Gelfand-Tsetlin patterns by $\mathfrak{GTP}(\nu)$, where $\nu$ is a short Young diagram, such that $U_\lambda \underset{O_N}{\cong} R_\nu$. In other words$\nu = \mathcal{F}(\mathcal{K}_N(\lambda))$.

\end{notation}

We will now denote the spinor representation with the base space $E$ by $S$ (as a vector space it is isomorphic to $\Lambda E$). 

There is another natural basis in multiplicity space $U_\lambda$. As mentioned above, it is natural to treat $U_\lambda$ as a subspace of singular vectors of weight $\lambda$ in $S^{\otimes N}$.  We will need the following notation. 

\begin{notation}
We denote the set of tuples $(\mu_1, \mu_2, \ldots, \mu_N) \in T^N$ (see \ref{cor_2.1}), such that $\sum_{i=1}^{N} \mu_i = \lambda$ by $T_\lambda^N$.
\end{notation}

Consider the irreducible subrepresentation $L_{\mu_1 + \mu_2 + \ldots + \mu_N}$ of $S^{\otimes N}$, such that  $(\mu_1, \mu_2, \ldots, \mu_N) \in T^N_{\lambda}$. The subrepresentation $L_{\mu_1 + \mu_2 + \ldots + \mu_N}$ is uniquely defined as the irreducible subrepresentation of the highest weight $\lambda$ in $L_{\mu_1 + \mu_2 + \ldots + \mu_{N-1}} \otimes S$. Similarly, $L_{\mu_1 + \mu_2 + \ldots + \mu_{N-1}}\subset S^{\otimes N-1}$  is uniquely defined as the irreducible  subrepresentation of the highest weight $\lambda - \mu_N$ in $L_{\mu_1 + \mu_2 + \ldots + \mu_{N-2}} \otimes S$ , etc. So, we see that the subspace $L_{\mu_1 + \mu_2 + \ldots + \mu_N} \subset S^{\otimes N}$ is  well-defined.  Taking the highest weight vectors inside irreducible subrepresentations $L_{\mu_1 + \mu_2 + \ldots + \mu_N} \subset S^{\otimes N}$ for all tuples $(\mu_1, \mu_2, \ldots, \mu_N) \in T^N_{\lambda}$ we obtain a basis in $U_\lambda$. This basis will be called \textbf{\textit{the principal basis}}. The principal basis is indexed by the set $T^N_{\lambda}$. It turns out that the set $T_\lambda^{N}$ has a nice combinatorial interpretation, which is well connected with the combinatorial interpretation of $\Delta^N $ discussed in paragraph 3, see \ref{prop_3.2}. 

\begin{notation}
    We denote the principal basis of the multiplicity space $U_\lambda$ by $\mathcal{PR}(U_\lambda)$.
\end{notation}

\subsection{Regular cell tables} 
\label{subs_cell_tables}
The set $T_\lambda^{N}$ indexes the principal basis in the multiplicity space $U_\lambda$. Similarly to how we identified the set $\Delta^N $ of the highest weights of irreducible subrepresentations of $S^{\otimes N}$ with the set $\mathfrak{D}(N, n)$ of the regular cell diagrams of length $N$ and height $n$, we will identify the set $T_\lambda^N$ for each $\lambda \in \Delta^N $ with the set of\textbf{ \textit{regular cell tables }} of shape $D_\lambda^N$. 

We will start with the necessary definitions. 

\begin{definition}
\label{def_6.1}
Let $D_1 = D(\textbf{l}^{(1)}, \textbf{r}^{(1)}) \in \mathfrak{D}(N_1, n)$ and $D_2 = D(\textbf{l}^{(2)}, \textbf{r}^{(2)}) \in \mathfrak{D}(N_2, n)$ be two regular cell diagrams of height $n$. We say that the diagram $D_1$ contains diagram $D_2$ (notation: $D_1 \supset D_2$)  iff  $l_i^{(1)}  \geqslant l_i^{(2)}$ and $r_i^{(1)}  \geqslant r_i^{(2)}$ for all $i \in \{1, 2, \ldots, n\}$. 
\end{definition}

\begin{definition}
\label{def_6.2}
\textbf{Regular cell table} of the shape $D^{(N)} \in \mathfrak{D}(N, n)$ is a sequence of regular cell diagrams $(D^{(1)}, D^{(2)}, \ldots, D^{(N)})$, such that $D^{(i)} \in \mathfrak{D}(i, n)$ and $D^{(i)} \supset D^{(j)}$ for $i \geqslant j$.  
We denote the set of regular cell tables of the shape $D^{(N)}$ by $\mathfrak{Ctab}(D^{(N)})$. 
\end{definition} 
The following proposition establishes a natural bijection between the sets $T_\lambda^N$ and $\mathfrak{Ctab}(D_\lambda^{N})$ for every $\lambda \in \Delta^N $.

\begin{proposition}
\label{prop_6.1}
For $\lambda \in \Delta^N $ consider a map $$\mathcal{I}_{\lambda}: T_\lambda^{N} \rightarrow \mathfrak{Ctab}(D_{\lambda}^{N})$$defined by the formula below.
\begin{gather} 
\label{eq_6.3}\mathcal{I}_{\lambda}:  (\mu_1, \mu_2, \ldots \mu_N) \mapsto (\mathcal{K}_1(\mu_1), \mathcal{K}_2(\mu_1 + \mu_2), \ldots, \mathcal{K}_i(\sum_{j=1}^{i} \mu_j), \ldots, \mathcal{K}_N (\lambda)),\end{gather}
where $(\mu_1, \mu_2, \ldots \mu_N) \in T_\lambda^N$.
Then the map $\mathcal{I}_\lambda$ is a bijection. 
 \end{proposition}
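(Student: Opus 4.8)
The plan is to reduce the statement entirely to the bijections $\mathcal{K}_i\colon \Delta^i\to\mathfrak{D}(i,n)$ of Proposition~\ref{prop_3.2} together with the observation that the sequence of partial sums of a tuple in $T^N_\lambda$ grows ``one cell at a time'' in each row. For $(\mu_1,\dots,\mu_N)\in T^N_\lambda$ write $\nu^{(i)}=\sum_{j=1}^i\mu_j$. Since the truncation $(\mu_1,\dots,\mu_i)$ still satisfies $\mu_1=\omega_\pm$ and $\sum_{j=1}^k\mu_j\in P_+$ for all $k\le i$, it lies in $T^i$, so $\nu^{(i)}\in\Delta^i$ and $\mathcal{K}_i(\nu^{(i)})$ makes sense; in particular its last entry is $\mathcal{K}_N(\lambda)=D_\lambda^N$ by Definition~\ref{def_3.2}. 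First I would check that $\mathcal{I}_\lambda$ actually lands in $\mathfrak{Ctab}(D_\lambda^N)$, i.e.\ that $D^{(i)}:=\mathcal{K}_i(\nu^{(i)})$ contains $D^{(i-1)}$ in the sense of Definition~\ref{def_6.1}. By formula~\ref{eq_3.2} the $j$-th row data of $D^{(i)}$ are $r^{(i)}_j=\tfrac{i}{2}+\nu^{(i)}_j$ and $l^{(i)}_j=\tfrac{i}{2}-\nu^{(i)}_j$, so $r^{(i)}_j-r^{(i-1)}_j=\tfrac12+(\mu_i)_j$ and $l^{(i)}_j-l^{(i-1)}_j=\tfrac12-(\mu_i)_j$; as $(\mu_i)_j\in\{\pm\tfrac12\}$ by Proposition~\ref{prop_1.3}, each of these is $0$ or $1$, hence nonnegative, so $D^{(i)}\supset D^{(i-1)}$, and since containment is coordinatewise and thus transitive, $D^{(i)}\supset D^{(j)}$ for all $i\ge j$.

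Injectivity is then immediate: if $\mathcal{I}_\lambda(\mu)=\mathcal{I}_\lambda(\mu')$ then $\mathcal{K}_i(\sum_{j\le i}\mu_j)=\mathcal{K}_i(\sum_{j\le i}\mu'_j)$ for every $i$, and since each $\mathcal{K}_i$ is injective all partial sums agree, whence $\mu_i=\mu'_i$ by taking successive differences. For surjectivity, take a regular cell table $(D^{(1)},\dots,D^{(N)})$ of shape $D_\lambda^N$, set $\nu^{(i)}=\mathcal{K}_i^{-1}(D^{(i)})\in\Delta^i$, and define $\mu_1=\nu^{(1)}$ and $\mu_i=\nu^{(i)}-\nu^{(i-1)}$ for $i\ge2$. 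Telescoping gives $\sum_{j\le i}\mu_j=\nu^{(i)}\in\Delta^i\subset P_+$ and $\sum_{j\le N}\mu_j=\nu^{(N)}=\mathcal{K}_N^{-1}(D_\lambda^N)=\lambda$, while $\mu_1=\nu^{(1)}\in\Delta^1=\{\omega_\pm\}$. To see $\mu_i\in P[\Lambda E]$ for $i\ge2$, note that $D^{(i)}\supset D^{(i-1)}$ forces $r^{(i)}_j\ge r^{(i-1)}_j$ and $l^{(i)}_j\ge l^{(i-1)}_j$, while $r^{(i)}_j+l^{(i)}_j=i$ and $r^{(i-1)}_j+l^{(i-1)}_j=i-1$, so exactly one of these two nonnegative integer differences equals $1$ and the other $0$; hence $(\mu_i)_j=\tfrac12(r^{(i)}_j-l^{(i)}_j)-\tfrac12(r^{(i-1)}_j-l^{(i-1)}_j)=\pm\tfrac12$. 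Thus $(\mu_1,\dots,\mu_N)\in T^N_\lambda$, and by construction $\mathcal{I}_\lambda$ sends it to $(D^{(1)},\dots,D^{(N)})$.

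I expect the only step needing genuine care to be the well-definedness of $\mathcal{I}_\lambda$ — the nesting $D^{(i)}\supset D^{(i-1)}$ — and, dually, the verification in the surjectivity argument that the differences $\nu^{(i)}-\nu^{(i-1)}$ arising from an arbitrary cell table are actual weights of $\Lambda E$. Both reduce to the single elementary fact that increasing the length by one adds at most one cell to each row, on exactly the side dictated by the sign of the relevant coordinate; everything else is formal bookkeeping with the bijections $\mathcal{K}_i$ and with telescoping sums.
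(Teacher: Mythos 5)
Your proof is correct and follows essentially the same route as the paper's: well-definedness via the coordinatewise containment $D^{(i)}\supset D^{(i-1)}$ forced by $(\mu_i)_j=\pm\tfrac12$, injectivity from the injectivity of each $\mathcal{K}_i$, and surjectivity by defining $\mu_i=\nu^{(i)}-\nu^{(i-1)}$ (your formula is algebraically identical to the paper's $(\mu_i)_j=\tfrac12(r^{(i)}_j-r^{(i-1)}_j+l^{(i-1)}_j-l^{(i)}_j)$). You fill in slightly more detail than the paper does at the "one can directly check" step of surjectivity, which is fine.
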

\begin{proof}
It is clear from the definition of the set $T_\lambda^N$ that for every $i$,  $$\lambda^{(i)}:= \sum_{j=1}^{i} \mu_j \in \Delta^{i} .$$
It is easy to see for each $i < N$, $$\mathcal{K}_i(\lambda^{(i)}) \subset \mathcal{K}_{i+1}(\lambda^{(i+1)})$$ since $$\lambda^{(i+1)}_k = \lambda_k^{(i)} \pm \frac{1}{2}$$for every $k \in \{1, 2, \ldots n\}$.  So either $$\begin{cases}r_k(\lambda^{(i+1)}, i+1) = r_k(\lambda^{(i)}, i) + 1 \\ l_k(\lambda^{(i+1)}, i+1) = l_k(\lambda^{(i)}, i) \end{cases}$$ or $$\begin{cases}r_k(\lambda^{(i+1)}, i+1) = r_k(\lambda^{(i)}, i)  \\ l_k(\lambda^{(i+1)}, i+1) = l_k(\lambda^{(i)}, i) +1 \end{cases}$$
So, the map $\mathcal{I}_{\lambda}$ is well-defined and is injective, because all the maps $\mathcal{K}_i$ are bijective by \ref{prop_3.2}. 
Let us prove the surjectivity of $\mathcal{I}_\lambda$. Consider $x = (D^{(1)}, D^{(2)}, \ldots, D^{(N)}) \in \mathfrak{Ctab}(D_{\lambda}^{N})$. Let $D^{(i)} = D(\textbf{l}^{(i)}, \textbf{r}^{(i)})$. Set $$(\mu_i)_j = \frac{1}{2}( r^{(i)}_j - r^{(i-1)}_j + l^{(i-1)}_j - l^{(i)}_j).$$
Then, one can directly check that $(\mu_1, \mu_2, \ldots \mu_N) \in T_\lambda^N$ and $$\mathcal{I}_{\lambda}((\mu_1, \mu_2, \ldots \mu_N)) = x.$$Hence, $\mathcal{I}_\lambda$ is bijective.
\end{proof}
\begin{example}\label{ex_6.1}
For clarity, let us visualize the notion of a regular cell table of some fixed shape. 
Consider the regular cell table $x = (D^{(1)}, D^{(2)}, \ldots D^{(7)})$ of the shape $D(\textbf{l}, \textbf{r}) \in \mathfrak{D}(7,4)$, where $\textbf{r} = (r_1, r_2, r_3, r_4) = (5, 4, 4, 3)$ and $\textbf{l} = (l_1, l_2, l_3, l_4) = (2, 3, 3, 4)$, defined in the following way. 
Let $\mu_1 = (\frac{1}{2}, \frac{1}{2},\frac{1}{2}, -\frac{1}{2})$, $\mu_2 = (\frac{1}{2}, \frac{1}{2},\frac{1}{2}, \frac{1}{2})$, $\mu_3 = (\frac{1}{2}, -\frac{1}{2},-\frac{1}{2}, -\frac{1}{2})$, $\mu_4 = (\frac{1}{2}, \frac{1}{2},\frac{1}{2}, \frac{1}{2})$, $\mu_5 = (\frac{1}{2}, -\frac{1}{2},-\frac{1}{2}, -\frac{1}{2})$, $\mu_6 = (-\frac{1}{2}, \frac{1}{2},\frac{1}{2}, -\frac{1}{2})$, $\mu_7 = (-\frac{1}{2}, -\frac{1}{2},-\frac{1}{2}, \frac{1}{2})$. Then $$x = \mathcal{I}_\lambda((\mu_1, \mu_2, \ldots, \mu_7)) \in \mathfrak{Ctab}(D(\textbf{l}, \textbf{r})),$$ where $\lambda = (\frac{3}{2}, \frac{1}{2}, \frac{1}{2}, -\frac{1}{2})$.
We can visualize $x$ the following way:

\centering

\begin{tikzpicture}
\draw[thick,<->] (6,-0.5) -- (6, 4.5) node[anchor=north west] {};
\draw (2,0) rectangle (3,1);
\draw (2.75, 0.5) circle (0pt)  node[anchor=east]{$6$};
\draw (3,0) rectangle (4,1);
\draw (3.75, 0.5) circle (0pt)  node[anchor=east]{$5$};
\draw (4,0) rectangle (5,1);
\draw (4.75, 0.5) circle (0pt)  node[anchor=east]{$3$};
\draw (5,0) rectangle (6,1);
\draw (5.75, 0.5) circle (0pt)  node[anchor=east]{$1$};
\draw (6,0) rectangle (7,1);
\draw (6.75, 0.5) circle (0pt)  node[anchor=east]{$2$};
\draw (7,0) rectangle (8,1);
\draw (7.75, 0.5) circle (0pt)  node[anchor=east]{$4$};
\draw (8,0) rectangle (9,1);
\draw (8.75, 0.5) circle (0pt)  node[anchor=east]{$7$};

\draw (3,1) rectangle (4,2);
\draw (3.75, 1.5) circle (0pt)  node[anchor=east]{$7$};
\draw (4,1) rectangle (5,2);
\draw (4.75, 1.5) circle (0pt)  node[anchor=east]{$5$};
\draw (5,1) rectangle (6,2);
\draw (5.75, 1.5) circle (0pt)  node[anchor=east]{$3$};
\draw (6,1) rectangle (7,2);
\draw (6.75, 1.5) circle (0pt)  node[anchor=east]{$1$};
\draw (7,1) rectangle (8,2);
\draw (7.75, 1.5) circle (0pt)  node[anchor=east]{$2$};
\draw (8,1) rectangle (9,2);
\draw (8.75, 1.5) circle (0pt)  node[anchor=east]{$4$};
\draw (9,1) rectangle (10,2);
\draw (9.75, 1.5) circle (0pt)  node[anchor=east]{$6$};

\draw (3,2) rectangle (4,3);
\draw (3.75, 2.5) circle (0pt)  node[anchor=east]{$7$};
\draw (4,2) rectangle (5,3);
\draw (4.75, 2.5) circle (0pt)  node[anchor=east]{$5$};
\draw (5,2) rectangle (6,3);
\draw (5.75, 2.5) circle (0pt)  node[anchor=east]{$3$};
\draw (6,2) rectangle (7,3);
\draw (6.75, 2.5) circle (0pt)  node[anchor=east]{$1$};
\draw (7,2) rectangle (8,3);
\draw (7.75, 2.5) circle (0pt)  node[anchor=east]{$2$};
\draw (8,2) rectangle (9,3);
\draw (8.75, 2.5) circle (0pt)  node[anchor=east]{$4$};
\draw (9,2) rectangle (10,3);
\draw (9.75, 2.5) circle (0pt)  node[anchor=east]{$6$};

\draw (5,3) rectangle (4,4);
\draw (4.75, 3.5) circle (0pt)  node[anchor=east]{$7$};
\draw (6,3) rectangle (5,4);
\draw (5.75, 3.5) circle (0pt)  node[anchor=east]{$6$};
\draw (7,3) rectangle (6,4);
\draw (6.75, 3.5) circle (0pt)  node[anchor=east]{$1$};
\draw (8,3) rectangle (7,4);
\draw (7.75, 3.5) circle (0pt)  node[anchor=east]{$2$};
\draw (9,3) rectangle (8,4);
\draw (8.75, 3.5) circle (0pt)  node[anchor=east]{$3$};
\draw (10,3) rectangle (9,4);
\draw (9.75, 3.5) circle (0pt)  node[anchor=east]{$4$};
\draw (11,3) rectangle (10,4);
\draw (10.75, 3.5) circle (0pt)  node[anchor=east]{$5$};
\end{tikzpicture}

 Here $D^{(i)} \in \mathfrak{D}(i, n)$ is the regular cell diagram formed by all the cells with the numbers less or equal to $i$. 
\end{example}
\subsection{Semi-standard short Young tables and regular cell tables.}

Let $V$ be the $N$-dimensional vector space. Consider the orthogonal group $O_N$ acting on this space. Let $$\textbf{v} = (v_1, v_2, \ldots v_N)$$ be an orthonormal basis of $V$. Consider the embedding of orthogonal groups $O_{N-1} \hookrightarrow O_N$, such that the image of $O_{N-1}$ is a subgroup of the operators in $O_N$ that acts trivially on the last vector $v_N$ from the orthonormal basis $\textbf{v}$.
\begin{proposition}
\label{prop_6.2}
Multiplicity space $U_{\lambda}$ where $\lambda \in \Delta^N $ as a representation of the group $O_{N-1}$ admits the following decomposition into the direct sum of irreducible representations
\begin{gather}
    U_\lambda \underset{O_{N-1}}{\cong} \bigoplus_{\underset{\lambda - \beta \in P[S]}{\beta \in \Delta^{N-1} ,} } U_\beta
\end{gather}
\end{proposition}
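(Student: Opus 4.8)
The plan is to realize both $U_\lambda$ and the right-hand side as the $L_\lambda$-multiplicity space inside $S^{\otimes N}$, now regarded as a module over $O_{N-1}\times\mathfrak{o}(\tilde{E})$, and to compute that multiplicity space in two different ways.

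First I would record how the identification $S^{\otimes N}\cong S^{\otimes(N-1)}\otimes S$ interacts with the two actions in play. Write $V=V'\oplus\langle v_N\rangle$ with $V'=\langle v_1,\dots,v_{N-1}\rangle$; by definition $O_{N-1}\subset O_N$ is the stabilizer of $v_N$, so it preserves $V'$, acts there as the standard representation, and acts trivially on $\langle v_N\rangle$. Since $W=V\otimes E=(V'\otimes E)\oplus(\langle v_N\rangle\otimes E)$, the exterior algebra factors as $\Lambda W\cong\Lambda(V'\otimes E)\otimes\Lambda(\langle v_N\rangle\otimes E)$; under the isomorphism $\gamma$ of section \ref{sec_4} this becomes an isomorphism $S^{\otimes N}\cong S^{\otimes(N-1)}\otimes S$ of $O_{N-1}\times\mathfrak{o}(\tilde{E})$-modules in which $O_{N-1}$ acts only on the first factor (trivially on the last $S$) and $\mathfrak{o}(\tilde{E})$, acting on $\Lambda W$ through $s_W\circ d_e i_{\tilde{E}}$, acts diagonally. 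This is Proposition \ref{prop_4.3} applied with $N-1$ tensor factors, combined with the formulas of Corollary \ref{cor_1.2}, which show that an element of $\mathfrak{o}(\tilde{E})$, embedded block-diagonally into $\mathfrak{o}(\tilde{W})$, acts on $\Lambda(V'\otimes E)\otimes\Lambda(\langle v_N\rangle\otimes E)$ as the sum of its actions on the two tensor factors. This compatibility bookkeeping is the only step requiring genuine care.

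Granting it, I would substitute the Howe-duality decomposition of $S^{\otimes(N-1)}$ (Corollary \ref{cor_4.3} with $N-1$ in place of $N$) and then the decomposition formula \ref{eq_2.2}:
\begin{gather*}
S^{\otimes N}\;\cong\;S^{\otimes(N-1)}\otimes S\;\underset{O_{N-1}\times\mathfrak{o}(\tilde{E})}{\cong}\;\Bigl(\bigoplus_{\beta\in\Delta^{N-1}}U_\beta\otimes L_\beta\Bigr)\otimes\Lambda E\;\cong\;\bigoplus_{\beta\in\Delta^{N-1}}\ \bigoplus_{\mu\in P[\Lambda E]:\ \beta+\mu\in P_+}U_\beta\otimes L_{\beta+\mu},
\end{gather*}
with $O_{N-1}$ acting only on the factors $U_\beta$. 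I would then apply $\operatorname{Hom}_{\mathfrak{o}(\tilde{E})}(L_\lambda,-)$ to this and to the decomposition $S^{\otimes N}\cong\bigoplus_{\lambda'\in\Delta^{N}}U_{\lambda'}\otimes L_{\lambda'}$ of Corollary \ref{cor_4.3}. Since $O_{N-1}$ centralizes $\mathfrak{o}(\tilde{E})$ inside $\operatorname{End}_{\mathbb{C}}(\Lambda W)$ (Corollary \ref{cor_4.2}), this $\operatorname{Hom}$-space is an $O_{N-1}$-module, and Schur's lemma for the pairwise non-isomorphic simple modules $L_{\lambda'}$ identifies it, from the first decomposition, with $U_\lambda$, and from the second, with $\bigoplus_{\beta\in\Delta^{N-1}:\ \lambda-\beta\in P[\Lambda E]}U_\beta$: indeed $\operatorname{Hom}_{\mathfrak{o}(\tilde{E})}(L_\lambda,\,U_\beta\otimes L_{\beta+\mu})\cong U_\beta\otimes\operatorname{Hom}_{\mathfrak{o}(\tilde{E})}(L_\lambda,L_{\beta+\mu})$ is $U_\beta$ when $\beta+\mu=\lambda$ and $0$ otherwise, the equation $\beta+\mu=\lambda$ forces $\mu=\lambda-\beta$, and the constraint $\lambda=\beta+\mu\in P_+$ is automatic because $\lambda\in\Delta^{N}\subset P_+$ (and $P[\Lambda E]=P[S]$). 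Comparing the two answers gives the claimed isomorphism of $O_{N-1}$-modules (and shows the sum is multiplicity-free, the $U_\beta$ being non-isomorphic for distinct $\beta$ by Corollary \ref{cor_4.3}). The main — and rather mild — obstacle is thus the first step; everything after it is a routine application of Schur's lemma.
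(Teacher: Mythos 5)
Your proposal is correct and follows essentially the same route as the paper: factor $S^{\otimes N}\cong S^{\otimes(N-1)}\otimes S$ as an $O_{N-1}\times\mathfrak{o}(\tilde E)$-module, apply Howe duality to $S^{\otimes(N-1)}$ and the one-step tensor decomposition of $L_\beta\otimes S$ (equation \ref{eq_2.2}), then match the $L_\lambda$-isotypic component against the direct Howe decomposition of $S^{\otimes N}$. The paper states the $O_{N-1}$-equivariance of these identifications more tersely (observing that $O_{N-1}$ acts only on the first $N-1$ tensor factors), whereas you spell out the compatibility of $\gamma$ with the block decomposition $W=(V'\otimes E)\oplus(\langle v_N\rangle\otimes E)$ and extract $U_\lambda$ via $\operatorname{Hom}_{\mathfrak{o}(\tilde E)}(L_\lambda,-)$ and Schur's lemma; these are presentational refinements of the same argument rather than a different proof.
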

\begin{proof}
From the decomposition formulas \ref{eq_2.2} and \ref{eq_2.4} we obtain 
\begin{gather}
\label{eq_6.4}
S^{\otimes N} =  S^{\otimes (N-1)} \otimes S \underset{\mathfrak{o}(\tilde{E})}{\cong}   \bigoplus_{\beta \in \Delta^{N-1} } U_\beta \otimes (L_\beta \otimes S) \underset{\mathfrak{o}(\tilde{E})}{\cong}\\\underset{\mathfrak{o}(\tilde{E})}{\cong}  \bigoplus_{\beta \in \Delta^{N-1} } U_\beta \otimes (\bigoplus_{\underset{\beta + \mu \in P_+ }{\mu \in P[S],}} L_{\beta + \mu})
\end{gather}
On the other hand, \begin{gather}
\label{eq_6.5}
S^{\otimes N} \underset{\mathfrak{o}(\tilde{E})}{\cong} \bigoplus_{\lambda \in \Delta^{N} } U_\lambda \otimes L_\lambda.\end{gather}
So, it follows that, as the vector spaces\begin{gather}
    U_\lambda \cong \bigoplus_{\underset{\lambda - \beta \in P[S]}{\beta \in \Delta^{N-1} ,} } U_\beta
\end{gather}
We want to show that these spaces are also isomorphic as $O_{N-1}$ modules. 
It is clear that the group $O_{N-1}$ acts only on the first $N-1$ factors of $S^{\otimes N}$, so the isomorphisms in \ref{eq_6.4}, \ref{eq_6.5} are also $O_{N-1}$-module isomorphisms. Therefore, $$\bigoplus_{\lambda \in \Delta^{N} } U_\lambda \otimes L_\lambda \underset{O_{N-1} \times \mathfrak{o}(\tilde{E})}{\cong} S^{\otimes N} \underset{O_{N-1} \times \mathfrak{o}(\tilde{E})}{\cong} \bigoplus_{\beta \in \Delta^{N-1} } U_\beta \otimes (\bigoplus_{\underset{\beta + \mu \in P_+ }{\mu \in P[S],}} L_{\beta + \mu}).$$
Hence, for each $\lambda \in \Delta^N $,

$$U_\lambda \otimes L_\lambda \underset{O_{N-1} \times \mathfrak{o}(\tilde{E})}{\cong}  \bigoplus_{\underset{\lambda - \beta \in P[S]}{\beta \in \Delta^{N-1} ,} } U_\beta \otimes L_\lambda.$$So, we conclude that \begin{gather}
    U_\lambda \underset{O_{N-1}} {\cong} \bigoplus_{\underset{\lambda - \beta \in P[S]}{\beta \in \Delta^{N-1} ,} } U_\beta
\end{gather}
and we are done. 
\end{proof}

We can go further by decomposing each $U_\beta$ into the sum of irreducible representations of $O_{N-2} \hookrightarrow O_{N-1}$ and so on. We may consider the nested family of orthogonal groups:
\begin{gather}
\label{eq_6.6}
    O_1 \subset O_2 \subset \ldots \subset O_{N-1} \subset O_{N}
\end{gather}
In the end, we obtain the decomposition of $U_\lambda$ into the direct sum of irreducible representations of the group $O(1) \cong \mathbb{Z}/{2\mathbb{Z}}$. But all irreducible representations of this group are one-dimensional. Hence, the following proposition. 
\begin{corollary}
\label{cor_6.2}
Let $U_{\lambda^{(N)}}$ be the multiplicity space corresponding to $\lambda^{(N)} \in \Delta^{N} $. Then it can be decomposed into the following direct sum of one-dimensional subspaces: 
\begin{equation}
    U_{\lambda^{(N)}} = \bigoplus_{(\lambda^{(1)}, \lambda^{(2)}, \ldots, \lambda^{(N)})} \mathbb{C}v_{(\lambda^{(1)}, \lambda^{(2)}, \ldots, \lambda^{(N)})},
    \end{equation}
where $\lambda^{(i)} \in \Delta^{i} $ and $\lambda^{(i+1)} - \lambda^{(i)} \in P[S]$. 
Vector $v_{(\lambda^{(1)}, \lambda^{(2)}, \ldots, \lambda^{(N)})} \neq 0$ is an element in the space  $U_{\lambda^{(N)}}$ such that, if we consider the decomposition of  $U_{\lambda^{(N)}}$ into the direct sum of irreducible $O_{i}$-modules, obtained by consecutive restrictions of the representation to the smaller orthogonal groups from the nested family \ref{eq_6.6}, then it lies inside a summand isomorphic to $U_{\lambda^{(i)}}$. 

The basis of $U_{\lambda^{(N)}} $ formed by the vectors $v_{(\lambda^{(1)}, \lambda^{(2)}, \ldots, \lambda^{(N)})}$ is called \textbf{Gelfand-Tsetlin basis} with respect to the nested family of orthogonal groups \ref{eq_6.6}, denoted by $GT(U_{\lambda^{(N)}})$.
Tuples $(\lambda^{(1)}, \lambda^{(2)}, \ldots, \lambda^{(N)})$ are in one-to-one correspondence with the regular cell tables  of the shape $D_{\lambda^{(N)}}^{N}$. The correspondence is given by the formula  
$$(\lambda^{(1)}, \lambda^{(2)}, \ldots, \lambda^{(N)}) \mapsto (\mathcal{K}_1(\lambda^{(1)}), \mathcal{K}_2(\lambda^{(2)}), \ldots, \mathcal{K}_N(\lambda^{(N)})).$$
\end{corollary}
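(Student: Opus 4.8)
The plan is to obtain the decomposition by iterating Proposition \ref{prop_6.2} down the chain \ref{eq_6.6}, and then to match the resulting index set with $\mathfrak{Ctab}(D_{\lambda^{(N)}}^N)$ via Proposition \ref{prop_3.2}. First I would note that $U_{\lambda^{(N)}}$ is an irreducible $O_N$-module by Corollary \ref{cor_4.3}, so the successive restrictions make sense. Restricting to $O_{N-1}$, Proposition \ref{prop_6.2} yields $U_{\lambda^{(N)}} \cong \bigoplus U_{\lambda^{(N-1)}}$, the sum being over $\lambda^{(N-1)} \in \Delta^{N-1}$ with $\lambda^{(N)} - \lambda^{(N-1)} \in P[S]$; the key point is that this sum is multiplicity-free, since by Corollary \ref{cor_4.3} (applied with $N-1$ in place of $N$) distinct $\lambda^{(N-1)}$ give non-isomorphic $O_{N-1}$-modules. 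Hence each summand $U_{\lambda^{(N-1)}}$ is a canonically determined subspace of $U_{\lambda^{(N)}}$, not merely one specified up to isomorphism.

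Next I would run the evident induction: each $U_{\lambda^{(N-1)}}$ is again a multiplicity space of the type studied (now inside $S^{\otimes(N-1)}$), so Proposition \ref{prop_6.2} applies to it with $O_{N-2}$, and so on down to $O_1 \cong \mathbb{Z}/2\mathbb{Z}$, whose irreducible representations are all one-dimensional. Tracking the canonically chosen subspaces through this induction produces a decomposition of $U_{\lambda^{(N)}}$ into one-dimensional subspaces $\mathbb{C}v_{(\lambda^{(1)},\ldots,\lambda^{(N)})}$ indexed by the chains $\lambda^{(i)} \in \Delta^i$, $\lambda^{(i+1)} - \lambda^{(i)} \in P[S]$, with $v_{(\lambda^{(1)},\ldots,\lambda^{(N)})}$ lying, for each $i$, in the unique $O_i$-subrepresentation of $U_{\lambda^{(N)}}$ isomorphic to $U_{\lambda^{(i)}}$ produced along the way. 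This is the Gelfand--Tsetlin basis $GT(U_{\lambda^{(N)}})$ relative to \ref{eq_6.6}.

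It remains to establish the stated bijection. Applying the maps $\mathcal{K}_i$ componentwise — each a bijection $\Delta^i \to \mathfrak{D}(i,n)$ by Proposition \ref{prop_3.2} — the assignment $(\lambda^{(1)},\ldots,\lambda^{(N)}) \mapsto (\mathcal{K}_1(\lambda^{(1)}),\ldots,\mathcal{K}_N(\lambda^{(N)}))$ is well defined and injective, and $\mathcal{K}_N(\lambda^{(N)}) = D_{\lambda^{(N)}}^N$. Exactly as in the proof of Proposition \ref{prop_6.1}, the condition $\lambda^{(i+1)} - \lambda^{(i)} \in P[S]$, i.e. $\lambda^{(i+1)}_k - \lambda^{(i)}_k = \pm\frac{1}{2}$ for every $k$, is equivalent via the formulas \ref{eq_3.2} (relating $r_k,l_k$ to $\lambda_k$ and to the length) to the containment $\mathcal{K}_i(\lambda^{(i)}) \subset \mathcal{K}_{i+1}(\lambda^{(i+1)})$; containment for all pairs $i \geqslant j$ then follows by transitivity, while conversely the consecutive containments are a special case. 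Thus the image is precisely $\mathfrak{Ctab}(D_{\lambda^{(N)}}^N)$. Equivalently, the map is $\mathcal{I}_{\lambda^{(N)}}$ of Proposition \ref{prop_6.1} precomposed with the bijection from admissible chains onto $T_{\lambda^{(N)}}^N$ sending $(\lambda^{(1)},\ldots,\lambda^{(N)})$ to $(\lambda^{(1)},\lambda^{(2)}-\lambda^{(1)},\ldots,\lambda^{(N)}-\lambda^{(N-1)})$.

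The one genuinely non-formal point — the main obstacle — is ensuring that iterating a multiplicity-free branching produces subspaces that are canonical at every level; this forces me to invoke Corollary \ref{cor_4.3} at each stage of the induction, not merely at the first step, in order to know that the relevant $O_i$-isotypic components are simple and hence pin down the summands uniquely.
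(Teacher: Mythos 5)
Your argument is correct and follows the same route the paper sketches just before the corollary: iterate Proposition \ref{prop_6.2} down the chain \ref{eq_6.6} until one reaches $O_1\cong\mathbb{Z}/2\mathbb{Z}$, whose irreducibles are one-dimensional, and then transport the index set to $\mathfrak{Ctab}(D_{\lambda^{(N)}}^N)$ exactly as in Proposition \ref{prop_6.1}. Your explicit observation that multiplicity-freeness at each level (via Corollary \ref{cor_4.3} applied to $O_i$) is what pins down the summands as canonical subspaces, rather than mere isomorphism classes, is a detail the paper leaves implicit but is indeed the right justification.
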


\begin{proposition}
\label{prop_6.3}    
Up to rescaling, $GT(U_\lambda) = \mathcal{PR}(U_\lambda)$.
\end{proposition}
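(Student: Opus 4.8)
The plan is to proceed by induction on $N$, using the recursive definition of the principal basis: the principal basis vector attached to a tuple $(\mu_1,\dots,\mu_N)\in T^N_\lambda$ is the highest-weight vector of a copy $L_{\lambda^{(N)}}$ of the irreducible $\mathfrak{o}_{2n}$-module $L_\lambda$ produced by the Cartan-component recursion $L_{\lambda^{(i)}}\subset L_{\lambda^{(i-1)}}\otimes S\subset S^{\otimes i}$, where $\lambda^{(i)}:=\mu_1+\dots+\mu_i\in\Delta^i$. Write $p^{(i)}=p^{(i)}_{(\mu_1,\dots,\mu_i)}\in U_{\lambda^{(i)}}$ for the corresponding principal basis vector of the multiplicity space $U_{\lambda^{(i)}}\subset S^{\otimes i}$. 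Because $\lambda^{(i)}=\sum_{j\le i}\mu_j$, the regular cell table assigned to $(\mu_1,\dots,\mu_N)$ by $\mathcal I_\lambda$ (Proposition \ref{prop_6.1}) coincides with the one assigned to $(\lambda^{(1)},\dots,\lambda^{(N)})$ by the bijection of Corollary \ref{cor_6.2}; hence it suffices to show that for each such tuple $p^{(N)}$ is a nonzero scalar multiple of the Gelfand--Tsetlin vector $v_{(\lambda^{(1)},\dots,\lambda^{(N)})}$.

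The case $N=1$ (where each $U_{\omega_\pm}$ is one-dimensional) is trivial, so assume the statement for $N-1$ and unwind the recursion one level. Inside $S^{\otimes(N-1)}$ the submodule $L_{\lambda^{(N-1)}}$ lies in the isotypic component $U_{\lambda^{(N-1)}}\otimes L_{\lambda^{(N-1)}}$, and being an irreducible submodule it has the form $\mathbb C\,p^{(N-1)}\otimes L_{\lambda^{(N-1)}}$ for the line spanned by $p^{(N-1)}$. Therefore, inside $S^{\otimes N}=S^{\otimes(N-1)}\otimes S$,
\[
L_{\lambda^{(N-1)}}\otimes S=\mathbb C\,p^{(N-1)}\otimes\bigl(L_{\lambda^{(N-1)}}\otimes S\bigr),
\]
and by the multiplicity-one statement of Theorem \ref{th_2.1} the recursively defined copy $L_{\lambda^{(N)}}$ is the unique copy of $L_{\lambda^{(N)}}$ in $L_{\lambda^{(N-1)}}\otimes S$, hence equals $\mathbb C\,p^{(N-1)}\otimes\bigl(L_{\lambda^{(N-1)}}\otimes S\bigr)_{\lambda^{(N)}}$. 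Passing to highest-weight vectors, $p^{(N)}=p^{(N-1)}\otimes\sigma$, where $\sigma$ spans the one-dimensional space of $\mathfrak{o}_{2n}$-singular vectors of weight $\lambda^{(N)}=\lambda$ in $L_{\lambda^{(N-1)}}\otimes S$.

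This identity places $p^{(N)}$ precisely in the decomposition produced in the proof of Proposition \ref{prop_6.2}: there the $O_{N-1}$-summand of $U_\lambda$ indexed by $\beta\in\Delta^{N-1}$ is the subspace $U_\beta\otimes(\text{singular vectors of weight }\lambda\text{ in }L_\beta\otimes S)$ of $S^{\otimes(N-1)}\otimes S$. Since $p^{(N)}=p^{(N-1)}\otimes\sigma$ with $p^{(N-1)}\in U_{\lambda^{(N-1)}}$ and $\sigma$ singular of weight $\lambda$, the vector $p^{(N)}$ lies in the $\beta=\lambda^{(N-1)}$ summand and, under the $O_{N-1}$-module isomorphism of Proposition \ref{prop_6.2} identifying that summand with $U_{\lambda^{(N-1)}}$, corresponds to $p^{(N-1)}$ up to the scalar absorbed in the choice of $\sigma$. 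On the other hand, by the very construction in Corollary \ref{cor_6.2} the Gelfand--Tsetlin basis of $U_\lambda$ for the chain $O_1\subset\cdots\subset O_N$ is obtained by restricting to $O_{N-1}$, splitting $U_\lambda=\bigoplus_\beta U_\beta$, and taking the Gelfand--Tsetlin basis of each summand for the chain $O_1\subset\cdots\subset O_{N-1}$; hence the same isomorphism sends $v_{(\lambda^{(1)},\dots,\lambda^{(N)})}$ to a nonzero multiple of $v_{(\lambda^{(1)},\dots,\lambda^{(N-1)})}$. By the inductive hypothesis $p^{(N-1)}$ is a nonzero multiple of $v_{(\lambda^{(1)},\dots,\lambda^{(N-1)})}$ in $U_{\lambda^{(N-1)}}$, so pulling everything back through the isomorphism gives $p^{(N)}=c\,v_{(\lambda^{(1)},\dots,\lambda^{(N)})}$ with $c\neq 0$, completing the induction; since this holds for every tuple, $\mathcal{PR}(U_\lambda)$ and $GT(U_\lambda)$ agree up to rescaling.

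The step I expect to require the most care is the bookkeeping of notation: the symbol $L_{\lambda^{(N-1)}}$ must simultaneously denote the abstract isotypic factor and the actual submodule $\mathbb C\,p^{(N-1)}\otimes L_{\lambda^{(N-1)}}$ of $S^{\otimes(N-1)}$, and the $\beta$-summands of Proposition \ref{prop_6.2} must be pinned down as genuine subspaces of $S^{\otimes N}$ (most cleanly by describing the multiplicity spaces as $\mathrm{Hom}_{\mathfrak{o}_{2n}}$-spaces and fixing the evaluation maps) so that the phrases ``$L_{\lambda^{(N-1)}}=\mathbb C\,p^{(N-1)}\otimes L_{\lambda^{(N-1)}}$'' and ``$p^{(N)}$ lies in the $\beta=\lambda^{(N-1)}$ summand'' are literally meaningful. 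Once this is set up, the only non-formal input is the multiplicity-one property of $L_\beta\otimes S$ from Theorem \ref{th_2.1}, and all scalar indeterminacy is harmless because the assertion is only up to rescaling.
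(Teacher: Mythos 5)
The paper states Proposition~\ref{prop_6.3} without supplying a proof, so there is no argument of the author's to compare against; your proposal fills that gap, and it does so correctly.

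Your induction on $N$ is the natural approach, and the essential point is made cleanly: realizing $U_\lambda$ as the singular weight-$\lambda$ subspace of $S^{\otimes N}=S^{\otimes (N-1)}\otimes S=\bigoplus_\beta U_\beta\otimes (L_\beta\otimes S)$, multiplicity-freeness from Theorem~\ref{th_2.1} shows that the $\beta$-indexed $O_{N-1}$-isotypic summand of $U_\lambda$ is precisely the subspace $U_\beta\otimes\mathbb{C}\sigma_\beta$, where $\sigma_\beta$ is the unique (up to scalar) singular weight-$\lambda$ vector in $L_\beta\otimes S$. The recursive definition of the principal basis then forces $p^{(N)}=p^{(N-1)}\otimes\sigma_{\lambda^{(N-1)}}$, which places $p^{(N)}$ in the $\beta=\lambda^{(N-1)}$ summand; since the $O_{N-1}$-isomorphism $U_{\lambda^{(N-1)}}\cong U_{\lambda^{(N-1)}}\otimes\mathbb{C}\sigma_{\lambda^{(N-1)}}$ carries the whole chain of $O_i$-decompositions for $i\le N-1$ along with it, the inductive hypothesis immediately identifies $p^{(N)}$ with a scalar multiple of $v_{(\lambda^{(1)},\ldots,\lambda^{(N)})}$. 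Your closing remark about the bookkeeping is well placed, but the only genuinely load-bearing input beyond this bookkeeping is, as you say, multiplicity-one from Theorem~\ref{th_2.1}, together with the fact (Corollary~\ref{cor_4.3}) that the $U_\beta$ for distinct $\beta\in\Delta^{N-1}$ are pairwise non-isomorphic $O_{N-1}$-modules, which ensures the summands are genuine isotypic components and Schur's lemma applies; it would be worth citing that corollary explicitly at the step where you invoke the $O_{N-1}$-module isomorphism.
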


So, as we see from the corollary \ref{cor_6.2} $GT(U_\lambda)$, where $\lambda \in \Delta^N $, is indexed by the set of the regular cell tables of the shape $D_\lambda^N$ - $\mathfrak{Ctab}(D_\lambda^N)$. Another way to explain it is to say that since in the proposition \ref{prop_6.1} we established the bijection between the set $T_\lambda^N$ and $\mathfrak{Ctab}(D_\lambda^N)$, then $\mathcal{PR}(U_\lambda)$ is indexed by the set $\mathfrak{Ctab}(D_\lambda^N)$ and due to the proposition \ref{prop_6.3} so is $GT(U_\lambda)$. 
But $U_\lambda$ is an irreducible representation of the group $O_N$. So, $GT(U_\lambda)$ has its own natural indexing set called \textbf{\textit{semi-standard short Young tables.}}
Let us discuss this set in detail. 

Let $\nu \in SYD(N, n)$ be the short Young diagram of height $N$ and length $n$. Consider an irreducible representation $R_{\nu}$ of the group $O_N$, corresponding to that diagram. We can realize this representation as a multiplicity space $U_\lambda$, where $$\lambda  = \mathcal{K}_{N}^{-1}(\mathcal{F}^{-1}(\nu))\in \mathfrak{} \Delta^N .$$ Due to proposition \ref{prop_6.2}, we know the decomposition of $U_\lambda$ into the sum of irreducible representations of the group $O_{N-1}$. Hence, we get the following corollary.  \begin{corollary}
\label{cor_6.3}
     Let $\nu$ be the short Young diagram of height $N$ and length $n$. Then irreducible representation $R_{\nu}$ of the group $O_N$ corresponding to the diagram $\nu$ admits the following decomposition into the sum of irreducible representations of the group $O_{N-1}$: 
     \begin{gather}
     \label{eq_6.7}
         R_\nu \cong \bigoplus_{\underset{\nu \supset \rho, \nu - \rho = \text{horizontal strip}}{\rho \in SYD(N-1, n)}} R_\rho
     \end{gather}
 \end{corollary}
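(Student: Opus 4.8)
The plan is to obtain Corollary~\ref{cor_6.3} directly from Proposition~\ref{prop_6.2}, combined with the dictionary between multiplicity spaces, regular cell diagrams and short Young diagrams provided by Proposition~\ref{prop_3.2} and Theorem~\ref{th_5.1}. First I would realize $R_\nu$ as a multiplicity space: put $\lambda = \mathcal{K}_N^{-1}(\mathcal{F}^{-1}(\nu)) \in \Delta^N$, so that $R_\nu \cong U_\lambda$ as an $O_N$-module. Proposition~\ref{prop_6.2} then gives
$$R_\nu \underset{O_{N-1}}{\cong} \bigoplus_{\underset{\lambda - \beta \in P[S]}{\beta \in \Delta^{N-1},}} U_\beta,$$
and Theorem~\ref{th_5.1} identifies each summand as $U_\beta \cong R_{\rho(\beta)}$ with $\rho(\beta) := \mathcal{F}(D_\beta^{N-1}) \in SYD(N-1,n)$. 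Since $\mathcal{K}_{N-1}$ and $\mathcal{F}$ are bijections, $\beta \mapsto \rho(\beta)$ is injective, so everything reduces to computing its image, i.e.\ to proving
$$\{\,\rho(\beta) : \beta \in \Delta^{N-1},\ \lambda - \beta \in P[S]\,\} = \{\,\rho \in SYD(N-1,n) : \nu \supseteq \rho,\ \nu/\rho \text{ a horizontal strip}\,\}.$$

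This last set equality is the combinatorial heart of the argument, and it follows from a short computation with the formulas $r_i(\lambda,N) = \tfrac N2 + \lambda_i$, $l_i(\lambda,N) = \tfrac N2 - \lambda_i$ of \ref{eq_3.2}. For $\beta \in \Delta^{N-1}$ with $\lambda - \beta \in P[S]$ one has $\lambda_i - \beta_i = \pm\tfrac12$ for every $i$ by Proposition~\ref{prop_1.3}, and direct substitution shows that when $\lambda_i - \beta_i = \tfrac12$ one has $r_i(\lambda,N) = r_i(\beta,N-1)+1$ and $l_i(\lambda,N) = l_i(\beta,N-1)$, while when $\lambda_i - \beta_i = -\tfrac12$ one has $r_i(\lambda,N) = r_i(\beta,N-1)$ and $l_i(\lambda,N) = l_i(\beta,N-1)+1$. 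Hence $D_\lambda^N$ arises from $D_\beta^{N-1}$ by adding exactly one cell in each of the $n$ rows, on the left or on the right. Passing to short Young diagrams via $\mathcal{F}$ --- which records $(l_n,\ldots,l_1)^t$ when $n$ is even and $(r_n,l_{n-1},\ldots,l_1)^t$ when $n$ is odd --- every entry of $\nu^t$ exceeds the corresponding entry of $\rho(\beta)^t$ by $0$ or $1$; since $r_n$ as well as every $l_i$ changes by $0$ or $1$, the first column (the one coming from row $n$) is handled uniformly in both parities. This is exactly the condition that $\rho(\beta) \subseteq \nu$ and that $\nu/\rho(\beta)$ is a horizontal strip, and $\rho(\beta) \in SYD(N-1,n)$ holds automatically since $\rho(\beta)^t$ has $n$ entries and $\rho(\beta) \subseteq \nu$. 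Conversely, given $\rho \in SYD(N-1,n)$ with $\nu \supseteq \rho$ and $\nu/\rho$ a horizontal strip, I would set $\beta = \mathcal{K}_{N-1}^{-1}(\mathcal{F}^{-1}(\rho)) \in \Delta^{N-1}$ (legitimate by Proposition~\ref{prop_3.2} and Theorem~\ref{th_5.1}) and run the same computation backwards: the conditions $\nu^t_j - \rho^t_j \in \{0,1\}$ for all $j$ force $\lambda_i - \beta_i \in \{-\tfrac12,\tfrac12\}$ for all $i$, i.e.\ $\lambda - \beta \in P[S]$.

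Finally, since both the $O_{N-1}$-decomposition of $R_\nu$ coming from Proposition~\ref{prop_6.2} and the desired right-hand side are multiplicity-free, the bijection of indexing sets upgrades to the claimed isomorphism $R_\nu \cong \bigoplus_\rho R_\rho$. I expect the only genuinely delicate point to be the bookkeeping in the combinatorial identification --- in particular verifying uniformly across the parities of $n$ that ``one new cell per row'' translates into the horizontal-strip condition, and checking the converse implication carefully; all the representation-theoretic input is already available from Proposition~\ref{prop_6.2} and Theorem~\ref{th_5.1}.
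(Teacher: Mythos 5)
Your proposal is correct and follows essentially the same route the paper takes: realize $R_\nu$ as $U_\lambda$ with $\lambda = \mathcal{K}_N^{-1}(\mathcal{F}^{-1}(\nu))$, apply Proposition~\ref{prop_6.2}, identify each $U_\beta$ with $R_{\mathcal{F}(\mathcal{K}_{N-1}(\beta))}$ via Theorem~\ref{th_5.1}, and match the indexing sets. The paper's own proof is just the bare chain of isomorphisms and leaves the final re-indexing (that $\{\beta \in \Delta^{N-1} : \lambda - \beta \in P[S]\}$ corresponds exactly to the horizontal-strip predecessors of $\nu$) entirely to the reader; you supply that computation explicitly, including the uniform treatment of the first column across the parities of $n$, which is the only place where the formula for $\mathcal{F}$ switches between $l_n$ and $r_n$. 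One small slip: the justification that $\rho(\beta) \in SYD(N-1,n)$ should simply be that $\mathcal{F}$ maps $\mathfrak{D}(N-1,n)$ into $SYD(N-1,n)$ by Theorem~\ref{th_5.1}, rather than the ``has $n$ entries and $\rho(\beta) \subseteq \nu$'' argument you give (containment in $\nu$ alone does not bound $\rho^t_1 + \rho^t_2$ by $N-1$); the conclusion is right, but the cited reason is not the one that does the work.
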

\begin{proof}
Let  $\lambda  = \mathcal{K}_{N}^{-1}(\mathcal{F}^{-1}(\nu))\in \mathfrak{} \Delta^N $. Then from proposition\ref{prop_6.2} and theorem \ref{th_5.1} we get the following chain of isomorphisms of the $O_{N-1}$- modules. 
   \begin{gather} 
   R_\nu \underset{O_N}{\cong}  U_\lambda \underset{O_{N-1}}{\cong} \bigoplus_{\underset{\lambda - \mu \in P[S]}{\mu \in \Delta^{N-1} }} U_\mu \underset{O_{N-1}}{\cong} \bigoplus_{\underset{\hspace{1mm} \lambda - \mu \in P[S]}{\mu \in \Delta^{N-1} \hspace{1mm}}} R_{\mathcal{F}(\mathcal{K}_{N-1}(\mu))} \underset{O_{N-1}}{\cong}\bigoplus_{\underset{\nu \supset \rho, \nu - \rho = \text{horizontal strip}}{\rho \in SYD(N-1, n)}} R_\rho.
   \end{gather}

\end{proof}

We need the following definition. 

\begin{definition}
   \label{def_6.3}
    \textbf{Semi-standard short Young table} of the shape $\nu \in SYD(N, n)$ is the sequence of the short Young diagrams $(\nu^{(1)}, \nu^{(2)}, \ldots, \nu^{(N)})$, such that $\nu^{(i)} \in SYD(i, n), \nu^{(N)} = \nu$, $\nu^{(i)} \subset \nu^{(i+1)}$ and $\nu^{(i+1)} - \nu^{(i)}$ is a horizontal strip for all $1 \leqslant i < N$. 
    
We denote the set of semi-standard short Young tables of shape $\nu$ by $SSSYT(\nu)$. 

Note that here it is very important to consider how we treat $\nu$. If $\nu \in SYD(N, n)$, then it belongs to all the sets $SYD(M, n)$ where $M > N$. But in our case, $\nu$ is always an index of some irreducible representation of the orthogonal group, so we always treat $\nu$ as an element of $SYD(N, n)$, where $N$ is the dimension of the group. 
\end{definition}

\begin{corollary}
 \label{cor_6.4}
 Let $R_\nu$ be the irreducible representation of the group $O_N$, corresponding to the short Young diagram $\nu$. Then $GT(R_\nu)$ is naturally indexed by the set $SSSYT(\nu)$.

Take $\lambda = \mathcal{K}_N^{-1}(\mathcal{F}^{-1}(\nu)) \in \Delta^N $. Then $U_\lambda \underset{O_N}{\cong} R_\nu$. As we saw in \ref{cor_6.2} $GT(U_\lambda)$ is indexed by the set $\mathfrak{Ctab}(D_\lambda^N)$. By identifying $U_\lambda$ with $R_\nu$ we get the natural bijection $$\mathcal{Y}_\lambda: \mathfrak{Ctab}(D_\lambda^N) \rightarrow SSSYT(\nu)$$ determined by the following formula 
\begin{gather} \label{eq_6.10} 
\mathcal{Y}_\lambda: (D^{(1)}, D^{(2)}, \ldots, D^{(N)} = D^N_\lambda) \mapsto (\mathcal{F}(D^{(1)}), \mathcal{F}(D^{(2)}), \ldots, \mathcal{F}(D^{(N)}) = \nu)
\end{gather}
\end{corollary}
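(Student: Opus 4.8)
The plan is to obtain the first assertion exactly as Corollary \ref{cor_6.2} was obtained from Proposition \ref{prop_6.2} (and as Corollary \ref{cor_6.1} was obtained from \ref{eq_6.1}), and then to transport the Gelfand-Tsetlin basis of $U_\lambda$ across the isomorphism $U_\lambda \underset{O_N}{\cong} R_\nu$ supplied by Theorem \ref{th_5.1}.

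First I would iterate the branching rule \ref{eq_6.7} of Corollary \ref{cor_6.3} down the nested family \ref{eq_6.6} of orthogonal groups. Since every restriction from $O_i$ to $O_{i-1}$ decomposes an irreducible $R_{\nu^{(i)}}$ as a multiplicity-free sum $\bigoplus_\rho R_\rho$ over $\rho \in SYD(i-1,n)$ with $\nu^{(i)} \supset \rho$ and $\nu^{(i)} - \rho$ a horizontal strip, consecutive restrictions decompose $R_\nu$ into a direct sum of one-dimensional subspaces indexed by the chains $\nu = \nu^{(N)} \supset \nu^{(N-1)} \supset \cdots \supset \nu^{(1)}$ with $\nu^{(i)} \in SYD(i,n)$ and every skew shape $\nu^{(i+1)} - \nu^{(i)}$ a horizontal strip, i.e. by $SSSYT(\nu)$ in the sense of Definition \ref{def_6.3}. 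This is exactly the natural indexing of $GT(R_\nu)$.

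Next I would check that the isomorphism $U_\lambda \underset{O_N}{\cong} R_\nu$ is compatible with restriction along the whole chain \ref{eq_6.6}. The copy of $O_i$ acting on $U_\lambda \subset S^{\otimes N}$ used in Proposition \ref{prop_6.2} is the subgroup of $O_N = O(V)$ fixing the last $N-i$ vectors of the orthonormal basis $\mathbf{v}$, which is precisely the standard chain defining the Gelfand-Tsetlin basis. Hence, under this isomorphism, the summand of the restriction of $U_\lambda$ to $O_i$ labelled $U_{\lambda^{(i)}}$ in Corollary \ref{cor_6.2} is carried onto the summand of the restriction of $R_\nu$ to $O_i$ labelled $R_{\mathcal{F}(\mathcal{K}_i(\lambda^{(i)}))}$, by Theorem \ref{th_5.1} applied to the group $O_i$; note that $\mathcal{K}_i(\lambda^{(i)}) = D^{(i)}$ under the bijection $\mathcal{I}_\lambda$. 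Chasing a basis vector $v_{(\lambda^{(1)},\ldots,\lambda^{(N)})}$ of $GT(U_\lambda)$ through these identifications, it becomes the Gelfand-Tsetlin vector of $R_\nu$ attached to the chain $(\mathcal{F}(D^{(1)}),\ldots,\mathcal{F}(D^{(N)}))$; this is formula \ref{eq_6.10}, and it is a bijection because $\mathcal{K}_i$, $\mathcal{F}$, and the two Gelfand-Tsetlin labellings all are.

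The one point demanding genuine care is this compatibility: one must confirm that the realization of $R_\nu$ as the multiplicity space $U_\lambda$ inside $\Lambda W$ carries the standard nested chain of orthogonal groups to the chain obtained by fixing one basis vector of $V$ at a time. This is immediate from the way $O(V)$ acts on $\Lambda W$ and the set-up of Proposition \ref{prop_6.2}, but it is what legitimizes the transport of bases. Everything else is definition-chasing: well-definedness of $\mathcal{Y}_\lambda$ — that $\mathcal{F}$ sends $D^{(i)} \subset D^{(i+1)}$ to diagrams $\mathcal{F}(D^{(i)}) \subset \mathcal{F}(D^{(i+1)})$ with horizontal-strip difference — and its bijectivity both follow from Theorem \ref{th_5.1} together with \ref{eq_6.7}.
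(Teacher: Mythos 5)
Your proposal is correct and takes essentially the same route the paper intends: iterate the branching rule of Corollary \ref{cor_6.3} down the chain \ref{eq_6.6} to index $GT(R_\nu)$ by $SSSYT(\nu)$, then transport the Gelfand-Tsetlin labelling across $U_\lambda \underset{O_N}{\cong} R_\nu$ using Theorem \ref{th_5.1} applied at each level, which is exactly how $\mathcal{F}$ enters formula \ref{eq_6.10}. The "compatibility" point you flag is automatic — any $O_N$-equivariant isomorphism is $O_i$-equivariant for all $i$ in the nested family, so the decompositions match summand-by-summand and only the relabelling via $\mathcal{F}\circ\mathcal{K}_i$ requires an argument — but spelling it out does no harm.
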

    
We established the bijection between regular cell tables and semistandard short Young diagrams. 

\subsection{Semi-standard short Young tables and Gelfand-Tsetlin patterns.}
\label{subs_bij}

We have seen in the corollary \ref{cor_6.1} that the Gelfand-Tsetlin patterns index the Gelfand-Tsetlin basis $\mathfrak{GT}(U_\lambda)$ in the multiplicity space $U_\lambda$. Unfortunately, bases $\mathfrak{GT}(U_\lambda)$ and $GT(U_\lambda)$ differ. Therefore, to obtain a bijection between Gelfand-Tsetlin patterns and regular cell tables, we need to establish some natural bijection between bases. 

Let us first consider an example that shows that bases $\mathfrak{GT}(U_\lambda)$ and $GT(U_\lambda)$ differ. 

\begin{example}
Let $\nu = (4, 1) \in SYD(4, 4)$ be the short Young diagram of height $4$ and length $4$. It corresponds to the highest weight $\lambda = (1,1,1,0) \in \Delta^{4}$ in the sense that $\mathcal{F}(\mathcal{K}_{4}(\lambda)) = \nu$.

According to the corollary \ref{cor_6.3} we have the following decomposition of the multiplicity space $U_\lambda \underset{O_4}{\cong}R_{\nu}$ into the direct sum of the irreducible $O_3$-modules: 
\begin{gather} 
\label{eq_6.8}
U_{\lambda} \underset{O_3}{=} R_{(4,1)} \oplus R_{(4,0)} \oplus R_{(3, 1)} \oplus R_{(3, 0)}\oplus R_{(2, 1)}  \oplus R_{(2, 0)} \oplus R_{(1, 1)} \oplus R_{(1, 0)} 
\end{gather}
On the other hand, $U_\lambda$ as $\mathfrak{o}_4$-module is isomorphic to the sum of two simple modules: $$U_\lambda \underset{\mathfrak{o}_4}{\cong} L_{(4,1)} \oplus L_{(4, -1)}$$
These two submodules are isomorphic as $\mathfrak{o}_3$-modules.  Each of them admits the following decomposition: 

$$L_{(4, -1)} \underset{{\mathfrak{o}_3}}{\cong} L_{(4,1)} \underset{{\mathfrak{o}_3}}{\cong} L_{4} \oplus L_{3} \oplus L_{2} \oplus L_{1}$$
So, $U_\lambda$ as $\mathfrak{o}_3$-module admits the following decomposition: 

$$U_\lambda \underset{{\mathfrak{o}_3}}{=} L_4^{(4,1)} \oplus L_4^{(4, -1)} \oplus L_3^{(4,1)} \oplus L_3^{(4, -1)} \oplus L_2^{(4,1)} \oplus L_2^{(4, -1)} \oplus L_1^{(4,1)} \oplus L_1^{(4, -1)}$$
Here the lower index of the summand is its highest weight and the upper index represents the highest weight of $\mathfrak{o}_4$-submodule that it came from. 
Each irreducible $O_3$ module from the decomposition \ref{eq_6.8} is an irreducible $\mathfrak{o}_3$ module. 
$$R_{(4,1)} \underset{{\mathfrak{o}_3}}{\cong} R_{(4,0)} \underset{{\mathfrak{o}_3}}{\cong} L_4$$
$$R_{(3,1)} \underset{{\mathfrak{o}_3}}{\cong} R_{(3,0)} \underset{{\mathfrak{o}_3}}{\cong} L_3$$
$$R_{(2,1)} \underset{{\mathfrak{o}_3}}{\cong} R_{(2,0)} \underset{{\mathfrak{o}_3}}{\cong} L_2$$
$$R_{(1,1)} \underset{{\mathfrak{o}_3}}{\cong} R_{(1,0)} \underset{{\mathfrak{o}_3}}{\cong} L_1$$
Submodules $L_4^{(4,1)}$ and $L_4^{(4, -1)}$  don't coincide with either $R_{(4,1)}$ or $R_{(4, 0)}$. But their sum $L_4^{(4,1)} \oplus L_4^{(4, -1)}$ coincides with $R_{(4,1)} \oplus R_{(4, 0)}$ as a subspace of $U_\lambda$. It follows from the fact that both sums form an isotypic component of $L_4$ in $U_\lambda$. 

To prove that these decompositions of an isotypic component of $L_4$ are different, we need to make the following observation. Take an element $ -\text{Id} = g \in O_3$. We know because of \ref{f_5.1} that it acts by $-1$ on the $O_3$ submodule $R_{(4,1)}$ and by identity on $R_{(4,0)}$. 

On the other hand, if we look at $g$ as an element of $O_4$, then we see that it does not belong to $SO_4$ and therefore neither $L_{(4,1)}$  nor $L_{(4, -1)}$ are invariant under its action. One can directly check that $g$ sends the highest vector of $L_{(4,1)}$ to the highest weight vector of $L_{(4, -1)}$. 

Observe that the highest vector of $L_4^{(4,1)}$ coincides with the highest weight vector of $L_{(4,1)}$ and, similarly, the highest vector of $L_4^{(4, -1)}$ coincides with the highest weight vector of $L_{(4,-1)}$. We conclude that $g$ permutes the submodules $L_4^{(4, 1)}$and  $L_4^{(4, -1)}$, since $g$ lies in the center of $O_3$. Therefore, $L_4^{(4,1)} \oplus L_4^{(4, -1)}$ and $R_{(4,1)} \oplus R_{(4, 0)}$ are different decompositions of the isotypic component of $L_4$ in $U_\lambda$. 
\end{example}

   This example shows the difference between bases $GT(U_\lambda)$ and $\mathfrak{GT}(U_\lambda)$. Establishing some random bijection between these bases is not interesting at all. We want to establish a bijection that will reflect the similarities between these bases. Let us consider another example. 

\begin{example}
Let $\nu = (4, 1, 1, 1,1) \in SYD(6, 4)$ be the short Young diagram of height $6$ and length $4$. It corresponds to the highest weight $\lambda = (2,2,2,-2) \in \Delta^{6}$ in the sense that $\mathcal{F}(\mathcal{K}_{6}(\lambda)) = \nu$.

 According to the corollary \ref{cor_6.3} we have the following decomposition of the multiplicity space $U_\lambda \underset{O_6}{\cong} R_{\nu}$ into the direct sum of the irreducible $O_5$ modules. 

$$U_\lambda \underset{O_5}{=} R_{(4,1,1,1)} \oplus R_{(3,1,1,1)}  \oplus R_{(2,1,1,1)} \oplus R_{(1,1,1,1)} \oplus R_{(1,1,1,1,1)}$$

As a $\mathfrak{o}_{6}$ module $U_\lambda \underset{{\mathfrak{o}_6}}{\cong} L_{(4,0, 0)}$, so it decomposes into the following sum of simple $\mathfrak{o}_5$ modules: 
$$U_\lambda \underset{{\mathfrak{o}_5}}{=} L_{(4, 0)} \oplus L_{(3,0)} \oplus L_{(2, 0)} \oplus L_{(1,0)} \oplus L_{(0,0)}$$
We also have the following isomorphisms of $\mathfrak{o}_5$-modules: 

$$R_{(4,1,1,1)} \underset{{\mathfrak{o}_5}}{\cong} L_{(4,0)}$$
$$R_{(3,1,1,1)} \underset{{\mathfrak{o}_5}}{\cong} L_{(3,0)}$$
$$R_{(2,1,1,1)} \underset{{\mathfrak{o}_5}}{\cong} L_{(2,0)}$$
$$R_{(1,1,1,1)} \underset{{\mathfrak{o}_5}}{\cong} L_{(1,0)}$$
$$R_{(1,1,1,1,1)} \underset{{\mathfrak{o}_5}}{\cong} L_{(0,0)}$$
We conclude that the decomposition of the multiplicity space $U_\lambda$ into the direct sum of simple $\mathfrak{o}_5$ modules coincides with the decomposition of $U_\lambda$ into the direct sum of simple $O_5$ modules. 
\end{example}
This example shows that sometimes decompositions into the direct sum of simple $\mathfrak{o}_N$ modules and the sum of simple $O_N$ modules agree with each other. 

Consider the multiplicity space $U_\lambda$, where $\lambda \in \Delta^N $. Set $$\nu = \mathcal{F}(\mathcal{K}_N(\lambda)) \in SYD(N, n)$$ For each $k \leqslant N$ we can decompose $U_\lambda$ into the direct sum of simple $O_k$ modules: 

$$U_\lambda = \bigoplus_{(\nu^{(k)}, \nu^{(k+1)}, \ldots, \nu^{(N)}= \nu)} M_{(\nu^{(k)}, \nu^{(k+1)}, \ldots, \nu^{(N)})},$$
where  $M_{(\nu^{(k)}, \nu^{(k+1)}, \ldots, \nu^{(N)})}$ is a $O_k$-submodule of $U_\lambda$ isomorphic to $R_{\nu^{(k)}}$ uniquely determined by the following property: the subspace $M_{(\nu^{(k)}, \nu^{(k+1)}, \ldots, \nu^{(N)})} \subset U_\lambda$ lies inside the $O_i$ submodule of $U_\lambda$ isomorphic to $R_{\nu^{(i)}}$ for all $i \in \{k, k+1, \ldots, N\}$. 

Similarly, for each $3 \leqslant k \leqslant N$ we can decompose $U_\lambda$ into the direct sum of simple $\mathfrak{o}_k$-modules: 
$$U_\lambda = \bigoplus_{(\beta^{(N)}, \beta^{(N-1)}, \ldots, \beta^{(k)})} H_{(\beta^{(N)}, \beta^{(N-1)}, \ldots, \beta^{(k)})},$$
where $H_{(\beta^{(N)}, \beta^{(N-1)}, \ldots, \beta^{(k)})}$ is a $\mathfrak{o}_k$-submodule of $U_\lambda$ isomorphic to $L_{\beta^{(k)}}$  uniquely determined by the following property: the subspace  $H_{(\beta^{(N)}, \beta^{(N-1)}, \ldots, \beta^{(k)})} \subset U_\lambda$ lies inside the $\mathfrak{o}_i$-submodule of $U_\lambda$ isomorphic to $L_{\beta^{(i)}}$ for all $i \in \{k, k+1, \ldots, N\}$. 
\begin{definition}
   \label{def_6.4}
    We call $O_k$-submodule of $U_\lambda$ \textbf{decent}, if it is a sum of some $O_k$-submodules   $M_{(\nu^{(k)}, \nu^{(k+1)}, \ldots, \nu^{(N)})}$ from the decomposition above. 
    
    Similarly, we call $\mathfrak{o}_k$ submodule of $U_\lambda$ \textbf{decent}, if it is the sum of some $\mathfrak{o}_k$ submodules $H_{(\beta^{(N)}, \beta^{(N-1)}, \ldots, \beta^{(k)})}$ from the above decomposition. 
\end{definition}

We want our bijection between $GT(U_\lambda)$ and $\mathfrak{GT}(U_\lambda)$ to have the following properties that we call \textbf{natural conditions}:
\begin{itemize}
    \item For any $k \in \{3, \ldots N\}$, if a decent $O_k$ submodule $M \subset U_\lambda$ is equal as a subspace to a decent $\mathfrak{o}_k$-submodule $H \subset U_\lambda$, then the bijection sends the basis of $M$, which is a subset of $GT(U_\lambda)$, to the basis of $H$, which is a subset of $\mathfrak{GT}(U_\lambda)$ .  
    \item For any $k \in \{3, \ldots N\}$, the bijection sends the basis of any decent $O_k$ submodule $M \subset U_\lambda$, which is a subset of $GT(U_\lambda)$,  to the basis of some decent $\mathfrak{o}_k$ submodule $H$, which is a subset of $\mathfrak{GT}(U_\lambda)$.
    \item For any $k \in \{3, \ldots N\}$, let $u$ be a vector vector from $GT(U_\lambda)$, which lies in a $O_k$ submodule of $U_\lambda$ isomorphic to $R_{\nu^{(k)}}$, where $\nu^{(k)} \in SYD(k, n)$ and $\nu^{(k)} = \nu^{(k)\dagger}$, then consider the action of an element $g = \begin{bmatrix}
        -\text{Id}_{k-1} & 0\\
        0 & 1
    \end{bmatrix} \in O_k$ on $u$. If $$g \rhd u = u,$$then the image of the vector $u$ under bijection lies inside a $\mathfrak{o}_k$ submodule of $U_\lambda$ isomorphic to $L_{s(\nu^{(k)})}$.  Otherwise, $$g \rhd u = -u,$$ and the image of the vector $u$ lies inside $\mathfrak{o}_k$-submodule of $U_\lambda$ isomorphic to $L_{s(\nu^{(k)})^{\sigma}}$
    \item Let $u$ be a vector from $GT(U_\lambda)$, which lies in a $O_2$ submodule of $U_\lambda$ isomorphic to $R_{\nu^{(2)}}$, where $\nu^{(2)} \in SYD(2, n)$. Then the image of $u$ under the bjection is a weight vector under the action of $\mathfrak{o}_3$ and the module of its weight is equal to $|s(\nu^{(2)})|$. Moreover, if $u$ lies in the trivial $O_1$submodule of $U_\lambda$, then the weight of its image equals $|s(\nu^{(2)})|$ and $-|s(\nu^{(2)})|$ otherwise. 
\end{itemize}

\begin{remark}
The last two conditions are not exactly "natural". In reality, each vector of $GT(U_\lambda)$, which lies in the $O_k$ submodule of $U_\lambda$ isomorphic to $R_{\nu^{(k)}}$, where $\nu^{(k)} \in SYD(k, n)$ and $\nu^{(k)} = \nu^{(k)\dagger}$, can be sent to the vector of $\mathfrak{GT}(U_\lambda)$, which lies in either $L_{\nu^{(k)}}$ or $L_{(\nu^{(k)})^{\sigma}}$ . The last two conditions essentially fix the way we choose between these two options at each step. In other words, the first two conditions define a family of the "truly natural" bijections between $GT(U_\lambda)$ and  $\mathfrak{GT}(U_\lambda)$. The last two conditions fix the choice of a certain bijection from this family.  It appears that no bijection from this family is better than the others.  
\end{remark}

\begin{proposition}
\label{prop_6.4}
Let $\lambda \in \Delta^N $  and set $\nu = \mathcal{F}(\mathcal{K}_{N}(\lambda)) \in SYD(N,n)$. There is a unique bijection $$\mathcal{J_\lambda}: GT(U_\lambda) \rightarrow \mathfrak{GT}(U_\lambda),$$satisfying the natural conditions above. It is determined by the following formula: 
    $$\mathcal{J}_\lambda: v_{(\nu^{(1)}, \nu^{(2)}, \ldots, \nu^{(N)})} \mapsto t_{(\beta^{(N)}, \beta^{(N-1)}, \ldots, \beta^{(3)}, z)},$$
where $$(\nu^{(1)}, \nu^{(2)}, \ldots, \nu^{(N)}) \in SSSYT(\nu)$$ and$$(\beta^{(N)}, \beta^{(N-1)}, \ldots, \beta^{(3)}, z) \in \mathfrak{GTP}(\nu)$$ satisfy the following equations: 

For $k \geqslant 3$ 
\begin{gather} 
\label{eq_6.9}
\beta^{(k)} = \begin{cases} 
    s(\nu^{(k)}), \text{if} \hspace{1mm} \nu^{(k)} \neq {\nu^{(k)}}^{\dagger}  \\
   \nu^{(k)}, \text{if} \hspace{1mm} \nu^{(k)} = {\nu^{(k)}}^{\dagger} \text{and} \hspace{1mm} (-1)^{|\nu^{(k-1)}|} = 1\\
    (\nu^{(k)})^{\sigma},  \text{if} \hspace{1mm} \nu^{(k)} = {\nu^{(k)}}^{\dagger} \text{and} \hspace{1mm} (-1)^{|\nu^{(k-1)}|} = -1
\end{cases}
\end{gather}
and
\begin{gather} \label{eq_6.10}
z = \begin{cases}
    |s(\nu^{(2)})|,\hspace{1mm} \text{if} \hspace{1mm} \nu^{(1)} = \emptyset \\
    -|s(\nu^{(2)})|, \hspace{1mm}\text{if} \hspace{1mm} \nu^{(1)} \neq \emptyset.
\end{cases}
\end{gather}

\end{proposition}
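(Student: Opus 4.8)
The plan is to build $\mathcal{J}_\lambda$ by downward induction on $k$ along the nested chains \ref{eq_6.6} and \ref{eq_6.2}, comparing at each level the decomposition of $U_\lambda$ into decent $O_k$-submodules with its decomposition into decent $\mathfrak{o}_k$-submodules. The structural input is Propositions \ref{prop_5.1} and \ref{prop_5.2}: a decent $O_k$-submodule $M\cong R_{\nu^{(k)}}$ is, as an $\mathfrak{o}_k$-module, the irreducible $L_{s(\nu^{(k)})}$ when $\nu^{(k)}\neq\nu^{(k)\dagger}$, and the sum $L_{\nu^{(k)}}\oplus L_{\nu^{(k)\sigma}}$ when $\nu^{(k)}=\nu^{(k)\dagger}$. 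Since $\mathfrak{o}_k$ acts inside $O_k$, every decent $\mathfrak{o}_k$-submodule sits inside a decent $O_k$-submodule, so the $\mathfrak{o}_k$-decomposition refines the $O_k$-decomposition; conversely Corollary \ref{cor_6.3} presents the $O_k$-refinement of a decent $O_{k+1}$-submodule as a sum over horizontal strips, and \ref{eq_6.1} presents the $\mathfrak{o}_k$-refinement of a decent $\mathfrak{o}_{k+1}$-submodule as a sum over the relation $\preccurlyeq$. The first thing I would establish is the combinatorial lemma that, for short Young diagrams, ``$\nu^{(k)}\subset\nu^{(k+1)}$ with $\nu^{(k+1)}/\nu^{(k)}$ a horizontal strip'' is equivalent to ``$s(\nu^{(k)})\preccurlyeq s(\nu^{(k+1)})$'', both with multiplicity one, the $\sigma$-twists in \ref{eq_6.9} being exactly the corrections forced when $\nu^{(k)}$ or $\nu^{(k+1)}$ is self-associated.

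Granting this lemma, I would first check that \ref{eq_6.9}--\ref{eq_6.10} gives a well-defined map $SSSYT(\nu)\to\mathfrak{GTP}(\nu)$: the interlacing $\beta^{(N)}\succcurlyeq\cdots\succcurlyeq\beta^{(3)}\geqslant|z|$ is exactly the lemma applied at each step $\nu^{(k)}\subset\nu^{(k+1)}$, reading the last inequality as the $\mathfrak{o}_3\supset\mathfrak{o}_2$ interlacing of \ref{eq_6.1}. For bijectivity, since $|SSSYT(\nu)|=\dim R_\nu=\dim U_\lambda=|\mathfrak{GTP}(\nu)|$, it is enough to prove injectivity; comparing two semistandard tables level by level from the top, and using that the containments $\nu^{(k)}\subset\nu^{(k+1)}$ are genuine (not merely up to association), the only way distinct tables could produce the same pattern is by replacing some $\nu^{(k)}$ with its associate, and the horizontal-strip constraints then force $k$ to be odd and $\nu^{(k+1)}$ to be self-associated, whereupon the parity $(-1)^{|\nu^{(k)}|}$ entering $\beta^{(k+1)}$ in \ref{eq_6.9} separates the two patterns.

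Next I would verify that the bijection of bases induced by \ref{eq_6.9}--\ref{eq_6.10} satisfies the four natural conditions and is the unique such bijection. For the first two conditions: if a decent $O_k$-submodule $M$ equals a decent $\mathfrak{o}_k$-submodule $H$ as a subspace, then $M\cong R_{\nu^{(k)}}$ with $\nu^{(k)}$ non-self-associated, $H\cong L_{s(\nu^{(k)})}$, and by the inductive matching the $O_j$-flag inside $M$ agrees with the $\mathfrak{o}_j$-flag inside $H$ for all $j\leqslant k$, so \ref{eq_6.9}--\ref{eq_6.10} carries the $GT$-basis of $M$ onto the $\mathfrak{GT}$-basis of $H$; the weaker second condition follows in the same way. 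For the third condition, take $u=v_{(\nu^{(1)},\ldots,\nu^{(N)})}$ with $\nu^{(k)}=\nu^{(k)\dagger}$ (so $k$ is even); then $u$ lies in the decent $O_{k-1}$-submodule isomorphic to $R_{\nu^{(k-1)}}$ sitting inside $R_{\nu^{(k)}}$, and, $k-1$ being odd, the central element $-\text{Id}_{k-1}\in O_{k-1}$ acts on that submodule by $(-1)^{|\nu^{(k-1)}|}$ by Fact \ref{f_5.1}, so $g\rhd u=(-1)^{|\nu^{(k-1)}|}u$; since conjugation by $g=\mathrm{diag}(-\text{Id}_{k-1},1)$ realizes the outer automorphism of $\mathfrak{o}_k$ interchanging $L_{\nu^{(k)}}$ and $L_{\nu^{(k)\sigma}}$, the prescription ``image in $L_{s(\nu^{(k)})}$ if $g\rhd u=u$, in $L_{s(\nu^{(k)})^\sigma}$ otherwise'' reads off as precisely \ref{eq_6.9}. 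The fourth condition is the analogous statement at $k=2$, fixing the $\mathfrak{o}_3$-weight of the image to be $|s(\nu^{(2)})|$ or $-|s(\nu^{(2)})|$ according as $\nu^{(1)}=\emptyset$ or not, which is \ref{eq_6.10}. Uniqueness then follows because the conditions recursively pin down, for every $GT$-vector and every $k$, the decent $\mathfrak{o}_k$-submodule containing its image -- the only freedom, at a self-associated $\nu^{(k)}$, being resolved by the third and fourth conditions -- and a vector of $\mathfrak{GT}(U_\lambda)$ is determined by the flag of $\mathfrak{o}_k$-submodules containing it together with its $\mathfrak{o}_3$-weight.

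I expect the main obstacle to be the coherent bookkeeping around self-associated short Young diagrams: one must reconcile, simultaneously at every level of the chain, the $O_k$-module $R_{\nu^{(k)}}$ (which remembers the short-versus-long distinction but carries no $\sigma$-symmetry) with the $\mathfrak{o}_k$-module it restricts to (which forgets that distinction but gains the involution $\nu\mapsto\nu^\sigma$), and verify that \ref{eq_6.9}--\ref{eq_6.10} tracks this reconciliation exactly. In particular one must be careful that the parity $(-1)^{|\nu^{(k-1)}|}$ in \ref{eq_6.9} is computed with the honest, possibly long, short Young diagram $\nu^{(k-1)}$ drawn from the semistandard table, so that the comparison with the action of $\mathrm{diag}(-\text{Id}_{k-1},1)$ -- where Facts \ref{f_5.1} and \ref{f_5.2} and their sources in \cite{Goodman} enter -- comes out right.
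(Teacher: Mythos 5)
The paper states Proposition \ref{prop_6.4} without a proof (it is followed immediately by Corollary \ref{cor_6.5}), so there is no argument in the source to compare against; your proposal is effectively filling a gap the author leaves to the reader. Your overall strategy -- downward induction along the chains \ref{eq_6.2}, \ref{eq_6.6}, matching the horizontal-strip branching of Corollary \ref{cor_6.3} against the $\preccurlyeq$-branching of \ref{eq_6.1} level by level, then using Facts \ref{f_5.1}--\ref{f_5.2} and the four natural conditions to resolve the ambiguity at self-associated levels -- is the argument one would expect here, and the crucial step (deriving \ref{eq_6.9} from condition 3 by observing that $g=\mathrm{diag}(-\mathrm{Id}_{k-1},1)$ is the image of $-\mathrm{Id}_{k-1}$ under $O_{k-1}\hookrightarrow O_k$, hence acts on the decent $O_{k-1}$-submodule $R_{\nu^{(k-1)}}$ by $(-1)^{|\nu^{(k-1)}|}$ via Fact \ref{f_5.1}) is correct and is the real content.

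Two points you should tighten. First, your combinatorial lemma as stated -- that ``$\nu^{(k)}\subset\nu^{(k+1)}$ with horizontal-strip difference'' is \emph{equivalent} to ``$s(\nu^{(k)})\preccurlyeq s(\nu^{(k+1)})$,'' both with multiplicity one -- is not literally a bijection of indexing sets when $\nu^{(k+1)}$ is self-associated. In that case (necessarily $k$ odd) the horizontal strips $\rho\subset\nu^{(k+1)}$ come in $\dagger$-pairs, and $s$ collapses each pair to a single $\mu\preccurlyeq s(\nu^{(k+1)})$; simultaneously, the $\mathfrak{o}_k$-branching of $R_{\nu^{(k+1)}}=L_{\nu^{(k+1)}}\oplus L_{(\nu^{(k+1)})^\sigma}$ picks up each $\mu$ with multiplicity two because $\nu^{(k+1)}$ and $(\nu^{(k+1)})^\sigma$ branch identically. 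The correct statement is that $\rho\mapsto s(\rho)$ is a multiplicity-preserving surjection of indexing sets that is $1$-to-$1$ off the self-associated locus and $2$-to-$1$ on it; this matches the fact that the two decompositions of the isotypic component genuinely disagree there (the paper's $\nu=(4,1)$ example), and is precisely the degree of freedom condition $3$ is designed to break. As written, your lemma would falsely assert the two decompositions already coincide.

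Second, your injectivity argument is clean for a highest level of disagreement $k\geqslant 3$, where you correctly show that $\nu^{(k)}\neq\tilde\nu^{(k)}$ with $s(\nu^{(k)})=s(\tilde\nu^{(k)})$ forces $\{\nu^{(k)},\tilde\nu^{(k)}\}$ to be a $\dagger$-pair with $|\nu^{(k)}|\not\equiv|\tilde\nu^{(k)}|\pmod{2}$, so that $\beta^{(k+1)}$ separates them via \ref{eq_6.9}. But you do not treat the base levels: if the tables first disagree at $k=1$ or $k=2$, the separation must instead come from the $z$-formula \ref{eq_6.10}, which depends on $\nu^{(1)}$ as well as $|s(\nu^{(2)})|$, and since $\nu^{(3)}$ is never self-associated ($3$ is odd) the extra $(-1)^{|\nu^{(2)}|}$ escape route in \ref{eq_6.9} is unavailable at $k=3$. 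You need a short case analysis of the finitely many shapes $\nu^{(1)}\in\{\emptyset,(1)\}$, $\nu^{(2)}\in SYD(2,n)$ to see that distinct admissible pairs always produce distinct $z$. This is routine but cannot be left implicit, since it is exactly where the otherwise uniform inductive pattern bottoms out.
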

\begin{corollary}
\label{cor_6.5}
We obtain a natural bijection between sets $SSSYT(\nu)$ and  $\mathfrak{GTP}(\nu)$ 
$$\mathcal{\bar J}_{\nu} : SSSYT(\nu) \overset{\sim}{\longrightarrow}  \mathfrak{GTP}(\nu),$$$$(\nu^{(1)}, \nu^{(2)}, \ldots, \nu^{(N)}) \mapsto (\beta^{(N)}, \beta^{(N-1)}, \ldots, \beta^{(3)}, z) $$
given by the formulas \ref{eq_6.9} and \ref{eq_6.10} in the proposition \ref{prop_6.4}
\end{corollary}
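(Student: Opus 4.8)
The plan is to obtain the corollary as a formal consequence of Proposition \ref{prop_6.4}, by transporting the bijection of bases established there to the sets that index those bases. First I would assemble the two indexings. On one side, Corollary \ref{cor_6.2} shows that $GT(U_\lambda)$ is indexed by $\mathfrak{Ctab}(D_\lambda^N)$, and Corollary \ref{cor_6.4} identifies $\mathfrak{Ctab}(D_\lambda^N)$ with $SSSYT(\nu)$ through the bijection $\mathcal{Y}_\lambda$; hence $GT(U_\lambda)$ is canonically indexed by the semi-standard short Young tables $(\nu^{(1)},\nu^{(2)},\ldots,\nu^{(N)})$, and this is exactly the labelling used in Proposition \ref{prop_6.4}. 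On the other side, Corollary \ref{cor_6.1} shows that $\mathfrak{GT}(U_\lambda)$ is indexed by the Gelfand--Tsetlin patterns $(\beta^{(N)},\beta^{(N-1)},\ldots,\beta^{(3)},z)$, that is, by $\mathfrak{GTP}(\nu)$ with $\nu=\mathcal{F}(\mathcal{K}_N(\lambda))$.

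Next I would invoke Proposition \ref{prop_6.4}: it furnishes a bijection $\mathcal{J}_\lambda\colon GT(U_\lambda)\to\mathfrak{GT}(U_\lambda)$ together with an explicit description of $\mathcal{J}_\lambda$ on the level of labels, namely $v_{(\nu^{(1)},\ldots,\nu^{(N)})}\mapsto t_{(\beta^{(N)},\ldots,\beta^{(3)},z)}$ with $\beta^{(k)}$ and $z$ given by the formulas \ref{eq_6.9} and \ref{eq_6.10}. A bijection between two bases is in particular a bijection between the sets that index them, so $\mathcal{J}_\lambda$ descends to a bijection $SSSYT(\nu)\to\mathfrak{GTP}(\nu)$; by the explicit description this descended map is precisely $\mathcal{\bar J}_\nu\colon(\nu^{(1)},\ldots,\nu^{(N)})\mapsto(\beta^{(N)},\ldots,\beta^{(3)},z)$. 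Injectivity and surjectivity of $\mathcal{\bar J}_\nu$, as well as its naturality, are inherited verbatim from $\mathcal{J}_\lambda$.

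The only point left to spell out --- and I expect it to be entirely routine, since it is already implicit in Proposition \ref{prop_6.4} --- is that the right-hand sides of \ref{eq_6.9} and \ref{eq_6.10} actually constitute a legitimate element of $\mathfrak{GTP}(\nu)$, i.e. that $(\beta^{(N)},\ldots,\beta^{(3)},z)$ satisfies the interlacing conditions $\beta^{(N)}\succcurlyeq\beta^{(N-1)}\succcurlyeq\cdots\succcurlyeq\beta^{(3)}\geqslant|z|$ from Corollary \ref{cor_6.1}, and that the assignment is invertible. For the first part I would rewrite the condition ``$\nu^{(k+1)}-\nu^{(k)}$ is a horizontal strip'' in terms of column lengths, and check that replacing each $\nu^{(k)}$ by its short form $s(\nu^{(k)})$ --- or by $(\nu^{(k)})^{\sigma}$ in the self-associated case --- keeps the interlacing intact; the branching rule \ref{eq_6.7} for $O_k\downarrow O_{k-1}$ and the branching rule \ref{eq_6.1} for $\mathfrak{o}_k\downarrow\mathfrak{o}_{k-1}$ guarantee that the successive terms on the two sides are in correspondence, so no inequality can break. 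For invertibility I would read \ref{eq_6.9} backwards: from $\beta^{(k)}$ one recovers $\nu^{(k)}$ as either $\beta^{(k)}$ or its associate $(\beta^{(k)})^{\dagger}$, with the choice pinned down by the parity of $|\nu^{(k-1)}|$, and $z$ records whether $\nu^{(1)}=\emptyset$; this exhibits the two-sided inverse and completes the argument.
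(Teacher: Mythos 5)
Your argument is correct and is exactly the route the paper takes: Corollary \ref{cor_6.5} is stated without a separate proof precisely because it follows by transporting the basis-level bijection $\mathcal{J}_\lambda$ of Proposition \ref{prop_6.4} to the indexing sets, using the identifications from Corollaries \ref{cor_6.1}, \ref{cor_6.2}, and \ref{cor_6.4}. Your final paragraph verifying that the target tuple lies in $\mathfrak{GTP}(\nu)$ and that the map is invertible is a harmless extra check, but it is already guaranteed by the statement of Proposition \ref{prop_6.4} (whose target $\mathfrak{GT}(U_\lambda)$ is, by definition, indexed by $\mathfrak{GTP}(\nu)$), so it adds caution rather than new content.
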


\section{Crystals. Commutor.}
\label{sec_7}
Our goal is to define an action of the Cactus group on the set of regular cell tables defined in \ref{def_6.2}. Using natural bijections constructed in corollaries \ref{cor_6.4} and \ref{cor_6.5}, we obtain the action of the Cactus group on the set of semi-standard short Young tables and the set of Gelfand-Tsetlin patterns for nested orthogonal Lie algebras. 

We will start with the definition of a crystal. One can think of a crystal as a combinatorial model for a representation of the Lie algebra $\mathfrak{g}$. Let $\mathfrak{g}$ be a semisimple Lie algebra. Denote by $P$ its weight lattice and by $P_+$ the set of dominant weights, $I$ the set of vertices of its Dynkin diagram, $\{\alpha_i\}_{i \in I}$  its simple roots and $\{\alpha_i^{\vee}\}_{i \in I}$ its simple co-roots. 

\begin{definition}
\label{def_7.1}
    A \textbf{$\mathfrak{g}$-crystal} is finite set $\mathcal{B}$ along with maps 
    \begin{gather*}
        \text{wt}: \mathcal{B} \rightarrow P \\
        \epsilon_{i}, \phi_{i}: \mathcal{B} \rightarrow \mathbb{Z} \\
        e_i, f_i : \mathcal{B} \rightarrow \mathcal{B} \cup \{0\}
    \end{gather*}
for each $i \in I$ such that: 
\begin{itemize}
    \item for all $b \in \mathcal{B}$ we have $\phi_i(b) - \epsilon_i(b) = \langle\text{wt}(b), \alpha_i^{\vee}  \rangle$
    \item $\epsilon_i(b) = \text{max} \{n: e_i^{n} \cdot b \neq 0 \}$ and $\phi_i(b) = \text{max} \{n : f_i^{n}  \cdot b \neq 0 \}$ for all $b \in \mathcal{B}$ and $i \in I$
    \item if $b \in \mathcal{B}$ and $e_i \cdot b \neq 0$ then $\text{wt} (e_i \cdot b) = \text{wt}(b) + \alpha_i,$ similarly if $f_i \cdot b \neq 0$ then $\text{wt}(f_i \cdot b) = \text{wt}(b) - \alpha_i$ 
    \item for all $b, b^{'} \in \mathcal{B}$ we have $b^{'} = e_i \cdot b$ iff $b = f_i \cdot b^{'}$ 
\end{itemize}
\end{definition}

Looking at this definition it is natural to think of $\mathcal{B}$ as a basis for some representation of $\mathfrak{g}$, with the $e_i$ and $f_i$ representing the actions of Chevalley generators of $\mathfrak{g}$. 

Next, we define the tensor product of the crystals. 

\begin{definition}
\label{def_7.2}
Let $\mathcal{A}, \mathcal{B}$ be crystals. Then \textbf{tensor product} $\mathcal{A} \otimes \mathcal{B}$ defined as follows. The underlying set is $\mathcal{A} \times \mathcal{B}$  and $\text{wt}(a, b) = \text{wt}(a) + \text{wt}(b)$.  We define $e_i$, $f_i$ by the following formula: 

\centering
$e_i \cdot (a, b) = \begin{cases}
(e_i \cdot a, b), \hspace{1mm} \text{if} \hspace{1mm} \epsilon_i(a) > \phi(b)
\\
(a, e_i \cdot b), \hspace{1mm} \text{otherwise}
\end{cases}
$

$f_i \cdot (a, b) = \begin{cases}
(f_i \cdot a, b), \hspace{1mm} \text{if} \hspace{1mm} \epsilon_i(a) \geq \phi(b)
\\
(a, f_i \cdot b), \hspace{1mm} \text{otherwise}
\end{cases}
$
\end{definition}

\begin{definition}
  \label{def_7.3}
We define \textbf{Direct sum} of two crystals as their disjoint union. 
\end{definition}
\begin{definition}
\label{def_7.4}
A morphism of crystals is a map between underlying sets, that commutes with all the structure maps. We assume that it takes $0$ to $0$ even though $0$ is not an element of a crystal.  
\end{definition}

\begin{definition}
\label{def_7.5}
A crystal $\mathcal{B}$ is called \textbf{connected} if the underlying graph (we say that $b$, $b^{'} \in \mathcal{B}$ are joined by the edge if $e_i \cdot b = b^{'}$ for some $i$) is connected. 

A crystal $\mathcal{B}$ is called a \textbf{highest weight crystal of highest weight} $\lambda \in P_+$, if there exists an element $b_\lambda$ (called a \textbf{highest weight element}) such that $\text{wt}(b_\lambda) = \lambda$, $e_i \cdot b_\lambda = 0$ for all $i \in I$  and $\mathcal{B}$ is generated by $f_i$ acting on $b_\lambda$. 
\end{definition}

Clearly, every highest weight crystal is connected. The converse is not true. Moreover, there are non-isomorphic highest weight crystals of the same highest weight. 

However, we can consider only those crystals that naturally arise from representations of $\mathfrak{g}$. For these crystals, it is true that every connected crystal is a highest weight crystal and that highest weight crystals of the same highest weight are isomorphic to each other. 

Formally, let $\mathbb{B} = \{\mathcal{B}_{\lambda} : \lambda \in P_+\}$ be a family of crystals such that $\mathcal{B}_\lambda$ is a crystal of the highest weight $\lambda$ . We call the family $(\mathbb{B}, \iota)$ \textbf{closed}
if $\iota_{\lambda, \mu}: \mathcal{B}_{\lambda + \mu} \rightarrow \mathcal{B}_{\lambda} \otimes \mathcal{B}_{\mu}$ is an inclusion of crystals. 

\begin{theorem} (Due to Joseph, \cite{Joseph}) \label{th_7.1}
There exists a unique closed family of Crystals $(\mathbb{B}, \iota)$. 
\end{theorem}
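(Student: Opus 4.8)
The plan is to establish \emph{existence} by exhibiting one concrete closed family, and then to prove \emph{uniqueness} by an inductive rigidity argument; the real content is the uniqueness, and for it everything reduces to a characterization of the crystals attached to the fundamental weights.

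For existence I would invoke Kashiwara's theory of crystal bases for the quantized enveloping algebra $U_q(\mathfrak{g})$. For each $\lambda\in P_+$ the integrable highest weight module $V(\lambda)$ carries a crystal basis whose crystal $B(\lambda)$ is a highest weight crystal of highest weight $\lambda$ in the sense of \ref{def_7.5}. Putting $\mathcal{B}_\lambda:=B(\lambda)$, the embedding of $U_q(\mathfrak{g})$-modules $V(\lambda+\mu)\hookrightarrow V(\lambda)\otimes V(\mu)$ sending the highest weight vector to $v_\lambda\otimes v_\mu$, together with the compatibility of crystal bases with tensor products, descends to an embedding of crystals $\iota_{\lambda,\mu}\colon \mathcal{B}_{\lambda+\mu}\hookrightarrow \mathcal{B}_\lambda\otimes\mathcal{B}_\mu$; the coherence of the $\iota_{\lambda,\mu}$ (compatibility of the two induced maps $\mathcal{B}_{\lambda+\mu+\nu}\to\mathcal{B}_\lambda\otimes\mathcal{B}_\mu\otimes\mathcal{B}_\nu$) comes likewise from the corresponding associativity at the level of $U_q(\mathfrak{g})$-modules. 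This yields a closed family.

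For uniqueness, let $(\mathbb{B},\iota)$ be an arbitrary closed family. The first step is the observation that $b_\lambda\otimes b_\mu$ is the unique highest weight element of weight $\lambda+\mu$ in $\mathcal{B}_\lambda\otimes\mathcal{B}_\mu$: since $\epsilon_i(b_\lambda)=0$ and $\phi_i(b_\mu)\geqslant 0$, the tensor product rule of \ref{def_7.2} puts us in the second case, so $e_i\cdot(b_\lambda\otimes b_\mu)=b_\lambda\otimes e_i\cdot b_\mu=0$ for all $i$; and since all weights of a highest weight crystal lie below its highest weight, a weight count forces any element of weight $\lambda+\mu$ in the tensor product to be $b_\lambda\otimes b_\mu$. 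As $\iota_{\lambda,\mu}$ is an embedding of crystals commuting with the $e_i$, it must send $b_{\lambda+\mu}$ to $b_\lambda\otimes b_\mu$, hence identify $\mathcal{B}_{\lambda+\mu}$ with the subcrystal generated by applying the $f_i$ to $b_\lambda\otimes b_\mu$. Writing any $\lambda\in P_+$ as a sum of fundamental weights $\lambda=\varpi_{i_1}+\cdots+\varpi_{i_r}$ and iterating, one deduces by induction on $r$ that $\mathcal{B}_\lambda$ is determined, as an abstract crystal, by the family $\{\mathcal{B}_{\varpi_i}\}_{i\in I}$ alone, and that (using coherence of the $\iota$'s) the isomorphism class so obtained is independent of the chosen decomposition. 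Thus uniqueness of the whole family reduces to the statement that each fundamental crystal $\mathcal{B}_{\varpi_i}$ is itself uniquely determined.

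I expect this last point to be the main obstacle, since it is precisely where the purely formal crystal axioms no longer suffice. The argument I would run is that closedness forces every $\mathcal{B}_\lambda$, and in particular every $\mathcal{B}_{\varpi_i}$, to be a \emph{normal} crystal, i.e.\ one arising from an actual $\mathfrak{g}$-module: each $\mathcal{B}_{\varpi_i}$ sits compatibly inside arbitrarily long tensor products of the $\mathcal{B}_{\varpi_j}$, and one propagates the admissible local structure (the shapes of the $i$-strings, and in the simply-laced case the Stembridge local axioms, together with the characterization of $B(\lambda)$ due to Kashiwara and Joseph) outward from the generating vertex. Since a normal highest weight crystal of a given highest weight is unique up to isomorphism, this closes the induction. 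For the present paper it is enough to cite Joseph \cite{Joseph} for this final characterization, everything else having been reduced to it.
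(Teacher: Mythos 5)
The paper does not prove Theorem \ref{th_7.1}; it is stated as an imported result, attributed to Joseph, with only a parenthetical remark that one closed family can be built from Kashiwara's crystal bases of $U_q(\mathfrak{g})$-modules. So there is no paper proof to compare against, and your proposal has to be judged on its own.

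Your reduction is sound and it is worth spelling out why. Since $\epsilon_i(b_\lambda)=0\leqslant\phi_i(b_\mu)$, the tensor product rule of \ref{def_7.2} gives $e_i\cdot(b_\lambda\otimes b_\mu)=b_\lambda\otimes e_i b_\mu=0$, and a weight count shows $b_\lambda\otimes b_\mu$ is the only element of weight $\lambda+\mu$ in $\mathcal{B}_\lambda\otimes\mathcal{B}_\mu$; any inclusion of crystals $\iota_{\lambda,\mu}$ must therefore send $b_{\lambda+\mu}$ there, and hence identifies $\mathcal{B}_{\lambda+\mu}$ with the subcrystal generated from $b_\lambda\otimes b_\mu$ by the $f_i$. (In particular the coherence you assume for existence is automatic for uniqueness, because $\iota_{\lambda,\mu}$ is forced on its image.) Iterating over a decomposition of $\lambda$ into fundamental weights then reduces uniqueness of the whole family to uniqueness of the fundamental crystals $\mathcal{B}_{\varpi_i}$. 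This is a legitimate and correct reduction, and the existence part via Kashiwara is standard.

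The genuine gap is exactly where you flag it, and it is not a small gap: you assert that closedness forces each $\mathcal{B}_{\varpi_i}$ to be normal, but the sentence about ``propagating the admissible local structure outward from the generating vertex'' is not an argument. Nothing in the definition of a closed family given in the paper visibly produces the Stembridge local conditions (or Joseph's analogue in the non-simply-laced case) from the mere existence of the inclusions $\iota_{\lambda,\mu}$; establishing that implication is essentially the entire technical content of Joseph's proof. By the time you cite Joseph for ``the characterization of $B(\lambda)$'' you have effectively cited the theorem you set out to prove, so what you have produced is a clean reduction of the statement to its hard core rather than a proof. That is fine for the purposes of this paper, which itself treats the result as a black box, but you should be explicit that the local rigidity step is where the real work lies and that you are not supplying it.
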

One of the ways to construct crystals $\mathcal{B}_{\lambda}$ from this closed family is by using crystal bases of $U_q(\mathfrak{g})$-modules . See \cite{Kashiwara} for the details. 

\begin{definition}
\label{def_7.6}
The category of $\mathfrak{g}$-$\text{Crystals}$ is the category whose objects are crystals $\mathcal{B}$ such that every connected component of $\mathcal{B}$ is isomorphic to some $\mathcal{B}_\lambda$ from the closed family. Morphisms in this category are the usual morphisms of crystals. 
\end{definition}

For the category of $\mathfrak{g}$-$\text{Crystals}$ we have the following version of Schur's Lemma. 

\begin{proposition}
\label{prop_7.1}
$Hom(\mathcal{B}_\lambda, \mathcal{B}_\mu)$ contains just the identity map in case $\lambda = \mu$ and is empty otherwise. 
\end{proposition}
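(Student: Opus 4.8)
The plan is to prove this version of Schur's Lemma for the category of $\mathfrak{g}$-crystals by reducing everything to the rigidity of highest weight elements. First I would recall that each $\mathcal{B}_\lambda$ in the closed family is a highest weight crystal, so it has a distinguished highest weight element $b_\lambda$ with $\mathrm{wt}(b_\lambda) = \lambda$ and $e_i \cdot b_\lambda = 0$ for all $i \in I$, and moreover $\mathcal{B}_\lambda$ is generated from $b_\lambda$ by applying the lowering operators $f_i$. A crystal morphism $\psi : \mathcal{B}_\lambda \to \mathcal{B}_\mu$ commutes with $\mathrm{wt}$, with all $e_i$ and with all $f_i$ (sending $0$ to $0$), so it is determined by its value on $b_\lambda$ once we know $\mathcal{B}_\lambda$ is generated by $b_\lambda$.

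The key steps are then: (1) show $\psi(b_\lambda)$ is either $0$ or a highest weight element of $\mathcal{B}_\mu$. Indeed $e_i \cdot \psi(b_\lambda) = \psi(e_i \cdot b_\lambda) = \psi(0) = 0$ for every $i$, and $\mathrm{wt}(\psi(b_\lambda)) = \mathrm{wt}(b_\lambda) = \lambda$. (2) Observe that in a crystal coming from the closed family, a connected component has a \emph{unique} highest weight element, and its weight is the highest weight of that component; since $\mathcal{B}_\mu$ is connected with highest weight $\mu$, the only highest weight element is $b_\mu$, which has weight $\mu$. (3) Conclude that if $\psi(b_\lambda) \ne 0$ then $\lambda = \mathrm{wt}(\psi(b_\lambda)) = \mu$ and $\psi(b_\lambda) = b_\mu$; this already shows $\mathrm{Hom}(\mathcal{B}_\lambda, \mathcal{B}_\mu) = \emptyset$ (contains no nonzero morphism, and in a crystal there is no zero object / the only "morphism" sending everything to $0$ is not a morphism of crystals as $0 \notin \mathcal{B}_\mu$) when $\lambda \ne \mu$. (4) When $\lambda = \mu$, any morphism $\psi$ with $\psi(b_\lambda) = b_\lambda$ must agree with the identity on every element, because every $b \in \mathcal{B}_\lambda$ is of the form $f_{i_1} \cdots f_{i_r} \cdot b_\lambda$ and $\psi(f_{i_1}\cdots f_{i_r}\cdot b_\lambda) = f_{i_1}\cdots f_{i_r}\cdot \psi(b_\lambda) = f_{i_1}\cdots f_{i_r}\cdot b_\lambda$; hence $\psi = \mathrm{id}$, so $\mathrm{Hom}(\mathcal{B}_\lambda, \mathcal{B}_\lambda) = \{\mathrm{id}\}$.

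The main obstacle I expect is step (2): justifying that a connected crystal in this category has a unique highest weight element of the expected weight. This is where one genuinely uses that the family $(\mathbb{B}, \iota)$ is the closed family (equivalently, that these are the crystals arising from $U_q(\mathfrak{g})$-modules) rather than an arbitrary connected crystal — the excerpt itself warns that abstract connected crystals need not be highest weight crystals and that there can be non-isomorphic highest weight crystals of the same highest weight. I would cite Theorem \ref{th_7.1} and the standard theory (e.g. \cite{Kashiwara}, \cite{Joseph}) for the fact that $\mathcal{B}_\lambda$, being the crystal of the irreducible $U_q(\mathfrak{g})$-module of highest weight $\lambda$, has a one-dimensional $\lambda$-weight space consisting of its unique highest weight element and that all other weights $\mu'$ of $\mathcal{B}_\lambda$ satisfy $\mu' < \lambda$, so no other element is annihilated by all $e_i$. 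Granting that input, the remaining steps are the routine diagram-chase above, and the proof is short.

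\begin{proof}
Recall from Theorem \ref{th_7.1} that each $\mathcal{B}_\lambda$ is a highest weight crystal with highest weight element $b_\lambda$, so $\mathcal{B}_\lambda$ is generated by the operators $f_i$ acting on $b_\lambda$, and (as $\mathcal{B}_\lambda$ is the crystal of the irreducible $U_q(\mathfrak{g})$-module of highest weight $\lambda$) $b_\lambda$ is its unique element annihilated by all $e_i$, with $\mathrm{wt}(b_\lambda) = \lambda$ and every other weight strictly lower than $\lambda$.

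Let $\psi : \mathcal{B}_\lambda \to \mathcal{B}_\mu$ be a morphism of crystals. If $\psi(b_\lambda) \neq 0$, then for every $i \in I$ we have $e_i \cdot \psi(b_\lambda) = \psi(e_i \cdot b_\lambda) = \psi(0) = 0$, and $\mathrm{wt}(\psi(b_\lambda)) = \mathrm{wt}(b_\lambda) = \lambda$. Hence $\psi(b_\lambda)$ is a highest weight element of $\mathcal{B}_\mu$ of weight $\lambda$; by uniqueness of the highest weight element this forces $\psi(b_\lambda) = b_\mu$ and $\lambda = \mu$. In particular, if $\lambda \neq \mu$ there is no such $\psi$ with $\psi(b_\lambda) \neq 0$. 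But $\psi(b_\lambda) = 0$ is impossible for a crystal morphism unless $\psi$ is identically the nonexistent zero map, since every $b \in \mathcal{B}_\lambda$ equals $f_{i_1} \cdots f_{i_r} \cdot b_\lambda$ for some sequence of indices, whence $\psi(b) = f_{i_1} \cdots f_{i_r} \cdot \psi(b_\lambda) = f_{i_1} \cdots f_{i_r} \cdot 0 = 0$, contradicting that a morphism of crystals maps $\mathcal{B}_\lambda$ into $\mathcal{B}_\mu$ (and $0 \notin \mathcal{B}_\mu$). Therefore $\mathrm{Hom}(\mathcal{B}_\lambda, \mathcal{B}_\mu) = \emptyset$ when $\lambda \neq \mu$.

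Now suppose $\lambda = \mu$. By the above, any morphism $\psi$ satisfies $\psi(b_\lambda) = b_\lambda$. For an arbitrary $b \in \mathcal{B}_\lambda$, write $b = f_{i_1} \cdots f_{i_r} \cdot b_\lambda$; then
$$\psi(b) = \psi(f_{i_1} \cdots f_{i_r} \cdot b_\lambda) = f_{i_1} \cdots f_{i_r} \cdot \psi(b_\lambda) = f_{i_1} \cdots f_{i_r} \cdot b_\lambda = b.$$
Thus $\psi = \mathrm{id}_{\mathcal{B}_\lambda}$, and $\mathrm{Hom}(\mathcal{B}_\lambda, \mathcal{B}_\lambda) = \{\mathrm{id}_{\mathcal{B}_\lambda}\}$.
\end{proof}
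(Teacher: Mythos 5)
Your proof is correct and follows essentially the same route as the paper: send the highest weight element to a highest weight element, deduce $\lambda=\mu$, and then use that $\mathcal{B}_\lambda$ is generated by $b_\lambda$ under the $f_i$ to pin the morphism down to the identity. You are somewhat more careful than the paper in ruling out $\psi(b_\lambda)=0$ and in flagging where uniqueness of the highest weight element depends on being in the closed family, but the substance is the same.
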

\begin{proof}
Since a morphism of crystals commutes with all the structure maps, it sends the highest weight element of $\mathcal{B}_\lambda$ to the highest weight element of $\mathcal{B}_\mu$. Therefore, $\lambda = \mu$. Since any element of $\mathcal{B}_\lambda$ can be obtained by acting on the highest weight element by multiple $f_i$ 's, the morphism is uniquely determined and is equal to the identity map. 
\end{proof}

Category of $\mathfrak{g}$-$\text{Crystals}$ is closed under the tensor product operation. That follows from the fact that tensor product of crystal bases of the $U_q(\mathfrak{g})$-modules is a crystal base of their tensor product and that crystal base for any $U_q(\mathfrak{g})$-module exists and the corresponding crystal $\mathcal{B}$ is uniquely determined by the module (doesn't depend on the choice of the crystal base inside the module). 

Tensor product of crystals is associative: i.e if $\mathcal{B}_1, \mathcal{B}_2, \mathcal{B}_3$ are crystals, then 
$$\alpha_{\mathcal{B}_1, \mathcal{B}_2, \mathcal{B}_3}: (\mathcal{B}_1 \otimes \mathcal{B}_2) \otimes \mathcal{B}_3 \rightarrow \mathcal{B}_1 \otimes (\mathcal{B}_2 \otimes \mathcal{B}_3) $$$$((a, b), c) \mapsto (a, (b, c))$$
is an isomorphism of crystals. 

Note that the tensor product of crystals is not symmetric, i.e. the map 
$$\text{flip} : \mathcal{B}_1 \otimes \mathcal{B}_2 \rightarrow \mathcal{B}_2 \otimes \mathcal{B}_1$$ defined by $(b_1, b_2) \mapsto (b_2, b_1)$ is not a morphism of crystals. 

In the work of Herniques and Kamnitzer \cite{Herniques_Kamnitzer} isomorphisms $$\sigma_{\mathcal{B}_1, \mathcal{B}_2} : \mathcal{B}_1 \otimes \mathcal{B}_2 \rightarrow \mathcal{B}_2 \otimes \mathcal{B}_1$$ were defined for any two $\mathcal{B}_1, \mathcal{B}_2 \in Ob(\mathfrak{g}$-$\text{Crystals})$. This family of maps is called \textbf{commutor}. 

\begin{proposition}
    \label{prop_7.2}
    $\sigma_{\mathcal{B}_1, \mathcal{B}_2} \circ \sigma_{\mathcal{B}_2, \mathcal{B}_1} = 1$.
\end{proposition}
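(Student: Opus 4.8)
The plan is to reduce the claim to two elementary facts: that the Sch\"utzenberger involution on crystals and the flip of tensor factors are each involutions. Recall from \cite{Herniques_Kamnitzer} that for every object $\mathcal{B}$ of the category of $\mathfrak{g}$-crystals there is the \emph{Sch\"utzenberger involution} $\xi_{\mathcal{B}}\colon\mathcal{B}\to\mathcal{B}$, defined componentwise (on a connected component $\mathcal{B}_{\lambda}$ it is the unique bijection sending the highest weight element to the lowest weight element and conjugating $e_i$ to $f_{\theta(i)}$, where $\theta$ is the diagram automorphism attached to $w_0$), and that the commutor is built from $\xi$ and the set-theoretic swap $\text{flip}$ by
\[
\sigma_{\mathcal{B}_1,\mathcal{B}_2} \;=\; \xi_{\mathcal{B}_2\otimes\mathcal{B}_1}\circ\text{flip}_{\mathcal{B}_1,\mathcal{B}_2}\circ\xi_{\mathcal{B}_1\otimes\mathcal{B}_2},
\]
the outer and inner copies of $\xi$ being exactly what repairs the fact that $\text{flip}$ is not itself a morphism of crystals. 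The two facts I will use are $\xi_{\mathcal{B}}\circ\xi_{\mathcal{B}}=\mathrm{id}_{\mathcal{B}}$ for every crystal $\mathcal{B}$ (immediate from the defining property, since applying $\xi$ twice returns the highest weight element to itself and restores the operators $e_i,f_i$) and $\text{flip}_{\mathcal{B}_1,\mathcal{B}_2}\circ\text{flip}_{\mathcal{B}_2,\mathcal{B}_1}=\mathrm{id}_{\mathcal{B}_2\otimes\mathcal{B}_1}$.

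With these in hand the argument is a direct cancellation. Writing out the definition with the two crystals interchanged, $\sigma_{\mathcal{B}_2,\mathcal{B}_1}=\xi_{\mathcal{B}_1\otimes\mathcal{B}_2}\circ\text{flip}_{\mathcal{B}_2,\mathcal{B}_1}\circ\xi_{\mathcal{B}_2\otimes\mathcal{B}_1}$, so that
\begin{align*}
\sigma_{\mathcal{B}_1,\mathcal{B}_2}\circ\sigma_{\mathcal{B}_2,\mathcal{B}_1}
&=\xi_{\mathcal{B}_2\otimes\mathcal{B}_1}\circ\text{flip}_{\mathcal{B}_1,\mathcal{B}_2}\circ\bigl(\xi_{\mathcal{B}_1\otimes\mathcal{B}_2}\circ\xi_{\mathcal{B}_1\otimes\mathcal{B}_2}\bigr)\circ\text{flip}_{\mathcal{B}_2,\mathcal{B}_1}\circ\xi_{\mathcal{B}_2\otimes\mathcal{B}_1}\\
&=\xi_{\mathcal{B}_2\otimes\mathcal{B}_1}\circ\bigl(\text{flip}_{\mathcal{B}_1,\mathcal{B}_2}\circ\text{flip}_{\mathcal{B}_2,\mathcal{B}_1}\bigr)\circ\xi_{\mathcal{B}_2\otimes\mathcal{B}_1}
=\xi_{\mathcal{B}_2\otimes\mathcal{B}_1}\circ\xi_{\mathcal{B}_2\otimes\mathcal{B}_1}=\mathrm{id}_{\mathcal{B}_2\otimes\mathcal{B}_1},
\end{align*}
where the inner cancellation uses involutivity of $\xi$ on $\mathcal{B}_1\otimes\mathcal{B}_2$, the next uses that the two flips are mutually inverse, and the last uses involutivity of $\xi$ on $\mathcal{B}_2\otimes\mathcal{B}_1$.

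I expect the only point requiring genuine attention — the ``hard part'' — to be the justification that $\xi$ is defined and involutive on the whole category of $\mathfrak{g}$-crystals, which rests on Theorem \ref{th_7.1} (so that every connected component is canonically one of the $\mathcal{B}_{\lambda}$) together with the standard description of $\xi$ on $\mathcal{B}_{\lambda}$; both are established in \cite{Herniques_Kamnitzer} and \cite{Joseph}. Should one prefer to avoid invoking the explicit formula for $\sigma$, there is a more robust route: $\sigma_{\mathcal{B}_1,\mathcal{B}_2}$ is a crystal isomorphism, hence $\sigma_{\mathcal{B}_1,\mathcal{B}_2}\circ\sigma_{\mathcal{B}_2,\mathcal{B}_1}$ is an automorphism of $\mathcal{B}_2\otimes\mathcal{B}_1$; by the Schur-type statement of Proposition \ref{prop_7.1} such an automorphism must carry each connected component to a component of the same highest weight and act there through the unique isomorphism, so it suffices to check that it fixes one chosen element (for instance the highest weight element) of each component, which again follows from the behaviour of $\xi$ on highest and lowest weight vectors of a tensor product.
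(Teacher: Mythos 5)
The paper does not give its own argument for Proposition \ref{prop_7.2}; it simply cites \cite{Herniques_Kamnitzer}, so there is no internal proof to compare against. That said, your attempted proof as written is not correct, because the formula you use for the commutor is misremembered in a way that is fatal to the cancellation.

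The Henriques--Kamnitzer commutor is defined by $\sigma_{A,B}(a\otimes b)=\xi\bigl(\xi(b)\otimes\xi(a)\bigr)$, i.e.
\[
\sigma_{A,B}\;=\;\xi_{B\otimes A}\circ(\xi_{B}\otimes\xi_{A})\circ\text{flip}_{A,B},
\]
or, using their lemma that the two expressions agree, equivalently
\[
\sigma_{A,B}\;=\;(\xi_{B}\otimes\xi_{A})\circ\text{flip}_{A,B}\circ\xi_{A\otimes B}.
\]
You wrote instead $\sigma_{A,B}=\xi_{B\otimes A}\circ\text{flip}_{A,B}\circ\xi_{A\otimes B}$, replacing the component-wise involution $\xi_{B}\otimes\xi_{A}$ by the involution $\xi_{B\otimes A}$ of the whole tensor product. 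These are genuinely different maps -- indeed, the discrepancy between $\xi_{A\otimes B}$ and $\xi_A\otimes\xi_B$ is precisely what the commutor measures. One can see the problem already for $\mathfrak{g}=\mathfrak{sl}_2$ with $A=B$ the two-element standard crystal $\{+,-\}$: here $A\otimes A$ decomposes as $\mathcal{B}_2\oplus\mathcal{B}_0$ with $\mathcal{B}_2=\{(+,+),(+,-),(-,-)\}$ and $\mathcal{B}_0=\{(-,+)\}$, so by Proposition~\ref{prop_7.1} the commutor $\sigma_{A,A}$ must be the identity. But $\xi_{A\otimes A}$ fixes both $(+,-)$ and $(-,+)$, so your proposed formula sends $(+,-)\mapsto(-,+)$ -- it moves the $\mathcal{B}_2$-component into the $\mathcal{B}_0$-component and is therefore not even a crystal morphism, let alone the commutor.

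This matters because your cancellation
\[
\xi_{B\otimes A}\circ\text{flip}\circ(\xi_{A\otimes B}\circ\xi_{A\otimes B})\circ\text{flip}\circ\xi_{B\otimes A}=\mathrm{id}
\]
only works because the two middle $\xi$'s are the same map. In the correct formula the adjacent terms are $\xi_{A\otimes B}$ and $\xi_A\otimes\xi_B$ (or the other way around, depending on which of the two equivalent expressions you take for each factor), and these do not cancel. To make the cancellation go through one must \emph{first} prove the Henriques--Kamnitzer lemma that
\[
\xi_{B\otimes A}\circ(\xi_{B}\otimes\xi_{A})\circ\text{flip}_{A,B}\;=\;(\xi_{B}\otimes\xi_{A})\circ\text{flip}_{A,B}\circ\xi_{A\otimes B},
\]
use one form for $\sigma_{A,B}$ and the other for $\sigma_{B,A}$, and only then does everything collapse. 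That lemma carries the actual content and is not a one-line observation.

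Your alternative route via Proposition~\ref{prop_7.1} is sound in outline, but be careful about what it reduces the problem to: if $\mathcal{B}_2\otimes\mathcal{B}_1$ contains several copies of the same $\mathcal{B}_\lambda$, a crystal automorphism may permute them, so you must show that $\sigma_{\mathcal{B}_1,\mathcal{B}_2}\circ\sigma_{\mathcal{B}_2,\mathcal{B}_1}$ sends each highest-weight element to \emph{itself}, not merely to a highest-weight element of the same weight. That check is exactly the computation with the Schützenberger involution that the explicit formula encodes, so the alternative route does not actually let you avoid it. In short: fix the formula (component-wise $\xi$ on one side, whole-crystal $\xi$ on the other), invoke the equality of the two forms, and the rest of your cancellation is fine.
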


\begin{theorem}
\label{th_7.2} 
The following diagram commutes in $\mathfrak{g}$-$\text{Crystals}$: \\
    
\centering
\begin{tikzcd}[column sep=3em]
\mathcal{B}_1 \otimes \mathcal{B}_2 \otimes \mathcal{B}_3 \arrow[r, "1_{\mathcal{B}_1} \otimes \sigma_{\mathcal{B}_2, \mathcal{B}_3} "] \arrow[d,swap,"\sigma_{\mathcal{B}_1, \mathcal{B}_2} \otimes 1_{\mathcal{B}_3} " ]  &
\mathcal{B}_1 \otimes \mathcal{B}_3 \otimes \mathcal{B}_2 \arrow[d,"\sigma_{\mathcal{B}_1 \otimes \mathcal{B}_3 , \mathcal{B}_2}"] \\
\mathcal{B}_2 \otimes \mathcal{B}_1 \otimes \mathcal{B}_3 \arrow[r,"\sigma_{\mathcal{B}_2 \otimes \mathcal{B}_1 , \mathcal{B}_3}" ]  &  \mathcal{B}_2 \otimes \mathcal{B}_1 \otimes \mathcal{B}_3 
\end{tikzcd}
\end{theorem}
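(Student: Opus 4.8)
The plan is to follow Henriques--Kamnitzer \cite{Herniques_Kamnitzer} and reduce the assertion to a bookkeeping identity for the Sch\"utzenberger (Lusztig) involution on crystals. Recall that every crystal $\mathcal{B}$ in the category of $\mathfrak{g}$-crystals carries a canonical involution $\xi_{\mathcal{B}}\colon\mathcal{B}\to\mathcal{B}$, characterized on each connected component by interchanging the highest and lowest weight elements and by the relations $\xi_{\mathcal{B}}\,e_i=f_{i^{*}}\,\xi_{\mathcal{B}}$, $\xi_{\mathcal{B}}\,f_i=e_{i^{*}}\,\xi_{\mathcal{B}}$, $\mathrm{wt}\circ\xi_{\mathcal{B}}=w_0\circ\mathrm{wt}$, where $i\mapsto i^{*}$ is the Dynkin diagram automorphism induced by $-w_0$; in particular $\xi_{\mathcal{B}}^{2}=\mathrm{id}$. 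The commutor is $\sigma_{\mathcal{A},\mathcal{B}}=\mathrm{flip}\circ(\xi_{\mathcal{A}}\otimes\xi_{\mathcal{B}})\circ\xi_{\mathcal{A}\otimes\mathcal{B}}$, and Proposition \ref{prop_7.2} is the simplest instance of what is needed here: it falls out of $\xi^{2}=\mathrm{id}$ once one knows that conjugating the involution on $\mathcal{A}\otimes\mathcal{B}$ by $\mathrm{flip}$ and by $\xi_{\mathcal{A}}\otimes\xi_{\mathcal{B}}$ produces the involution on $\mathcal{B}\otimes\mathcal{A}$. Accordingly, the first step is to expand, using the associativity isomorphism of $\otimes$ and this formula for $\sigma$, each of the two composites in the diagram as an explicit alternating word in the maps $\xi_{\mathcal{B}_i}$, $\xi_{\mathcal{B}_i\otimes\mathcal{B}_j}$, $\xi_{\mathcal{B}_1\otimes\mathcal{B}_2\otimes\mathcal{B}_3}$ and suitable $\mathrm{flip}$'s.

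The engine that makes the two words coincide is a structural lemma for $\xi$ on iterated tensor products: an identity of the form $\xi_{\mathcal{A}\otimes\mathcal{B}}=\sigma_{\mathcal{B},\mathcal{A}}\circ(\xi_{\mathcal{B}}\otimes\xi_{\mathcal{A}})\circ\mathrm{flip}$, together with its threefold analogue, which expresses that $\xi_{\mathcal{B}_1\otimes\mathcal{B}_2\otimes\mathcal{B}_3}$ is produced by the same recipe irrespective of how one brackets, and that sliding an inner $\xi_{\mathcal{B}_i\otimes\mathcal{B}_j}$ past the remaining factor costs exactly one $\mathrm{flip}$. To cut down the bookkeeping I would first observe that both composites are natural in their three arguments and commute with direct sums (Definition \ref{def_7.3}), so one may assume $\mathcal{B}_1,\mathcal{B}_2,\mathcal{B}_3$ connected, hence each of the form $\mathcal{B}_{\lambda_i}$ from the closed family of Theorem \ref{th_7.1}; and since a connected crystal is generated from its highest weight element by the $f_i$ while crystal morphisms commute with the $f_i$ and fix $0$, it then suffices to check equality of the two composites on the (finitely many) highest weight elements of $\mathcal{B}_1\otimes\mathcal{B}_2\otimes\mathcal{B}_3$, where the words above are evaluated directly.

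With the structural lemma in hand, substituting it into both composites allows one to collapse adjacent pairs $\xi\circ\xi=\mathrm{id}$, and the two alternating words reduce to the same expression; that is the commutativity of the diagram. The main obstacle is exactly this lemma and the flip-tracking it entails: it is an order- and position-tracking computation on iterated tensor products, not anything conceptual, and it is where slips are easy. I should add that the construction of $\sigma$ from $\xi$, Proposition \ref{prop_7.2}, and the diagram above are all established in Henriques--Kamnitzer \cite{Herniques_Kamnitzer}, so the statement may also simply be cited from there.
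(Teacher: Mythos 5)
The paper offers no proof of Theorem \ref{th_7.2} at all: after Theorem \ref{th_7.2} it simply says that the proofs of Propositions \ref{prop_7.2} and \ref{th_7.2} may be found in Henriques--Kamnitzer, which is exactly the escape hatch you note in your final sentence. So there is no internal argument to compare yours against; the only question is whether your sketch, read as a proposed proof, is sound and complete.

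The sketch is ideologically aligned with Henriques--Kamnitzer. The Sch\"utzenberger involution $\xi$, its characterizing relations $\xi e_i = f_{i^*}\xi$, $\xi f_i = e_{i^*}\xi$, $\mathrm{wt}\circ\xi = w_0\circ\mathrm{wt}$, $\xi^2=\mathrm{id}$, and the formula $\sigma_{\mathcal{A},\mathcal{B}}=\mathrm{flip}\circ(\xi_{\mathcal{A}}\otimes\xi_{\mathcal{B}})\circ\xi_{\mathcal{A}\otimes\mathcal{B}}$ are correct, and the structural identity you write, $\xi_{\mathcal{A}\otimes\mathcal{B}}=\sigma_{\mathcal{B},\mathcal{A}}\circ(\xi_{\mathcal{B}}\otimes\xi_{\mathcal{A}})\circ\mathrm{flip}$, is indeed a rearrangement of the commutor formula once one knows Proposition \ref{prop_7.2} and uses $\xi^2=\mathrm{id}$ together with $\mathrm{flip}\circ(\xi_{\mathcal{B}}\otimes\xi_{\mathcal{A}})=(\xi_{\mathcal{A}}\otimes\xi_{\mathcal{B}})\circ\mathrm{flip}$. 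That part is fine. Two comments. First, the reduction to connected crystals and highest weight elements is unnecessary: the argument is a purely formal manipulation of the operator identities above, valid verbatim on the whole crystal, and routing it through highest weight elements only invites bookkeeping errors about which component of the tensor product a given highest weight vector lies in. Second, and more seriously, the ``threefold analogue'' on which everything hinges is named but neither stated nor proved, and the promised cancellation of adjacent $\xi\circ\xi$ pairs is asserted rather than exhibited. The needed input is really a pair of lemmas on how $\sigma$ interacts with triple tensor products, e.g. that $\sigma_{\mathcal{A},\mathcal{B}\otimes\mathcal{C}}$ and $\sigma_{\mathcal{A}\otimes\mathcal{B},\mathcal{C}}$ can each be written as a composition of one ``inner'' commutor with one ``outer'' commutor, plus the identity for $\xi$ on a triple product that makes the two bracketings agree; once those are stated, the diagram commutes by a short substitution. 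As written, your proposal is a plausible plan in the Henriques--Kamnitzer style, not a proof; either carry the $\xi$-calculus through explicitly (write down the two composites as words in $\xi_{\mathcal{B}_i}$, $\xi_{\mathcal{B}_i\otimes\mathcal{B}_j}$, $\xi_{\mathcal{B}_1\otimes\mathcal{B}_2\otimes\mathcal{B}_3}$ and flips, state the triple-product identity for $\xi$, and cancel) or do what the paper does and cite Henriques--Kamnitzer outright.
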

The proofs of \ref{prop_7.2} and \ref{th_7.2} can be found in \cite{Herniques_Kamnitzer}.

\section{Cactus group action}
\label{sec_8}
Let $\mathcal{B}_1, \mathcal{B}_2, \ldots, \mathcal{B}_N \in Ob(\mathfrak{g}$-$\text{Crystals})$. If $1 \leq p \leq r < q \leq N$, we get isomorphisms denoted by $\sigma_{p,q,r}$ defined by the following formula: 
\begin{gather*}
    (\sigma_{p,q,r})_{\mathcal{B}_1 \otimes \mathcal{B_2} \otimes \ldots \otimes \mathcal{B}_N} := 1_{\mathcal{B}_1 \otimes \ldots \otimes \mathcal{B}_{p-1}} \otimes \sigma_{\mathcal{B}_{p} \otimes \ldots \otimes \mathcal{B}_r, \mathcal{B}_{r+1} \otimes \ldots \otimes \mathcal{B}_q} \otimes 1_{\mathcal{B}_{q+1} \otimes \ldots \otimes \mathcal{B}_{N}}: \\
    \mathcal{B}_1 \otimes \ldots \otimes  \mathcal{B}_{p-1} \otimes \mathcal{B}_{p} \otimes \ldots \otimes \mathcal{B}_{r} \otimes \mathcal{B}_{r+1} \otimes \ldots \otimes \mathcal{B}_{q} \otimes \mathcal{B}_{q+1} \otimes \ldots \otimes   \mathcal{B}_N \rightarrow \\ \rightarrow \mathcal{B}_1 \otimes \ldots \otimes  \mathcal{B}_{p-1} \otimes \mathcal{B}_{r+1} \otimes \ldots \otimes \mathcal{B}_{q} \otimes \mathcal{B}_{p} \otimes \ldots \otimes \mathcal{B}_{r} \otimes \mathcal{B}_{q+1} \otimes \ldots \otimes   \mathcal{B}_N
\end{gather*}
We will construct natural isomorphisms using these $\sigma_{p,q,r}$. For $1 \leq p \leq q \leq N$ we define isomorphisms 
\begin{gather*}s_{p, q}: \mathcal{B}_1 \otimes \ldots \otimes  \mathcal{B}_{p-1} \otimes \mathcal{B}_{p} \otimes \mathcal{B}_{p+1} \otimes \ldots \otimes \mathcal{B}_{q-1} \otimes \mathcal{B}_{q}  \otimes \mathcal{B}_{q+1}\otimes \ldots \mathcal{B}_N \rightarrow \\ \rightarrow \mathcal{B}_1 \otimes \ldots \otimes  \mathcal{B}_{p-1} \otimes \mathcal{B}_{q} \otimes \mathcal{B}_{q-1} \otimes \ldots \otimes \mathcal{B}_{p+1} \otimes \mathcal{B}_{p}  \otimes \mathcal{B}_{q+1}\otimes \ldots \mathcal{B}_N 
\end{gather*}
recursively by setting $s_{p, p+1} = \sigma_{p, p, p+1}$ and $s_{p,q} = \sigma_{p, p, q} \circ s_{p+1, q}$ for $q-p > 1$. By convention $s_{p,p} = 1$.

We will also adopt the following notation 
\begin{notation}
  For $p < q$  we denote by $\tilde{s}_{p, q}$ the involutive element of the symmetric group $S_n$ which transposes the segment $[p, q]$. 
  $$\tilde{s}_{p, q} = \begin{pmatrix}
      1 & \ldots & p-1 & p & \ldots & q & q+1 & \ldots & n \\
      1 & \ldots & p-1 & q & \ldots & p & q+1 & \ldots & n 
  \end{pmatrix}$$
\end{notation}

It was proved in \cite{Herniques_Kamnitzer} that these isomorphisms satisfy certain equations. 

\begin{proposition}
Isomorphisms $s_{p,q}$ satisfy the following conditions:
\label{prop_8.1}
    \begin{itemize}
        \item $s_{p, q} \circ s_{p, q} = 1$
        \item $s_{p, q} \circ s_{k, l} = s_{k, l} \circ s_{p, q}$  if  $p < q  < k < l$ 
        \item $s_{p,q} \circ s_{k,l} = s_{m,n} \circ s_{p,q} $ if $p < q$ contains $k < l$, where $m = \tilde{s}_{p,q}(l)$ and $n = \tilde{s}_{p,q}(k)$.  
    \end{itemize}
\end{proposition}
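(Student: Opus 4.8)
The plan is to treat all three identities as what they are -- purely formal consequences of the coboundary structure -- and to argue entirely with morphisms in the coboundary category of crystals, forgetting the crystals themselves. The only tools needed are the involutivity of the commutor (Proposition~\ref{prop_7.2}), the three-object coherence of Theorem~\ref{th_7.2}, the naturality of $\sigma$, and the bifunctoriality (interchange law) of $\otimes$; associativity constraints are suppressed throughout, and I write $\mathcal{B}_{[a,b]} := \mathcal{B}_a \otimes \cdots \otimes \mathcal{B}_b$. First I would record a \emph{locality lemma}: for all $1 \le p < q \le N$ one has $s_{p,q} = 1_{\mathcal{B}_{[1,p-1]}} \otimes \theta_{p,q} \otimes 1_{\mathcal{B}_{[q+1,N]}}$ for an isomorphism $\theta_{p,q}\colon \mathcal{B}_{[p,q]} \to \mathcal{B}_q \otimes \cdots \otimes \mathcal{B}_p$; this is a one-line induction on $q-p$ from the recursion $s_{p,q} = \sigma_{p,p,q} \circ s_{p+1,q}$, since each $\sigma$ occurring touches only the tensor slots in $[p,q]$. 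The second relation (disjoint support, $p<q<k<l$) is then immediate: $s_{p,q}$ is supported on slots $\le q$ and $s_{k,l}$ on slots $\ge k > q$, so the two morphisms commute by the interchange law.

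The technical core is a joint induction on $q-p$ establishing simultaneously (a) involutivity $s_{p,q}\circ s_{p,q}=1$, and (b) the \emph{opposite recursions}, in which the block is reversed by peeling off its right-hand factor, e.g.\ $s_{p,q}=\sigma_{p,q-1,q}\circ s_{p,q-1}$ and $s_{p,q}=s_{p,q-1}\circ\sigma_{p,p,q}$ (where $s_{p,q-1}$ now means reversal of slots $[p,q-1]$). After substituting the inductive forms of the shorter reversals into $s_{p,q}=\sigma_{p,p,q}\circ s_{p+1,q}$ and cancelling isomorphisms, the identities in (b) reduce to three-block instances of Theorem~\ref{th_7.2}, e.g.
\begin{gather*}
\sigma_{p,p,q}\circ\sigma_{p+1,q-1,q}=\sigma_{p,q-1,q}\circ\sigma_{p,p,q-1}
\end{gather*}
for the blocks $\mathcal{B}_p$, $\mathcal{B}_{[p+1,q-1]}$, $\mathcal{B}_q$, together with naturality of the outer commutor to slide the remaining reversal through. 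Granting (b), involutivity follows from
\begin{gather*}
s_{p,q}\circ s_{p,q}=\bigl(\sigma_{p,q-1,q}\circ s_{p,q-1}\bigr)\circ\bigl(s_{p,q-1}\circ\sigma_{p,p,q}\bigr)=\sigma_{p,q-1,q}\circ\sigma_{p,p,q}=1,
\end{gather*}
the middle step by the inductive hypothesis $s_{p,q-1}\circ s_{p,q-1}=1$ and the last by Proposition~\ref{prop_7.2} (after $\sigma_{p,p,q}$ the first tensor slot carries $\mathcal{B}_{[p+1,q]}$ and the last carries $\mathcal{B}_p$, so $\sigma_{p,q-1,q}$ is literally the inverse of $\sigma_{p,p,q}=\sigma_{\mathcal{B}_p,\mathcal{B}_{[p+1,q]}}$); the base case $q=p+1$ is $\sigma_{p,p,p+1}\circ\sigma_{p,p,p+1}=\sigma_{\mathcal{B}_{p+1},\mathcal{B}_p}\circ\sigma_{\mathcal{B}_p,\mathcal{B}_{p+1}}=1$.

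For the nested relation $s_{p,q}\circ s_{k,l}=s_{m,n}\circ s_{p,q}$ (with $[k,l]\subseteq[p,q]$, $m=\tilde{s}_{p,q}(l)$, $n=\tilde{s}_{p,q}(k)$) I would induct on $q-p$; the case $[k,l]=[p,q]$ is the tautology $s_{p,q}\circ s_{p,q}=s_{p,q}\circ s_{p,q}$. If $k>p$, write $s_{p,q}=\sigma_{p,p,q}\circ s_{p+1,q}$ and apply the inductive hypothesis to $[k,l]\subseteq[p+1,q]$; since $\tilde{s}_{p+1,q}$ is $\tilde{s}_{p,q}$ shifted by one, the new window is $[m+1,n+1]\subseteq[p+1,q]$, and one is left with showing $\sigma_{p,p,q}\circ s_{m+1,n+1}=s_{m,n}\circ\sigma_{p,p,q}$, which holds because by the locality lemma $s_{m+1,n+1}$ is an endomorphism localized in the tensor factor $\mathcal{B}_{[p+1,q]}$ and $\sigma_{p,p,q}$ is natural in that factor (moving $\mathcal{B}_p$ past the block shifts the window down by one). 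If instead $k=p$ (so $l<q$), use the opposite recursion $s_{p,q}=\sigma_{p,q-1,q}\circ s_{p,q-1}$, apply the inductive hypothesis to $[p,l]\subseteq[p,q-1]$, and finish by naturality of $\sigma_{p,q-1,q}$ in its first (block) argument, the window now shifting up by one to match $\tilde{s}_{p,q}$.

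The step I expect to be the main obstacle is item (b), the opposite recursion: this is the only place where genuine coherence (as opposed to bookkeeping) enters, and it amounts to repackaging Theorem~\ref{th_7.2} into the assertion that a block reversal can be built ``from either end''. Everything else is careful accounting of which contiguous block of slots each $\sigma$ acts on once associators are discarded, plus routine applications of naturality and Proposition~\ref{prop_7.2}. All of this is of course due to Henriques and Kamnitzer~\cite{Herniques_Kamnitzer}; the above only indicates the structure of the argument.
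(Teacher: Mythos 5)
The paper gives no proof of this proposition: it is stated immediately after a single sentence citing Henriques--Kamnitzer~\cite{Herniques_Kamnitzer}, so there is no internal argument against which to compare. Your reconstruction is correct and is, in substance, the Henriques--Kamnitzer derivation: the locality lemma together with the interchange law gives the disjoint commutation; the joint induction establishing the opposite recursions $s_{p,q}=\sigma_{p,q-1,q}\circ s_{p,q-1}=s_{p,q-1}\circ\sigma_{p,p,q}$ reduces correctly (the first via the three-object coherence of Theorem~\ref{th_7.2}, the second via naturality of $\sigma$ in its second slot), after which involutivity follows from Proposition~\ref{prop_7.2} exactly as you say; and the nested relation follows by the two-case induction you sketch, each case being a naturality computation once the inductive hypothesis and an opposite recursion are in place.

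Two small points worth recording. First, you invoke the naturality of the commutor repeatedly; the paper uses this implicitly but never states it, so if you were to write this up in this paper's framework you would want to add it alongside Propositions~\ref{prop_7.2} and~\ref{th_7.2}. Second, the diagram of Theorem~\ref{th_7.2} as printed is internally inconsistent: the bottom-right object should read $\mathcal{B}_3\otimes\mathcal{B}_2\otimes\mathcal{B}_1$, and the right vertical arrow should carry $\sigma_{\mathcal{B}_1,\,\mathcal{B}_3\otimes\mathcal{B}_2}$ rather than $\sigma_{\mathcal{B}_1\otimes\mathcal{B}_3,\,\mathcal{B}_2}$. Your reduction $\sigma_{p,p,q}\circ\sigma_{p+1,q-1,q}=\sigma_{p,q-1,q}\circ\sigma_{p,p,q-1}$ is exactly the corrected form of that diagram, so you are using it in the intended sense; just be aware that the statement as typeset does not literally say this.
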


It makes sense to give the definition of the cactus group right after this proposition. 

\begin{definition}
    Let $C_N$ be the group with generators $s_{p,q}$ \hspace{1mm} for $1 \leq p < q \leq N$ and relations: 
    \begin{itemize}
        \item $s_{p,q}^{2} = 1.$
        \item $s_{p,q}s_{k,l} = s_{k,l}s_{p,q}$ if $p < q $ and $k < l$ are disjoint. 
        \item $s_{p,q}s_{k,l} = s_{m,n}s_{p,q}$ if $p < q$ contains $k < l$, where $m = \tilde{s}_{p,q}(l)$ and $n = \tilde{s}_{p,q}(k)$.
    \end{itemize}
\end{definition}
So we see that there is a natural action of a cactus group $C_N$ on the tensor degree of a crystal $\mathcal{B}^{\otimes N}$. The generators of the group $C_N$ act by inner automorphisms of the crystal $\mathcal{B}^{\otimes N}$. These automorphisms essentially permute the connected components of the same highest weight (each connected component is the highest weight subcrystal). 
We need the following proposition.

\begin{proposition}
\label{f_8.1}
If $L_\lambda$ and $L_\beta$ are finite-dimensional irreducible representations of $\mathfrak{g}$ and $\mathcal{B}_\lambda$, $\mathcal{B}_{\beta}$ are the corresponding crystals. Then if 
$$L_\lambda \otimes L_\beta = \bigoplus_{\mu} L_\mu $$ then $$\mathcal{B}_\lambda \otimes \mathcal{B}_\beta \cong \bigoplus_{\mu} \mathcal{B}_{\mu}$$\end{proposition}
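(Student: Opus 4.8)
\emph{Proof plan.} The plan is to deduce this from two facts: that the formal character of a $\mathfrak{g}$-crystal agrees with the character of the representation it comes from, and that $\mathcal{B}_\lambda \otimes \mathcal{B}_\beta$, being again an object of the category of $\mathfrak{g}$-crystals, splits into connected components each isomorphic to some $\mathcal{B}_\mu$. So the whole argument reduces to bookkeeping with characters once the correct identification of the $\mathcal{B}_\nu$ is in place.

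First I would record, for every $\nu \in P_+$, the identity $\mathrm{ch}(\mathcal{B}_\nu) = \mathrm{ch}(L_\nu)$, where $\mathrm{ch}(\mathcal{B}) := \sum_{b\in\mathcal{B}} e^{\mathrm{wt}(b)}$. This follows from Theorem \ref{th_7.1} together with Kashiwara's construction of $\mathcal{B}_\nu$ from a crystal basis of the irreducible $U_q(\mathfrak{g})$-module of highest weight $\nu$ (see \cite{Kashiwara}): the weight multiplicities of that module coincide with those of $L_\nu$ by the quantum analogue of the Weyl character formula, and these multiplicities are exactly the cardinalities of the weight-$\mu$ fibres of $\mathrm{wt}$ on $\mathcal{B}_\nu$.

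Next, since the category of $\mathfrak{g}$-crystals is closed under tensor product (as explained after Definition \ref{def_7.6}), every connected component of $\mathcal{B}_\lambda \otimes \mathcal{B}_\beta$ is isomorphic to some $\mathcal{B}_\mu$, so I may write $\mathcal{B}_\lambda \otimes \mathcal{B}_\beta \cong \bigoplus_{\mu} \mathcal{B}_\mu^{\oplus c_\mu}$ for uniquely determined non-negative integers $c_\mu$. Taking characters and using that $\mathrm{wt}$ is additive on the tensor product (Definition \ref{def_7.2}) gives
\begin{align*}
\mathrm{ch}(L_\lambda)\,\mathrm{ch}(L_\beta) &= \mathrm{ch}(\mathcal{B}_\lambda)\,\mathrm{ch}(\mathcal{B}_\beta) = \mathrm{ch}(\mathcal{B}_\lambda \otimes \mathcal{B}_\beta) \\
&= \sum_{\mu} c_\mu\, \mathrm{ch}(\mathcal{B}_\mu) = \sum_{\mu} c_\mu\, \mathrm{ch}(L_\mu).
\end{align*}
On the other hand $\mathrm{ch}(L_\lambda)\,\mathrm{ch}(L_\beta) = \sum_{\mu} m_\mu\, \mathrm{ch}(L_\mu)$, where $m_\mu$ is the multiplicity of $L_\mu$ in $L_\lambda \otimes L_\beta$. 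Since the Weyl characters $\{\mathrm{ch}(L_\mu)\}_{\mu\in P_+}$ are linearly independent in $\mathbb{Z}[P]$, I conclude $c_\mu = m_\mu$ for all $\mu$, which is precisely the assertion.

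The only genuinely non-formal ingredient — and hence the main obstacle — is the first step: knowing that $\mathcal{B}_\nu$ really is the crystal attached to the representation $L_\nu$ (equivalently $\mathrm{ch}(\mathcal{B}_\nu)=\mathrm{ch}(L_\nu)$), rather than some a priori unrelated highest weight crystal of highest weight $\nu$; this is exactly what the existence and uniqueness of the closed family in Theorem \ref{th_7.1} provides. An alternative route bypassing characters is to observe that the tensor product of crystal bases of $U_q(\mathfrak{g})$-modules is a crystal basis of the tensor product module, so $\mathcal{B}_\lambda \otimes \mathcal{B}_\beta$ is the crystal of $V_q(\lambda)\otimes V_q(\beta)$; combined with the fact that this quantum tensor product decomposes into irreducibles with the same multiplicities as the classical one, and that the crystal of a direct sum is the disjoint union of the crystals of the summands (Definition \ref{def_7.4}), the claim again follows, this time from the uniqueness of the crystal attached to a module.
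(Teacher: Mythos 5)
Your proposal is correct, but your main argument takes a genuinely different route from the paper's. The paper proceeds directly through crystal bases: it invokes Kashiwara's results that the tensor product of crystal bases is a crystal base of the tensor product of $U_q(\mathfrak{g})$-modules, that direct sums behave similarly, and that crystal bases of a fixed $U_q(\mathfrak{g})$-module give isomorphic crystals; decomposing $L_\lambda^q \otimes L_\beta^q$ into irreducibles then immediately yields the desired crystal isomorphism. Your main argument instead first uses only the closure of $\mathfrak{g}$-$\text{Crystals}$ under tensor product to write $\mathcal{B}_\lambda \otimes \mathcal{B}_\beta \cong \bigoplus_\mu \mathcal{B}_\mu^{\oplus c_\mu}$ for unknown $c_\mu$, and then pins down the $c_\mu$ by the identity $\mathrm{ch}(\mathcal{B}_\nu)=\mathrm{ch}(L_\nu)$ together with linear independence of Weyl characters. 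What your approach buys is a clean reduction to two stable inputs (character identity plus closedness) and a purely formal bookkeeping finish; what the paper's approach buys is that the multiplicities are matched structurally at the level of crystal bases without any appeal to linear independence of characters. Both ultimately rest on Kashiwara's theory, and your first step (that $\mathrm{ch}(\mathcal{B}_\nu)=\mathrm{ch}(L_\nu)$) uses exactly the same input as the paper, so the arguments are of comparable depth; your "alternative route" at the end is essentially the paper's proof verbatim.
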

\begin{proof}
It is known from \cite{Kashiwara_1} that crystal bases of the same $U_q(\mathfrak{g})$-module $M^{q}$ give rise to isomorphic crystals. Let $L_\lambda^{q}$ be the simple $U_q(\mathfrak{g})$-module of the highest weight $\lambda \in P_+$. If $(\mathcal{L}_\lambda, \mathcal{B}_\lambda)$ is a crystal base of $L_\lambda^{q}$  and $(\mathcal{L}_\beta, \mathcal{B}_\beta)$ is a crystal base of $L_\beta^{q}$, then $(\mathcal{L}_\lambda \otimes \mathcal{L}_\beta, \mathcal{B}_\lambda \otimes \mathcal{B}_\beta)$ is a crystal base of $L_\lambda^{q} \otimes L_\beta^{q}$ (see \cite{Kashiwara_1} for more details). On the other hand, we can decompose $L_\lambda^{q} \otimes L_\beta^{q}$ into the sum of simple $U_q(\mathfrak{g})$-modules: 
$$L_\lambda^{q} \otimes L_\beta^{q} \cong \bigoplus_{\mu} L_\mu^{q}$$
Since direct sum of crystal bases of $U_q(\mathfrak{g})$-modules is a crystal base of the direct sum of these modules, we obtain that $$\mathcal{B}_\lambda \otimes \mathcal{B}_\beta \cong \bigoplus_\mu \mathcal{B}_\mu,$$ where $\mathcal{B}_\mu$ is a crystal arising from the crystal base of $L_\mu^{q}$.  The decomposition into the direct sum of the tensor product of simple $U_q(\mathfrak{g})$-modules agrees with the decomposition of the tensor product of simple $U(\mathfrak{g})$-modules, corresponding to the same highest weights. So, we are done. 

\end{proof}

Then we get the following corollary.
 \begin{corollary}
\label{cor_8.1}
Consider the N-th tensor power of the crystal $\mathcal{B}_{S}$ which is the crystal corresponding to the spinor representation $S$ of the algebra Lie $\mathfrak{g}= \mathfrak{o}_{2n}$. We get the following decomposition of $\mathcal{B}_{S}^{\otimes N}$ into the sum of simple crystals: 

\begin{gather}
    \mathcal{B}_{S}^{\otimes N} \cong \bigoplus_{(\mu_1, \mu_2, \ldots, \mu_N) \in T^N} \mathcal{B}_{\mu_1 + \mu_2 + \cdots + \mu_N} \hspace{1mm},
\end{gather}
where $\hspace{1mm} $$T^N = \{(\mu_1, \mu_2, \ldots, \mu_N) \in P[S]^N \hspace{1mm}| \hspace{1mm} \mu_1 = \omega_{\pm}, \sum_{i=1}^{k} \mu_i \in P_+ \hspace{1mm}, \forall k \leqslant N   \}$
 
 It is evident that connected components of $\mathcal{B}_{S}^{\otimes N}$ isomorphic to the highest weight crystal $\mathcal{B}_{\lambda}$,$\lambda \in \Delta^N $ are indexed by the set $T_\lambda^N$, which has a nice combinatorial interpretation discussed in \ref{prop_6.1}, as the set $\mathfrak{Ctab}(D_\lambda^N)$ . Therefore , there is a natural action of the group $C_N$ on the set $\mathfrak{Ctab}(D_\lambda^N)$. 
 \end{corollary}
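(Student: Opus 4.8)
The plan is to obtain the displayed decomposition of $\mathcal{B}_S^{\otimes N}$ by induction on $N$, mirroring the proof of Corollary \ref{cor_2.1}, and then to transport the cactus action on the crystal to its indexing set. First I would note that the spinor crystal splits as $\mathcal{B}_S \cong \mathcal{B}_{\omega_+} \oplus \mathcal{B}_{\omega_-}$ in the sense of Definition \ref{def_7.3}, matching the decomposition $\Lambda E \cong L_{\omega_+}\oplus L_{\omega_-}$ of Proposition \ref{prop_1.3}. Assuming inductively that $\mathcal{B}_S^{\otimes(N-1)} \cong \bigoplus_{(\mu_1,\dots,\mu_{N-1})\in T^{N-1}} \mathcal{B}_{\mu_1+\cdots+\mu_{N-1}}$, I would tensor on the right by $\mathcal{B}_S$, distribute the tensor product over the direct sum, and apply Proposition \ref{f_8.1} to each summand $\mathcal{B}_{\mu_1+\cdots+\mu_{N-1}}\otimes\mathcal{B}_{\omega_\pm}$. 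Since $L_{\mu_1+\cdots+\mu_{N-1}}\otimes\Lambda E$ decomposes multiplicity-free by Theorem \ref{th_2.1}, so does the corresponding tensor product of crystals, and the bookkeeping of highest weights is word for word the one carried out in Corollary \ref{cor_2.1}. This yields the asserted decomposition, and, by the definitions of $\Delta^N$ and $T_\lambda^N$, identifies for each $\lambda\in\Delta^N$ the set of connected components of $\mathcal{B}_S^{\otimes N}$ isomorphic to $\mathcal{B}_\lambda$ with $T_\lambda^N$; composing with the bijection $\mathcal{I}_\lambda$ of Proposition \ref{prop_6.1} re-labels it as $\mathfrak{Ctab}(D_\lambda^N)$.

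Next I would verify that the cactus generators act by genuine automorphisms. Because every tensor factor equals the same object $\mathcal{B}_S$, the isomorphism $s_{p,q}$ built in Section \ref{sec_8} from the commutors has source and target both equal to $\mathcal{B}_S^{\otimes N}$ (suppressing the associativity isomorphism as usual), so $s_{p,q}\in\mathrm{Aut}(\mathcal{B}_S^{\otimes N})$. Proposition \ref{prop_8.1} asserts exactly the three defining relations of $C_N$ for these automorphisms, so $s_{p,q}\mapsto s_{p,q}$ extends to a group homomorphism $C_N\to\mathrm{Aut}(\mathcal{B}_S^{\otimes N})$.

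Finally I would pass to connected components. The decomposition of a crystal into connected components is canonical, so every automorphism $\phi$ of $\mathcal{B}_S^{\otimes N}$ permutes the set of its connected components; since $\phi$ commutes with $\mathrm{wt}$ and with all $e_i,f_i$, it restricts to an isomorphism from each component onto its image, and by the Schur-type Proposition \ref{prop_7.1} a component isomorphic to $\mathcal{B}_\lambda$ can only be sent to a component isomorphic to $\mathcal{B}_\lambda$. Hence $\phi$ preserves, for each $\lambda\in\Delta^N$, the subset of components isomorphic to $\mathcal{B}_\lambda$, i.e.\ it acts on $T_\lambda^N$ and, through $\mathcal{I}_\lambda$, on $\mathfrak{Ctab}(D_\lambda^N)$; precomposing with $C_N\to\mathrm{Aut}(\mathcal{B}_S^{\otimes N})$ gives the desired action. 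The one point that needs care is to make the phrase ``indexed by $T_\lambda^N$'' precise — namely to fix once and for all the labelling of connected components by $T^N$ coming from the inductive decomposition (the component attached to $(\mu_1,\dots,\mu_N)$ being the one generated by the $f_i$ acting on the tensor of the highest weight elements chosen at each step) — after which everything else is either immediate from the cited results or a routine transcription into crystal language of the arguments of Sections \ref{sec_2} and \ref{sec_6}.
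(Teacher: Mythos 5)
Your proposal is correct and follows exactly the route the paper has in mind: apply Proposition~\ref{f_8.1} inductively to transport the multiplicity-free decomposition of Corollary~\ref{cor_2.1} from $\mathfrak{o}_{2n}$-modules to crystals, identify the components of type $\mathcal{B}_\lambda$ with $T^N_\lambda$ (hence with $\mathfrak{Ctab}(D_\lambda^N)$ via $\mathcal{I}_\lambda$), observe that all tensor slots are the same object $\mathcal{B}_S$ so the $s_{p,q}$ are genuine automorphisms of $\mathcal{B}_S^{\otimes N}$ satisfying the relations of Proposition~\ref{prop_8.1}, and use Proposition~\ref{prop_7.1} to see that each automorphism preserves the subset of components of a fixed isomorphism type. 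One small inaccuracy in your final parenthetical: with the tensor product rule of Definition~\ref{def_7.2}, the highest weight element of the component labelled by $(\mu_1,\dots,\mu_N)$ is generally \emph{not} a tensor of highest weight elements chosen at each step --- the highest weight elements of $\mathcal{B}_{\lambda'}\otimes\mathcal{B}_{\omega_\pm}$ are of the form $(a,b_{\omega_\pm})$ with $a$ typically not highest weight in $\mathcal{B}_{\lambda'}$. This does not affect the argument, since the labelling of the component is already canonical: $\mathcal{B}_{\lambda'}\otimes\mathcal{B}_{\omega_\pm}$ decomposes multiplicity-freely into connected components, and that decomposition is intrinsic to the crystal, so the component attached to $(\mu_1,\dots,\mu_N)$ is unambiguously the connected component of highest weight $\mu_1+\cdots+\mu_N$ inside the component already attached to $(\mu_1,\dots,\mu_{N-1})$ tensored with $\mathcal{B}_{\mu_N}$; no auxiliary choice of highest weight vectors is required.
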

 \begin{corollary}
\label{cor_8.2}
Bijections established in \ref{cor_6.4} and \ref{cor_6.5} induce a natural action of the cactus group $C_N$ on the sets $SSSYT(\nu)$ and $\mathfrak{GTP}(\nu)$ for each $\nu \in SYD(N, n)$, defined by the action of the group $C_N$ on the set $\mathfrak{Ctab}(D_\lambda^N)$ for $\lambda = \mathcal{K}_N^{-1}(\mathcal{F}^{-1}(\nu))$, discussed in \ref{cor_8.1}.
  \end{corollary}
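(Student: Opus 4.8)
The plan is to argue by transport of structure. By Corollary~\ref{cor_8.1}, the cactus group $C_N$ acts on the set $\mathfrak{Ctab}(D_\lambda^N)$ of regular cell tables for every $\lambda \in \Delta^N$: the generators $s_{p,q}$ act as inner automorphisms of the crystal $\mathcal{B}_S^{\otimes N}$, permuting the connected components of fixed highest weight $\lambda$, and these components are indexed by $\mathfrak{Ctab}(D_\lambda^N)$. Concretely this yields a group homomorphism $\rho_\lambda : C_N \to \operatorname{Sym}(\mathfrak{Ctab}(D_\lambda^N))$. I would first note that, given $\nu \in SYD(N,n)$, the weight $\lambda = \mathcal{K}_N^{-1}(\mathcal{F}^{-1}(\nu)) \in \Delta^N$ is well defined, since $\mathcal{F}$ is a bijection by Theorem~\ref{th_5.1} and $\mathcal{K}_N$ is a bijection by Proposition~\ref{prop_3.2}; thus there is no ambiguity in which $C_N$-action on cell tables we transport.

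Next I would recall the explicit bijections $\mathcal{Y}_\lambda : \mathfrak{Ctab}(D_\lambda^N) \overset{\sim}{\longrightarrow} SSSYT(\nu)$ from Corollary~\ref{cor_6.4} and $\bar{\mathcal{J}}_\nu : SSSYT(\nu) \overset{\sim}{\longrightarrow} \mathfrak{GTP}(\nu)$ from Corollary~\ref{cor_6.5}. The general principle is that if a group $G$ acts on a set $X$ via $\rho : G \to \operatorname{Sym}(X)$ and $\phi : X \to Y$ is a bijection, then $g \mapsto \phi \circ \rho(g) \circ \phi^{-1}$ is again a group homomorphism $G \to \operatorname{Sym}(Y)$, because conjugation by $\phi$ is an isomorphism $\operatorname{Sym}(X) \to \operatorname{Sym}(Y)$; in particular the group-action axioms are preserved. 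Applying this with $G = C_N$, $\rho = \rho_\lambda$ and $\phi = \mathcal{Y}_\lambda$ produces the action of $C_N$ on $SSSYT(\nu)$, and applying it with $\phi = \bar{\mathcal{J}}_\nu \circ \mathcal{Y}_\lambda$ produces the action on $\mathfrak{GTP}(\nu)$. By construction these actions make $\mathcal{Y}_\lambda$ and $\bar{\mathcal{J}}_\nu \circ \mathcal{Y}_\lambda$ equivariant, which is precisely the sense in which they are ``natural'' and ``defined by the action on $\mathfrak{Ctab}(D_\lambda^N)$'': the square relating the $C_N$-action on cell tables to the $C_N$-action on short Young tables (resp.\ Gelfand--Tsetlin patterns) commutes by definition.

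There is essentially no obstacle here — the statement is a formal consequence of Corollary~\ref{cor_8.1} together with the already-constructed bijections of Corollaries~\ref{cor_6.4} and~\ref{cor_6.5}. The only items worth spelling out are the two just mentioned: that $\lambda$ is uniquely determined by $\nu$, so the transported action is canonical, and that conjugation by a bijection respects composition and identities, so we obtain genuine $C_N$-actions rather than mere families of permutations. If desired, one may also record the explicit formula for a generator, e.g.\ $s_{p,q}$ acts on $\tau \in \mathfrak{GTP}(\nu)$ by $\tau \mapsto (\bar{\mathcal{J}}_\nu \circ \mathcal{Y}_\lambda)\bigl(s_{p,q} \cdot (\mathcal{Y}_\lambda^{-1}\circ \bar{\mathcal{J}}_\nu^{-1})(\tau)\bigr)$, and analogously on $SSSYT(\nu)$, but no additional verification is needed.
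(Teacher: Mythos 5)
Your proof is correct and follows exactly the route the paper intends: Corollary~\ref{cor_8.2} is stated without an explicit proof precisely because it is the formal transport of the $C_N$-action of Corollary~\ref{cor_8.1} through the bijections $\mathcal{Y}_\lambda$ and $\bar{\mathcal{J}}_\nu$. Spelling out that $\lambda$ is uniquely determined by $\nu$ and that conjugation by a bijection preserves the group-action axioms is the right (and complete) justification.
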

Understanding this action will be the subject of a forthcoming paper. 

\renewcommand\refname{References}

\vspace{10mm}

\textsc{Igor \, K. \, Svyatnyy: \, National\, Research\, University\, Higher\, School\, of\, \,Economics, \,Usacheva\, Street 6,\, 119048,\, Moscow,\, Russia.} 

\textit{Email address:} \, \texttt{igor.svyatnyy@outlook.com}

\end{document}